%!TEX root = all.tex
% ******************************************************************
% ** Title:            Modules between $\infty$-categories?
% **                   Main file
% ** Precis:        
% ** Author:           Emily Riehl and Dominic Verity
% ** Commenced:        15/3/2015
% ******************************************************************

\documentclass[microtype]{gtpart}

%%%%% Packages

\usepackage{amsxtra}
\usepackage{mathtools}
\usepackage{extpfeil}
\usepackage{enumitem}
\usepackage{graphicx}
\usepackage{proof}
\usepackage{ifnextok}
\usepackage{xstring}

\usepackage[all]{xy}
\SelectTips{cm}{}
\SilentMatrices
\ifx\pdfoutput\undefined
  \xyoption{dvips}
\fi

\usepackage[silent]{fixme}
\usepackage{slashed}

\usepackage{xr}

% draft settings

\fxsetup{
  status=final
}

%%%%% Metadata

% Title

\title{Kan extensions and the calculus of modules for $\infty$-categories}

% Authors

\author[Riehl]{Emily Riehl}
\givenname{Emily}
\surname{Riehl}
\address{
Department of Mathematics \\
Johns Hopkins University \\
Baltimore, MD 21218\\
USA}
\email{eriehl@math.jhu.edu}
\urladdr{http://www.math.jhu.edu/~eriehl}

\author[Verity]{Dominic Verity}
\givenname{Dominic}
\surname{Verity}
\address{
  Centre of Australian Category Theory \\
  Macquarie University \\
  NSW 2109 \\
  Australia
}
\email{dominic.verity@mq.edu.au}
\urladdr{}

% Keywords

\keyword{$\infty$-categories}
\keyword{modules}
\keyword{profunctors}
\keyword{virtual equipment}
\keyword{pointwise Kan extension}

% Subject classification

\subject{primary}{MSC2010}{18G55}
\subject{primary}{MSC2010}{55U35}
\subject{primary}{MSC2010}{55U40}
\subject{secondary}{MSC2010}{18A05}
\subject{secondary}{MSC2010}{18D20}
\subject{secondary}{MSC2010}{18G30}
\subject{secondary}{MSC2010}{55U10}

% arXiv information

\arxivreference{arXiv:1507.01460}
\arxivpassword{}

% Leave the following items blank
%
\volumenumber{}
\issuenumber{}
\publicationyear{}
\papernumber{}
\startpage{}
\endpage{}
\doi{}
\MR{}
\Zbl{}
\received{}
\revised{}
\accepted{}
\published{}
\publishedonline{}
\proposed{}
\seconded{}
\corresponding{}
\editor{}
\version{}

%%%%% User defined macros

% Standard macro definitions

%!TEX root=all.tex
% ***************************************************************
% ** Title:            Dom's Standard Macros
% ** Author:           Dominic Verity.
% ** Commenced:        9/7/2009
% ***************************************************************

% Mods for compatibility with gtpart.cls class.
% Undefines
\let\co=\relax

% A useful conditional construct.
\newcommand{\ifundef}[1]{\expandafter\ifx\csname#1\endcsname\relax}

% Font fiddles

% import these fonts by hand here to avoid clashes with usual blackboard bold usage.
%\pdfmapfile{+bbold.map}

\DeclareMathAlphabet{\mathbbe}{U}{bbold}{m}{n}

\makeatletter

\def\re@DeclareMathSymbol#1#2#3#4{%
    \let#1=\undefined
    \DeclareMathSymbol{#1}{#2}{#3}{#4}}

% Symbols for pushout and pullback diagram shapes stolen
% from MnSymbol
\ifundef{righthalfcup}
  \DeclareFontFamily{U}{MnSymbolC}{}
  \DeclareSymbolFont{mnSyC}{U}{MnSymbolC}{m}{n}
  \SetSymbolFont{mnSyC}{bold}{U}{MnSymbolC}{b}{n}
  \DeclareFontShape{U}{MnSymbolC}{m}{n}{
      <-6>  MnSymbolC5
     <6-7>  MnSymbolC6
     <7-8>  MnSymbolC7
     <8-9>  MnSymbolC8
     <9-10> MnSymbolC9
    <10-12> MnSymbolC10
    <12->   MnSymbolC12}{}
  \DeclareFontShape{U}{MnSymbolC}{b}{n}{
      <-6>  MnSymbolC-Bold5
     <6-7>  MnSymbolC-Bold6
     <7-8>  MnSymbolC-Bold7
     <8-9>  MnSymbolC-Bold8
     <9-10> MnSymbolC-Bold9
    <10-12> MnSymbolC-Bold10
    <12->   MnSymbolC-Bold12}{}
  
  \re@DeclareMathSymbol{\righthalfcup}{\mathord}{mnSyC}{184}
  \re@DeclareMathSymbol{\lefthalfcap}{\mathord}{mnSyC}{185}
\fi

\makeatother

% ***************************************************************
% ** Description:      Miscellaneous bits and pieces.            
% ***************************************************************

% *** Now some general definitions ***

% *** Fiddling with boxes, depths etc. ***

\newcommand{\mlaux}[3]{\setbox0=\hbox{$\mathsurround=0pt #2{#3}$}%
  \dimen0=\dp0\advance\dimen0 by \ht0\lower#1\dimen0\box0}

\newcommand{\makellapm}[2]{\hbox to 0pt{\hss$\mathsurround=0pt #1{#2}$}}

\newcommand{\makerlapm}[2]{\hbox to 0pt{$\mathsurround=0pt #1{#2}$\hss}}

\newcommand{\makelapm}[2]{\hbox to 0pt{\hss$\mathsurround=0pt #1{#2}$\hss}}

\newcommand{\makeushort}[3]{%
	\setbox0=\hbox{$\mathsurround=0pt #2{#3}$}%
	\hbox to 1\wd0{\hss\underbar{\hbox to #1\wd0{\hss\box0\hss}}\hss}}

% Macro to typeset part of a math formula at a bigger size
\def\makebigger#1#2#3{\scalebox{#1}{$\mathsurround=0pt #2{#3}$}}
\def\bigger#1#2{{\relax\mathpalette{\makebigger{#1}}{#2}}}

\def\scaleuphalf{1.0954}
\def\scaleupone{1.2}

% Duals / Superscripted postfix ops

\newcommand{\op}{^{\mathord{\text{\rm op}}}}
\newcommand{\co}{^{\mathord{\text{\rm co}}}}
\newcommand{\coop}{^{\mathord{\text{\rm coop}}}}

% ordinal stuff

\renewcommand{\th}{^{\text{th}}}

% General mathematical connectives etc.

\newcommand{\defeq}{\mathrel{:=}}

% ***************************************************************
% ** Description:      Some useful mathematical operators.            
% ***************************************************************

\makeatletter

\def\newmop{\@ifstar{\@newmop m}{\@newmop o}}
\def\@newmop#1{\@ifnextchar[{\@@newmop #1}{\@@@newmop #1}}
\def\@@newmop#1[#2]{\@declmathop #1#2}
\def\@@@newmop#1#2{\expandafter\@declmathop\expandafter #1\csname #2\endcsname{#2}}

\makeatother

%new xypic tails
%\newdir{ >}{{}*!/-10pt/@{>}}
\newdir{ |}{{}*!/-5pt/@{|}}

% General operations on maps etc.
\newmop{im}
\newmop{coim}
\newmop{dom}
\newmop{cod}
\newmop{id}
\newmop{Map}

\newmop{obj}
\newmop{arr}
\newmop{sq}
\newmop{norm}

\newmop{el}

\newmop{ev}

%Misc

\newmop{Ext}
\newmop{icon}
\newmop{pbk}
\newmop{icom}

% (Weak) factorisation systems.
\newmop{cell}
\newmop{cof}
\newmop{fib}

% Bisimplicial sets
\newmop{diag}

% Colimits and limits
\newmop*{colim}
\newmop*{holim}

% Kan extension
\newmop[\lan]{lan}
\newmop[\ran]{ran}

% ***************************************************************
% ** Description:      General categorical notations
% ***************************************************************

\newcommand{\comma}{\mathbin{\downarrow}}

\newcommand{\itimes}{%
  \mathrel{\vbox{\offinterlineskip\ialign{%
    \hfil##\hfil\cr
    $\scriptscriptstyle\sim$\cr
    %\noalign{\kern0ex}
    $\times$\cr
}}}}

\makeatletter
\newcommand{\rotatemath}[2]{\rotatebox[origin=c]{180}{$\m@th #1{#2}$}}
\makeatother

%\newcommand{\yoneda}{\mathscr{Y}\!}

% projections for products, pullbacks and comma objects
\newmop[\pr]{pr}
\newmop[\dm]{d} 

% inclusions for coproducts and pushouts
\newmop[\cpr]{in}

% Pushout and Pullback ``corners'' for xypic diagrams.

\newcommand{\pocorner}{\hbox to 8pt{{\vrule height8pt depth0pt width0.5pt}%
    \vbox to 8pt{{\hrule height0.5pt width7.5pt depth0pt}\vfill}}}

\newcommand{\pbcorner}{\vbox to 0pt{\kern 4pt\hbox to 0pt{\kern 4pt%
      \vbox{{\hrule height0.5pt width7.5pt depth0pt}}%
      {\vrule height8pt depth0pt width0.5pt}\hss}\vss}}
\newcommand{\pbexcursion}{\save[]+<5pt,-5pt>*{\pbcorner}\restore}
\newcommand{\pbdiamond}{\save[]+<0pt,-5pt>*{\rotatebox{-45}{$\pbcorner$}}\restore}

% ***************************************************************
% ** Description:      Tensors, actions etc.            
% ***************************************************************

\newcommand{\pwr}{\pitchfork}

\newmop{cls}

\newcommand{\leib}[1]{\mathbin{\widehat{#1}}}

\newcommand{\pbtimes}[1]{\mathbin{\mathop{\times}_{#1}}}

% ***************************************************************
% ** Description:      Standard categories. 
% ***************************************************************

% Macros for typesetting the names of different kinds of category

\newcommand{\category}[1]{\underline{\smash[b]{\text{\rm{#1}}}}}

\newcommand{\tcat}{\lcat}

\newcommand{\lcat}{\mathcal}
\newcommand{\scat}{\mathbf}

\newcommand{\cattwo}{{\bigger{1.12}{\mathbbe{2}}}}
\newcommand{\catone}{{\bigger{1.16}{\mathbbe{1}}}}

% Magic with Delta

\makeatletter

\def\Del@Sym{{\bigger\scaleuphalf{\mathbbe{\Delta}}}}

\def\del@fn{\futurelet\del@next}
\def\del@dn{\def\del@next}

\def\parsedel@{%
  \ifx +\del@next \del@dn+{\Del@Sym_{\mathord{+}}}%
  \else \del@dn {\del@fn\parsedel@@}%
  \fi\del@next}

\def\parsedel@@{%
  \ifx\space@\del@next \expandafter\del@dn\space{\del@fn\parsedel@@}%
  \else\ifx [\del@next \del@dn[{\del@fn\parsedel@@@}%
  \else\ifx _\del@next \del@dn{\Delta}%
  \else\ifx ^\del@next \del@dn{\Delta}%
  \else \del@dn{\Del@Sym}%
  \fi\fi\fi\fi\del@next}

\def\parsedel@@@{%
  \ifx\space@\del@next \expandafter\del@dn\space{\del@fn\parsedel@@@}%
  \else\ifx t\del@next \del@dn t{\Del@Sym_\infty\del@fn\parsedel@@@@}%
  \else\ifx b\del@next \del@dn b{\Del@Sym_{-\infty}\del@fn\parsedel@@@@}%
  \else \del@dn{\errmessage{unexpected modifier}}%
  \fi\fi\fi\del@next}

\def\parsedel@@@@{%
  \ifx\space@\del@next \expandafter\del@dn\space{\del@fn\parsedel@@@@}%
  \else\ifx ]\del@next \del@dn]{}%
  \else \del@dn{\errmessage{expecting close of option block}}%
  \fi\fi\del@next}

\def\Del{\del@fn\parsedel@}

\makeatother

\newcommand{\Horn}{\Lambda}

% Standard categories

\newcommand{\Set}{\category{Set}}
\newcommand{\Cat}{\category{Cat}}

 % Enriched categories

\newcommand{\sSet}{\category{sSet}}
%bisimplicial sets
\newcommand{\qCat}{\category{qCat}}
\newcommand{\Adj}{\category{Adj}}
\newcommand{\Mnd}{\category{Mnd}}

 % marked simplicial sets = strat

% legacy names for a few things

\newcommand{\isoSpan}[2]{\category{Span}_{\tcat{C}}(#1,#2)}
\newcommand{\SSpan}[1]{\category{Span}_{\lcat{#1}}}
\newcommand{\Mod}[2]{{}\category{Mod}_{\lcat{K}}(#1,#2)}
\newcommand{\MMod}[1]{\category{Mod}_{\lcat{#1}}}
\newcommand{\qMod}[2]{\mathrm{mod}(#1,#2)}
\newcommand{\dmod}[3]{\xymatrix@=1.25em{{#2} \ar[r]|\mid^{ {#1}} & {#3}}}

% Notation for Adj related structures

% Shapes of diagrams for pullbacks and pushouts
\newcommand{\pbshape}{{\mathord{\bigger\scaleupone\righthalfcup}}}

% ***************************************************************
% ** Description:      Simplicial Set / Model Category notation.            
% ***************************************************************

% Elementary operators in the theory of (stratified) simplicial sets.

% face by vertices (fbv)

\newcommand{\fbv}[1]{\{{#1}\}}

% Partition operators.

% other simplicial notation

\newcommand{\join}{\star}

\newmop{dec}
\newmop{fatdec}
\newmop{slc}
\newmop{fatslc}

\newcommand{\slice}{/}

\newcommand{\slicer}[2]{{#1\mkern-1mu}_{{}\slice{#2}}}

\newmop{ir} % the interval representation functor
\newmop{incl}

 %for twisted arrow quasi-categories (note conflict with \twar)

% nerves etc

\newcommand{\ho}{h}

%left adjoint to the homotopy coherent nerve

% Notation associated with Reedy categories

\newcommand{\boundary}{\partial}

\def\reedyfilt#1_#2{#1_{\leq #2}}
\newmop{sk}
\newmop{cosk}
\newmop{res}

%decoration

% General model category notation

\newmop{map}
\newmop{Ho}

\newmop[\pth]{path}
\newmop{cyl}

%for left and right closures in an equipment

% Standard notations for operations on (iterated) functor categories

\newmop[\const]{c}
  
% Some standard model categories

% Augmentation

% collage and its right adjoint 
% (NB: this is probably in the wrong place)

\newmop{coll}
\newmop{wgt}

% spaces of homotopy coherent adjunctions

%\newmop{counits}
%\newmop{cohadjs}
%\newmop{leftadjs}

% ***************************************************************
% ** Description:      In-line arrows.            
% ***************************************************************

\newdir{ >}{{}*!/-7pt/@{>}}
\newdir{u(}{{}*!/-4pt/@^{(}}
\newdir{d(}{{}*!/-4pt/@_{(}}
\newdir{|>}{%
  !/4.5pt/@{|}*:(1,-.2)@^{>}*:(1,+.2)@_{>}*+@{}}

% New style, simpler, inline arrows. To match these in xypic diagrams
% use cm arrow heads there.

\makeatletter
\def\makeslashed#1#2#3#4#5{#1{\mathpalette{\sla@{#2}{#3}{#4}}{#5}}}

\def\@mathlower#1#2#3{\setbox0=\hbox{$\m@th#2#3$}\lower#1\ht0\box0}
\def\mathlower#1#2{\mathpalette{\@mathlower{#1}}{#2}}
\makeatother

\newcommand{\inc}{\hookrightarrow}

\newcommand{\tfib}{\twoheadrightarrow}
\newcommand{\fibt}{\twoheadleftarrow}

\newcommand{\xtfib}[1]{\xtwoheadrightarrow{#1}}
\newcommand{\xfibt}[1]{\xtwoheadleftarrow{#1}}

\newcommand{\prof}{\makeslashed\mathbin\shortmid{-0.1}{0.16}\to}

\newcommand{\longtwoheadrightarrow}{\mathrel{\mathord{-}\mkern-3mu\mathord\twoheadrightarrow}}

\newcommand{\we}{\xrightarrow{\mkern10mu{\smash{\mathlower{0.6}{\sim}}}\mkern10mu}}

\newcommand{\trvfib}{\stackrel{\smash{\mkern-2mu\mathlower{1.5}{\sim}}}\longtwoheadrightarrow}

% 2-cells

\newcommand{\To}{\Rightarrow}
\newcommand{\xTo}[1]{\xRightarrow{#1}}

% ***************************************************************
% ** Description:      Macros to support mixed variance tensor 
% **                   style sub/super-script notation            
% ***************************************************************

% Note - the \tn macro is not "re-entrant".

\makeatletter

\def\tens@fn{\futurelet\tens@next}
\def\tens@dn{\def\tens@nextcont}
\newtoks\tens@toks
\def\addtotens@toks#1{\tens@toks=\expandafter{\the\tens@toks#1}}

\def\parsetens@@{%
    \ifx\space@\tens@next \expandafter\tens@dn\space{\tens@fn\parsetens@@}%
    \else\ifx ^\tens@next \tens@dn ^##1{\parsetens@procsep^\addtotens@toks{##1}%
      \tens@fn\parsetens@@}%
    \else\ifx _\tens@next \tens@dn _##1{\parsetens@procsep_\addtotens@toks{##1}%
      \tens@fn\parsetens@@}%
    \else\tens@dn{\ifx *\tens@last \else\addtotens@toks\egroup\fi\the\tens@toks}%
    \fi\fi\fi\tens@nextcont}

\def\parsetens@procsep#1{%
  \ifx *\tens@last \addtotens@toks{#1}\addtotens@toks\bgroup%
  \else\ifx \tens@last\tens@next \addtotens@toks,%
  \else \addtotens@toks\egroup\addtotens@toks\bgroup%
    \addtotens@toks\egroup\addtotens@toks{#1}\addtotens@toks\bgroup% 
  \fi\fi\let\tens@last\tens@next}

\newcommand{\tn}[1]{\let\tens@last=*\tens@toks={#1}\tens@fn\parsetens@@}

\makeatother

% ***************************************************************
% ** Description:      Some standard xypic diagrams            
% ***************************************************************

\def\adjdisplay#1-|#2:#3->#4.{{%
    \xymatrix@R=0em@!C=2.5em{%
      *+[l]{#3} \ar@/_0.55pc/[rr]_-{#2} & {\bot} &
      *+[r]{#4}\ar@/_0.55pc/[ll]_-{#1}}}}

\def\adjdisplaytwo#1-|#2:#3->#4.{{% 
\xymatrix@=1.2em{
      {#3}\ar@/_1.5ex/[rr]_-{#2}^-{}="one"
      & & {#4}
      \ar@/_1.5ex/[ll]_-{#1}^-{}="two" 
      \ar@{}"one";"two"|{\bot}
    }}}

\def\tripleadjdisplay#1-|#2-|#3:#4->#5.{{%
\xymatrix@=2.4em{ 
{#4}\ar[r]|{#2} &
{#5} \ar@/_3ex/[l]_{#1}^{\bot} \ar@/^3ex/[l]_{\bot}^{#3}}
}}

\def\adjinline#1-|#2:#3->#4.{{#1}\dashv{#2}:#3\to #4}

\newcommand{\pent}[1]{
  \xybox{
    \POS (0,-15)*+{\a}="0", 
         (-14,-5)*+{\b}="1", 
         (-9,12)*+{\c}="2", 
         (9,12)*+{\d}="3", 
         (14,-5)*+{\e}="4"
    \POS"0" \ar "1"^{\labelstyle \ab}|{}="01"
    \POS"1" \ar "2"^{\labelstyle \bc}|{}="12"
    \POS"2" \ar "3"^{\labelstyle \cd}|{}="23"
    \POS"3" \ar "4"^{\labelstyle \de}|{}="34"
    \POS"0" \ar "4"_{\labelstyle \ae}|{}="04"
    \ifcase #1
    \POS"0" \ar "2"|{\labelstyle \ac}="02"
    \POS"0" \ar "3"|{\labelstyle \ad}="03"
    \POS"02";"1"**{}, ?(0.3) \ar@{=>} ?(0.7)^{\labelstyle \abc}
    \POS"03";"2"**{}, ?(0.25) \ar@{=>} ?(0.5)_{\labelstyle \acd}
    \POS"04";"3"**{}, ?(0.2) \ar@{=>} ?(0.4)_{\labelstyle \ade}
    \or
    \POS"1" \ar "3"|{\labelstyle \bd}="13"
    \POS"1" \ar "4"|{\labelstyle \be}="14"
    \POS"13";"2"**{}, ?(0.3) \ar@{=>} ?(0.7)_{\labelstyle \bcd}
    \POS"14";"3"**{}, ?(0.25) \ar@{=>} ?(0.5)_{\labelstyle \bde}
    \POS"04";"1"**{}, ?(0.25) \ar@{=>} ?(0.5)_{\labelstyle \abe}
    \or
    \POS"2" \ar "4"|{\labelstyle \ce}="24"
    \POS"0" \ar "2"|{\labelstyle \ac}="02"
    \POS"02";"1"**{}, ?(0.3) \ar@{=>} ?(0.7)^{\labelstyle \abc}
    \POS"04";"2"**{}, ?(0.2) \ar@{=>} ?(0.35)_{\labelstyle \ace}
    \POS"24";"3"**{}, ?(0.2) \ar@{=>} ?(0.6)^{\labelstyle \cde}
    \or
    \POS"1" \ar "3"|{\labelstyle \bd}="13"
    \POS"0" \ar "3"|{\labelstyle \ad}="03"
    \POS"04";"3"**{}, ?(0.2) \ar@{=>} ?(0.4)_{\labelstyle \ade}
    \POS"13";"2"**{}, ?(0.3) \ar@{=>} ?(0.7)_{\labelstyle \bcd}
    \POS"03";"1"**{}, ?(0.25) \ar@{=>} ?(0.5)^{\labelstyle \abd}
    \or
    \POS"2" \ar "4"|{\labelstyle \ce}="24"
    \POS"1" \ar "4"|{\labelstyle \be}="14"
    \POS"24";"3"**{}, ?(0.2) \ar@{=>} ?(0.6)^{\labelstyle \cde}
    \POS"04";"1"**{}, ?(0.25) \ar@{=>} ?(0.5)_{\labelstyle \abe}
    \POS"14";"2"**{}, ?(0.25) \ar@{=>} ?(0.5)^{\labelstyle \bce}
    \else\fi
  }
}

\newcommand{\pentofpent}[1]{
  \def\baselen{#1}
  \begin{xy}
    0;<\baselen,0mm>:
    *{\xybox{
        \POS(0,-4)*[o]{\pent 0}="zero"
        \POS(16,40)*[o]{\pent 3}="three"
        \POS(72,40)*[o]{\pent 1}="one"
        \POS(88,-4)*[o]{\pent 4}="four"
        \POS(44,-36)*[o]{\pent 2}="two"
        \ar@<1ex>"zero";"three"^-{\objectstyle\abcd}
        \ar@<1ex>"three";"one"^-{\objectstyle\abde}
        \ar@<1ex>"one";"four"^-{\objectstyle\bcde}
        \ar@<-1ex>"zero";"two"_-{\objectstyle\acde}
        \ar@<-1ex>"two";"four"_-{\objectstyle\abce}
        \ar@{=>}(44,-5);(44,+15)^{\objectstyle\abcde}
     }}
  \end{xy}
}

% Adjust list environments.

\setlist{}
\setenumerate{leftmargin=*,labelindent=\parindent}
\setitemize{leftmargin=*,labelindent=0.5\parindent}
\setdescription{leftmargin=1em}

% Theorem declarations.

\swapnumbers

\theoremstyle{plain}

\newtheorem{thm}{Theorem}[subsection]
\newtheorem{lem}[thm]{Lemma}
\newtheorem{cor}[thm]{Corollary}

\newtheorem{prop}[thm]{Proposition}

\theoremstyle{definition}
\newtheorem{defn}[thm]{Definition}
\newtheorem{ex}[thm]{Example}
\newtheorem{ntn}[thm]{Notation}

\theoremstyle{remark}
\newtheorem{obs}[thm]{Observation}
\newtheorem{rec}[thm]{Recall}
\newtheorem{rmk}[thm]{Remark}

% Number equations in subsections
\makeatletter
\let\c@equation\c@thm
\makeatother
\numberwithin{equation}{subsection}

% General page formatting tweaks.

\raggedbottom

% Macros for cross reference between RV papers

\newlabel{found:sec:background}{{2}{6}{Background on quasi-categories}{section.2}{}}
\newlabel{found:obs:size-conventions}{{2.0.1}{6}{size}{thm.2.0.1}{}}
\newlabel{found:subsect:simplicial.notation}{{2.1}{6}{Some standard simplicial notation}{subsection.2.1}{}}
\newlabel{found:ntn:simp.op}{{2.1.1}{6}{simplicial operators}{thm.2.1.1}{}}
\newlabel{found:ntn:simplicial-sets}{{2.1.2}{6}{(augmented) simplicial sets}{thm.2.1.2}{}}
\newlabel{found:rec:augmentation}{{2.1.3}{7}{augmentation}{thm.2.1.3}{}}
\newlabel{found:ntn:faces-by-vertices}{{2.1.5}{8}{faces of \protect {$\Del ^n$}}{thm.2.1.5}{}}
\newlabel{found:ntn:simplicial-hom-space}{{2.1.6}{8}{internal hom}{thm.2.1.6}{}}
\newlabel{found:rec:hty-category}{{2.2.2}{8}{the homotopy category}{thm.2.2.2}{}}
\newlabel{found:eq:homotopy-of-1-simplices}{{2.2.3}{9}{the homotopy category}{equation.2.2.3}{}}
\newlabel{found:rec:qmc-quasicat}{{2.2.5}{9}{the model category of quasi-categories}{thm.2.2.5}{}}
\newlabel{found:rec:leibniz}{{2.2.6}{9}{Leibniz constructions}{thm.2.2.6}{}}
\newlabel{found:rec:cart-modcat}{{2.2.8}{10}{cartesian model categories}{thm.2.2.8}{}}
\newlabel{found:obs:isofibration-closure}{{2.2.9}{10}{closure properties of isofibrations}{thm.2.2.9}{}}
\newlabel{found:defn:equivalences}{{2.3.1}{10}{isomorphisms in quasi-categories}{thm.2.3.1}{}}
\newlabel{found:rec:marked-prod-exp}{{2.3.4}{11}{products and exponentiation}{thm.2.3.4}{}}
\newlabel{found:rmk:equiv-markings}{{2.3.5}{12}{isomorphisms and markings}{thm.2.3.5}{}}
\newlabel{found:prop:joyal-special-horn}{{2.3.7}{12}{Joyal}{thm.2.3.7}{}}
\newlabel{found:rec:qmc-quasi-marked}{{2.3.8}{12}{the model structure of naturally marked quasi-categories}{thm.2.3.8}{}}
\newlabel{found:obs:nat-mark-homs}{{2.3.9}{13}{natural markings, internal homs, and products}{thm.2.3.9}{}}
\newlabel{found:lem:pointwise-equiv}{{2.3.10}{13}{pointwise isomorphisms are isomorphisms}{thm.2.3.10}{}}
\newlabel{found:eq:pointwise-equivalence-square}{{2.3.11}{13}{Isomorphisms and marked simplicial sets}{equation.2.3.11}{}}
\newlabel{found:cor:pointwise-equiv}{{2.3.12}{14}{}{thm.2.3.12}{}}
\newlabel{found:obs:marked-arrow-subcat}{{2.3.13}{14}{}{thm.2.3.13}{}}
\newlabel{found:subsec:join}{{2.4}{14}{Join and slice}{subsection.2.4}{}}
\newlabel{found:defn:join-dec}{{2.4.1}{15}{joins and d{\'e}calage}{thm.2.4.1}{}}
\newlabel{found:defn:slices}{{2.4.2}{15}{d{\'e}calage and slices}{thm.2.4.2}{}}
\newlabel{found:obs:slice-and-qcats}{{2.4.3}{16}{slices of quasi-categories}{thm.2.4.3}{}}
\newlabel{found:def:fat-join}{{2.4.4}{16}{fat join}{thm.2.4.4}{}}
\newlabel{found:eq:fat-join-def}{{2.4.5}{16}{fat join}{equation.2.4.5}{}}
\newlabel{found:eq:fat-join-cong}{{2.4.6}{17}{fat join}{equation.2.4.6}{}}
\newlabel{found:defn:fat-slices}{{2.4.7}{17}{fat slice}{thm.2.4.7}{}}
\newlabel{found:eq:join-comp-def}{{2.4.9}{17}{comparing join constructions}{equation.2.4.9}{}}
\newlabel{found:eq:tnm-def}{{2.4.10}{17}{comparing join constructions}{equation.2.4.10}{}}
\newlabel{found:prop:join-fatjoin-equiv}{{2.4.11}{18}{}{thm.2.4.11}{}}
\newlabel{found:lem:slices-quillen}{{2.4.12}{18}{}{thm.2.4.12}{}}
\newlabel{found:prop:slice-fatslice-equiv}{{2.4.13}{18}{slices and fat slices of a quasi-category are equivalent}{thm.2.4.13}{}}
\newlabel{found:rmk:map-slices}{{2.4.14}{19}{}{thm.2.4.14}{}}
\newlabel{found:sec:twocat}{{3}{19}{The 2-category of quasi-categories}{section.3}{}}
\newlabel{found:obs:simp-to-2-cats}{{3.1.2}{20}{}{thm.3.1.2}{}}
\newlabel{found:def:qCat-2}{{3.2.1}{21}{the 2-category of quasi-categories}{thm.3.2.1}{}}
\newlabel{found:eq:qCat2homdefn}{{3.2.2}{21}{the 2-category of quasi-categories}{equation.3.2.2}{}}
\newlabel{found:obs:pointwise-iso-reprise}{{3.2.3}{22}{pointwise isomorphisms are isomorphisms (reprise)}{thm.3.2.3}{}}
\newlabel{found:prop:qcat2closed}{{3.2.4}{22}{}{thm.3.2.4}{}}
\newlabel{found:defn:2-cat.of.all.simpsets}{{3.2.5}{23}{the 2-category of all simplicial sets}{thm.3.2.5}{}}
\newlabel{found:rmk:exp2functor}{{3.2.6}{23}{}{thm.3.2.6}{}}
\newlabel{found:lem:2-cat.equivs}{{3.2.8}{23}{}{thm.3.2.8}{}}
\newlabel{found:prop:equivsareequivs}{{3.2.9}{24}{}{thm.3.2.9}{}}
\newlabel{found:prop:equivsareequivs2}{{3.2.10}{24}{}{thm.3.2.10}{}}
\newlabel{found:subsec:weak-2-limits}{{3.3}{24}{Weak 2-limits}{subsection.3.3}{}}
\newlabel{found:defn:smothering}{{3.3.1}{24}{smothering functors}{thm.3.3.1}{}}
\newlabel{found:lem:smothering}{{3.3.2}{25}{fibres of smothering functors}{thm.3.3.2}{}}
\newlabel{found:defn:weak2limit}{{3.3.3}{25}{weak 2-limits in a 2-category}{thm.3.3.3}{}}
\newlabel{found:eq:cone-induced}{{3.3.4}{25}{weak 2-limits in a 2-category}{equation.3.3.4}{}}
\newlabel{found:lem:unique-weak-2-limits}{{3.3.5}{25}{}{thm.3.3.5}{}}
\newlabel{found:lem:weak-simplification}{{3.3.6}{26}{}{thm.3.3.6}{}}
\newlabel{found:eq:qCat.wl.comp.1}{{3.3.7}{26}{Weak 2-limits}{equation.3.3.7}{}}
\newlabel{found:prop:weak-cotensors}{{3.3.9}{27}{}{thm.3.3.9}{}}
\newlabel{found:eq:arrowinhA}{{3.3.10}{27}{Weak 2-limits}{equation.3.3.10}{}}
\newlabel{found:eq:liftedarrowinA2}{{3.3.11}{28}{Weak 2-limits}{equation.3.3.11}{}}
\newlabel{found:prop:weak-iso-cotensor}{{3.3.13}{28}{}{thm.3.3.13}{}}
\newlabel{found:prop:weak-homotopy-pullbacks}{{3.3.14}{29}{}{thm.3.3.14}{}}
\newlabel{found:def:comma-obj}{{3.3.15}{30}{comma objects}{thm.3.3.15}{}}
\newlabel{found:lem:comma-obj}{{3.3.16}{30}{}{thm.3.3.16}{}}
\newlabel{found:lem:comma-obj-maps}{{3.3.17}{30}{maps induced between comma objects}{thm.3.3.17}{}}
\newlabel{found:prop:weakcomma}{{3.3.18}{31}{}{thm.3.3.18}{}}
\newlabel{found:eq:commacat-comp}{{3.3.19}{31}{Weak 2-limits}{equation.3.3.19}{}}
\newlabel{found:obs:unpacking-weak-comma-objects}{{3.3.20}{31}{unpacking the universal property of weak comma objects}{thm.3.3.20}{}}
\newlabel{found:eq:weak-comma-prop}{{3.3.21}{31}{unpacking the universal property of weak comma objects}{equation.3.3.21}{}}
\newlabel{found:eq:standard-comma-pic}{{3.3.22}{32}{unpacking the universal property of weak comma objects}{equation.3.3.22}{}}
\newlabel{found:eq:comma-cone}{{3.3.23}{32}{unpacking the universal property of weak comma objects}{equation.3.3.23}{}}
\newlabel{found:eq:comma-ind-1cell-prop}{{3.3.24}{32}{unpacking the universal property of weak comma objects}{equation.3.3.24}{}}
\newlabel{found:eq:comma-ind-2cell-data}{{3.3.25}{32}{unpacking the universal property of weak comma objects}{equation.3.3.25}{}}
\newlabel{found:eq:comma-ind-2cell-compat}{{3.3.26}{33}{unpacking the universal property of weak comma objects}{equation.3.3.26}{}}
\newlabel{found:lem:1cell-ind-uniqueness}{{3.3.27}{33}{1-cell induction is unique up to isomorphism}{thm.3.3.27}{}}
\newlabel{found:subsec:slice-2cats-of-qcats}{{3.4}{33}{Slices of the category of quasi-categories}{subsection.3.4}{}}
\newlabel{found:defn:enriched-slice}{{3.4.1}{33}{enriching the slices of $\qCat $}{thm.3.4.1}{}}
\newlabel{found:eq:slice-hom-objects}{{3.4.2}{34}{enriching the slices of $\qCat $}{equation.3.4.2}{}}
\newlabel{found:obs:fibred-pushforward}{{3.4.3}{34}{pushforward}{thm.3.4.3}{}}
\newlabel{found:obs:fibred-pullback}{{3.4.4}{34}{pullback}{thm.3.4.4}{}}
\newlabel{found:defn:smothering-2-functor}{{3.4.6}{35}{smothering 2-functor}{thm.3.4.6}{}}
\newlabel{found:prop:slice-smothering-2-functor}{{3.4.7}{35}{}{thm.3.4.7}{}}
\newlabel{found:defn:representable-isofibrations}{{3.4.8}{35}{representably defined isofibrations in 2-categories}{thm.3.4.8}{}}
\newlabel{found:lem:representable-isofibration}{{3.4.9}{36}{}{thm.3.4.9}{}}
\newlabel{found:lem:proj-is-1-conservative}{{3.4.10}{36}{}{thm.3.4.10}{}}
\newlabel{found:eq:equiv-to-lift}{{3.4.11}{36}{}{equation.3.4.11}{}}
\newlabel{found:cor:recog-fibred-equivs}{{3.4.12}{36}{}{thm.3.4.12}{}}
\newlabel{found:defn:fibred-equivalence}{{3.4.13}{37}{fibred equivalence}{thm.3.4.13}{}}
\newlabel{found:obs:1cell-ind-uniqueness-reloaded}{{3.5.1}{37}{uniqueness of 1-cell induction revisited}{thm.3.5.1}{}}
\newlabel{found:eq:induced-1-cell-comparison}{{3.5.2}{37}{uniqueness of 1-cell induction revisited}{equation.3.5.2}{}}
\newlabel{found:obs:squares-set}{{3.5.3}{38}{}{thm.3.5.3}{}}
\newlabel{found:obs:groupoid-components}{{3.5.4}{38}{}{thm.3.5.4}{}}
\newlabel{found:lem:sq-as-a-functor}{{3.5.5}{38}{}{thm.3.5.5}{}}
\newlabel{found:eq:the-real-sq-functor}{{3.5.6}{38}{}{equation.3.5.6}{}}
\newlabel{found:lem:cpts-and-comma-2-cells}{{3.5.7}{39}{}{thm.3.5.7}{}}
\newlabel{found:eq:first-comma-cone}{{3.5.8}{39}{}{equation.3.5.8}{}}
\newlabel{found:eq:other-comma-cone}{{3.5.9}{39}{}{equation.3.5.9}{}}
\newlabel{found:sec:qcatadj}{{4}{40}{Adjunctions of quasi-categories}{section.4}{}}
\newlabel{found:defn:adjunction}{{4.0.1}{40}{adjunction}{thm.4.0.1}{}}
\newlabel{found:ex:simp.quillen.adj}{{4.0.4}{41}{}{thm.4.0.4}{}}
\newlabel{found:subsec:RARI}{{4.1}{42}{Right adjoint right inverse adjunctions}{subsection.4.1}{}}
\newlabel{found:defn:RARI}{{4.1.1}{42}{}{thm.4.1.1}{}}
\newlabel{found:lem:adjunction.from.isos}{{4.1.2}{42}{}{thm.4.1.2}{}}
\newlabel{found:eq:triangle-calculation-1}{{4.1.3}{42}{Right adjoint right inverse adjunctions}{equation.4.1.3}{}}
\newlabel{found:eq:triangle-calculation-2}{{4.1.4}{43}{Right adjoint right inverse adjunctions}{equation.4.1.4}{}}
\newlabel{found:rmk:idempotent-isomorphisms}{{4.1.5}{43}{idempotent isomorphisms}{thm.4.1.5}{}}
\newlabel{found:lem:technicalsliceadjunction}{{4.1.6}{43}{}{thm.4.1.6}{}}
\newlabel{found:eq:technicalsliceadjunction}{{4.1.7}{43}{}{equation.4.1.7}{}}
\newlabel{found:lem:isofibration-RARI}{{4.1.8}{44}{}{thm.4.1.8}{}}
\newlabel{found:subsec:terminal}{{4.2}{44}{Terminal objects as adjoint functors}{subsection.4.2}{}}
\newlabel{found:defn:terminal}{{4.2.1}{44}{terminal objects}{thm.4.2.1}{}}
\newlabel{found:ex:slice-terminal}{{4.2.2}{45}{slices have terminal objects}{thm.4.2.2}{}}
\newlabel{found:lem:min-term-pres}{{4.2.3}{45}{minimal information required to display a terminal object}{thm.4.2.3}{}}
\newlabel{found:lem:adj-ext-univ}{{4.2.4}{45}{}{thm.4.2.4}{}}
\newlabel{found:prop:terminal-ext-univ}{{4.2.5}{46}{}{thm.4.2.5}{}}
\newlabel{found:ex:terminaldefn}{{4.2.6}{46}{}{thm.4.2.6}{}}
\newlabel{found:prop:adj-comp}{{4.3.1}{47}{}{thm.4.3.1}{}}
\newlabel{found:prop:equivtoadjoint}{{4.3.2}{47}{}{thm.4.3.2}{}}
\newlabel{found:prop:expadj}{{4.3.3}{47}{}{thm.4.3.3}{}}
\newlabel{found:prop:terminaldefn}{{4.3.4}{48}{}{thm.4.3.4}{}}
\newlabel{found:eq:commaobjdefn}{{4.4.1}{48}{The universal property of adjunctions}{equation.4.4.1}{}}
\newlabel{found:prop:adjointequiv}{{4.4.2}{48}{}{thm.4.4.2}{}}
\newlabel{found:prop:adjointequivconverse}{{4.4.3}{49}{}{thm.4.4.3}{}}
\newlabel{found:obs:pointwise-adjoint-correspondence}{{4.4.4}{50}{the hom-spaces of a quasi-category}{thm.4.4.4}{}}
\newlabel{found:rmk:vs-lurie-adjunction}{{4.4.5}{50}{}{thm.4.4.5}{}}
\newlabel{found:lem:slice-equiv-comma}{{4.4.6}{50}{}{thm.4.4.6}{}}
\newlabel{found:prop:terminalconverse}{{4.4.7}{51}{}{thm.4.4.7}{}}
\newlabel{found:prop:pointwise-univ-adj}{{4.4.8}{51}{the pointwise universal property of an adjunction}{thm.4.4.8}{}}
\newlabel{found:obs:universal-property-of-epsilon}{{4.4.10}{52}{unpacking this pointwise universal property of an adjunction}{thm.4.4.10}{}}
\newlabel{found:lem:RARI-lifting}{{4.4.12}{53}{right adjoint right inverse as a lifting property}{thm.4.4.12}{}}
\newlabel{found:eq:RARI-lifting}{{4.4.13}{53}{right adjoint right inverse as a lifting property}{equation.4.4.13}{}}
\newlabel{found:subsec:fibred.adjunction}{{4.5}{54}{Fibred adjunctions}{subsection.4.5}{}}
\newlabel{found:defn:fibred.adj}{{4.5.1}{54}{fibred adjunctions}{thm.4.5.1}{}}
\newlabel{found:lem:missed-lemma}{{4.5.2}{55}{}{thm.4.5.2}{}}
\newlabel{found:cor:missed-lemma}{{4.5.3}{55}{}{thm.4.5.3}{}}
\newlabel{found:ex:fibred-technical-slice-adjunction}{{4.5.4}{55}{}{thm.4.5.4}{}}
\newlabel{found:ex:isofib-section.fibred.adjunction}{{4.5.5}{55}{fibred isofibration RARIs}{thm.4.5.5}{}}
\newlabel{found:eq:fibred.terminal}{{4.5.6}{55}{fibred isofibration RARIs}{equation.4.5.6}{}}
\newlabel{found:obs:isofib-section.fibred.adjunction}{{4.5.7}{56}{}{thm.4.5.7}{}}
\newlabel{found:ex:comp.ident.adj}{{4.5.8}{56}{}{thm.4.5.8}{}}
\newlabel{found:eq:comp.ident.adj}{{4.5.9}{57}{}{equation.4.5.9}{}}
\newlabel{found:sec:limits}{{5}{57}{Limits and colimits}{section.5}{}}
\newlabel{found:defn:abs-right-lift}{{5.0.1}{57}{}{thm.5.0.1}{}}
\newlabel{found:eq:absRlifting}{{5.0.2}{57}{}{equation.5.0.2}{}}
\newlabel{found:eq:abs-lifting-property}{{5.0.3}{57}{}{equation.5.0.3}{}}
\newlabel{found:ex:adjasabslifting}{{5.0.4}{58}{}{thm.5.0.4}{}}
\newlabel{found:eq:adjasabslifting}{{5.0.5}{58}{}{equation.5.0.5}{}}
\newlabel{found:eq:unitdefn}{{5.0.6}{58}{Limits and colimits}{equation.5.0.6}{}}
\newlabel{found:eq:comma-cones}{{5.1.1}{58}{Absolute liftings and comma objects}{equation.5.1.1}{}}
\newlabel{found:eq:w-def-prop}{{5.1.2}{59}{Absolute liftings and comma objects}{equation.5.1.2}{}}
\newlabel{found:prop:absliftingtranslation}{{5.1.3}{59}{}{thm.5.1.3}{}}
\newlabel{found:eq:induced-u-from-nattrans-k}{{5.1.4}{60}{Absolute liftings and comma objects}{equation.5.1.4}{}}
\newlabel{found:lem:represented-nat-trans}{{5.1.6}{60}{}{thm.5.1.6}{}}
\newlabel{found:eq:rel-mu-lambda}{{5.1.7}{61}{Absolute liftings and comma objects}{equation.5.1.7}{}}
\newlabel{found:prop:absliftingtranslation2}{{5.1.8}{61}{}{thm.5.1.8}{}}
\newlabel{found:eq:absRlifting.2}{{5.1.9}{62}{}{equation.5.1.9}{}}
\newlabel{found:eq:induced-from-lambda}{{5.1.10}{62}{}{equation.5.1.10}{}}
\newlabel{found:eq:t-for-lim-def-prop}{{5.1.11}{63}{Absolute liftings and comma objects}{equation.5.1.11}{}}
\newlabel{found:prop:right.liftings.as.fibred.terminal.objects}{{5.1.12}{63}{}{thm.5.1.12}{}}
\newlabel{found:eq:fibred.terminal.2}{{5.1.13}{63}{}{equation.5.1.13}{}}
\newlabel{found:eq:limit-lifting-prop}{{5.1.14}{63}{Absolute liftings and comma objects}{equation.5.1.14}{}}
\newlabel{found:eq:m-def-prop}{{5.1.15}{64}{Absolute liftings and comma objects}{equation.5.1.15}{}}
\newlabel{found:obs:right.liftings.as.fibred.terminal.objects}{{5.1.16}{64}{}{thm.5.1.16}{}}
\newlabel{found:prop:translated.lifting}{{5.1.17}{64}{}{thm.5.1.17}{}}
\newlabel{found:eq:orig.lifting}{{5.1.18}{64}{}{equation.5.1.18}{}}
\newlabel{found:eq:translated.lifting}{{5.1.19}{64}{}{equation.5.1.19}{}}
\newlabel{found:eq:ran.as.fibred.adj}{{5.1.20}{65}{Absolute liftings and comma objects}{equation.5.1.20}{}}
\newlabel{found:eq:adj.for.trans}{{5.1.21}{65}{Absolute liftings and comma objects}{equation.5.1.21}{}}
\newlabel{found:defn:limit}{{5.2.2}{66}{}{thm.5.2.2}{}}
\newlabel{found:eq:genericlimit}{{5.2.3}{66}{}{equation.5.2.3}{}}
\newlabel{found:eq:genericcolimit}{{5.2.4}{66}{}{equation.5.2.4}{}}
\newlabel{found:prop:limits.as.terminal.objects}{{5.2.6}{67}{}{thm.5.2.6}{}}
\newlabel{found:lem:cone-equiv-fatcone}{{5.2.7}{67}{}{thm.5.2.7}{}}
\newlabel{found:prop:limits.are.limits}{{5.2.8}{68}{}{thm.5.2.8}{}}
\newlabel{found:defn:families.of.diagrams}{{5.2.9}{68}{}{thm.5.2.9}{}}
\newlabel{found:eq:limits.of.a.family}{{5.2.10}{68}{}{equation.5.2.10}{}}
\newlabel{found:prop:families.of.diagrams}{{5.2.11}{68}{}{thm.5.2.11}{}}
\newlabel{found:prop:limitsasadjunctions}{{5.2.12}{69}{}{thm.5.2.12}{}}
\newlabel{found:prop:RAPL}{{5.2.13}{69}{}{thm.5.2.13}{}}
\newlabel{found:cor:equivprescolim}{{5.2.14}{70}{}{thm.5.2.14}{}}
\newlabel{found:obs:transpose-abs-lifting}{{5.2.15}{70}{}{thm.5.2.15}{}}
\newlabel{found:eq:untransposed}{{5.2.16}{70}{}{equation.5.2.16}{}}
\newlabel{found:eq:transposed}{{5.2.17}{70}{}{equation.5.2.17}{}}
\newlabel{found:prop:pointwise-limits-in-functor-quasi-categories}{{5.2.18}{71}{pointwise limits in functor quasi-categories}{thm.5.2.18}{}}
\newlabel{found:prop:ran.adj.limits}{{5.2.19}{71}{}{thm.5.2.19}{}}
\newlabel{found:cor:ran.adj.limits}{{5.2.20}{72}{}{thm.5.2.20}{}}
\newlabel{found:ntn:pb.po.joins}{{5.2.22}{73}{pushout and pullback diagrams}{thm.5.2.22}{}}
\newlabel{found:obs:loops.diag.fam}{{5.2.24}{73}{}{thm.5.2.24}{}}
\newlabel{found:defn:loop.susp}{{5.2.25}{73}{loop spaces and suspensions}{thm.5.2.25}{}}
\newlabel{found:prop:loops-suspension}{{5.2.27}{74}{}{thm.5.2.27}{}}
\newlabel{found:eq:pullback.pushout.adj}{{5.2.28}{74}{Limits and colimits as absolute lifting diagrams}{equation.5.2.28}{}}
\newlabel{found:eq:loop.susp.var}{{5.2.29}{74}{Limits and colimits as absolute lifting diagrams}{equation.5.2.29}{}}
\newlabel{found:thm:splitgeorealizations}{{5.3.1}{76}{}{thm.5.3.1}{}}
\newlabel{found:lem:doms2catlemma}{{5.3.2}{76}{}{thm.5.3.2}{}}
\newlabel{found:sec:pointwise}{{6}{78}{Pointwise universal properties}{section.6}{}}
\newlabel{found:defn:pointwise-abs-lifting}{{6.1.1}{78}{pointwise universal property of absoluting lifting diagrams}{thm.6.1.1}{}}
\newlabel{found:eq:pointwise-absRlifting}{{6.1.2}{78}{pointwise universal property of absoluting lifting diagrams}{equation.6.1.2}{}}
\newlabel{found:lem:pointwise-terminal}{{6.1.3}{78}{}{thm.6.1.3}{}}
\newlabel{found:thm:pointwise}{{6.1.4}{79}{}{thm.6.1.4}{}}
\newlabel{found:eq:term.obj.assumption}{{6.1.5}{79}{Pointwise absolute lifting}{equation.6.1.5}{}}
\newlabel{found:eq:term.obj.extension}{{6.1.6}{79}{Pointwise absolute lifting}{equation.6.1.6}{}}
\newlabel{found:eq:term.obj.lifting}{{6.1.7}{79}{Pointwise absolute lifting}{equation.6.1.7}{}}
\newlabel{found:cor:pointwise}{{6.1.8}{80}{}{thm.6.1.8}{}}
\newlabel{found:thm:simplicial-Quillen-adjunction}{{6.2.1}{81}{}{thm.6.2.1}{}}
\newlabel{found:eq:simp.adj.extension}{{6.2.2}{82}{Simplicial Quillen adjunctions are adjunctions of quasi-categories}{equation.6.2.2}{}}
\newlabel{found:tocindent-1}{0pt}
\newlabel{found:tocindent0}{15.01021pt}
\newlabel{found:tocindent1}{20.88377pt}
\newlabel{found:tocindent2}{34.49158pt}
\newlabel{found:tocindent3}{0pt}

\newlabel{cohadj:sec:intro-adj-data}{{1.1}{2}{Adjunction data}{subsection.1.1}{}}
\newlabel{cohadj:eq:sampletriangleidentities}{{1.1.1}{3}{Adjunction data}{equation.1.1.1}{}}
\newlabel{cohadj:eq:horn-1}{{1.1.2}{3}{Adjunction data}{equation.1.1.2}{}}
\newlabel{cohadj:eq:middle.four}{{1.1.3}{4}{Adjunction data}{equation.1.1.3}{}}
\newlabel{cohadj:eq:horn-2}{{1.1.4}{4}{Adjunction data}{equation.1.1.4}{}}
\newlabel{cohadj:sec:computads}{{2}{7}{Simplicial computads}{section.2}{}}
\newlabel{cohadj:subsec:computads}{{2.1}{7}{Simplicial categories and simplicial computads}{subsection.2.1}{}}
\newlabel{cohadj:ntn:whiskering}{{2.1.2}{8}{whiskering in a simplicial category}{thm.2.1.2}{}}
\newlabel{cohadj:ntn:generic-cattwo}{{2.1.3}{8}{the generic $n$-arrow}{thm.2.1.3}{}}
\newlabel{cohadj:defn:computads}{{2.1.4}{8}{(relative) simplicial computads}{thm.2.1.4}{}}
\newlabel{cohadj:obs:simp-computad-char}{{2.1.5}{8}{an explicit characterisation of simplicial computads}{thm.2.1.5}{}}
\newlabel{cohadj:eq:computad-arrrow-decomp}{{2.1.6}{9}{an explicit characterisation of simplicial computads}{equation.2.1.6}{}}
\newlabel{cohadj:ex:gothic-C}{{2.1.10}{9}{}{thm.2.1.10}{}}
\newlabel{cohadj:ex:homotopy-coherent-simplex}{{2.1.11}{10}{}{thm.2.1.11}{}}
\newlabel{cohadj:subsec:subcomputads}{{2.2}{10}{Simplicial subcomputads}{subsection.2.2}{}}
\newlabel{cohadj:ex:mono-subcomputad}{{2.2.3}{10}{}{thm.2.2.3}{}}
\newlabel{cohadj:defn:generated-subcomputad}{{2.2.4}{11}{}{thm.2.2.4}{}}
\newlabel{cohadj:ex:simp-cat.(co)skeleta}{{2.2.5}{11}{(co)skeleta of simplicial categories}{thm.2.2.5}{}}
\newlabel{cohadj:prop:simp-computad-maps}{{2.2.6}{11}{}{thm.2.2.6}{}}
\newlabel{cohadj:sec:generic-adj}{{3}{12}{The generic adjunction}{section.3}{}}
\newlabel{cohadj:ssec:graphical}{{3.1}{12}{A graphical calculus for the simplicial category \protect \texorpdfstring {$\protect \Adj $}{Adj}}{subsection.3.1}{}}
\newlabel{cohadj:eq:samplesquiggle}{{3.1.1}{13}{A graphical calculus for the simplicial category \protect \texorpdfstring {$\protect \Adj $}{Adj}}{equation.3.1.1}{}}
\newlabel{cohadj:def:strict-und-squiggles}{{3.1.2}{14}{strictly undulating squiggles}{thm.3.1.2}{}}
\newlabel{cohadj:item:strict-und-squiggle-1}{{{{(i)}}}{14}{strictly undulating squiggles}{Item.1}{}}
\newlabel{cohadj:item:strict-und-squiggle-2}{{{{(ii)}}}{14}{strictly undulating squiggles}{Item.2}{}}
\newlabel{cohadj:item:und-squiggle-2}{{{{(ii)$'$}}}{14}{strictly undulating squiggles}{Item.3}{}}
\newlabel{cohadj:def:comp-squiggles}{{3.1.3}{14}{composing squiggles}{thm.3.1.3}{}}
\newlabel{cohadj:eq:samplesquiggle2}{{3.1.6}{15}{simplicial action on strictly undulating squiggles}{equation.3.1.6}{}}
\newlabel{cohadj:obs:interval.rep}{{3.1.7}{17}{interval representation}{thm.3.1.7}{}}
\newlabel{cohadj:obs:squiggle-vertices}{{3.1.9}{18}{the vertices of an arrow in $\Adj $}{thm.3.1.9}{}}
\newlabel{cohadj:prop:adjcomputad}{{3.1.10}{19}{}{thm.3.1.10}{}}
\newlabel{cohadj:ex:sample-adjunction-data}{{3.1.11}{19}{adjunction data in $\Adj $}{thm.3.1.11}{}}
\newlabel{cohadj:eq:ualpha-faces}{{3.1.12}{20}{adjunction data in $\Adj $}{equation.3.1.12}{}}
\newlabel{cohadj:ssec:2-cat-adj}{{3.2}{21}{The simplicial category \texorpdfstring {$\protect \Adj $}{Adj} as a 2-category}{subsection.3.2}{}}
\newlabel{cohadj:prop:adj.2-cat}{{3.2.2}{21}{}{thm.3.2.2}{}}
\newlabel{cohadj:eq:samplesquiggles4}{{3.2.3}{21}{The simplicial category \texorpdfstring {$\protect \Adj $}{Adj} as a 2-category}{equation.3.2.3}{}}
\newlabel{cohadj:eq:samplesquiggle3}{{3.2.4}{22}{The simplicial category \texorpdfstring {$\protect \Adj $}{Adj} as a 2-category}{equation.3.2.4}{}}
\newlabel{cohadj:rmk:hammock}{{3.2.5}{22}{}{thm.3.2.5}{}}
\newlabel{cohadj:eq:karol}{{3.2.6}{23}{}{equation.3.2.6}{}}
\newlabel{cohadj:ssec:adj-is-adj}{{3.3}{23}{The 2-categorical universal property of \protect \texorpdfstring {$\protect \Adj $}{Adj}}{subsection.3.3}{}}
\newlabel{cohadj:obs:2-cat.as.simp-cat}{{3.3.1}{24}{2-categories as simplicial categories}{thm.3.3.1}{}}
\newlabel{cohadj:eq:genadjdata}{{3.3.3}{24}{the adjunction in \protect \texorpdfstring {$\protect \Adj $}{Adj}}{equation.3.3.3}{}}
\newlabel{cohadj:prop:2-cat-univ-Adj}{{3.3.4}{24}{a 2-categorical universal property of \protect \texorpdfstring {$\protect \Adj $}{Adj}}{thm.3.3.4}{}}
\newlabel{cohadj:cor:schanuel-street-iso}{{3.3.5}{25}{}{thm.3.3.5}{}}
\newlabel{cohadj:obs:delta.adj.duality}{{3.3.6}{26}{adjunctions in \protect \texorpdfstring {$\protect \Del +$}{Delta+}}{thm.3.3.6}{}}
\newlabel{cohadj:eq:elem.op.adj}{{3.3.7}{26}{adjunctions in \protect \texorpdfstring {$\protect \Del +$}{Delta+}}{equation.3.3.7}{}}
\newlabel{cohadj:rmk:schanuel-street}{{3.3.8}{26}{the Schanuel and Street 2-category $\Adj $}{thm.3.3.8}{}}
\newlabel{cohadj:sec:adjunction-data}{{4}{28}{Adjunction data}{section.4}{}}
\newlabel{cohadj:ssec:fillable}{{4.1}{28}{Fillable arrows}{subsection.4.1}{}}
\newlabel{cohadj:obs:fillable-dist-face}{{4.1.2}{29}{fillable arrows and distinguished faces}{thm.4.1.2}{}}
\newlabel{cohadj:lem:fillablecodim1}{{4.1.3}{29}{}{thm.4.1.3}{}}
\newlabel{cohadj:lem:fillablecodim1'}{{4.1.4}{30}{}{thm.4.1.4}{}}
\newlabel{cohadj:ssec:parental}{{4.2}{30}{Parental subcomputads}{subsection.4.2}{}}
\newlabel{cohadj:defn:parental-subcomputad}{{4.2.2}{30}{parental subcomputads of $\Adj $}{thm.4.2.2}{}}
\newlabel{cohadj:ex:parental-subcomputad}{{4.2.3}{31}{}{thm.4.2.3}{}}
\newlabel{cohadj:ex:non-parental-subcomputad}{{4.2.4}{31}{a non-example}{thm.4.2.4}{}}
\newlabel{cohadj:ex:another-parental-subcomputad}{{4.2.5}{31}{}{thm.4.2.5}{}}
\newlabel{cohadj:ntn:generic-catthree}{{4.2.6}{31}{}{thm.4.2.6}{}}
\newlabel{cohadj:eq:3[i]-square}{{4.2.7}{32}{}{equation.4.2.7}{}}
\newlabel{cohadj:def:funct-for-fillable}{{4.2.9}{32}{}{thm.4.2.9}{}}
\newlabel{cohadj:lem:ext-par-subcomp}{{4.2.10}{32}{extending parental subcomputads}{thm.4.2.10}{}}
\newlabel{cohadj:eq:parental-adj-pushout-1}{{4.2.11}{33}{extending parental subcomputads}{equation.4.2.11}{}}
\newlabel{cohadj:eq:parental-adj-pushout-2}{{4.2.12}{33}{extending parental subcomputads}{equation.4.2.12}{}}
\newlabel{cohadj:cor:ext-par-subcomp}{{4.2.13}{34}{}{thm.4.2.13}{}}
\newlabel{cohadj:eq:big-pushout}{{4.2.14}{34}{}{equation.4.2.14}{}}
\newlabel{cohadj:prop:ext-par-subcomp}{{4.2.15}{34}{}{thm.4.2.15}{}}
\newlabel{cohadj:itm:one}{{{{(i)}}}{34}{}{Item.4}{}}
\newlabel{cohadj:itm:two}{{{{(ii)}}}{34}{}{Item.5}{}}
\newlabel{cohadj:ssec:hocoh}{{4.3}{36}{Homotopy Coherent Adjunctions}{subsection.4.3}{}}
\newlabel{cohadj:obs:adjunctions-in-qcat-cats}{{4.3.2}{36}{adjunctions in a quasi-categorically enriched category}{thm.4.3.2}{}}
\newlabel{cohadj:obs:int-univ}{{4.3.4}{37}{the internal universal property of the counit}{thm.4.3.4}{}}
\newlabel{cohadj:lem:rel-lift-terminal}{{4.3.5}{38}{a relative universal property of terminal objects}{thm.4.3.5}{}}
\newlabel{cohadj:prop:rel-int-univ}{{4.3.6}{39}{the relative internal universal property of the counit}{thm.4.3.6}{}}
\newlabel{cohadj:obs:ext-quasicat-homs}{{4.3.7}{39}{}{thm.4.3.7}{}}
\newlabel{cohadj:thm:hty-coherence-exist}{{4.3.8}{40}{}{thm.4.3.8}{}}
\newlabel{cohadj:thm:hty-coherence-exist-I}{{4.3.9}{41}{homotopy coherence of adjunctions I}{thm.4.3.9}{}}
\newlabel{cohadj:thm:hty-coherence-exist-II}{{4.3.11}{42}{homotopy coherence of adjunctions II}{thm.4.3.11}{}}
\newlabel{cohadj:def:adjunction-data}{{4.3.13}{42}{}{thm.4.3.13}{}}
\newlabel{cohadj:ssec:uniqueness}{{4.4}{42}{Homotopical uniqueness of homotopy coherent adjunctions}{subsection.4.4}{}}
\newlabel{cohadj:obs:icon-defn}{{4.4.1}{43}{simplicial enrichment of simplicial categories}{thm.4.4.1}{}}
\newlabel{cohadj:lem:isofib-icon}{{4.4.2}{43}{}{thm.4.4.2}{}}
\newlabel{cohadj:lem:conservative-icon}{{4.4.4}{45}{}{thm.4.4.4}{}}
\newlabel{cohadj:lem:cohadj.space.Kan}{{4.4.6}{46}{}{thm.4.4.6}{}}
\newlabel{cohadj:prop:hty-uniqueness}{{4.4.7}{46}{}{thm.4.4.7}{}}
\newlabel{cohadj:eq:trans-lift-prob-1}{{4.4.8}{47}{Homotopical uniqueness of homotopy coherent adjunctions}{equation.4.4.8}{}}
\newlabel{cohadj:obs:epsilon-icon-expl}{{4.4.9}{47}{}{thm.4.4.9}{}}
\newlabel{cohadj:thm:hty-uniqueness-I}{{4.4.11}{48}{}{thm.4.4.11}{}}
\newlabel{cohadj:prop:counits-proj-trivial}{{4.4.12}{48}{}{thm.4.4.12}{}}
\newlabel{cohadj:lem:fib-contract-fibres}{{4.4.13}{48}{}{thm.4.4.13}{}}
\newlabel{cohadj:prop:leftadjs-proj-trivial}{{4.4.17}{49}{}{thm.4.4.17}{}}
\newlabel{cohadj:thm:hty-uniqueness-II}{{4.4.18}{50}{}{thm.4.4.18}{}}
\newlabel{cohadj:sec:weighted}{{5}{50}{Weighted limits in \protect \texorpdfstring {$\protect \qCat _\infty $}{qCat}}{section.5}{}}
\newlabel{cohadj:subsec:weighted}{{5.1}{50}{Weighted limits and colimits}{subsection.5.1}{}}
\newlabel{cohadj:defn:cotensors}{{5.1.1}{50}{cotensors}{thm.5.1.1}{}}
\newlabel{cohadj:eq:wlimformula}{{5.1.3}{51}{weighted limits}{equation.5.1.3}{}}
\newlabel{cohadj:eq:wlim-def-prop}{{5.1.4}{51}{weighted limits}{equation.5.1.4}{}}
\newlabel{cohadj:ex:rep-weights}{{5.1.5}{51}{representable weights}{thm.5.1.5}{}}
\newlabel{cohadj:eq:yoneda}{{5.1.6}{51}{representable weights}{equation.5.1.6}{}}
\newlabel{cohadj:obs:weighted-cocontinuity}{{5.1.7}{51}{}{thm.5.1.7}{}}
\newlabel{cohadj:ex:diagrams-weighted-limit}{{5.1.8}{52}{diagrams}{thm.5.1.8}{}}
\newlabel{cohadj:ex:commaweightedlimit}{{5.1.9}{52}{comma quasi-categories}{thm.5.1.9}{}}
\newlabel{cohadj:ex:homotopyweighted}{{5.1.10}{52}{homotopy limits as weighted limits}{thm.5.1.10}{}}
\newlabel{cohadj:lem:lanweights}{{5.1.11}{52}{weighted limits and Kan extensions}{thm.5.1.11}{}}
\newlabel{cohadj:subsec:weighted-qcat}{{5.2}{53}{Weighted limits in the quasi-categorical context}{subsection.5.2}{}}
\newlabel{cohadj:defn:proj-cof}{{5.2.1}{53}{projective cofibrations}{thm.5.2.1}{}}
\newlabel{cohadj:prop:projwlims2}{{5.2.2}{53}{}{thm.5.2.2}{}}
\newlabel{cohadj:eq:layer-in-tower}{{5.2.3}{53}{Weighted limits in the quasi-categorical context}{equation.5.2.3}{}}
\newlabel{cohadj:prop:projwlims}{{5.2.4}{54}{}{thm.5.2.4}{}}
\newlabel{cohadj:prop:proj-wlim-homotopical}{{5.2.6}{54}{}{thm.5.2.6}{}}
\newlabel{cohadj:rmk:projwlims}{{5.2.7}{54}{}{thm.5.2.7}{}}
\newlabel{cohadj:rmk:categories-special-case}{{5.2.9}{55}{2-categorical weighted limits and quasi-categorical weighted limits}{thm.5.2.9}{}}
\newlabel{cohadj:subsec:collage}{{5.3}{55}{The collage construction}{subsection.5.3}{}}
\newlabel{cohadj:obs:coll-right-adj}{{5.3.2}{55}{a right adjoint to the collage construction}{thm.5.3.2}{}}
\newlabel{cohadj:prop:projcofchar2}{{5.3.3}{56}{}{thm.5.3.3}{}}
\newlabel{cohadj:eq:pushout-transform}{{5.3.4}{56}{The collage construction}{equation.5.3.4}{}}
\newlabel{cohadj:prop:projcofchar}{{5.3.5}{58}{}{thm.5.3.5}{}}
\newlabel{cohadj:sec:formal}{{6}{58}{The formal theory of homotopy coherent monads}{section.6}{}}
\newlabel{cohadj:ssec:formalmonads}{{6.1}{59}{Weighted limits for the formal theory of monads}{subsection.6.1}{}}
\newlabel{cohadj:obs:mnd-weights}{{6.1.3}{59}{weights on $\Mnd $}{thm.6.1.3}{}}
\newlabel{cohadj:defn:W+}{{6.1.4}{60}{monad resolutions}{thm.6.1.4}{}}
\newlabel{cohadj:eq:resolution2}{{6.1.5}{60}{monad resolutions}{equation.6.1.5}{}}
\newlabel{cohadj:defn:W-}{{6.1.6}{61}{}{thm.6.1.6}{}}
\newlabel{cohadj:defn:EMobject}{{6.1.7}{61}{quasi-category of algebras}{thm.6.1.7}{}}
\newlabel{cohadj:lem:projcof1}{{6.1.8}{61}{}{thm.6.1.8}{}}
\newlabel{cohadj:rmk:expl-htycoh-alg}{{6.1.10}{61}{}{thm.6.1.10}{}}
\newlabel{cohadj:eq:expl-htycoh-alg-cond}{{6.1.11}{61}{}{equation.6.1.11}{}}
\newlabel{cohadj:eq:algebraresolution}{{6.1.12}{62}{}{equation.6.1.12}{}}
\newlabel{cohadj:ex:monadicadj}{{6.1.14}{63}{monadic adjunction}{thm.6.1.14}{}}
\newlabel{cohadj:eq:monadicadjunctionweights}{{6.1.15}{63}{monadic adjunction}{equation.6.1.15}{}}
\newlabel{cohadj:ssec:monadicequivs}{{6.2}{63}{Conservativity of the monadic forgetful functor}{subsection.6.2}{}}
\newlabel{cohadj:prop:conservative-char}{{6.2.2}{63}{}{thm.6.2.2}{}}
\newlabel{cohadj:cor:uTconservative}{{6.2.3}{64}{}{thm.6.2.3}{}}
\newlabel{cohadj:obs:understanding-monadic-forgetful-functor}{{6.2.4}{64}{}{thm.6.2.4}{}}
\newlabel{cohadj:rmk:u-map}{{6.2.5}{65}{}{thm.6.2.5}{}}
\newlabel{cohadj:ssec:algcolims}{{6.3}{65}{Colimit representation of algebras}{subsection.6.3}{}}
\newlabel{cohadj:eq:splitcoeq}{{6.3.1}{65}{Colimit representation of algebras}{equation.6.3.1}{}}
\newlabel{cohadj:eq:splitgeorealizations}{{6.3.2}{66}{}{equation.6.3.2}{}}
\newlabel{cohadj:rec:splitgeorealizations}{{6.3.3}{66}{constructing the triangle in theorem \refI {thm:splitgeorealizations}}{thm.6.3.3}{}}
\newlabel{cohadj:eq:canonical-2-cell}{{6.3.4}{66}{constructing the triangle in theorem \refI {thm:splitgeorealizations}}{equation.6.3.4}{}}
\newlabel{cohadj:defn:u-split-aug}{{6.3.5}{66}{$u$-split augmented simplicial objects}{thm.6.3.5}{}}
\newlabel{cohadj:eq:u-split-pullback}{{6.3.6}{67}{$u$-split augmented simplicial objects}{equation.6.3.6}{}}
\newlabel{cohadj:prop:uTcreates}{{6.3.7}{67}{}{thm.6.3.7}{}}
\newlabel{cohadj:thm:uTcreates}{{6.3.8}{67}{}{thm.6.3.8}{}}
\newlabel{cohadj:obs:uTcreates}{{6.3.9}{67}{}{thm.6.3.9}{}}
\newlabel{cohadj:eq:colim.diag.1}{{6.3.10}{67}{}{equation.6.3.10}{}}
\newlabel{cohadj:eq:colimit-diag-ut-split}{{6.3.11}{69}{Colimit representation of algebras}{equation.6.3.11}{}}
\newlabel{cohadj:eq:colimit-diag-ut-split-under}{{6.3.12}{69}{Colimit representation of algebras}{equation.6.3.12}{}}
\newlabel{cohadj:eq:Wscdefn}{{6.3.14}{70}{a direct description of $S(u^t)$}{equation.6.3.14}{}}
\newlabel{cohadj:obs:canonical.alg.pres}{{6.3.15}{70}{}{thm.6.3.15}{}}
\newlabel{cohadj:eq:usplitpushout}{{6.3.16}{70}{}{equation.6.3.16}{}}
\newlabel{cohadj:thm:colim-rep-algebras}{{6.3.17}{71}{canonical colimit representation of algebras}{thm.6.3.17}{}}
\newlabel{cohadj:eq:canonical-colimits}{{6.3.18}{71}{canonical colimit representation of algebras}{equation.6.3.18}{}}
\newlabel{cohadj:eq:downstairs-colimit}{{6.3.19}{71}{canonical colimit representation of algebras}{equation.6.3.19}{}}
\newlabel{cohadj:sec:monadicity}{{7}{72}{Monadicity}{section.7}{}}
\newlabel{cohadj:ssec:comparison}{{7.1}{72}{Comparison with the monadic adjunction}{subsection.7.1}{}}
\newlabel{cohadj:lem:actionquotient}{{7.1.3}{73}{}{thm.7.1.3}{}}
\newlabel{cohadj:eq:actionquotient}{{7.1.4}{73}{}{equation.7.1.4}{}}
\newlabel{cohadj:obs:lanW-}{{7.1.5}{73}{}{thm.7.1.5}{}}
\newlabel{cohadj:eq:monadicadjunctionweight}{{7.1.6}{74}{}{equation.7.1.6}{}}
\newlabel{cohadj:eq:monadiccomparison}{{7.1.10}{74}{comparison with the monadic adjunction}{equation.7.1.10}{}}
\newlabel{cohadj:ssec:monadicity}{{7.2}{74}{The monadicity theorem}{subsection.7.2}{}}
\newlabel{cohadj:eq:usplit}{{7.2.2}{75}{$u$-split simplicial objects}{equation.7.2.2}{}}
\newlabel{cohadj:eq:usplithyp}{{7.2.3}{75}{$u$-split simplicial objects}{equation.7.2.3}{}}
\newlabel{cohadj:thm:monadiccomparisonadj}{{7.2.4}{75}{monadicity I}{thm.7.2.4}{}}
\newlabel{cohadj:eq:Ldefn}{{7.2.5}{75}{The monadicity theorem}{equation.7.2.5}{}}
\newlabel{cohadj:eq:Labs-lifting}{{7.2.6}{76}{The monadicity theorem}{equation.7.2.6}{}}
\newlabel{cohadj:thm:monadicity}{{7.2.7}{77}{monadicity II}{thm.7.2.7}{}}
\newlabel{cohadj:tocindent-1}{0pt}
\newlabel{cohadj:tocindent0}{15.01021pt}
\newlabel{cohadj:tocindent1}{20.88377pt}
\newlabel{cohadj:tocindent2}{34.49158pt}
\newlabel{cohadj:tocindent3}{0pt}

\newlabel{complete:thm:limits-in-limits}{{1.1}{2}{}{thm.1.1}{}}
\newlabel{complete:item:stepI}{{{{(I)}}}{4}{Outline of the proof}{Item.1}{}}
\newlabel{complete:item:stepII}{{{{(II)}}}{4}{Outline of the proof}{Item.2}{}}
\newlabel{complete:sec:weighted}{{2}{5}{Weighted limits in \texorpdfstring {$\protect \qCat _\infty $}{qCat}}{section.2}{}}
\newlabel{complete:sec:terminal}{{3}{6}{Quasi-categories with terminal objects}{section.3}{}}
\newlabel{complete:defn:qCat-X}{{3.1}{6}{}{thm.3.1}{}}
\newlabel{complete:obs:terminal-pres-banal}{{3.2}{6}{}{thm.3.2}{}}
\newlabel{complete:obs:terminals-in-limits-prf}{{3.3}{6}{}{thm.3.3}{}}
\newlabel{complete:rmk:qCat-term-closure-expl}{{3.4}{7}{proof-schema}{thm.3.4}{}}
\newlabel{complete:enum:term-in-lim-p1}{{{{(i)}}}{7}{proof-schema}{Item.13}{}}
\newlabel{complete:enum:term-in-lim-p2}{{{{(ii)}}}{7}{proof-schema}{Item.14}{}}
\newlabel{complete:rec:terminal.char}{{3.5}{7}{terminal objects}{thm.3.5}{}}
\newlabel{complete:lem:rel-lift-terminal-conv}{{3.7}{8}{}{thm.3.7}{}}
\newlabel{complete:lem:rel-lift-terminal-companion}{{3.8}{8}{}{thm.3.8}{}}
\newlabel{complete:lem:qCat_t-closed-products}{{3.9}{8}{}{thm.3.9}{}}
\newlabel{complete:eq:to-solve-prod}{{3.10}{9}{Conical limits in \texorpdfstring {$\protect \qCat _\emptyset $}{qCat}}{equation.3.10}{}}
\newlabel{complete:lem:qCat_t-closed-pullbacks}{{3.11}{9}{}{thm.3.11}{}}
\newlabel{complete:eq:pb-in-qCat}{{3}{9}{Conical limits in \texorpdfstring {$\protect \qCat _\emptyset $}{qCat}}{thm.3.11}{}}
\newlabel{complete:eq:to-solve-pb}{{3.12}{9}{Conical limits in \texorpdfstring {$\protect \qCat _\emptyset $}{qCat}}{equation.3.12}{}}
\newlabel{complete:lem:qCat_t-closed-countable-composite}{{3.13}{10}{}{thm.3.13}{}}
\newlabel{complete:eq:to-solve-tower}{{3.14}{10}{Conical limits in \texorpdfstring {$\protect \qCat _\emptyset $}{qCat}}{equation.3.14}{}}
\newlabel{complete:lem:qCat_t-closed-idempotents}{{3.15}{11}{}{thm.3.15}{}}
\newlabel{complete:thm:terminal-closure}{{3.16}{11}{}{thm.3.16}{}}
\newlabel{complete:sec:absolute}{{4}{12}{Absolute lifting diagrams via terminal objects}{section.4}{}}
\newlabel{complete:eq:obj-qCat-pbshape}{{4.2}{12}{}{equation.4.2}{}}
\newlabel{complete:eq:obj-qCat-pbshape-trans}{{4.3}{12}{}{equation.4.3}{}}
\newlabel{complete:eq:lifting.triangle}{{4.4}{12}{}{equation.4.4}{}}
\newlabel{complete:defn:abs-lift-trans-right-exactness}{{4.5}{13}{}{thm.4.5}{}}
\newlabel{complete:eq:induced-mate}{{4.6}{13}{}{equation.4.6}{}}
\newlabel{complete:obs:comparison-2cell-composite}{{4.8}{13}{}{thm.4.8}{}}
\newlabel{complete:prop:abslifts-in-limits}{{4.9}{14}{}{thm.4.9}{}}
\newlabel{complete:eq:limit-X-pbshape}{{4.10}{14}{Projective cofibrant weighted limits in \texorpdfstring {$\protect \qCat _{r\infty }^{\protect \pbshape }$}{qCat}}{equation.4.10}{}}
\newlabel{complete:eq:limit-X-pbshape-trans}{{4.11}{14}{Projective cofibrant weighted limits in \texorpdfstring {$\protect \qCat _{r\infty }^{\protect \pbshape }$}{qCat}}{equation.4.11}{}}
\newlabel{complete:obs:comma-simp-functor}{{4.12}{15}{}{thm.4.12}{}}
\newlabel{complete:obs:pointwise-exactness}{{4.13}{15}{a pointwise characterisation of right exactness}{thm.4.13}{}}
\newlabel{complete:lem:right-exact-pointwise}{{4.14}{16}{}{thm.4.14}{}}
\newlabel{complete:lem:conical-closure}{{4.15}{16}{}{thm.4.15}{}}
\newlabel{complete:eq:conical-limit-cone}{{4.16}{17}{Projective cofibrant weighted limits in \texorpdfstring {$\protect \qCat _{r\infty }^{\protect \pbshape }$}{qCat}}{equation.4.16}{}}
\newlabel{complete:lem:cotensor-closure}{{4.17}{18}{}{thm.4.17}{}}
\newlabel{complete:eq:cotensor-in-qCat-pb}{{4.18}{18}{Projective cofibrant weighted limits in \texorpdfstring {$\protect \qCat _{r\infty }^{\protect \pbshape }$}{qCat}}{equation.4.18}{}}
\newlabel{complete:eq:induced-into-cotensor}{{4.19}{19}{Projective cofibrant weighted limits in \texorpdfstring {$\protect \qCat _{r\infty }^{\protect \pbshape }$}{qCat}}{equation.4.19}{}}
\newlabel{complete:cor:terminal-cotensor}{{4.20}{20}{}{thm.4.20}{}}
\newlabel{complete:thm:stable}{{4.21}{20}{}{thm.4.21}{}}
\newlabel{complete:sec:algebras}{{5}{22}{Limits and colimits in the quasi-category of algebras}{section.5}{}}
\newlabel{complete:rec:algebras}{{5.1}{22}{homotopy coherent monads and their algebras}{thm.5.1}{}}
\newlabel{complete:obs:algcats-functoriality}{{5.2}{22}{}{thm.5.2}{}}
\newlabel{complete:eq:commutes-with-underlying}{{5.3}{23}{}{equation.5.3}{}}
\newlabel{complete:cor:monadic-colimits}{{5.5}{23}{}{thm.5.5}{}}
\newlabel{complete:cor:monadic-creates-u-split}{{5.6}{23}{}{thm.5.6}{}}
\newlabel{complete:thm:monadic-completeness}{{5.7}{23}{}{thm.5.7}{}}
\newlabel{complete:lem:W-.decomp}{{5.10}{25}{}{thm.5.10}{}}
\newlabel{complete:eq:projective-cell}{{5.11}{25}{Understanding the monadic forgetful functor}{equation.5.11}{}}
\newlabel{complete:cor:W-.decomp}{{5.12}{25}{}{thm.5.12}{}}
\newlabel{complete:eq:tower}{{5.13}{26}{Understanding the monadic forgetful functor}{equation.5.13}{}}
\newlabel{complete:eq:tower.step.pb}{{5.14}{26}{Understanding the monadic forgetful functor}{equation.5.14}{}}
\newlabel{complete:obs:W-.decomp.key}{{5.15}{26}{The key fact}{thm.5.15}{}}
\newlabel{complete:eq:terminal-fact}{{5.16}{26}{The key fact}{equation.5.16}{}}
\newlabel{complete:prop:monadic-terminal}{{5.17}{27}{}{thm.5.17}{}}
\newlabel{complete:eq:monadic-terminal}{{5.18}{27}{}{equation.5.18}{}}
\newlabel{complete:cor:monadic-terminal}{{5.20}{27}{}{thm.5.20}{}}
\newlabel{complete:eq:ind.step.filler.diag}{{5.21}{28}{Creation of terminal objects}{equation.5.21}{}}
\newlabel{complete:eq:lift1}{{5.22}{29}{Creation of limits}{equation.5.22}{}}
\newlabel{complete:thm:monadic-limit}{{5.24}{29}{}{thm.5.24}{}}
\newlabel{complete:eq:monadic-hypothesis}{{5.25}{29}{}{equation.5.25}{}}
\newlabel{complete:eq:monadic-conclusion}{{5.26}{29}{}{equation.5.26}{}}
\newlabel{complete:eq:pres-by-underlying}{{5.27}{30}{}{equation.5.27}{}}
\newlabel{complete:prop:monadic-limit-2}{{5.29}{30}{}{thm.5.29}{}}
\newlabel{complete:obs:expl-alg-desc}{{5.30}{30}{}{thm.5.30}{}}
\newlabel{complete:eq:expl-alg-desc}{{5.31}{30}{}{equation.5.31}{}}
\newlabel{complete:obs:W-.monad}{{5.32}{30}{}{thm.5.32}{}}
\newlabel{complete:eq:pre-commas}{{5.34}{31}{}{equation.5.34}{}}
\newlabel{complete:eq:squares-of-power}{{5.35}{32}{}{equation.5.35}{}}
\newlabel{complete:lem:technical-lem}{{5.36}{32}{}{thm.5.36}{}}
\newlabel{complete:tocindent-1}{0pt}
\newlabel{complete:tocindent0}{15.01021pt}
\newlabel{complete:tocindent1}{20.88377pt}
\newlabel{complete:tocindent2}{0pt}
\newlabel{complete:tocindent3}{0pt}

\newlabel{yoneda:sec:cosmoi}{{2}{5}{\texorpdfstring {$\infty $}{infinity}-cosmoi}{section.2}{}}
\newlabel{yoneda:ssec:cosmoi}{{2.1}{6}{\texorpdfstring {$\infty $}{infinity}-cosmoi and functors}{subsection.2.1}{}}
\newlabel{yoneda:qcat.ctxt.def}{{2.1.1}{6}{$\infty $-cosmos}{thm.2.1.1}{}}
\newlabel{yoneda:qcat.ctxt:a}{{{{(a)}}}{6}{$\infty $-cosmos}{Item.4}{}}
\newlabel{yoneda:qcat.ctxt:b}{{{{(b)}}}{6}{$\infty $-cosmos}{Item.5}{}}
\newlabel{yoneda:qcat.ctxt:d}{{{{(c)}}}{6}{$\infty $-cosmos}{Item.6}{}}
\newlabel{yoneda:qcat.ctxt:f}{{{{(d)}}}{6}{$\infty $-cosmos}{Item.7}{}}
\newlabel{yoneda:qcat.ctxt:e}{{{{(e)}}}{6}{$\infty $-cosmos}{Item.8}{}}
\newlabel{yoneda:qcat.ctxt.products}{{2.1.2}{6}{}{thm.2.1.2}{}}
\newlabel{yoneda:qcat.ctxt.clarif}{{2.1.3}{6}{}{thm.2.1.3}{}}
\newlabel{yoneda:ex:qcat-qcat-ctxt}{{2.1.4}{7}{the $\infty $-cosmos of quasi-categories}{thm.2.1.4}{}}
\newlabel{yoneda:rec:equiv-and-K}{{2.1.5}{7}{}{thm.2.1.5}{}}
\newlabel{yoneda:rec:equiv-and-K.a}{{{{(a)}}}{7}{}{Item.9}{}}
\newlabel{yoneda:rec:equiv-and-K.b}{{{{(b)}}}{7}{}{Item.10}{}}
\newlabel{yoneda:rec:equiv-and-K.c}{{{{(c)}}}{7}{}{Item.11}{}}
\newlabel{yoneda:rec:equiv-and-K.d}{{{{(d)}}}{7}{}{Item.12}{}}
\newlabel{yoneda:lem:Brown.fact}{{2.1.6}{7}{Brown's factorisation lemma}{thm.2.1.6}{}}
\newlabel{yoneda:obs:Brown.fact.cofibrant}{{2.1.7}{8}{}{thm.2.1.7}{}}
\newlabel{yoneda:lem:isofib-rep}{{2.1.8}{8}{}{thm.2.1.8}{}}
\newlabel{yoneda:defn:qcat-ctxt-functor}{{2.1.9}{9}{}{thm.2.1.9}{}}
\newlabel{yoneda:prop:representable-functors}{{2.1.10}{9}{}{thm.2.1.10}{}}
\newlabel{yoneda:ex:sliced.contexts}{{2.1.11}{9}{sliced $\infty $-cosmoi}{thm.2.1.11}{}}
\newlabel{yoneda:eq:sliced-map}{{2.1.12}{9}{sliced $\infty $-cosmoi}{equation.2.1.12}{}}
\newlabel{yoneda:prop:pullback-functors}{{2.1.13}{10}{pulling back between sliced $\infty $-cosmoi}{thm.2.1.13}{}}
\newlabel{yoneda:ssec:cosmoi-examples}{{2.2}{11}{Examples of \texorpdfstring {$\infty $}{infinity}-cosmoi}{subsection.2.2}{}}
\newlabel{yoneda:lem:model-categories-cosmoi}{{2.2.1}{11}{}{thm.2.2.1}{}}
\newlabel{yoneda:obs:duals-of-cosmoi}{{2.2.2}{11}{}{thm.2.2.2}{}}
\newlabel{yoneda:prop:model-categories-cosmoi}{{2.2.3}{11}{}{thm.2.2.3}{}}
\newlabel{yoneda:ex:cat-cosmos}{{2.2.4}{11}{the $\infty $-cosmos of categories}{thm.2.2.4}{}}
\newlabel{yoneda:ex:CSS-cosmos}{{2.2.5}{12}{the $\infty $-cosmos of complete Segal spaces}{thm.2.2.5}{}}
\newlabel{yoneda:ex:other-CSS-functor}{{2.2.6}{12}{}{thm.2.2.6}{}}
\newlabel{yoneda:ex:segal-cosmos}{{2.2.7}{13}{the $\infty $-cosmos of Segal categories}{thm.2.2.7}{}}
\newlabel{yoneda:ex:marked-cosmos}{{2.2.8}{14}{the $\infty $-cosmos of marked simplicial sets}{thm.2.2.8}{}}
\newlabel{yoneda:prop:rezk-cosmos}{{2.2.9}{14}{the $\infty $-cosmos of Rezk objects}{thm.2.2.9}{}}
\newlabel{yoneda:ex:theta-n-cosmos}{{2.2.10}{16}{the $\infty $-cosmos of $\theta _n$-spaces}{thm.2.2.10}{}}
\newlabel{yoneda:eq:CSS-theta-n-adj}{{2.2.11}{16}{the $\infty $-cosmos of $\theta _n$-spaces}{equation.2.2.11}{}}
\newlabel{yoneda:sec:abstract}{{3}{17}{2-category theory in an \texorpdfstring {$\infty $}{infinity}-cosmos}{section.3}{}}
\newlabel{yoneda:ssec:htpy-2-cat}{{3.1}{17}{The homotopy 2-category of an \texorpdfstring {$\infty $}{infinity}-cosmos}{subsection.3.1}{}}
\newlabel{yoneda:rec:trivial-fibration}{{3.1.3}{18}{equivalences, isofibrations, and surjective equivalences}{thm.3.1.3}{}}
\newlabel{yoneda:lem:isofib.are.representably.so}{{3.1.4}{18}{}{thm.3.1.4}{}}
\newlabel{yoneda:cor:isofib.are.representably.so}{{3.1.5}{19}{}{thm.3.1.5}{}}
\newlabel{yoneda:obs:representing-isos}{{3.1.6}{19}{functors representing invertible 2-cells}{thm.3.1.6}{}}
\newlabel{yoneda:lem:iso-to-weak-equiv}{{3.1.7}{19}{}{thm.3.1.7}{}}
\newlabel{yoneda:prop:equiv.are.weak.equiv}{{3.1.8}{19}{}{thm.3.1.8}{}}
\newlabel{yoneda:obs:bi-coreflection}{{3.1.9}{20}{cofibrant replacement as a {\em bi-coreflection\/}}{thm.3.1.9}{}}
\newlabel{yoneda:ssec:abstract}{{3.2}{20}{Abstract homotopy 2-categories}{subsection.3.2}{}}
\newlabel{yoneda:defn:comma-object}{{3.2.1}{21}{comma object}{thm.3.2.1}{}}
\newlabel{yoneda:eq:comma-square}{{3.2.2}{21}{comma object}{equation.3.2.2}{}}
\newlabel{yoneda:defn:iso-comma-object}{{3.2.3}{22}{iso-comma object}{thm.3.2.3}{}}
\newlabel{yoneda:eq:iso-comma}{{3.2.4}{22}{iso-comma object}{equation.3.2.4}{}}
\newlabel{yoneda:defn:abstract-htpy-2-cat}{{3.2.5}{23}{abstract homotopy 2-category}{thm.3.2.5}{}}
\newlabel{yoneda:ssec:htpy-2-cat-is-abstract}{{3.3}{23}{Comma and iso-comma objects in a homotopy 2-category}{subsection.3.3}{}}
\newlabel{yoneda:lem:htpy-2-cat-commas}{{3.3.1}{23}{}{thm.3.3.1}{}}
\newlabel{yoneda:eq:comma-as-simp-pullback}{{3.3.2}{23}{}{equation.3.3.2}{}}
\newlabel{yoneda:lem:htpy-2-cat-iso-commas}{{3.3.3}{24}{}{thm.3.3.3}{}}
\newlabel{yoneda:eq:iso-comma-as-simp-pullback}{{3.3.4}{24}{}{equation.3.3.4}{}}
\newlabel{yoneda:obs:legs-are-isofibrations}{{3.3.6}{25}{}{thm.3.3.6}{}}
\newlabel{yoneda:ssec:comma}{{3.4}{25}{Stability and uniqueness of comma and iso-comma constructions}{subsection.3.4}{}}
\newlabel{yoneda:defn:span-isofib}{{3.4.1}{25}{}{thm.3.4.1}{}}
\newlabel{yoneda:itm:isofib-right}{{{{(ii)}}}{25}{}{Item.22}{}}
\newlabel{yoneda:itm:isofib-left}{{{{(iii)}}}{25}{}{Item.23}{}}
\newlabel{yoneda:lem:comma-span-isofib}{{3.4.2}{26}{}{thm.3.4.2}{}}
\newlabel{yoneda:eq:span-map}{{3.4.4}{26}{}{equation.3.4.4}{}}
\newlabel{yoneda:lem:equivalence-of-spans}{{3.4.5}{26}{}{thm.3.4.5}{}}
\newlabel{yoneda:obs:ess.unique.1-cell.ind}{{3.4.6}{27}{essential uniqueness of induced 1-cells}{thm.3.4.6}{}}
\newlabel{yoneda:rec:comma-uniqueness}{{3.4.7}{27}{}{thm.3.4.7}{}}
\newlabel{yoneda:lem:comma-uniqueness}{{3.4.8}{28}{stability of (iso-)comma objects under equivalence}{thm.3.4.8}{}}
\newlabel{yoneda:cor:comma-uniqueness}{{3.4.9}{29}{}{thm.3.4.9}{}}
\newlabel{yoneda:lem:comma-composition}{{3.4.10}{29}{}{thm.3.4.10}{}}
\newlabel{yoneda:lem:comma-cancelation}{{3.4.11}{30}{}{thm.3.4.11}{}}
\newlabel{yoneda:lem:restricted-commas-pullback}{{3.4.12}{31}{}{thm.3.4.12}{}}
\newlabel{yoneda:ssec:pullback}{{3.5}{32}{Iso-commas and pullbacks}{subsection.3.5}{}}
\newlabel{yoneda:obs:slice-2-cat}{{3.5.1}{32}{slice 2-categories}{thm.3.5.1}{}}
\newlabel{yoneda:eq:1-cell.in.slice}{{3.5.2}{32}{slice 2-categories}{equation.3.5.2}{}}
\newlabel{yoneda:lem:fibred-equivalence}{{3.5.3}{32}{}{thm.3.5.3}{}}
\newlabel{yoneda:defn:pullback}{{3.5.4}{33}{pullback}{thm.3.5.4}{}}
\newlabel{yoneda:rmk:generalized-1-cell-induction}{{3.5.5}{33}{}{thm.3.5.5}{}}
\newlabel{yoneda:lem:pullbacks-exist}{{3.5.6}{34}{}{thm.3.5.6}{}}
\newlabel{yoneda:defn:pullback-of-an-isofib}{{3.5.7}{34}{}{thm.3.5.7}{}}
\newlabel{yoneda:ex:pullbacks-from-iso-commas}{{3.5.8}{34}{}{thm.3.5.8}{}}
\newlabel{yoneda:ex:old-pullbacks}{{3.5.9}{35}{}{thm.3.5.9}{}}
\newlabel{yoneda:lem:pullback-comp}{{3.5.10}{35}{composition of pullbacks}{thm.3.5.10}{}}
\newlabel{yoneda:ssec:smothering}{{3.6}{36}{Smothering 2-functors and adjunctions}{subsection.3.6}{}}
\newlabel{yoneda:eq:pbk-1-cell}{{3.6.3}{36}{}{equation.3.6.3}{}}
\newlabel{yoneda:eq:pbk-2-cell}{{3.6.4}{37}{}{equation.3.6.4}{}}
\newlabel{yoneda:lem:pullback-smothering}{{3.6.5}{37}{}{thm.3.6.5}{}}
\newlabel{yoneda:cor:adjunctions-pullback}{{3.6.6}{37}{}{thm.3.6.6}{}}
\newlabel{yoneda:cor:pullback-uniqueness}{{3.6.7}{37}{}{thm.3.6.7}{}}
\newlabel{yoneda:defn:iso-comma-slice-2-cat}{{3.6.8}{38}{}{thm.3.6.8}{}}
\newlabel{yoneda:eq:iso-comma-1-cell}{{3.6.9}{38}{}{equation.3.6.9}{}}
\newlabel{yoneda:eq:iso-comma-2-cell}{{3.6.10}{38}{}{equation.3.6.10}{}}
\newlabel{yoneda:lem:iso-comma-smothering}{{3.6.11}{38}{}{thm.3.6.11}{}}
\newlabel{yoneda:cor:adjunctions-iso-comma}{{3.6.12}{39}{}{thm.3.6.12}{}}
\newlabel{yoneda:obs:adj.rep.def}{{3.6.13}{39}{adjunctions are representably defined}{thm.3.6.13}{}}
\newlabel{yoneda:obs:RARI}{{3.6.14}{39}{right adjoint right inverse}{thm.3.6.14}{}}
\newlabel{yoneda:obs:gen.2-cat.results}{{3.6.15}{40}{}{thm.3.6.15}{}}
\newlabel{yoneda:sec:cartesian}{{4}{40}{Cartesian fibrations}{section.4}{}}
\newlabel{yoneda:ssec:cartesian}{{4.1}{40}{Cartesian fibrations}{subsection.4.1}{}}
\newlabel{yoneda:defn:weak.cart.2-cell}{{4.1.1}{40}{cartesian 2-cells}{thm.4.1.1}{}}
\newlabel{yoneda:itm:weak.cart.i}{{{{(i)}}}{40}{cartesian 2-cells}{Item.38}{}}
\newlabel{yoneda:itm:weak.cart.ii}{{{{(ii)}}}{40}{cartesian 2-cells}{Item.39}{}}
\newlabel{yoneda:obs:cart.iso.stab}{{4.1.2}{41}{isomorphism stability}{thm.4.1.2}{}}
\newlabel{yoneda:obs:weak.cart.conserv}{{4.1.3}{41}{more conservativity}{thm.4.1.3}{}}
\newlabel{yoneda:defn:cart-fib}{{4.1.4}{41}{cartesian fibration}{thm.4.1.4}{}}
\newlabel{yoneda:eq:2-cell.to.lift}{{4.1.5}{41}{cartesian fibration}{equation.4.1.5}{}}
\newlabel{yoneda:obs:weak.cart.lift.unique}{{4.1.6}{41}{uniqueness of cartesian lifts}{thm.4.1.6}{}}
\newlabel{yoneda:prop:cart-fib-comp}{{4.1.7}{42}{composites of cartesian fibrations}{thm.4.1.7}{}}
\newlabel{yoneda:ntn:for.commas}{{4.1.8}{42}{}{thm.4.1.8}{}}
\newlabel{yoneda:lem:adj.for.commas}{{4.1.9}{43}{}{thm.4.1.9}{}}
\newlabel{yoneda:thm:cart.fib.chars}{{4.1.10}{44}{}{thm.4.1.10}{}}
\newlabel{yoneda:itm:cart.fib.chars.i}{{{{(i)}}}{44}{}{Item.42}{}}
\newlabel{yoneda:itm:cart.fib.chars.ii}{{{{(ii)}}}{44}{}{Item.43}{}}
\newlabel{yoneda:eq:cartesian.fib.adj}{{4.1.11}{44}{}{equation.4.1.11}{}}
\newlabel{yoneda:itm:cart.fib.chars.iii}{{{{(iii)}}}{44}{}{Item.44}{}}
\newlabel{yoneda:eq:cartesian.isosect.adj}{{4.1.12}{44}{}{equation.4.1.12}{}}
\newlabel{yoneda:obs:fibered-unit-iso}{{4.1.13}{44}{}{thm.4.1.13}{}}
\newlabel{yoneda:obs:constructing-weak-lifts}{{4.1.14}{45}{}{thm.4.1.14}{}}
\newlabel{yoneda:cor:cart-fib-rep}{{4.1.15}{45}{}{thm.4.1.15}{}}
\newlabel{yoneda:ex:domain-fibration}{{4.1.16}{45}{}{thm.4.1.16}{}}
\newlabel{yoneda:obs:domain-cart-lifts}{{4.1.17}{46}{cartesian lifts for domain projections}{thm.4.1.17}{}}
\newlabel{yoneda:ex:general-domain-projection}{{4.1.18}{47}{}{thm.4.1.18}{}}
\newlabel{yoneda:prop:bifibration-adjunction}{{4.1.20}{47}{}{thm.4.1.20}{}}
\newlabel{yoneda:eq:pseudo-slice-adj}{{4.1.21}{48}{Cartesian fibrations}{equation.4.1.21}{}}
\newlabel{yoneda:eq:RARI-lifting}{{4.1.24}{49}{right adjoint right inverse as a lifting property}{equation.4.1.24}{}}
\newlabel{yoneda:cor:lurie-cartesian}{{4.1.24}{50}{}{thm.4.1.24}{}}
\newlabel{yoneda:eq:lurie-cartesian}{{4.1.25}{50}{}{equation.4.1.25}{}}
\newlabel{yoneda:ssec:groupoidal}{{4.2}{51}{Groupoidal cartesian fibrations}{subsection.4.2}{}}
\newlabel{yoneda:defn:groupoidal-object}{{4.2.1}{51}{groupoidal objects}{thm.4.2.1}{}}
\newlabel{yoneda:lem:groupoidal.conserv}{{4.2.2}{51}{}{thm.4.2.2}{}}
\newlabel{yoneda:cor:groupoidal-cart-fib-rep}{{4.2.4}{51}{}{thm.4.2.4}{}}
\newlabel{yoneda:prop:groupoidal-ext-char}{{4.2.5}{51}{}{thm.4.2.5}{}}
\newlabel{yoneda:lem:gpd-cart-cancel}{{4.2.6}{52}{}{thm.4.2.6}{}}
\newlabel{yoneda:prop:iso-groupoidal-char}{{4.2.7}{52}{}{thm.4.2.7}{}}
\newlabel{yoneda:obs:biterminal}{{4.2.10}{53}{biterminal objects in the homotopy 2-category}{thm.4.2.10}{}}
\newlabel{yoneda:ex:groupoidal-representable}{{4.2.11}{54}{}{thm.4.2.11}{}}
\newlabel{yoneda:sec:stability}{{5}{54}{Cartesian functors and pullbacks of cartesian fibrations}{section.5}{}}
\newlabel{yoneda:itm:to-do-cart-cells}{{{{(i)}}}{54}{Cartesian functors and pullbacks of cartesian fibrations}{Item.45}{}}
\newlabel{yoneda:itm:to-do-pullback}{{{{(ii)}}}{54}{Cartesian functors and pullbacks of cartesian fibrations}{Item.46}{}}
\newlabel{yoneda:ssec:cart-fun}{{5.1}{54}{Cartesian functors}{subsection.5.1}{}}
\newlabel{yoneda:eq:cartesian-functor-square}{{5.1.2}{54}{}{equation.5.1.2}{}}
\newlabel{yoneda:thm:cart.fun.chars}{{5.1.4}{55}{}{thm.5.1.4}{}}
\newlabel{yoneda:itm:cart.fun.chars.i}{{{{(i)}}}{55}{}{Item.47}{}}
\newlabel{yoneda:itm:cart.fun.chars.ii}{{{{(ii)}}}{55}{}{Item.48}{}}
\newlabel{yoneda:eq:cart-fun-iso.ii}{{5.1.5}{55}{}{equation.5.1.5}{}}
\newlabel{yoneda:itm:cart.fun.chars.iii}{{{{(iii)}}}{55}{}{Item.49}{}}
\newlabel{yoneda:eq:cart-fun-iso.iii}{{5.1.6}{55}{}{equation.5.1.6}{}}
\newlabel{yoneda:cor:radj-cartesian}{{5.1.7}{57}{}{thm.5.1.7}{}}
\newlabel{yoneda:lem:cart-arrows-compose}{{5.1.8}{58}{}{thm.5.1.8}{}}
\newlabel{yoneda:lem:cart-arrows-cancel}{{5.1.9}{58}{}{thm.5.1.9}{}}
\newlabel{yoneda:ssec:pullback-stability}{{5.2}{59}{Pullback stability}{subsection.5.2}{}}
\newlabel{yoneda:prop:cart-fib-pullback}{{5.2.1}{59}{pullbacks of cartesian fibrations}{thm.5.2.1}{}}
\newlabel{yoneda:cor:groupoidal-pullback}{{5.2.2}{61}{}{thm.5.2.2}{}}
\newlabel{yoneda:sec:yoneda}{{6}{62}{The Yoneda lemma}{section.6}{}}
\newlabel{yoneda:thm:yoneda}{{6.0.1}{62}{Yoneda lemma}{thm.6.0.1}{}}
\newlabel{yoneda:eq:evaluation-comma-adjunction}{{6.0.2}{62}{The Yoneda lemma}{equation.6.0.2}{}}
\newlabel{yoneda:ssec:pullback-functoriality}{{6.1}{62}{Functoriality of pullbacks of cartesian fibrations}{subsection.6.1}{}}
\newlabel{yoneda:defn:pbshape-2-cat}{{6.1.1}{63}{}{thm.6.1.1}{}}
\newlabel{yoneda:eq:1-cell-in-pbshape}{{6.1.2}{63}{}{equation.6.1.2}{}}
\newlabel{yoneda:eq:cartesian-weak-pullback}{{6.1.3}{63}{Functoriality of pullbacks of cartesian fibrations}{equation.6.1.3}{}}
\newlabel{yoneda:defn:square-2-cat}{{6.1.4}{63}{}{thm.6.1.4}{}}
\newlabel{yoneda:eq:1-cell-in-square}{{6.1.5}{63}{}{equation.6.1.5}{}}
\newlabel{yoneda:prop:cartesian-smothering}{{6.1.6}{64}{}{thm.6.1.6}{}}
\newlabel{yoneda:ssec:proof-yoneda}{{6.2}{64}{Proof of the Yoneda lemma}{subsection.6.2}{}}
\newlabel{yoneda:eq:1-cell-in-lax-comma}{{6.2.2}{64}{lax slice 2-category}{equation.6.2.2}{}}
\newlabel{yoneda:obs:lax-slice-adjunction}{{6.2.3}{65}{}{thm.6.2.3}{}}
\newlabel{yoneda:eq:lax-slice-left-adjoint}{{6.2.4}{65}{}{equation.6.2.4}{}}
\newlabel{yoneda:obs:pullback-2-category}{{6.2.5}{65}{}{thm.6.2.5}{}}
\newlabel{yoneda:lem:rep-cart-cells}{{6.2.6}{65}{}{thm.6.2.6}{}}
\newlabel{yoneda:eq:strange-right-adjoint}{{6.2.7}{66}{Proof of the Yoneda lemma}{equation.6.2.7}{}}
\newlabel{yoneda:eq:unrestricted-adjunction}{{6.2.8}{66}{Proof of the Yoneda lemma}{equation.6.2.8}{}}
\newlabel{yoneda:lem:codomain-restriction}{{6.2.9}{66}{}{thm.6.2.9}{}}
\newlabel{yoneda:eq:chi-lifting-phi}{{6.2.10}{67}{Proof of the Yoneda lemma}{equation.6.2.10}{}}
\newlabel{yoneda:eq:restricted-adjunction}{{6.2.11}{67}{Proof of the Yoneda lemma}{equation.6.2.11}{}}
\newlabel{yoneda:lem:last-yoneda-step}{{6.2.12}{67}{}{thm.6.2.12}{}}
\newlabel{yoneda:cor:groupoidal-yoneda}{{6.2.13}{68}{}{thm.6.2.13}{}}
\newlabel{yoneda:sec:appendix}{{7}{68}{Appendix: proof of Theorem~\ref {thm:cart.fib.chars}}{section.7}{}}
\newlabel{yoneda:itm:cart.fib.chars.i'}{{{{(i)}}}{68}{}{Item.50}{}}
\newlabel{yoneda:itm:cart.fib.chars.ii'}{{{{(ii)}}}{68}{}{Item.51}{}}
\newlabel{yoneda:eq:cartesian.fib.adj'}{{7.0.1}{69}{}{equation.7.0.1}{}}
\newlabel{yoneda:itm:cart.fib.chars.iii'}{{{{(iii)}}}{69}{}{Item.52}{}}
\newlabel{yoneda:eq:cartesian.isosect.adj'}{{7.0.2}{69}{}{equation.7.0.2}{}}
\newlabel{yoneda:ntn:adj.for.commas}{{7.0.3}{69}{}{thm.7.0.3}{}}
\newlabel{yoneda:eq:compat.sq.1}{{7.0.4}{71}{Appendix: proof of Theorem~\ref {thm:cart.fib.chars}}{equation.7.0.4}{}}
\newlabel{yoneda:eq:defn.muhat}{{7.0.5}{71}{Appendix: proof of Theorem~\ref {thm:cart.fib.chars}}{equation.7.0.5}{}}
\newlabel{yoneda:tocindent-1}{0pt}
\newlabel{yoneda:tocindent0}{15.01021pt}
\newlabel{yoneda:tocindent1}{20.88377pt}
\newlabel{yoneda:tocindent2}{34.49158pt}
\newlabel{yoneda:tocindent3}{0pt}

\newcommand{\extRef}[3]{%
  {\protect\IfBeginWith{#3}{itm:}{}{#2.}}\ref*{#1:#3}}

\newcommand{\refI}{\extRef{found}{I}}

\newcommand{\refIV}{\extRef{yoneda}{IV}}

\date{\today}

\setcounter{tocdepth}{2}

%%%% Start of document proper.

\begin{document}

  \ifpdf
  \DeclareGraphicsExtensions{.pdf, .jpg, .tif}
  \else
  \DeclareGraphicsExtensions{.eps, .jpg}
  \fi

	\begin{abstract}
    Various models of $(\infty,1)$-categories, including quasi-categories, complete Segal spaces, Segal categories, and naturally marked simplicial sets can be considered as the objects of an $\infty$-\emph{cosmos}. In a generic $\infty$-cosmos, whose objects we call $\infty$-\emph{categories}, we introduce \emph{modules} (also called \emph{profunctors} or \emph{correspondences}) between $\infty$-categories, incarnated as as spans of suitably-defined fibrations with groupoidal fibers. As the name suggests, a module from $A$ to $B$ is an $\infty$-category equipped with a left action of $A$ and a right action of $B$, in a suitable sense. Applying the fibrational form of the Yoneda lemma, we develop a general calculus of modules, proving that they naturally assemble into a multicategory-like structure called a \emph{virtual equipment}, which is known to be a robust setting in which to develop formal category theory. Using the calculus of modules, it is straightforward to define and study pointwise Kan extensions, which we relate, in the case of cartesian closed $\infty$-cosmoi, to limits and colimits of diagrams valued in an $\infty$-category, as introduced in previous work.
	\end{abstract}

  \begin{asciiabstract}
    Various models of (infinity,1)-categories, including quasi-categories, complete Segal spaces, Segal categories, and naturally marked simplicial sets can be considered as the objects of an infinity-cosmos. In a generic infinity-cosmos, whose objects we call infinity-categories, we introduce modules (also called profunctors or correspondences) between infinity-categories, incarnated as as spans of suitably-defined fibrations with groupoidal fibers. As the name suggests, a module from A to B is an infinity-category equipped with a left action of A and a right action of B, in a suitable sense. Applying the fibrational form of the Yoneda lemma, we develop a general calculus of modules, proving that they naturally assemble into a multicategory-like structure called a virtual equipment, which is known to be a robust setting in which to develop formal category theory. Using the calculus of modules, it is straightforward to define and study pointwise Kan extensions, which we relate, in the case of cartesian closed infinity-cosmoi, to limits and colimits of diagrams valued in an infinity-category, as introduced in previous work.
	\end{asciiabstract}

  \maketitle

  %\tableofcontents
  
  %!TEX root = all.tex
% ******************************************************************
% ** Title:            Modules between $\infty$-categories?
% **                   Main file
% ** Precis:        
% ** Author:           Emily Riehl and Dominic Verity
% ** Commenced:        15/3/2015
% ******************************************************************

\section{Introduction}

Previous work \cite{RiehlVerity:2012tt, RiehlVerity:2012hc, RiehlVerity:2013cp, RiehlVerity:2015fy} shows that the basic theory of $(\infty,1)$-categories --- categories that are weakly enriched over $\infty$-groupoids, i.e., topological spaces --- can be developed ``model independently,'' at least if one is content to work with one of the better-behaved models: namely, quasi-categories, complete Segal spaces, Segal categories, or naturally marked simplicial sets. More specifically, we show that a large portion of the category theory of quasi-categories---one model of $(\infty,1)$-categories that has been studied extensively by Joyal, Lurie, and others---can be re-developed from the abstract perspective of the \emph{homotopy 2-category} of the $\infty$-\emph{cosmos} of quasi-categories. Each of the above-mentioned models has its own $\infty$-cosmos, a quotient of which defines the homotopy 2-category. As our development of the basic theory takes place entirely within this axiomatic framework, the basic definitions and theorems apply simultaneously and uniformly to each of the above-mentioned models.

An $\infty$-\emph{cosmos} is a universe within which to develop the basic category theory of its objects, much like a simplicial model category is a universe within which to develop the basic homotopy theory of its objects. A simplicial model category is a model category that is enriched as such over Quillen's model structure on simplicial sets, whose fibrant objects, the \emph{Kan complexes}, model $\infty$-groupoids. By analogy, an $\infty$-cosmos resembles  a model category that is enriched as such over Joyal's model structure on simplicial sets, whose fibrant objects, the \emph{quasi-categories}, model $(\infty,1)$-categories; more precisely an $\infty$-cosmos is the simplicial subcategory spanned by the fibrant objects.  The $\infty$-cosmos axioms discard the features of a quasi-categorically enriched model structure that are not necessary for our proofs. We restrict to the subcategory of fibrant objects, which traditionally model the homotopy coherent category-like structures of interest, and forget about the cofibrations, which are not needed for our constructions. We refer to fibrations between fibrant objects as \emph{isofibrations}, as these will play a role analagous to the categorical isofibrations. Finally, in contrast to the form of this axiomatization presented in \cite{RiehlVerity:2015fy}, we assume that ``all fibrant objects are cofibrant,'' which happens to be true of all of the examples that we will consider in the present paper. While everything we discuss here holds in a general $\infty$-cosmos, this cofibrancy restriction allows for useful didactic simplification of the arguments presented here.

We use the term $\infty$-\emph{categories} to refer to the objects in some $\infty$-cosmos; these are the infinite-dimensional categories within the scope of our treatment. Examples include the models of $(\infty,1)$-categories mentioned above, but also ordinary categories, $\theta_n$-spaces, general categories of ``Rezk objects'' valued in a reasonable model category, and also sliced (fibred) versions of the $\infty$-categories in any $\infty$-cosmos. In particular, theorems about $\infty$-categories, i.e., objects in some $\infty$-cosmos, are not only theorems about $(\infty,1)$-categories. This being said, for the present narrative, the interpretation of ``$\infty$-categories'' as being ``well-behaved models of $(\infty,1)$-categories'' might prove the least confusing.

Quillen's model category axioms provide a well-behaved homotopy category, spanned by the fibrant-cofibrant objects, in which the poorly behaved notion of weak equivalence is equated with a better behaved notion of homotopy equivalence. Similarly, an $\infty$-cosmos provides a well-behaved \emph{homotopy 2-category}, which should be thought of as a categorification of the usual homotopy category, in which the canonical 2-categorical notion of equivalence coincides precisely with the $\infty$-cosmos level notion of (weak) equivalence. This means that 2-categorical equivalence-invariant  definitions are appropriately ``homotopical.'' Our work is largely 2-categorical, presented in terms of the $\infty$-categories, $\infty$-functors, and $\infty$-natural transformations that assemble into the homotopy 2-category of some $\infty$-cosmos, much like ordinary categorical notions can be defined in terms of categories, functors, and natural transformations. References to \cite{RiehlVerity:2012tt, RiehlVerity:2012hc, RiehlVerity:2013cp, RiehlVerity:2015fy} will have the form I.x.x.x, II.x.x.x, III.x.x.x, and IV.x.x.x respectively. We spare the reader the pain of extensive cross referencing however, by beginning with a comprehensive survey of the necessary background in \S\ref{sec:background}.

The aim of this paper is to develop the calculus of \emph{modules} between $\infty$-categories. In classical 1-category theory, ``modules'' are our prefered name for what are also called \emph{profunctors}, \emph{correspondences}, or \emph{distributors}: a \emph{module} $E$ from $A$ to $B$ is a functor $E \colon B\op \times A \to \Set$. The bifunctoriality of $E$ is expressed via ``left'' (covariant) actions on the sets $E(b,a)$ by morphisms in $A$ and ``right'' (contravariant) actions by morphisms in $B$. The hom bifunctor associated to any category $A$ defines a module from $A$ to $A$, the \emph{arrow module} denoted by $A^\cattwo$. More generally, any functor $f \colon B \to A$ can be encoded as a \emph{covariant represented module} from $B$ to $A$ and as a \emph{contravariant represented module} from $A$ to $B$; these modules are defined by restricting one or the other variable in the arrow module  $A^\cattwo$. Given a second functor $g \colon C \to A$, there is a module from $C$ to $B$ obtained by restricting the domain variable of the arrow module $A^\cattwo$ along $f$ and restricting the codomain variable along $g$. This module can be regarded as the composite of the contravariant module representing $f$ with the covariant module representing $g$.

There are a number of equivalent 2-categorical incarnations of modules in classical 1-category theory. Our preferred mechanism is to represent a module $E$ from $A$ to $B$ as a \emph{two-sided discrete fibration} $(q,p) \colon E \tfib A \times B$. In particular, under this presentation, a module is a category fibered over $A \times B$; by analogy, a module between $\infty$-categories $A$ and $B$ will be an $\infty$-category fibred over $A \times B$. As slices of $\infty$-cosmoi are again $\infty$-cosmoi, this means that we can apply theorems from our previous work, which concern the objects in any $\infty$-cosmos, to develop the theory of modules. By contrast, Lurie \cite{Lurie:2009fk} and Barwick--Schommer-Pries  \cite{BSP:2011ot} represent modules as correspondences --- cospans rather than spans. Haugseng \cite{Haugseng:2015bn} uses an $\infty$-operadic approach to define modules for enriched $\infty$-categories.

In \S\ref{sec:modules}, we define modules between $\infty$-categories, the prototypical examples being the arrow $\infty$-categories and comma $\infty$-categories that play a central role in previous work in the series. A \emph{module} $E$ from $A$ to $B$ will be an $\infty$-category equipped with an isofibration $(q,p) \colon E \tfib A \times B$ that has ``groupoidal fibers'' and satisfies two additional properties. Firstly, $(q,p)$ defines a \emph{cartesian fibration} in the sliced $\infty$-cosmos over $A$, in the sense introduced in \S\refIV{sec:cartesian}. Loosely, this says that $B$ acts on the right of $E$, over $A$. Dually, $(q,p)$ defines a \emph{cocartesian fibration} in the sliced $\infty$-cosmos over $B$, which says that $A$ acts on the left of $E$, over $B$. Applying results about cartesian and groupoidal cartesian fibrations developed in \S\refIV{sec:stability} and \S\refIV{sec:yoneda}, we prove that modules can be pulled back along an arbitrary pair of functors, and we characterize the quasi-category of module maps out of a represented module, this result being an application of the relative case of the Yoneda lemma, in the form of \refIV{cor:groupoidal-yoneda}.

In \S\ref{sec:virtual}, we develop the calculus of modules, which resembles the calculus of (bi)modules between rings. Unital rings, ring homomorphisms, modules, and module maps assemble into a 2-dimensional categorical structure known as a \emph{double category}. Ring homomorphisms can be composed ``vertically'' while modules can be composed ``horizontally,'' by tensoring. A module map, from an $A$-$B$-module to an $A'$-$B'$-module over a pair of ring homomorphisms $A \to A'$ and $B \to B'$ is an $A$-$B$-module homomorphism from the former to the latter, where the $A$-$B$-actions on the codomain are defined by restriction of scalars. These module maps can be represented as 2-dimensional ``cells'' inside squares, which can be composed in both the ``horizontal'' and ``vertical'' directions.

Similarly, $\infty$-categories, $\infty$-functors, modules, and module maps assemble into a 2-dimensional categorical structure. At the level of our $\infty$-cosmos axiomatization, we are not able to define tensor products for all modules, which would involve homotopy colimits that are not included within this general framework. But as it turns out, this is a deficiency we can work around for our purposed here. Modules between $\infty$-categories naturally assemble into a \emph{virtual double category}, where module maps are allowed to have a ``multi-source.'' Our main theorem in this section is that the virtual double category of modules is in fact a \emph{virtual equipment}, in the sense of Cruttwell and Shulman \cite{CruttwellShulman:2010au}. The proof of this result, which appears as Theorem~\ref{thm:virtual-equipment}, follows easily from our work in \S\ref{sec:modules}, and we spend the remainder of this section exploring its consequences. In particular, we show that the homotopy 2-category of the $\infty$-cosmos embeds both covariantly and contravariantly into the virtual equipment of modules, by sending an $\infty$-category to either its covariant or contravariant represented module.

Prior categorical work suggests that Theorem~\ref{thm:virtual-equipment}, which demonstrates that modules between $\infty$-categories assemble into a virtual equipment, serves as the starting point for many further developments in the formal category theory of $\infty$-categories  \cite{Street:1974:ec,wood:proI, wood:proII, Verity:2011qy, Weber:2007ys, CruttwellShulman:2010au, Shulman:2013ei}. Here we illustrate only a small portion of the potential applications in \S\ref{sec:pointwise-kan} by introducing pointwise Kan extensions, exact squares, and final and initial functors for $\infty$-categories. There is a naive notion of Kan extension which can be defined in any 2-category, in particular in the homotopy 2-category, but the universal property so-encoded is insufficiently robust to define a good notion for $\infty$-functors between $\infty$-categories. The correct notion is of \emph{pointwise} Kan extension, which we define in two different ways that we prove equivalent in Proposition~\ref{prop:pointwise-kan}. One definition, guided by Street \cite{Street:1974:ec}, is that a pointwise Kan extension is an ordinary extension diagram in the homotopy 2-category of $\infty$-categories that is stable under pasting with comma, or more generally \emph{exact}, squares. A second definition, is that a pointwise Kan extension is a Kan extension under the covariant embeddding into the virtual equipment of modules.

In a cartesian closed $\infty$-cosmos, pointwise Kan extensions along the unique functor to the terminal $\infty$-category correspond exactly to the absolute lifting diagrams used in \S\refI{sec:limits} to define limits and colimits of diagrams valued in an $\infty$-category. Thus, pointwise Kan extensions can be used to extend this notion to non-cartesian closed $\infty$-cosmoi, such as sliced $\infty$-cosmoi or the $\infty$-cosmoi of Rezk objects. We introduce \emph{initial} and \emph{final functors} between $\infty$-categories, defined in terms of \emph{exact squares}, which are in turn characterized using the virtual equipment of modules. We prove that for any final functor $k \colon C \to D$, $D$-indexed colimits exist if and only if the restricted $C$-indexed colimits do, and when they exist they coincide. We conclude by proving the Beck-Chevalley property for functorial pointwise Kan extensions, and use it to sketch an argument that any complete and cocomplete quasi-category gives rise to a derivator in the sense of Heller \cite{Heller:1988ly} and Grothendieck.

The results contained here might appear to be specialized to the $\infty$-cosmoi whose objects model $(\infty,1)$-categories. For instance, in these $\infty$-cosmoi, the groupoidal objects,  which serve as the fibers for modules,  will be precisely the $\infty$-groupoids; for other  $\infty$-cosmoi, the groupoidal objects will be those objects whose underlying quasi-categories are Kan complexes. Nonetheless, broader applications of the present results are anticipated. For instance, Par\'e conjectured \cite{pare:dl} and Verity proved \cite{Verity:2011qy} that the \emph{flexible 2-limits}, which is the class of 2-dimensional limits that are appropriately homotopical, are captured by the double-categorical notion of \emph{persistent limits}. That is, 2-dimensional limits of diagrams defined internally to 2-categories can be studied by regarding those 2-categories as vertically-discrete double categories, and using two-sided discrete fibrations (modules) between such double categories to define the shape of the limit notion.

The flexible 2-limits mentioned here are the only 2-dimensional limits that have meaningful $(\infty,2)$-categorical analogues. This result suggests that we should be able to apply the calculus of modules---exactly as developed here in a general $\infty$-cosmos---between $\infty$-categories that model $(\infty,2)$-categories,  incarnated as Rezk objects in quasi-categories, to define weights for 2-dimensional limit and colimits of diagrams valued inside an $(\infty,2)$-category. We plan to explore this topic in a future paper.

\subsection{Acknowledgments} 

This material is based upon work supported by the National Science Foundation under Award No.~DMS-1509016 and by the Australian Research Council under Discovery grant number DP130101969. A substantial portion of this work was completed while the second-named author was visiting Harvard University, during which time he was partially supported by an NSF grant DMS-0906194 and a DARPA grant HR0011-10-1-0054-DOD35CAP held by Mike Hopkins. We are particularly grateful for his support. Some of the writing took place while the first-named author was in residence at the Hausdorff Research Institute for Mathematics, with travel support provided by the Simons Foundation through an AMS-Simons Travel Grant. Edoardo Lanari pointed out a circularity in the interpretation of the axiomatization for an $\infty$-cosmos with all objects cofibrant, now corrected.

%%% Local Variables:
%%% mode: latex
%%% TeX-master: "all"
%%% End:

  %!TEX root = all.tex
% ******************************************************************
% ** Title:            Modules between $\infty$-categories?
% **                   Main file
% ** Precis:        
% ** Author:           Emily Riehl and Dominic Verity
% ** Commenced:        15/3/2015
% ******************************************************************

\section{Background}\label{sec:background}

In \S\ref{ssec:cosmoi}, we introduce the axiomatic framework in which we work --- an $\infty$-\emph{cosmos} and its \emph{homotopy 2-category} --- first introduced in \cite{RiehlVerity:2015fy} but considered here in a simplified form. The underlying 1-categories of an $\infty$-cosmos (a simplicially enriched category) and its homotopy 2-category (a $\Cat$-enriched category) are identical: objects are $\infty$-\emph{categories} and morphisms are $\infty$-\emph{functors} (with the prefix ``$\infty$'' typically dropped).

In \S\ref{ssec:sliced}, we consider slices of an $\infty$-cosmos $\lcat{K}$ over a fixed object $B$. In this context, there are two closely related 2-categories: the homotopy 2-category $(\lcat{K}/B)_2$ of the sliced $\infty$-cosmos $\lcat{K}/B$ and the slice $\lcat{K}_2/B$ of the homotopy 2-category of $\lcat{K}$. Both 2-categories have the same underlying 1-categories but their 2-cells do not coincide. However, there is a canonically defined \emph{smothering 2-functor} $(\lcat{K}/B)_2 \to \lcat{K}_2/B$ which means that, for many practical purposes, the distinction between these slices is not so important.

In \S\ref{ssec:limits}, we review the construction of \emph{comma $\infty$-categories}, a particular simplicially enriched limit notion permitted by the axioms of an $\infty$-cosmos that produces an object, a pair of functors, and a natural transformation that enjoy a particular weak 2-dimensional universal property in the homotopy 2-category. Comma $\infty$-categories are well-defined up to equivalence of spans in the homotopy 2-category, but for the purpose of calculations we frequently make use of a particular model, defined up to isomorphism in the $\infty$-cosmos.

In \S\ref{ssec:cartesian}, we summarize the main definitions and results concerning cartesian fibrations and groupoidal cartesian fibrations, contained in \S\refIV{sec:cartesian}-\refIV{sec:yoneda}. These will be used in \S\ref{sec:modules} to define \emph{modules} between $\infty$-categories, which are two-sided groupoidal cartesian fibrations of a particular variety.

\subsection{\texorpdfstring{$\infty$}{infinity}-cosmoi and their homotopy 2-categories}\label{ssec:cosmoi}

The $\infty$-cosmoi of principle interest to this paper are those whose objects, the $\infty$-\emph{categories}, model $(\infty,1)$-categories. These include the $\infty$-cosmoi of quasi-categories, complete Segal spaces, Segal categories, and marked simplicial sets. In each of these, all objects are cofibrant. Adding this as an assumption to the definition of an $\infty$-cosmos, as presented in \refIV{qcat.ctxt.def}, we obtain a simplified form of the axiomatization, contained in Definition~\ref{qcat.ctxt.cof.def} below. This assumption is not required for any of the main theorems presented in this paper, but it does simplify their proofs.

 For the duration of this paper, an $\infty$-\emph{cosmos} will refer to an $\infty$-cosmos with all objects cofibrant. We refer to the objects of the underlying 1-category of an $\infty$-cosmos as $\infty$-\emph{categories} and its morphisms as $\infty$-\emph{functors}, or simply \emph{functors}.

\begin{defn}[$\infty$-cosmos]\label{qcat.ctxt.cof.def}
An $\infty$-\emph{cosmos} (with all objects cofibrant) is a simplicially enriched category $\lcat{K}$ whose mapping spaces $\map(A,B)$ are all quasi-categories that is equipped with a specified subcategory of \emph{isofibrations} satisfying the following axioms:
 \begin{enumerate}[label=(\alph*)]
    \item\label{qcat.ctxt.cof:a} (completeness) As a simplicially enriched category,  $\lcat{K}$ possesses a terminal object $1$, cotensors $U\pwr A$ of all objects $A$ by all finitely presented simplicial sets $U$, and pullbacks of isofibrations along any functor;
    \item\label{qcat.ctxt.cof:b} (isofibrations) The class of isofibrations contains the isomorphisms and all of the functors $!\colon A \to 1$ with codomain $1$; is stable under pullback along all functors; and if $p\colon E\tfib B$ is an isofibration in $\lcat{K}$ and $i\colon U\inc V$ is an inclusion of finitely presented simplicial sets then the Leibniz cotensor $i\leib\pwr p\colon V\pwr E\to U\pwr E\times_{U\pwr B} V\pwr B$ is an isofibration. Moreover, for any object $X$ and isofibration $p \colon E \tfib B$, $\map(X,p) \colon \map(X,E) \tfib \map(X,B)$ is an isofibration of quasi-categories.
    \item\label{qcat.ctxt.cof:c} (cofibrancy) All objects are \emph{cofibrant}, in the sense that they enjoy the left lifting property with respect to all \emph{trivial fibrations} in $\lcat{K}$, a class of maps that will now be defined.
  \end{enumerate}
\end{defn}

\begin{defn}[equivalences in an $\infty$-cosmos]
The underlying category of an $\infty$-cosmos $\lcat{K}$ has a canonically defined class of (representably-defined) equivalences. A functor $f \colon A \to B$ is an \emph{equivalence} just when the induced functor $\map(X,f) \colon \map(X,A) \to \map(X,B)$ is an equivalence of quasi-categories for all objects $X \in \lcat{K}$.
\end{defn}

Note that the equivalences define a subcategory and satisfy the 2-of-6 property. The  \emph{trivial fibrations} are those functors that are both equivalences and isofibrations; immediately it follows that the trivial fibrations define a subcategory containing the isomorphisms. We use the symbols ``$\tfib$'', ``$\we$'', and ``$\trvfib$'' to denote the isofibrations, equivalences, and trivial fibrations, respectively.  The trivial fibrations enjoy the following stability properties:

\begin{lem}[stability properties of trivial fibrations]\label{lem:triv.fib.stab} $\quad$ 
\begin{enumerate}[label=(\alph*)]
\item\label{itm:triv.fib.stab:a} If  $p \colon E \trvfib B$ is a trivial fibration in an $\infty$-cosmos $\tcat{K}$, then for any object $X$, $\map(X,f) \colon \map(X,E) \trvfib \map(X,B)$ is a trivial fibration of quasi-categories.
\item\label{itm:triv.fib.stab:b} The trivial fibrations are stable under pullback along any functor.
\item\label{itm:triv.fib.stab:c}  The Leibniz cotensor $i\leib\pwr p\colon V\pwr E\to U\pwr E\times_{U\pwr B} V\pwr B$ of an isofibration $p\colon E\tfib B$ in $\lcat{K}$ and a monomorphism $i\colon U\inc V$ between presented simplicial sets   is a trivial fibration when $p$ is a trivial fibration in $\lcat{K}$ or $i$ is trivial cofibration in the Joyal model structure on $\sSet$.
\end{enumerate}
\end{lem}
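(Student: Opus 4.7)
The three parts will be proved by systematically transferring the problem to the Joyal model structure on simplicial sets via the representable definition of equivalences, leveraging the well-known pushout-product properties of that model structure together with the $\infty$-cosmos axioms.

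For part \ref{itm:triv.fib.stab:a}, I unpack the definition: a trivial fibration $p \colon E \trvfib B$ in $\lcat{K}$ is both an isofibration and an equivalence. Axiom \ref{qcat.ctxt.cof:b} directly yields that $\map(X,p)$ is an isofibration of quasi-categories, and the representable definition of equivalences gives that $\map(X,p)$ is an equivalence of quasi-categories. An isofibration that is also an equivalence of quasi-categories is precisely a trivial fibration in the Joyal model structure, which finishes this part.

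For part \ref{itm:triv.fib.stab:b}, given $p \colon E \trvfib B$ and $f \colon A \to B$, the pullback $f^*p$ is an isofibration by axiom \ref{qcat.ctxt.cof:b}. To verify that it is an equivalence, I apply $\map(X,-)$, which preserves pullbacks since it is representable. The resulting pullback square in simplicial sets has its right leg $\map(X,p)$ a trivial fibration of quasi-categories by part \ref{itm:triv.fib.stab:a}; trivial fibrations in the Joyal model structure are stable under pullback, so $\map(X,f^*p)$ is a trivial fibration, in particular an equivalence. Since $X$ was arbitrary, $f^*p$ is an equivalence in $\lcat{K}$.

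For part \ref{itm:triv.fib.stab:c}, axiom \ref{qcat.ctxt.cof:b} already provides that $i\leib\pwr p$ is an isofibration, so only the equivalence condition needs checking. The key point is that $\map(X,-)$, being simplicially representable, commutes with cotensors by finitely presented simplicial sets and with pullbacks; consequently it carries the Leibniz cotensor $i\leib\pwr p$ computed in $\lcat{K}$ to the Leibniz cotensor $i\leib\pwr \map(X,p)$ computed in $\sSet$. Now $\map(X,p)$ is an isofibration of quasi-categories by axiom \ref{qcat.ctxt.cof:b}, and it is even a trivial fibration if $p$ is, by part \ref{itm:triv.fib.stab:a}. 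The Joyal model structure on $\sSet$ is cartesian closed, hence satisfies the Leibniz cotensor (pushout-product) axiom: the Leibniz cotensor of a monomorphism with an isofibration is a trivial fibration whenever either of the two inputs is trivial. Thus $\map(X, i\leib\pwr p)$ is a trivial fibration of simplicial sets, in particular an equivalence, for every $X$, yielding that $i\leib\pwr p$ is an equivalence in $\lcat{K}$.

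The only real subtlety is the appeal in part \ref{itm:triv.fib.stab:c} to the Leibniz cotensor axiom in the Joyal model structure, and to the compatibility of enriched representables with the relevant simplicial limits; neither is a serious obstacle, as the first is a standard fact about cartesian model categories (recalled in \refI{found:rec:cart-modcat}) and the second is automatic for any simplicially enriched category.
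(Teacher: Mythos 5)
Your proof is correct and follows essentially the same route as the paper's: part (a) from axiom (b) plus the representable definition of equivalences, and parts (b) and (c) by transferring along $\map(X,-)$ (which preserves the relevant simplicial limits) to the corresponding stability properties of isofibrations and trivial fibrations of quasi-categories in the Joyal model structure. The paper's proof is just a terser statement of exactly this argument, citing the quasi-categorical $\infty$-cosmos example for the facts you derive from the cartesian model category axioms.
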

\begin{proof}
\ref{itm:triv.fib.stab:a} is immediate from \ref{qcat.ctxt.cof:b} and the definitions, while \ref{itm:triv.fib.stab:b} and \ref{itm:triv.fib.stab:c} follow from the analogous properties for isofibrations, the corresponding stability properties for quasi-categories established in Example~\refIV{ex:qcat-qcat-ctxt}, and the fact that the referenced simplicially limits are representably defined.
\end{proof}

\begin{rmk} An $\infty$-cosmos in the sense of Definition~\ref{qcat.ctxt.cof.def} is exactly an $\infty$-cosmos in the sense of Definition~\refIV{qcat.ctxt.def} in which the weak equivalences are taken to be the class of equivalences and in which all objects are cofibrant. 
\end{rmk}

\begin{defn}[cartesian closed $\infty$-cosmoi]\label{defn:closed-cosmoi}
An $\infty$-cosmos is \emph{cartesian closed} if the product bifunctor $-\times - \colon \lcat{K} \times \lcat{K} \to \lcat{K}$ extends to a simplicially enriched two-variable adjunction
\[ \map(A \times B,C) \cong \map(A, C^B) \cong \map(B,C^A).\]% so that if $p \colon E \tfib B$ is an isofibration, then $p^A \colon E^A \tfib B^A$ is as well for any $A$. It follows that if $p \colon E \trvfib B$ is a trivial fibration, then so is $p^A \colon E^A \trvfib B^A$.
\end{defn}

Examples~\refIV{ex:qcat-qcat-ctxt}, \refIV{ex:cat-cosmos}, \refIV{ex:CSS-cosmos}, \refIV{ex:segal-cosmos}, and \refIV{ex:marked-cosmos} establish $\infty$-cosmoi for quasi-categories, ordinary categories, complete Segal spaces, Segal categories, and marked simplicial sets, respectively. All of these examples are cartesian closed.

Example~\refIV{ex:sliced.contexts} proves that there exist \emph{sliced $\infty$-cosmoi} defined as follows:

\begin{defn}[sliced $\infty$-cosmoi]\label{defn:sliced-cosmoi}
If $\lcat{K}$ is an $\infty$-cosmos and $B$ is a fixed object, then there is an $\infty$-cosmos  $\lcat{K}\slice B$ in which the:
\begin{itemize}
\item objects are isofibrations $p \colon E \tfib B$ with codomain $B$;
\item mapping quasi-category from $p \colon E \tfib B$ to $q \colon F \tfib B$ is defined by  taking the pullback
  \begin{equation}\label{eq:sliced-map}
    \xymatrix@=1.5em{
      {\map_B(p,q)}\pbexcursion\ar[r]\ar@{->>}[d] &
      {\map(E,F)}\ar@{->>}[d]^{\map(E,q)} \\
      {\Del^0}\ar[r]_-{p} & {\map(E,B)}
    }
  \end{equation}
  in simplicial sets;
\item isofibrations, equivalences, and trivial fibrations are created by the forgetful functor $\lcat{K}/B \to \lcat{K}$;
\item the terminal object is $\id_B \colon B \tfib B$;
\item pullbacks are created by the forgetful functor $\lcat{K}/B \to \lcat{K}$;
\item the cotensor of an object $p\colon E\tfib B$ of $\lcat{K}\slice B$ by a finitely presented simplicial set $U$ is the left-hand vertical arrow in the following pullback in $\lcat{K}$:
  \begin{equation*}
    \xymatrix@=1.5em{
      {U \pwr_p E}\pbexcursion\ar[r]\ar@{->>}[d] &  
      {U \pwr E}\ar@{->>}[d]^{U \pwr p} \\
      {B}\ar[r]_-{\Delta} & {U \pwr B}
    }
  \end{equation*} where the arrow $\Delta$ appearing along the bottom is the adjoint transpose of the constant map $U \to \Delta^0 \xrightarrow{\id_B} \map(B,B)$ at the identity for $B$.  
\end{itemize}
\end{defn}

\begin{obs}\label{obs:general-functors-in-slices}
  In order to ensure that a slice of an $\infty$-cosmos is again an $\infty$-cosmos it is necessary to take only the isofibrations as the objects of $\lcat{K}/B$. For example, in order to ensure that the mapping space defined in~\eqref{eq:sliced-map} is a quasi-category we require that the vertical functor on the right of that square is an isofibration of quasi-categories and that, in turn, follows so long as the codomain functor $q\colon F\tfib B$ is an isofibration in $\lcat{K}$. Notice here, however, that this result holds without any assumption on the domain functor $p\colon E\to B$. Indeed, as a matter of general principle, we may treat an arbitrary functor $f\colon X\to B$ as if it were an object of $\lcat{K}/B$ so long as we are never called upon to place it in a codomain position in any argument.
\end{obs}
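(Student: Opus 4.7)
The plan is to verify the two assertions of the observation directly from the axioms of an $\infty$-cosmos and the explicit definition of the sliced $\infty$-cosmos. Since this is an observation rather than a theorem, the main task is not proof-discovery but careful bookkeeping of which hypotheses enter where in the pullback defining the mapping spaces.

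First I would address the claim that the codomain functor $q\colon F\tfib B$ must be an isofibration in order for $\lcat{K}/B$ to qualify as an $\infty$-cosmos. The relevant point is axiom~\ref{qcat.ctxt.cof:b} of Definition~\ref{qcat.ctxt.cof.def}, which asserts that if $q\colon F\tfib B$ is an isofibration then $\map(E,q)\colon\map(E,F)\tfib\map(E,B)$ is an isofibration of quasi-categories. In particular, its codomain $\map(E,B)$ is a quasi-category and the map is an inner fibration, so the pullback
\[
\xymatrix@=1.5em{
{\map_B(p,q)}\pbexcursion\ar[r]\ar@{->>}[d] & {\map(E,F)}\ar@{->>}[d]^{\map(E,q)} \\
{\Del^0}\ar[r]_-{p} & {\map(E,B)}
}
\]
taken against any vertex $p\in\map(E,B)$ is an isofibration of quasi-categories whose codomain is a point, hence a quasi-category. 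Thus $\map_B(p,q)$ is a quasi-category as required, and the argument used no hypothesis whatsoever on the domain arrow $p$.

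Next I would confirm the converse point implicit in the observation: that isofibration-hood of $p$ is nonetheless needed elsewhere, namely to place $p$ in the \emph{codomain} position of this same construction. Indeed, if we wish $\map_B(q,p)$ to be a quasi-category for all objects $q$ of $\lcat{K}/B$, then reversing the roles of $p$ and $q$ in the pullback above requires $\map(F,p)\colon\map(F,E)\to\map(F,B)$ to be an isofibration of quasi-categories, which is guaranteed by axiom~\ref{qcat.ctxt.cof:b} precisely when $p$ is an isofibration in $\lcat{K}$. This justifies why the \emph{objects} of $\lcat{K}/B$ must be restricted to isofibrations.

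Finally I would explain the second assertion, that an arbitrary $f\colon X\to B$ may be treated as if it were an object of $\lcat{K}/B$ provided it is only ever used in domain position. By the analysis of the first paragraph above, for any isofibration $q\colon F\tfib B$ and any vertex $f\in\map(X,B)$, the pullback
\[
\xymatrix@=1.5em{
{\map_B(f,q)}\pbexcursion\ar[r]\ar@{->>}[d] & {\map(X,F)}\ar@{->>}[d]^{\map(X,q)} \\
{\Del^0}\ar[r]_-{f} & {\map(X,B)}
}
\]
still defines a quasi-category, and more generally the sliced hom and cotensor constructions go through without modification, their well-definedness being driven entirely by the isofibration status of the object occupying the codomain slot. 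The only step to be wary of — and the true content of the remark — is that whenever we contemplate a construction which places $f$ in the codomain slot (for instance, maps \emph{into} $f$ or cotensors of $f$ as an object of the slice), this general principle breaks down and one must revert to genuine objects of $\lcat{K}/B$.
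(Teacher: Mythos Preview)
Your proposal is correct and follows exactly the reasoning implicit in the paper. In the paper this statement is an \emph{observation} with no separate proof environment: the justification is woven into the text of the observation itself, invoking axiom~\ref{qcat.ctxt.cof:b} to see that $\map(E,q)$ is an isofibration of quasi-categories whenever $q$ is an isofibration in $\lcat{K}$, independently of $p$. You have simply unpacked this in more detail, and your added paragraph explaining why the restriction to isofibrations is genuinely needed (by reversing the roles of $p$ and $q$) is a helpful elaboration not spelled out in the paper.
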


\begin{defn}\label{defn:qcat-ctxt-functor} A \emph{functor of $\infty$-cosmoi} $F \colon \lcat{K} \to \lcat{L}$ is a simplicial functor that preserves isofibrations and the limits listed  in~\ref{qcat.ctxt.cof.def}\ref{qcat.ctxt.cof:a}.
\end{defn}

Note that simplicial functoriality implies that a functor of $\infty$-cosmoi also preserves equivalences by a short exercise left to the reader (for a hint, see Proposition \refIV{prop:equiv.are.weak.equiv} recalled below) and hence also trivial fibrations. 

\begin{ex}[functors of $\infty$-cosmoi]\label{ex:functors}
The following define functors of $\infty$-cosmoi:
\begin{itemize}
\item $\map(X,-) \colon\lcat{K} \to \qCat$ for any object $X \in \lcat{K}$ (see Proposition~\refIV{prop:representable-functors}). The special case $\map(1,-) \colon \lcat{K} \to \qCat$ is the \emph{underlying quasi-category functor}.
\item $U\pwr - \colon \lcat{K} \to \lcat{K}$ for any finitely presented simplicial set $U$ (by \ref{qcat.ctxt.cof.def}\ref{qcat.ctxt.cof:c} and the fact that simplicially enriched limits commute with each other).
\item The pullback functor $f^* \colon \lcat{K}/B \to \lcat{K}/A$ for any functor $f \colon A \to B \in \lcat{K}$ (see Proposition~\refIV{prop:pullback-functors}).
\item The underlying quasi-category functor $\map(1,-)\colon \category{CSS} \to \qCat$ that takes a complete Segal space to its $0\th$ row (see Example~\refIV{ex:CSS-cosmos}).
\item The functor $t^! \colon \qCat \to \category{CSS}$ defined in Example~\refIV{ex:other-CSS-functor}.
\item The underlying quasi-category functor $\map(1,-)\colon \category{Segal} \to \qCat$ that takes a Segal category to its $0\th$ row (see Example~\refIV{ex:segal-cosmos}). 
\item The underlying quasi-category functor that carries a naturally marked simplicial set to its underlying quasi-category (see Example~\refIV{ex:marked-cosmos}).
\item The inclusion $\Cat\to \qCat$ of categories into quasi-categories that identifies a category with its nerve (see Example~\refIV{ex:cat-cosmos}).
\end{itemize}
\end{ex}

\begin{defn}[the homotopy 2-category of $\infty$-cosmos]
A quotient of an $\infty$-cosmos $\lcat{K}$ defines the \emph{homotopy 2-category}. This is a strict 2-category $\lcat{K}_2$ with the same objects and 1-morphisms and whose hom-categories are defined by  \[\hom(A,B)\defeq \ho(\map(A,B))\] to be the homotopy categories of the mapping quasi-categories in $\lcat{K}$. 

Put concisely, the homotopy 2-category is the 2-category $\lcat{K}_2 \defeq \ho_*\lcat{K}$ defined by applying the homotopy category functor $\ho \colon \qCat \to \Cat$ to the mapping quasi-categories of the $\infty$-cosmos. By the same construction, a functor $F \colon \lcat{K} \to \lcat{L}$ of $\infty$-cosmoi induces a 2-functor $F_2 \defeq \ho_*F \colon \lcat{K}_2 \to \lcat{L}_2$ between their homotopy 2-categories.
\end{defn}

Isofibrations and trivial fibrations in the $\infty$-cosmos define \emph{representable isofibrations} and \emph{representable surjective equivalences} in the homotopy 2-category:

{
\renewcommand{\thethm}{\refIV{lem:isofib.are.representably.so}}
\begin{lem} For all objects $X$ in an $\infty$-cosmos:
\begin{enumerate}[label=(\roman*)]
\item If $E \tfib B$ is an isofibration, then $\hom(X,E) \tfib \hom(X,B)$ is an isofibration.
\item If $E \trvfib B$ is a trivial fibration, then $\hom(X,E) \trvfib \hom(X,B)$ is a surjective equivalence.
\end{enumerate}
\end{lem}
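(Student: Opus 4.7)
The plan is to factor $\hom(X,-) = \ho\circ\map(X,-)$ and invoke two enriched properties of the $\infty$-cosmos axiomatisation in order to reduce each claim to a purely quasi-categorical fact about the homotopy category functor $\ho\colon\qCat\to\Cat$.

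First I would appeal to axiom \ref{qcat.ctxt.cof:b} of Definition~\ref{qcat.ctxt.cof.def}, which guarantees that $\map(X,p)\colon\map(X,E)\tfib\map(X,B)$ is an isofibration of quasi-categories whenever $p$ is an isofibration in $\lcat{K}$, and to Lemma~\ref{lem:triv.fib.stab}\ref{itm:triv.fib.stab:a}, which likewise shows that $\map(X,p)$ is a trivial fibration of quasi-categories whenever $p$ is a trivial fibration in $\lcat{K}$. It then suffices to show that $\ho$ carries isofibrations of quasi-categories to categorical isofibrations, and trivial fibrations of quasi-categories to surjective equivalences of categories.

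For part (i), I would verify that $\ho(\map(X,p))$ enjoys the right lifting property against the inclusion $\Del 0\inc\iso$ of an endpoint into the free-living isomorphism, which characterises categorical isofibrations. An isomorphism $[\alpha]$ in $\ho(\map(X,B))$ with prescribed source $\hom(X,p)(e)$ is represented by an edge $\alpha$ of $\map(X,B)$ which is itself an isomorphism in the quasi-categorical sense, equivalently, extends to a map $\iso\to\map(X,B)$. Applying the defining lifting property of the quasi-categorical isofibration $\map(X,p)$ against $\Del 0\inc\iso$ with this lifting problem produces an isomorphism $\tilde\alpha$ in $\map(X,E)$ starting at $e$ whose homotopy class provides the required lift of $[\alpha]$.

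For part (ii), I would unpack a surjective equivalence as a functor that is surjective on objects and fully faithful, and verify these three properties by extracting lifting problems against $\boundary\Del n\inc\Del n$ for $n=0,1,2$ from the trivial fibration $\map(X,p)$: surjectivity on objects against $\emptyset\inc\Del 0$, fullness against $\boundary\Del 1\inc\Del 1$ with the endpoints in the source prescribed, and faithfulness against $\boundary\Del 2\inc\Del 2$ applied to a $2$-simplex of $\map(X,B)$ witnessing the homotopy between the images of two parallel edges in $\map(X,E)$ whose classes in $\hom$ agree. The only book-keeping step warranting care is faithfulness, where one must choose a representing $2$-simplex in the base whose non-degenerate faces line up with the two prescribed edges in the source; since in a quasi-category two parallel $1$-simplices are identified in the homotopy category precisely when some filling $2$-simplex with one degenerate face exists, such a choice is always available and the resulting $\boundary\Del 2\inc\Del 2$ lifting problem has a solution, delivering the desired homotopy upstairs. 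No step presents any serious obstacle.
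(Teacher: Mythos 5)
Your proof is correct and follows essentially the same route as the source: this lemma is only recalled here from \refIV{lem:isofib.are.representably.so}, whose argument likewise reduces to quasi-categories via axiom \ref{qcat.ctxt.cof.def}\ref{qcat.ctxt.cof:b} and the stability of trivial fibrations under $\map(X,-)$, and then checks that $\ho\colon\qCat\to\Cat$ carries isofibrations to isofibrations (lifting against $\Del^0\inc\iso$, using Joyal's characterisation of isomorphisms) and trivial fibrations to surjective equivalences (lifting against $\boundary\Del^n\inc\Del^n$ for $n=0,1,2$). Your care about choosing a witnessing $2$-simplex in the base with the correct faces on the nose is exactly the right book-keeping point, and it goes through since homotopy of parallel $1$-simplices in a quasi-category is witnessed by such a $2$-simplex.
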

\addtocounter{thm}{-1}
}

Importantly: 

{
\renewcommand{\thethm}{\refIV{prop:equiv.are.weak.equiv}}
\begin{prop} A functor $ f\colon A \to B$ is an equivalence in the $\infty$-cosmos if and only if it is an equivalence in the homotopy 2-category.
\end{prop}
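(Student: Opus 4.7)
The plan is to argue the two implications separately, leveraging the representable definition of $\infty$-cosmos equivalence on one side and the 2-categorical definition on the other.

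For the easier direction, suppose $f \colon A \to B$ is an equivalence in $\lcat{K}_2$, witnessed by $g \colon B \to A$ together with isomorphisms $fg \cong 1_B$ in $\hom(B,B)$ and $gf \cong 1_A$ in $\hom(A,A)$. By Example~\ref{ex:functors}, each representable $\map(X,-) \colon \lcat{K} \to \qCat$ is a functor of $\infty$-cosmoi, so the induced map $\lcat{K}_2 \to \qCat_2$ is a 2-functor and hence preserves this equivalence. Thus $\map(X,f)$ is an equivalence in $\qCat_2$, which by the standard coincidence between 2-categorical equivalences in $\qCat_2$ and equivalences of quasi-categories in the Joyal model structure yields that $\map(X,f)$ is an equivalence of quasi-categories for every $X$.

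For the reverse direction, assume $\map(X,f)$ is an equivalence of quasi-categories for every $X$, and construct a pseudo-inverse to $f$ in $\lcat{K}_2$ by hand. Specialising to $X = B$, essential surjectivity of the equivalence $\map(B,f) \colon \map(B,A) \we \map(B,B)$ on $0$-simplices yields a functor $g \colon B \to A$ and an isomorphism $\alpha \colon fg \cong 1_B$ in the quasi-category $\map(B,B)$, which passes to an isomorphism in $\hom(B,B)$. Specialising to $X = A$, the induced functor $\hom(A,f) \colon \hom(A,A) \to \hom(A,B)$ is fully faithful, being the homotopy-category image of an equivalence of quasi-categories. The whiskered isomorphism $\alpha f \colon fgf \cong f$ in $\hom(A,B)$ identifies $\hom(A,f)(gf)$ with $\hom(A,f)(1_A)$, so by full faithfulness it lifts uniquely to an isomorphism $\beta \colon gf \cong 1_A$ in $\hom(A,A)$. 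The data $(g,\alpha,\beta)$ then exhibits $f$ as an equivalence in $\lcat{K}_2$.

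The only substantive input required is the equivalence between the Joyal-model-theoretic and 2-categorical notions of equivalence for quasi-categories, a classical result that follows from the characterisation of isomorphisms in a quasi-category as $1$-simplices becoming invertible in the homotopy category. I do not anticipate any real obstacle beyond that: once this identification is available, the argument is a routine transportation of data through the hom-wise quotient $\ho \colon \qCat \to \Cat$ that defines the homotopy 2-category.
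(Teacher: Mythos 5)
Your proof is correct and takes essentially the route of the argument in Part IV that this paper merely recalls without reproving: the forward direction by transporting the 2-categorical equivalence along the 2-functor induced by each simplicial representable $\map(X,-)$ and invoking the already-established quasi-categorical case (that equivalences in the 2-category of quasi-categories coincide with Joyal equivalences, Propositions I.3.2.9--10), and the converse by the standard essential-surjectivity/full-faithfulness argument applied to $\hom(B,f)$ and $\hom(A,f)$ to build a 2-categorical inverse. I see no gaps.
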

\addtocounter{thm}{-1}
}

The upshot is that any categorical notion defined up to equivalence in the homotopy 2-category is characterized up to (weak) equivalence in the $\infty$-cosmos.

Axioms \ref{qcat.ctxt.cof.def}\ref{qcat.ctxt.cof:a} and \ref{qcat.ctxt.cof:b} imply that an $\infty$-cosmos has finite products satisfying a simplicially enriched universal property. Consequently:

\begin{prop}\label{prop:cartesian-closure}
The homotopy 2-category of an $\infty$-cosmos has finite products. If the $\infty$-cosmos is cartesian closed, then so is its homotopy 2-category.
\end{prop}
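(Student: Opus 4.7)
The plan is to deduce the $2$-categorical universal properties in $\lcat{K}_2$ by applying the homotopy category functor $\ho \colon \qCat \to \Cat$ to the corresponding simplicially enriched universal properties that already exist in $\lcat{K}$. Since $\lcat{K}_2 = \ho_*\lcat{K}$ is defined by replacing each mapping quasi-category $\map(A,B)$ with its homotopy category $\hom_{\lcat{K}_2}(A,B) = \ho\,\map(A,B)$, any natural isomorphism of mapping quasi-categories descends to a natural isomorphism of hom-categories, and thus an enriched universal property in $\lcat{K}$ translates directly into a $2$-dimensional universal property in $\lcat{K}_2$.

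The only non-formal step is an auxiliary observation I would establish first: the functor $\ho$ preserves finite products. This follows by direct inspection, since objects of $\ho(A \times B)$ are pairs of vertices of $A$ and $B$, morphisms are equivalence classes of $1$-simplices under a homotopy relation that operates componentwise, and composition is defined pointwise; likewise $\ho(\Delta^0)$ is the terminal category $\catone$. (This is compatible with $\ho$ being a left adjoint, since we are only asking for preservation of finite products on the quasi-categorical side.)

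With that in hand, the first assertion is immediate: axioms~\ref{qcat.ctxt.cof.def}\ref{qcat.ctxt.cof:a}--\ref{qcat.ctxt.cof:b} supply $\lcat{K}$ with a terminal object $1$ and binary products $A \times B$ (built as pullbacks of $A \tfib 1 \fibt B$) whose simplicially enriched universal properties take the form $\map(X,1) \cong \Delta^0$ and $\map(X, A \times B) \cong \map(X,A) \times \map(X,B)$, naturally in $X$. Applying $\ho$ produces natural isomorphisms $\hom_{\lcat{K}_2}(X,1) \cong \catone$ and $\hom_{\lcat{K}_2}(X, A \times B) \cong \hom_{\lcat{K}_2}(X,A) \times \hom_{\lcat{K}_2}(X,B)$, which are exactly the $2$-categorical universal properties of the terminal object and binary products in $\lcat{K}_2$.

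For the cartesian-closure claim, Definition~\ref{defn:closed-cosmoi} furnishes a simplicially enriched two-variable adjunction $\map(A \times B, C) \cong \map(A, C^B) \cong \map(B, C^A)$. Applying $\ho$ term-by-term, invoking its preservation of the binary product on the left-hand side, yields a $\Cat$-enriched two-variable adjunction $\hom_{\lcat{K}_2}(A \times B, C) \cong \hom_{\lcat{K}_2}(A, C^B) \cong \hom_{\lcat{K}_2}(B, C^A)$, exhibiting $\lcat{K}_2$ as a cartesian closed $2$-category. The mild obstacle throughout is the verification that $\ho$ preserves binary products of quasi-categories; once that is in place, everything else is a routine transport of universal properties along a product-preserving change-of-enrichment functor.
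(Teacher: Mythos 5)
Your proposal is correct and takes essentially the same route as the paper: the paper's proof likewise applies the finite-product-preserving functor $\ho\colon\qCat\to\Cat$ to the defining isomorphisms $\map(X,1)\cong\Del^0$, $\map(X,A\times B)\cong\map(X,A)\times\map(X,B)$, and (in the cartesian closed case) $\map(A\times B,C)\cong\map(A,C^B)\cong\map(B,C^A)$ to obtain the corresponding 2-categorical universal properties in $\lcat{K}_2$. Your explicit verification that $\ho$ preserves finite products (and the remark that this does not follow merely from its being a left adjoint) is a correct elaboration of a fact the paper simply cites.
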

\begin{proof}
The homotopy category functor $\ho \colon \qCat \to \Cat$ preserves finite products. Applying this to the  defining isomorphisms $\map(X,1)\cong 1$ and $\map(X, A\times B)\cong\map(X,A)\times\map(X,B)$ for the simplicially enriched terminal object and binary products of $\lcat{K}$ yields isomorphisms $\hom(X,1)\cong \catone$ and $\hom(X, A\times B)\cong\hom(X,A)\times\hom(X,B)$. These demonstrate that $1$ and $A\times B$ are also the 2-categorical terminal object and binary products in $\lcat{K}_2$.

In this case where $\lcat{K}$ is cartesian closed, as defined in \ref{defn:closed-cosmoi}, applying the homotopy category functor to the defining isomorphisms on mapping quasi-categories yields the required natural isomorphisms
\[ \hom(A \times B, C) \cong \hom(A,C^B) \cong \hom(B,C^A)\] of hom-categories.
\end{proof}

\begin{defn}\label{defn:groupoidal-object} We say an object $E$ in an $\infty$-cosmos $\lcat{K}$ is \emph{groupoidal} if it is groupoidal in the homotopy 2-category $\lcat{K}_2$, that is, if every 2-cell with codomain $E$ is invertible. This says exactly that for each $X \in \lcat{K}$, the hom-category $\hom(X,E)$ is a groupoid. By a well-known result of Joyal \cite[1.4]{Joyal:2002:QuasiCategories}, this is equivalent to postulating that each mapping quasi-category $\map(X,E)$ is a Kan complex.
\end{defn}

\begin{rmk}
In the $\infty$-cosmoi whose objects model $(\infty,1)$-categories, note that the groupoidal objects are precisely the corresponding $\infty$-\emph{groupoids}. For instance, in the $\infty$-cosmos for quasi-categories, an object is groupoidal if and only if it is a Kan complex. In the $\infty$-cosmos for marked simplicial sets, an object is groupoidal if and only if it is a Kan complex with every edge marked. For general $\infty$-cosmoi, it is always the case that the underlying quasi-category of a groupoidal object is a Kan complex.
\end{rmk}

\subsection{Sliced homotopy 2-categories}\label{ssec:sliced}

For any $\infty$-cosmos $\lcat{K}$ and any object $B$, \ref{defn:sliced-cosmoi} recalls the definition of the \emph{sliced $\infty$-cosmos} $\lcat{K}/B$. In this section, we reprise the relationship between the homotopy 2-category $(\lcat{K}/B)_2$ of the sliced $\infty$-cosmos and the slice $\lcat{K}_2/B$ of the homotopy 2-category of $\lcat{K}$. Our convention is that both of these 2-categories have the same objects, namely the isofibrations with codomain $B$, but their hom-categories differ.

\begin{defn}\label{defn:two-slices}
The objects of $(\lcat{K}/B)_2$ and $\lcat{K}_2/B$ are the isofibrations with codomain $B$. The hom-category between $p \colon E \tfib B$ and $q \colon F \tfib B$ in $(\lcat{K}/B)_2$ is defined by applying the homotopy category functor $\qCat \to \Cat$ to the mapping quasi-category defined by the left-hand pullback of simplicial sets, while the corresponding hom-category in $\lcat{K}_2/B$ is defined by the right-hand pullback of categories
  \begin{equation*}
    \xymatrix{ \map_B(p,q)  \pbexcursion \ar@{->>}[d] \ar[r] & \map(E,F) \ar@{->>}[d]^-{\map(E,q)} &  &   \hom_B(p,q) \pbexcursion \ar[r] \ar@{->>}[d] & \hom(E,F) \ar@{->>}[d]^-{\hom(E,q)} \\ 
       \Del^0 \ar[r]_-p & \map(E,A) & &     \catone \ar[r]_-p & \hom(E,A)}
   \end{equation*}
The vertices of $\map_B(p,q)$ and the objects of $\hom_B(p,q)$ are exactly the functors from $p$ to $q$ in $\lcat{K}/B$, i.e., commutative triangles over $B$. In particular, $(\lcat{K}/B)_2$ and $\lcat{K}_2/B$ have the same underlying 1-category. However, their 2-cells differ. 

Given a parallel pair of 1-cells \[ \xymatrix@=1.5em{ E \ar@{->>}[dr]_p \ar@/^1ex/[rr]^f \ar@/_1ex/[rr]_g & & F \ar@{->>}[dl]^q \\ & B}\] a 2-cell from $f$ to $g$ in

\begin{itemize}
\item $\lcat{K}_2\slice B$ is a homotopy class of 1-simplices $f \to g$ in $\map(E,F)$ that whisker with $q$ to the homotopy class of the degenerate 1-simplex on $p$ in $\map(E,B)$.
    \item $(\lcat{K}\slice B)_2$ is a homotopy class of 1-simplices $f \to g$ in the fibre of of the isofibration $\map(E,q) \colon \map(E,F) \tfib \map(E,B)$ over the vertex $p\in \map(E,B)$ under homotopies which are also constrained to that fibre.
  \end{itemize}
The distinction is that the notion of homotopy involved in the description of 2-cells in $(\lcat{K}\slice B)_2$ is more refined (identifies fewer simplices) than that given for 2-cells in $\lcat{K}_2\slice B$. Each homotopy class representing a 2-cell in $\lcat{K}_2\slice B$ may actually split into a number of distinct homotopy classes representing 2-cells in $(\lcat{K}\slice B)_2$.
\end{defn}

  For any object $B$ in an $\infty$-cosmos $\lcat{K}$ there exists a canonical comparison 2-functor $(\lcat{K}\slice B)_2\to\lcat{K}_2\slice B$. This acts identically on objects $p\colon E\tfib B$ and $q\colon F\tfib B$, while its action $\ho(\map_B(p,q))\to \hom_B(p,q)$ on hom-categories is induced by applying the universal property of the defining pullback square for the hom-category $\hom_B(p,q)$ of $\lcat{K}_2\slice B$ to the square obtained by applying the homotopy category functor $\ho$ to the defining pullback square for the mapping quasi-category $\map_B(p,q)$ of $\lcat{K}\slice B$.  The arguments leading to Proposition~\refI{prop:slice-smothering-2-functor} generalise immediately to the $\infty$-cosmos $\lcat{K}$ to demonstrate that $(\lcat{K}\slice B)_2\to\lcat{K}_2\slice B$ is a \emph{smothering 2-functor}. A functor is  \emph{smothering} if it is surjective on objects, locally surjective on arrows, and conservative (see Definition \refI{defn:smothering}). A 2-functor is \emph{smothering} if it is surjective on objects and locally smothering (see Definition~\refI{defn:smothering-2-functor}).

\begin{prop}\label{prop:smothering-slice-comparison} The canonical 2-functor $(\lcat{K}/B)_2 \to \lcat{K}_2/B$ that acts identically on underlying 1-categories and acts via the quotient map $\ho\map_B(p,q) \to \hom_B(p,q)$ on hom-categories is a smothering 2-functor.\qed
\end{prop}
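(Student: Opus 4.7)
The strategy is to adapt the proof of the corresponding result in the $\infty$-cosmos of quasi-categories, Proposition~\refI{prop:slice-smothering-2-functor}. The key observation is that that proof uses only two features of $\qCat$, both of which hold in an arbitrary $\infty$-cosmos $\lcat{K}$: first, that the hom-space $\map_B(p,q)$ is defined by the simplicial pullback in Definition~\ref{defn:sliced-cosmoi}; and second, that the representable simplicial functor $\map(E,-)$ carries isofibrations of $\lcat{K}$ to isofibrations of quasi-categories, which is part of axiom~\ref{qcat.ctxt.cof.def}\ref{qcat.ctxt.cof:b}. Given this, I would verify the three defining properties of a smothering 2-functor in turn, following Definitions~\refI{defn:smothering} and~\refI{defn:smothering-2-functor}.

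Surjectivity on objects is immediate, since both 2-categories have the same objects and the comparison 2-functor acts as the identity there. Similarly, the hom-functor $\ho\map_B(p,q)\to\hom_B(p,q)$ is the identity on objects, since both hom-categories have the underlying set of functors $E\to F$ over $B$ in $\lcat{K}$.

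For local fullness, a 2-cell $[\alpha]\colon f\to g$ of $\hom_B(p,q)$ is represented by a 1-simplex $\alpha\colon f\to g$ of $\map(E,F)$ together with the datum that $\map(E,q)(\alpha)$ equals the degenerate 1-simplex $\id_p$ in $\ho\map(E,B)$; this equation is witnessed by a 2-simplex $\beta$ in $\map(E,B)$ with one edge $\map(E,q)(\alpha)$ and two degenerate edges at $p$. Assembling $\alpha$ together with appropriate degenerate 1-simplices furnishes an inner horn in $\map(E,F)$ whose image under $\map(E,q)$ is $\beta$; using the inner fibration property of the isofibration $\map(E,q)\colon\map(E,F)\tfib\map(E,B)$ we may solve this lifting problem to produce a 2-simplex in $\map(E,F)$ with a non-degenerate face $\alpha'\colon f\to g$ satisfying $\map(E,q)(\alpha')=\id_p$ and homotopic to $\alpha$. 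Then $[\alpha']$ is a 2-cell of $\ho\map_B(p,q)$ mapping to $[\alpha]$.

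For local conservativity, suppose that a 2-cell $[\alpha]\colon f\to g$ of $\ho\map_B(p,q)$ becomes invertible in $\hom_B(p,q)$. Then $\alpha$ is an isomorphism in the quasi-category $\map(E,F)$, whence by Joyal's special horn theorem~\refI{prop:joyal-special-horn} it extends to a simplicial map $\iso\to\map(E,F)$ from the nerve of the free-living isomorphism. The composite $\iso\to\map(E,F)\to\map(E,B)$ extends $\id_p$, and hence differs from the constant map at $p$ by an automorphism of $\iso$ in $\map(E,B)$; using the iso-lifting property enjoyed by the isofibration $\map(E,q)$ one can then replace the original extension by one that projects to the constant map, and hence factors through the fibre $\map_B(p,q)$. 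This displays $\alpha$ as an isomorphism in that fibre, so $[\alpha]$ is invertible in $\ho\map_B(p,q)$. The main obstacle will be in this last step, where one needs to carefully deploy the iso-lifting property of the isofibration of quasi-categories $\map(E,q)$ so as to arrange the isomorphism witnesses for $\alpha$ to lie in the fibre without disturbing $\alpha$ itself; this is precisely the manoeuvre executed in Proposition~\refI{prop:slice-smothering-2-functor}, and the general case presents no additional difficulty given axiom~\ref{qcat.ctxt.cof.def}\ref{qcat.ctxt.cof:b}.
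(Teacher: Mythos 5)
Your proposal is correct and is essentially the paper's own proof: the paper simply observes that the arguments establishing Proposition~\refI{prop:slice-smothering-2-functor} use only the pullback definition of $\map_B(p,q)$ and the axiom that $\map(E,q)\colon\map(E,F)\tfib\map(E,B)$ is an isofibration of quasi-categories, and hence generalise verbatim to any $\infty$-cosmos, which is exactly your opening observation (and your fullness argument via inner horn lifting is the standard one). The only wrinkle is in your conservativity step, where the claim that the composite $\iso\to\map(E,B)$ ``differs from the constant map at $p$ by an automorphism of $\iso$'' is not itself an argument---the clean move is to lift the marked trivial cofibration $(\Delta^1)^\sharp\inc\iso$ (equivalently, to fill special outer horns as in \refI{prop:joyal-special-horn}) against $\map(E,q)$ directly over the constant map at $p$, so that the extension lands in the fibre from the start---but this is indeed the manoeuvre of \refI{prop:slice-smothering-2-functor} that you invoke.
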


The ramifications of Proposition~\ref{prop:smothering-slice-comparison} are that for many purposes it makes no difference whether we work in $(\lcat{K}/B)_2$ or in $\lcat{K}_2/B$. The following corollary summaries a few particular instantiations of this principle.

\begin{cor}\label{cor:slice-2-cat-comparison} Fix an $\infty$-cosmos $\lcat{K}$ and an object $B$. 
\begin{enumerate}[label=(\roman*)]
\item A pair of isofibrations over $B$ are equivalent as objects in $(\lcat{K}/B)_2$ if and only if they are equivalent in $\lcat{K}_2\slice B$.
\item\label{itm:slice-2-cat-comparison-equiv} A functor over $B$ is an equivalence in $(\lcat{K}/B)_2$ if and only if it is an equivalence in $\lcat{K}_2\slice B$ if and only if it is an equivalence in $\lcat{K}$.
\item\label{itm:slice-2-cat-comparison-iso-class} A parallel pair of functors over $B$ are isomorphic in $(\lcat{K}/B)_2$ if and only if they are isomorphic in $\lcat{K}_2\slice B$.
\item\label{itm:slice-2-cat-comparison-groupoidal} An object $p \colon E \tfib B$ is groupoidal, in the sense that any 2-cell with codomain $p$ is invertible, in $(\lcat{K}\slice B)_2$ if and only if it is groupoidal in $\lcat{K}_2\slice B$.
\item\label{itm:slice-2-cat-comparison-adj} A functor over $B$ admits a right or left adjoint in $(\lcat{K}\slice B)_2$ if and only if it admits the corresponding adjoint in $\lcat{K}_2/B$.
\end{enumerate}
\end{cor}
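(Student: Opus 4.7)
The plan is to derive each of the five statements as a formal consequence of Proposition~\ref{prop:smothering-slice-comparison}, namely that the comparison 2-functor $(\lcat{K}/B)_2 \to \lcat{K}_2/B$ is smothering --- its hom-functors $\ho\map_B(p,q) \to \hom_B(p,q)$ are surjective on objects, locally surjective on arrows, and conservative. Since any 2-functor preserves equivalences, invertible 2-cells, groupoidal objects, and adjunctions, for each item I need only verify the non-trivial reflection direction.

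The technical core is part (iii): a 2-isomorphism $\alpha$ between parallel 1-cells over $B$, when regarded as a morphism of $\hom_B(p,q)$, lifts by local surjectivity to some morphism $\tilde\alpha$ of $\ho\map_B(p,q)$ between the same two objects (the 1-cells agree since the two slice 2-categories share an underlying 1-category), and conservativity then promotes $\tilde\alpha$ to an isomorphism. Parts (i) and (ii) reduce to (iii) by unpacking the definition of an equivalence as a functor over $B$ together with a pair of invertible 2-cells witnessing inverses, and then transporting those isomorphisms between the two slice 2-categories. The remaining equivalence in (ii), identifying fibred equivalence with equivalence in $\lcat{K}$, is Lemma~\refIV{lem:fibred-equivalence}. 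For (iv), any 2-cell in $\lcat{K}_2/B$ with codomain $p$ lifts to one in $(\lcat{K}/B)_2$ with the same codomain, which is then invertible by the groupoidality hypothesis, hence so is its image; the converse direction follows directly from conservativity.

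The main obstacle lies in part (v), because reflecting an adjunction requires lifting the unit and counit \emph{compatibly} with both triangle identities. My plan is to first lift $\eta$ to $\tilde\eta$ via local surjectivity, then to lift $\varepsilon$ to some $\tilde\varepsilon$, and finally to adjust $\tilde\varepsilon$ within its 2-isomorphism class so as to enforce the triangle identities strictly. This adjustment manoeuvre is available in any smothering-2-functor scenario, and is formalised in the style of Observation~\refIV{obs:gen.2-cat.results}; once executed, the lifted tuple $(f, g, \tilde\eta, \tilde\varepsilon)$ constitutes an adjunction in $(\lcat{K}/B)_2$ whose image is the given adjunction in $\lcat{K}_2/B$. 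The dual argument handles the case of a left adjoint, and the proof is complete.
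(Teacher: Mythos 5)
Your proposal is correct and follows essentially the same route as the paper: everything except the final clause of (ii) and item (v) is a formal consequence of the smothering 2-functor of Proposition~\ref{prop:smothering-slice-comparison} (preservation being automatic, reflection coming from local surjectivity plus conservativity), with the remaining two points handled by appealing to previously established results in the series. The only difference is cosmetic --- you cite Lemma~\refIV{lem:fibred-equivalence} and Observation~\refIV{obs:gen.2-cat.results} where the paper cites Lemma~\refI{lem:proj-is-1-conservative} and Lemma~\refI{lem:missed-lemma} --- and your sketch of the counit-adjustment argument for (v) is the standard proof underlying those references.
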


Note that all of these results, with the exception of the final clause of \ref{itm:slice-2-cat-comparison-equiv}, are formal consequences of the fact that $(\lcat{K}/B)_2 \to \lcat{K}_2/B$ is a smothering 2-functor.

\begin{proof}
The canonical identity-on-underlying-1-categories 2-functor $(\lcat{K}/B)_2 \to \lcat{K}_2/B$ preserves equivalences, isomorphic 2-cells, and adjunctions. The ``reflection'' part of these assertions follows in each case from the fact that  $(\lcat{K}/B)_2 \to \lcat{K}_2/B$ is a smothering 2-functor: smothering 2-functors reflect equivalence and equivalences and reflect and create  2-cell isomorphisms.
A proof of the final assertion in \ref{itm:slice-2-cat-comparison-equiv}, that a functor over $B$ defines an equivalence in the slice 2-categories if and only if it defines an equivalence in $\lcat{K}_2$, can be found in Lemma~\refI{lem:proj-is-1-conservative}, and a proof of \ref{itm:slice-2-cat-comparison-adj} can be found in Lemma~\refI{lem:missed-lemma}.
\end{proof}

\begin{defn}[fibred equivalence] In an $\infty$-cosmos $\tcat{K}$, we say that two isofibrations with codomain $B$ are \emph{equivalent over $B$} if the equivalent conditions of Corollary~\ref{cor:slice-2-cat-comparison}\ref{itm:slice-2-cat-comparison-equiv} are satisfied. By Proposition~\refIV{prop:equiv.are.weak.equiv}, recalled in the previous section, this is equivalent to asking that there is an equivalence between them as objects in the sliced $\infty$-cosmos $\lcat{K}/B$.
\end{defn}

\begin{defn}[fibred adjunction] In \cite{RiehlVerity:2012tt}, we reserved the term \emph{fibred adjunction} for an adjunction in the homotopy 2-category $(\tcat{K}/B)_2$ of a sliced $\infty$-cosmos. However, on account of Corollary~\ref{cor:slice-2-cat-comparison}\ref{itm:slice-2-cat-comparison-adj} we also apply this appellation to adjunctions in $\tcat{K}_2/B$, as the unit and counit 2-cells in here can always be lifted to unit and counit 2-cells in $(\tcat{K}/B)_2$.
\end{defn}

\begin{rmk}\label{rmk:adjunctions-pullback}
In particular, a functor $f \colon A \to B$ induces a functor of sliced $\infty$-cosmoi $f^* \colon \lcat{K}/B \to \lcat{K}/A$ that carries an isofibration over $B$ to its (simplicial) pullback, an isofibration over $A$. The induced 2-functor $f^* \colon (\lcat{K}/B)_2 \to (\lcat{K}/A)_2$, like any 2-functor, preserves adjunctions. By contrast, there is no ``pullback 2-functor'' $f^* \colon \lcat{K}_2/B \to \lcat{K}_2/A$. However, on account of Corollary~\ref{cor:slice-2-cat-comparison}\ref{itm:slice-2-cat-comparison-adj} we can say nonetheless assert that fibred adjunctions may be pulled back along any functor.  For a discussion of this point at the level of homotopy 2-categories, without reference to the simplicially enriched universal property of pullbacks, see \S\refIV{ssec:smothering}.
\end{rmk}

\subsection{Simplicial limits modeling comma \texorpdfstring{$\infty$}{infinity}-categories}\label{ssec:limits}

The homotopy 2-categories of $\infty$-cosmoi, including those of the slices of other $\infty$-cosmoi, are \emph{abstract homotopy 2-categories}: that is, strict 2-categories admitting comma objects and iso-comma objects of a particular weak variety discussed abstractly in \S\refIV{ssec:htpy-2-cat-is-abstract}.  At the level of the $\infty$-cosmos, these weak 2-limits are constructed as particular weighted limits, an up-to-isomorphism limit notion. The constructions resemble familiar homotopy limits but the term ``weighted limit'' is more precise: for instance, the difference between the construction of the comma object and of the iso-comma object is the choice of a non-invertible or invertible interval. 

Our development of the theory of cartesian fibrations in \cite{RiehlVerity:2015fy} is entirely 2-categorical, taking place in an abstract homotopy 2-category. Here, for simplicity, we frequently take advantage of extra strictness provided by $\infty$-cosmos-level models of weak 2-limit constructions, which commute up to isomorphism (rather than simply isomorphic 2-cell) and preserve the chosen class of isofibrations. In the present paper, we can do without iso-commas entirely. As noted in \S\refIV{ssec:pullback}, iso-commas formed from a cospan in which at least one leg is an isofibration are equivalent to the pullbacks of \ref{qcat.ctxt.cof.def}\ref{qcat.ctxt.cof:a}. As we won't make use of the weak 2-universal properties of these pullbacks (which are somewhat less well behaved than the closely related iso-commas), we will typically refer to them as \emph{simplicial pullbacks} here, to avoid confusion with the terminology used in previous papers in this series. Our aim here is to achieve an expository simplification: simplicial pullbacks, i.e., ordinary strict pullbacks satisfying a simplicially enriched version of the usual universal property, are quite familiar.

\begin{rec}[comma $\infty$-categories]\label{rec:comma}
Given a pair of functors  $f\colon B\to A$ and $g\colon C\to A$ in an $\infty$-cosmos $\lcat{K}$, their comma object, which we call their \emph{comma $\infty$-category}, may be constructed by the following simplicial pullback, formed in $\lcat{K}$:
  \begin{equation}\label{eq:comma-as-simp-pullback}
    \xymatrix@=2.5em{
      {f\comma g}\pbexcursion \ar[r]\ar@{->>}[d]_{(p_1,p_0)} &
      {\Del^1\pwr A} \ar@{->>}[d]^{(p_1,p_0)} \\
      {C\times B} \ar[r]_-{g\times f} & {A\times A}
    }
  \end{equation}
The data of the simplicial pullback defines a canonical square 
\begin{equation}\label{eq:comma-square}
    \xymatrix@=10pt{
      & f \downarrow g \ar[dl]_{p_1} \ar[dr]^{p_0} \ar@{}[dd]|(.4){\phi}|{\Leftarrow}  \\ 
      C \ar[dr]_g & & B \ar[dl]^f \\ 
      & A}
  \end{equation}
in the homotopy 2-category $\lcat{K}_2$ with the property that for any object $X$,  the induced comparison functor of hom-categories
  \begin{equation*}
    \hom(X,f\comma g)\longrightarrow\hom(X,f)\comma\hom(X,g)
  \end{equation*}
is \emph{smothering}: surjective on objects, locally surjective on arrows, and conservative.

Explicitly, this weak universal property supplies us with three operations in the homotopy 2-category.
  \begin{enumerate}[label=(\roman*)]
    \item (1-cell induction) Given a \emph{comma cone} $\alpha \colon fb \To gc$
\[    \vcenter{ \xymatrix@=10pt{
      & X \ar[dl]_{c} \ar[dr]^{b} \ar@{}[dd]|(.4){\alpha}|{\Leftarrow}  \\ 
      C \ar[dr]_g & & B \ar[dl]^f \\ 
      & A}} \quad =\quad \vcenter{     \xymatrix@=10pt{ & X \ar@{-->}[d]^{a} \ar@/^2ex/[ddr]^b \ar@/_2ex/[ddl]_c \\ 
      & f \downarrow g \ar[dl]_{p_1} \ar[dr]^{p_0} \ar@{}[dd]|(.4){\phi}|{\Leftarrow}  \\ 
      C \ar[dr]_g & & B \ar[dl]^f \\ 
      & A}}
\] 
over the pair of functors $f$ and $g$, there exists a 1-cell $a\colon X\to f\comma g$ so that $p_0a = b$, $p_1a = c$, and $\alpha = \phi a$.
    \item (2-cell induction) Given a pair of functors $a,a'\colon X\to f\comma g$ and a pair of 2-cells $\tau_0\colon p_0a \Rightarrow p_0a'$ and $\tau_1\colon p_1a\Rightarrow p_1a'$ which are compatible in the sense that $\phi a'\cdot f\tau_0 = g\tau_1\cdot \phi a$, then there exists a 2-cell $\tau\colon a\Rightarrow a'$ with $p_0\tau=\tau_0$ and $p_1\tau=\tau_1$.
    \item (conservativity) Any 2-cell $\tau\colon a\Rightarrow a'\colon X\to f\comma g$ with the property that the whiskered 2-cells $p_0\tau$ and $p_1\tau$ are both isomorphisms is also an isomorphism.
  \end{enumerate}
\end{rec}

We refer to \eqref{eq:comma-square} as a \emph{comma square} and $C \xleftarrow{p_1} f \comma g \xrightarrow{p_0} B$ as a \emph{comma span}. Note that, by construction, the map $(p_1,p_0) \colon f \comma g \tfib C \times B$ is an isofibration.

\begin{rec}\label{rec:ess.unique.1-cell.ind}
As discussed in \S 3.5 of \cite{RiehlVerity:2012tt}, a parallel pair of functors $a,a' \colon X \to f \comma g$ are isomorphic over $C \times B$ if and only if $a$ and $a'$  both enjoy the same defining properties as 1-cells induced by the weak 2-universal property of $f\comma g$, i.e., they satisfy $p_0a= p_0a'$, $p_1a = p_1a'$, and $\phi a = \phi a'$. That is, 2-cells of the form displayed on the left
\[ \vcenter{\xymatrix@=10pt{
      & X \ar[dl]_{c} \ar[dr]^{b} \ar@{}[dd]|(.4){\alpha}|{\Leftarrow}  \\ 
      C \ar[dr]_g & & B \ar[dl]^f \\ 
      & A}} \qquad \leftrightsquigarrow \qquad \vcenter{     \xymatrix@=10pt{ & X \ar@{->}[dd]^{a} \ar[dr]^b \ar[dl]_c \\ C & & B \\ & f \comma g \ar@{->>}[ul]^{p_1} \ar@{->>}[ur]_{p_0}}}\]
stand in bijection with isomorphism classes of maps of spans, as displayed on the right. Note however that the isomorphic 2-cells between a parallel pair of isomorphic spans are not  typically unique.
\end{rec}

\begin{rec}[special cases of commas]
When $f$ or $g$ is an identity, we write $A \comma g$ or $f \comma A$, respectively, for the comma object. In the case where both $f$ and $g$ are identities, we write $A^\cattwo$ for $A \comma A$ because this object is a \emph{weak} $\cattwo$-\emph{cotensor}, in the sense introduced in \S\refI{subsec:weak-2-limits}.

Their weak universal properties in the homotopy 2-category only characterize these objects up to equivalence, but we frequently make use of the preferred construction of $A^\cattwo$ as the cotensor $\Del^1 \pwr A$. This allows us to make use of the fact that $\Del^1 \pwr - \colon \lcat{K} \to \lcat{K}$ is a functor of $\infty$-cosmoi, preserving isofibrations and simplicial limits.
\end{rec}

\begin{lem}\label{lem:equiv-invariance-of-commas} Given a pair of cospans connected by equivalences
\[ \xymatrix{ C' \ar[r]^{g'} \ar[d]_{c}^{\rotatebox{90}{$\labelstyle\sim$}} & A' \ar[d]_a^{\rotatebox{90}{$\labelstyle\sim$}} & B' \ar[l]_{f'} \ar[d]^b_{\rotatebox{90}{$\labelstyle\sim$}} \\ C \ar[r]_g & A & B \ar[l]^f}\] the induced functor $f' \comma g' \we f \comma g$ between the comma constructions is an equivalence, commuting, via the legs of the comma spans, with the equivalence $c \times b \colon C' \times B' \we C \times B$. 
\end{lem}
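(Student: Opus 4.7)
The plan is to identify $f'\comma g' \to f\comma g$ with the comparison map between simplicial pullbacks induced by a morphism of cospans whose three components are equivalences, and then appeal to the equivalence-invariance of simplicial pullbacks along isofibrations. Indeed, each comma $\infty$-category is constructed as the simplicial pullback displayed in Recollection~\ref{rec:comma}. The given data assembles into a morphism of cospans
\[ \bigl(C'\times B' \xrightarrow{g'\times f'} A'\times A' \xleftarrow{(p_1,p_0)} \Del^1\pwr A'\bigr) \longrightarrow \bigl(C\times B \xrightarrow{g\times f} A\times A \xleftarrow{(p_1,p_0)} \Del^1\pwr A\bigr) \]
with components $c\times b$, $a\times a$, and $\Del^1\pwr a$, and the induced map on pullbacks is precisely $f'\comma g' \to f\comma g$. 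Each of these cospan-components is an equivalence: $c\times b$ and $a\times a$ by 2-functoriality of the cartesian product in $\lcat{K}_2$ (Proposition~\ref{prop:cartesian-closure}), and $\Del^1\pwr a$ because $\Del^1\pwr-$ is a functor of $\infty$-cosmoi (Example~\ref{ex:functors}) and therefore preserves equivalences. The right-hand leg of each cospan is an isofibration by the isofibration axiom of \ref{qcat.ctxt.cof.def}\ref{qcat.ctxt.cof:b} applied to the inclusion $\boundary\Del^1 \inc \Del^1$ and the terminal isofibrations $A \tfib 1$ and $A'\tfib 1$.

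The main step is therefore to verify that a morphism of cospans whose three components are equivalences, and whose right-hand legs are isofibrations, induces an equivalence between the corresponding simplicial pullbacks in any $\infty$-cosmos. The cleanest route is via representability: for each $W \in \lcat{K}$, the representable functor $\map(W,-) \colon \lcat{K} \to \qCat$ is a functor of $\infty$-cosmoi (Example~\ref{ex:functors}), so it preserves the simplicial pullbacks, carries the three equivalences between cospans to equivalences of quasi-categories, and carries the right-hand legs to isofibrations of quasi-categories. This reduces the claim to the analogous statement in $\qCat$, where pullbacks along isofibrations are homotopy pullbacks in the Joyal model structure and are thus invariant under equivalence of cospans. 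That every $\map(W,-)$ carries $f'\comma g' \to f\comma g$ to an equivalence of quasi-categories then delivers the conclusion in $\lcat{K}$ via the representable characterisation of equivalences recorded in Proposition~\refIV{prop:equiv.are.weak.equiv}.

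Compatibility of the induced equivalence with the legs of the comma spans via $c\times b$ is automatic from the naturality of the simplicial pullback construction: the left-hand legs $(p_1,p_0)\colon f'\comma g' \tfib C'\times B'$ and $(p_1,p_0) \colon f\comma g \tfib C\times B$ are the respective pullback projections, and the square they form with $c\times b$ along the bottom commutes by construction. The main conceptual input is the equivalence-invariance of simplicial pullbacks along isofibrations; once transported through the representable functor to $\qCat$ this becomes a standard model-categorical fact, and the remainder of the argument is a straightforward assembly of preservation properties already recorded in \S\ref{ssec:cosmoi}.
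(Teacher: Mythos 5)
Your argument is correct, but it takes a genuinely different route from the paper's. You reduce everything, via the representable functors $\map(W,-)$ and the representable definition of equivalence, to a statement about the Joyal model structure: the cogluing lemma, i.e.\ invariance of pullbacks under componentwise equivalence of cospans of quasi-categories whose right legs are isofibrations. That external fact is true (all objects in sight are fibrant and the compared legs are fibrations, so the strict pullbacks compute homotopy pullbacks and the induced comparison is an equivalence), and the rest of your assembly --- the morphism of cospans with components $c\times b$, $a\times a$, $\Del^1\pwr a$, the preservation of the defining simplicial pullbacks, isofibrations, and equivalences by $\map(W,-)$, and the commutation with the legs by naturality --- is accurately checked against the cosmos axioms. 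The paper instead argues entirely inside the $\infty$-cosmos: it first observes (Lemma \refI{lem:comma-obj-maps}) that when $a$, $b$, $c$ are trivial fibrations the induced map $f'\comma g'\to f\comma g$ is a trivial fibration, and then bootstraps to arbitrary equivalences by the functorial Brown factorisation of Lemma \refIV{lem:Brown.fact} (any equivalence factors as a section of a trivial fibration followed by a trivial fibration, functorially in simplicial limits) together with 2-of-3. What each approach buys: the paper's proof stays within the toolkit already recorded in papers I--IV and never invokes any model-categorical input about $\qCat$ beyond the cosmos axioms, which is in keeping with its model-independent stance; your proof is shorter and conceptually transparent, but its key step (the cogluing lemma in the Joyal model structure) is not proved or cited in this series, so to be self-contained you would need to supply it, e.g.\ by the Reedy argument for the cospan shape with one inverse morphism, or by an appeal to a standard reference. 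You should also note that the citation of Proposition \refIV{prop:equiv.are.weak.equiv} at the end is not quite the right one --- what you actually use is the definition of equivalences in an $\infty$-cosmos as the representably-defined ones --- though this is cosmetic rather than a gap.
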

\begin{proof}
Lemma~\refI{lem:comma-obj-maps} shows that if the maps $a$, $b$, and $c$ are trivial fibrations, then so is the functor $f' \comma g' \trvfib f \comma g$ induced between the pullbacks \eqref{eq:comma-as-simp-pullback}. The general result follows from Ken Brown's lemma. Lemma~\refIV{lem:Brown.fact} shows that any map can be factored as a fibration preceded by an equivalence that defines a section of a trivial fibration; of course, if the original map is an equivalence, then the right factor is a trivial fibration. This construction, making use of various simplicial limits, is functorial, and so induces a corresponding factorization of the induced functor $f' \comma g' \to f \comma g$. Lemma~\refI{lem:comma-obj-maps} implies that the right factor in this factorization is a trivial fibration, and so, by the 2-of-3 property, the composite functor is an equivalence.
\end{proof}

\begin{lem}\label{lem:commas-pullback}
Consider functors $f\colon B\to A$, $g\colon C\to A$, $h\colon B'\to B$, and $k\colon C'\to C$ in an $\infty$-cosmos. Then the preferred simplicial models of comma $\infty$-categories are related by the following simplicial pullbacks.
\[ \xymatrix{ f \comma gk \ar@{->>}[d]_{p_1}\pbexcursion \ar[r] & f \comma g \ar@{->>}[d]^{p_1} & fh \comma g \ar@{->>}[d]_{p_0} \pbexcursion \ar[r] & f \comma h \ar@{->>}[d]^{p_0} \\ C' \ar[r]_k & C & B' \ar[r]_h & B}\]
\end{lem}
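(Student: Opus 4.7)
The proof is a direct application of the pullback pasting lemma to the defining simplicial pullback~\eqref{eq:comma-as-simp-pullback}. The two squares asserted in the statement are entirely symmetric, so it suffices to treat the left-hand one; the right-hand square follows by the analogous argument, pulling back on the other side of the cospan.

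The key observation is that the bottom cospan in the defining pullback square of $f\comma gk$ factors through that of $f \comma g$, namely as $gk \times f = (g \times f) \circ (k \times 1_B)$. Consequently we may splice the two defining pullbacks together as the right-hand square and the outer rectangle of the diagram
\[
\xymatrix@=1.5em{
f \comma gk \ar@{->>}[d]_{(p_1,p_0)} \ar[r] & f \comma g \ar[r] \ar@{->>}[d]_{(p_1,p_0)} & \Del^1 \pwr A \ar@{->>}[d]^{(p_1,p_0)} \\
C' \times B \ar[r]_-{k \times B} & C \times B \ar[r]_-{g \times f} & A \times A \rlap{,}
}
\]
and the pasting lemma for simplicial pullbacks then shows that the left-hand square is itself a simplicial pullback.

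The desired pullback is obtained by stacking this square on top of the obvious simplicial pullback
\[
\xymatrix@=1.5em{
C' \times B \ar@{->>}[d]_{\pi_{C'}} \ar[r]^-{k \times B} & C \times B \ar@{->>}[d]^{\pi_C} \\
C' \ar[r]_-{k} & C
}
\]
relating first-factor projections. A second application of the pasting lemma, combined with the observation that the composite vertical leg on each side is precisely $p_1$, delivers the claimed simplicial pullback. There is no real obstacle: beyond the formal pasting argument, the only point to verify is that the horizontal arrow $f \comma gk \to f \comma g$ constructed via this universal property coincides with the one induced by the evident functoriality of the comma construction in its defining cospan, and this is immediate from inspecting how each map arises from the two defining pullback squares.
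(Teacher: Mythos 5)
Your argument is correct and is essentially the paper's own proof written out in detail: the paper simply invokes ``the standard composition and cancellation results for simplicial pullback squares and rectangles,'' which is precisely the two-step pasting-lemma argument you give (cancellation against the defining pullback \eqref{eq:comma-as-simp-pullback}, then composition with the product-projection pullback). No gap; the right-hand square is indeed handled by the symmetric argument (note the target there should read $f \comma g$, a typo in the statement).
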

\begin{proof}
This follows easily from the standard composition and cancellation results for simplicial pullback squares and rectangles. Compare with Lemma~\refIV{lem:restricted-commas-pullback}.
\end{proof}

It is easy to show that any isofibration $(q,p) \colon E \tfib C\times B$ equipped with a 2-cell satisfying the weak universal property of the comma $\infty$-category for the cospan $C \xrightarrow{g} A \xleftarrow{f} B$ must be equivalent over $C \times B$ to the object $f \comma g$ constructed in \eqref{eq:comma-as-simp-pullback}; see   Lemma~\refI{lem:unique-weak-2-limits}. The following lemma proves the converse: that any $\infty$-category that is equivalent to a comma $\infty$-category via an equivalence that commutes with the legs of the comma span must enjoy the same weak universal property in the homotopy 2-category. 

\begin{lem}\label{lem:equiv-to-comma} Suppose $(q,p) \colon E \tfib C \times B$ is an isofibration and $C \xrightarrow{g} A \xleftarrow{f} B$ is a pair of functors so that $E$ is equivalent to $f \comma g$ over $C \times B$. Then the composite of the equivalence $E \to f \comma g$ with the canonical comma 2-cell displays $E$ as a weak comma object for the functors $f$ and $g$. 
\end{lem}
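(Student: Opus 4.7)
The plan is to show that the three components of the weak universal property of a comma object---1-cell induction, 2-cell induction, and conservativity---transfer from $f \comma g$ to $E$ via the given equivalence. Let $e \colon E \we f \comma g$ be the equivalence assumed in the statement, choose an equivalence inverse $e' \colon f \comma g \we E$ together with 2-isomorphisms $\eta \colon \id_E \cong e'e$ and $\epsilon \colon ee' \cong \id_{f \comma g}$, all living in the slice over $C \times B$, so that $p e' = p_0$, $q e' = p_1$, $p_0 e = p$, $p_1 e = q$, and the whiskerings $p \eta$, $q \eta$, $p_0 \epsilon$, $p_1 \epsilon$ are all identities. The candidate comma cone on $E$ is the whiskered 2-cell $\phi e \colon f p \To g q$, where $\phi$ denotes the canonical cone on $f \comma g$.

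For 1-cell induction, given a cone $\alpha \colon f b \To g c$ from an object $X$, I first apply the weak universal property of $f \comma g$ to obtain a functor $a' \colon X \to f \comma g$ with $p_0 a' = b$, $p_1 a' = c$, and $\phi a' = \alpha$. Then $a := e' a' \colon X \to E$ satisfies $p a = b$ and $q a = c$ automatically, while the naturality square of $\phi$ against the 2-cell $\epsilon$ collapses to the identity $\phi (e e') = \phi$, because both projections $p_0 \epsilon$ and $p_1 \epsilon$ are identities. Consequently $(\phi e) a = \phi (e e' a') = \phi a' = \alpha$, as required.

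For 2-cell induction, supplied with $a, a' \colon X \to E$ and compatible 2-cells $\tau_0 \colon p a \To p a'$ and $\tau_1 \colon q a \To q a'$, I transport the data to $f \comma g$ by whiskering with $e$: since $p_0 e = p$ and $p_1 e = q$, the 2-cells $\tau_0$ and $\tau_1$ double as 2-cells $p_0 (ea) \To p_0 (ea')$ and $p_1 (ea) \To p_1 (ea')$, and the compatibility condition is exactly the one we hypothesized for the cone $\phi e$. Applying 2-cell induction in $f \comma g$ produces $\tau \colon e a \To e a'$ with $p_0 \tau = \tau_0$ and $p_1 \tau = \tau_1$, and setting $\hat\tau := (\eta a')^{-1} \cdot (e' \tau) \cdot (\eta a) \colon a \To a'$ provides the 2-cell required for $E$. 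Its projections collapse to $\tau_0$ and $\tau_1$ because whiskering the slice 2-cells $\eta$ and $\eta^{-1}$ with $p$ or $q$ yields identities, while whiskering $\tau$ with $pe' = p_0$ or $qe' = p_1$ recovers $\tau_0$ or $\tau_1$.

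Finally, conservativity for $E$ reduces to conservativity for $f \comma g$ plus the general fact that post-composition with an equivalence is locally an equivalence of hom-categories, hence in particular conservative: if $\tau \colon a \To a'$ in $\hom(X, E)$ has both $p \tau$ and $q \tau$ invertible, then $e \tau$ has $p_0(e \tau) = p \tau$ and $p_1(e \tau) = q \tau$ invertible, so it is itself invertible by conservativity in $f \comma g$, whence $\tau$ is invertible. The only delicate point in the whole argument is the bookkeeping around how $\eta$ and $\epsilon$ interact with the legs of the spans; working with an equivalence in the slice $\lcat{K}_2 / (C \times B)$ makes these interactions trivial and drives the entire proof.
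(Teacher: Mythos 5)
Your proof is correct, and it establishes precisely what the statement requires: that the comparison functor $\hom(X,E)\to\hom(X,f)\comma\hom(X,g)$ induced by the cone $\phi e$ is smothering, which you verify clause by clause (1-cell induction, 2-cell induction, conservativity). The organization differs from the paper's. The paper factors that comparison functor as $\hom(X,E)\to\hom(X,f\comma g)\to\hom(X,f)\comma\hom(X,g)$, an equivalence followed by a smothering functor, so fullness and conservativity come for free from closure of full and of conservative functors under composition; the only point needing an argument is strict surjectivity on objects, which the paper obtains by lifting an object to $\hom(X,f\comma g)$ and observing that it is isomorphic, over an identity in $\hom(X,C\times B)$, to something in the image of $\hom(X,E)$, and that such an isomorphism does not change the image in $\hom(X,f)\comma\hom(X,g)$. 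You instead promote the given equivalence to a fibred one --- an inverse $e'$ and 2-isomorphisms $\eta,\epsilon$ lying over $C\times B$, which is legitimate by Corollary~\ref{cor:slice-2-cat-comparison}\ref{itm:slice-2-cat-comparison-equiv}, the same input the paper invokes --- and transport each operation explicitly; your interchange computation $\phi(ee')=\phi$ is exactly the paper's ``isomorphic over an identity has the same image'' observation in disguise, and your use of conservativity of $\hom(X,e)$ replaces the paper's appeal to the composite being conservative. What your version buys is explicitness, avoiding any appeal to the formal closure properties of smothering functors, at the cost of more 2-categorical bookkeeping for the clauses the paper dispatches formally.
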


\begin{proof}
For any $X$, the canonical functor 
\[ \hom(X,E) \to \hom(X,f \comma g) \to \hom(X,f) \comma \hom(X,g)\] is the composite of an equivalence with a smothering functor, and as such is immediately full and conservative. It remains only to show that the composite, which is clearly essentially surjective on objects, is in fact surjective on objects. 

To this end, observe that any object in $\hom(X,f) \comma \hom(X,g)$ has a preimage in $\hom(X,f \comma g)$, which is isomorphic, via some isomorphism projecting to an identity in $\hom(X,C \times B)$ to an object in the image of $\hom(X,E) \to \hom(X,f \comma g)$. This follows from Corollary~\ref{cor:slice-2-cat-comparison}\ref{itm:slice-2-cat-comparison-equiv} which tells us that any equivalence between the domains of isofibrations over a common base can be promoted to an equivalence in the slice 2-category over that base, provided at least one of the maps is fibred.  But any pair of objects in $\hom(X,f \comma g)$, which are isomorphic over an identity in $\hom(X,C \times B)$, have the same image in $\hom(X,f) \comma \hom(X,g)$. Thus $\hom(X,E) \to \hom(X,f) \comma \hom(X,g)$ is surjective on objects, as desired.
\end{proof}

\begin{ex}\label{ex:cone-comma-squares}
For any pair of finitely presented simplicial sets $X$ and $Y$, Proposition~\refI{prop:join-fatjoin-equiv} supplies a map $X \diamond Y \to X \join Y$, under $X \coprod Y$, that is a weak equivalence in the Joyal model structure. It follows that for any object $A$ in an $\infty$-cosmos $\lcat{K}$, the induced map $(X \join Y) \pwr A \to (X \diamond Y) \pwr A$ on cotensors is an equivalence of $\infty$-categories over $(X \pwr A) \times (Y \pwr A)$. As observed in the proof of Lemma~\refI{lem:cone-equiv-fatcone}, $(X \diamond \Delta^0)\pwr A$ is isomorphic to $(X \pwr A) \comma \Delta$, where $\Delta \colon A \to (X \pwr A)$ is the constant diagram functor, as both of these objects are defined by the same pullback in $\lcat{K}$. Using the common notation $X^{\triangleright} \defeq X \join \Delta^0$ and $X^{\triangleleft} \defeq \Delta^0 \join X$, Lemma~\ref{lem:equiv-to-comma} supplies comma squares
\[ \    \xymatrix@=10pt{ & \save[]{ {X^{\triangleright}}\pwr A}  \ar[dl]_-{\pi} \ar[dr]^-{\pi} \ar@{}[dd]|{\Leftarrow} \restore & & & & \save[]{ {X^{\triangleleft}}\pwr A}  \ar[dl]_-{\pi} \ar[dr]^-{\pi} \ar@{}[dd]|{\Leftarrow} \restore\\ A \ar[dr]_-\Delta & & \save[]{ X \pwr A}  \ar@{=}[dl] \restore&& \save[]{ X \pwr A} \ar@{=}[dr] \restore & & A \ar[dl]^-\Delta \\ & \save[]{ X \pwr A} \restore & & & & \save[]{ X \pwr A} \restore}\] 
under the spans defined by restricting to the diagrams on $X$ or on the cone point.

In the case where the indexing simplicial set $X$ is (the nerve of) a small category, these comma squares arise from the cocomma squares
\[    \xymatrix{ X \ar[d]_{!} \ar@{=}[r] \ar@{}[dr]|{\Downarrow} & X \ar[d] & X \ar[r]^{!} \ar@{=}[d] \ar@{}[dr]|{\Downarrow} & \catone \ar[d] \\ \catone \ar[r] & X^\triangleright & X \ar[r] & X^\triangleleft}\]
upon application of the 2-functor $(-) \pwr A \colon \Cat_2\op \to \lcat{K}_2$.
\end{ex}

\begin{prop}\label{prop:induced-2-functor} A functor $F \colon \lcat{K} \to \lcat{L}$ of $\infty$-cosmoi induces a 2-functor $F_2 \colon \lcat{K}_2 \to \lcat{L}_2$ between their homotopy 2-categories that preserves  adjunctions, equivalences, isofibrations, trivial fibrations, products, and comma objects.
\end{prop}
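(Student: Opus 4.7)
The plan is to verify each preservation claim in turn. First observe that $F_2 \defeq \ho_* F$ is indeed a 2-functor: since $F$ is simplicially enriched, applying the product-preserving homotopy-category functor $\ho \colon \qCat \to \Cat$ to each hom-quasi-category yields a $\Cat$-enriched (hence 2-)functor between the homotopy 2-categories. Preservation of isofibrations is part of Definition~\ref{defn:qcat-ctxt-functor}. By Proposition~\refIV{prop:equiv.are.weak.equiv}, equivalences in the $\infty$-cosmos coincide with 2-categorical equivalences in its homotopy 2-category, and the latter are preserved by any 2-functor; thus $F_2$ preserves equivalences. Trivial fibrations, as intersections of the two classes just discussed, are therefore also preserved. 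Adjunctions transport through any 2-functor, since the unit/counit data and triangle identities are diagrammatic.

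For products, $F$ strictly preserves the terminal object by axiom~\ref{qcat.ctxt.cof.def}\ref{qcat.ctxt.cof:a}, and the binary product $A \times B$ is the simplicial pullback of the cospan $A \xrightarrow{!} 1 \xleftarrow{!} B$ whose legs are isofibrations by axiom~\ref{qcat.ctxt.cof.def}\ref{qcat.ctxt.cof:b}. Since $F$ preserves isofibrations and their simplicial pullbacks, it preserves finite products strictly, and this descends to 2-categorical products in $\lcat{K}_2$ and $\lcat{L}_2$ (cf.\ Proposition~\ref{prop:cartesian-closure}).

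For comma objects, recall that the preferred model of $f \comma g$ is the simplicial pullback~\eqref{eq:comma-as-simp-pullback} built from the cotensor $\Del^1 \pwr A$, whose right-hand leg $(p_1,p_0) \colon \Del^1 \pwr A \tfib A \times A$ is the Leibniz cotensor of the isofibration $A \to 1$ with the boundary inclusion $\partial \Del^1 \hookrightarrow \Del^1$, and hence an isofibration by axiom~\ref{qcat.ctxt.cof:b}. Each structural ingredient---the cotensor by the finitely presented simplicial set $\Del^1$, the isofibration status of $(p_1,p_0)$, and the simplicial pullback along $g \times f$---is preserved by $F$. Applying $F$ to~\eqref{eq:comma-as-simp-pullback} therefore yields (up to isomorphism) the defining simplicial pullback for $Ff \comma Fg$ in $\lcat{L}$. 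Simplicial functoriality of $F$ additionally transports the universal 2-cell $\phi$, which is itself extracted from the cotensor structure on $\Del^1 \pwr A$, to the corresponding comma 2-cell on $Ff \comma Fg$; Lemma~\ref{lem:equiv-to-comma} then confirms that $F_2$ preserves the weak 2-universal property of the comma.

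The main subtlety lies in the comma case, where one must check not merely that the defining pullback transports through $F$ but also that the accompanying universal 2-cell is faithfully carried over. Simplicial enrichment of $F$ handles this uniformly, since both pieces of data are extracted from the same cotensor construction; the remaining claims follow essentially formally from preservation of isofibrations, equivalences, and the specified simplicial limits.
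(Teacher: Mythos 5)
Your proposal is correct and follows essentially the same route as the paper's proof: isofibrations and products come directly from the hypotheses on a functor of $\infty$-cosmoi, equivalences and adjunctions from 2-functoriality (via Proposition \refIV{prop:equiv.are.weak.equiv}), trivial fibrations as the intersection of the two classes, and commas by observing that $F$ preserves the defining simplicial pullback \eqref{eq:comma-as-simp-pullback} together with the equivalence-invariance of the weak universal property recorded around Lemma~\ref{lem:equiv-to-comma}. Your write-up merely spells out in more detail the steps the paper leaves terse (the construction of binary products from pullbacks of $!\colon A\tfib 1$ and the transport of the comma cone $\phi$).
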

\begin{proof}
Any 2-functor preserves  adjunctions and equivalences. Preservation of isofibrations and products  are direct consequences of the hypotheses in Definition~\ref{defn:qcat-ctxt-functor}; recall that the class of trivial fibrations in this intersection of the classes of isofibrations and equivalences. Preservation of commas follows from the construction of \eqref{eq:comma-as-simp-pullback}, which is preserved by a functor of $\infty$-cosmoi, and the observation made before Lemma~\ref{lem:equiv-to-comma} that all comma objects over the same cospan are equivalent.
\end{proof}

\subsection{Cartesian fibrations and groupoidal cartesian fibrations}\label{ssec:cartesian}

Cartesian fibrations and groupoidal cartesian fibrations are defined in \S\refIV{sec:cartesian} to be certain isofibrations in an abstract homotopy 2-category. Here we consider only cartesian fibrations and groupoidal cartesian fibrations in the homotopy 2-category $\lcat{K}_2$ of an $\infty$-cosmos, which we may as well refer to as cartesian fibrations and groupoidal cartesian fibrations in $\lcat{K}$.

\begin{defn}[cartesian 2-cells]
A 2-cell $\chi \colon e' \Rightarrow e \colon A \to E$ in $\lcat{K}_2$ is \emph{cartesian} for an isofibration $p \colon E \tfib B$ if and only if
\begin{enumerate}[label=(\roman*)]
  \item\label{itm:weak.cart.i} (induction) for any pair of 2-cells $\tau \colon e'' \Rightarrow e$ and $\gamma\colon pe''\Rightarrow pe'$ with $p \tau = p\chi \cdot \gamma$ there is some $\overline{\gamma} \colon e'' \Rightarrow e'$ with $p\overline\gamma = \gamma$ ($\bar\gamma$ {\em lies over\/} $\gamma$) and the property that $\tau = \chi \cdot \bar\gamma$.
  \item\label{itm:weak.cart.ii} (conservativity) for any 2-cell $\gamma \colon e' \Rightarrow e'$ if $\chi \cdot \gamma = \chi$ and $p\gamma$ is an identity then $\gamma$ is an isomorphism.
\end{enumerate}
\end{defn}

All isomorphisms with codomain $E$ are $p$-cartesian. The class of $p$-cartesian 2-cells is stable under composition and left cancellation (Lemmas~\refIV{lem:cart-arrows-compose} and~\refIV{lem:cart-arrows-cancel}).

\begin{defn}[cartesian fibration]
An isofibration  $p\colon E\tfib B$ is a {\em cartesian fibration\/} if and only if:
\begin{enumerate}[label=(\roman*)]
  \item Every 2-cell $\alpha \colon b \To pe$ has a $p$-cartesian lift $\chi_\alpha \colon e' \To e$:
    \begin{equation}\label{eq:2-cell.to.lift}
      \vcenter{
        \xymatrix@=3em{
          {} \ar@{}[dr]|(0.68){\Uparrow\alpha} & {E}\ar@{->>}[d]^p \\
          {A}\ar[ur]^{e} \ar[r]_b & {B}
      }}
      \mkern40mu=\mkern40mu
      \vcenter{
        \xymatrix@=3em{
          {} & {E}\ar@{->>}[d]^p \\
          {A}\ar@/^1.6ex/[ur]^-{e}
          \ar@/_1.6ex/[ur]_-*!/^0.3pc/{\scriptstyle e'} 
          \ar@{}[ur]|(0.55){\Uparrow\chi_\alpha}\ar[r]_b & {B}
      }}
        \end{equation}
  \item The class of $p$-cartesian 2-cells for $p$ is closed under pre-composition by all 1-cells.
  \end{enumerate}
\end{defn}

Any functor $p \colon E \to B$ induces functors between comma $\infty$-categories 
  \begin{equation*}
 \vcenter{\xymatrix@C=0.8em@R=1.2em{
    & {E}\ar[d]^-{i} & \\
    & {B\comma p}\ar@{->>}[dl]_{p_1}\ar@{->>}[dr]^{p_0} & \\
    {E}\ar[rr]_{p} && {B}
    \ar@{} "2,2";"3,2" |(0.6){\Leftarrow\phi}
  }} = 
  \vcenter{\xymatrix@C=0.8em@R=1.2em{
    & {E}\ar@{=}[dl]\ar@{->>}[dr]^{p} & \\
    {E}\ar[rr]_{p} && {B}
    \ar@{} "1,2";"2,2" |(0.6){=}
  }}   
  \mkern50mu
  \vcenter{\xymatrix@C=0.8em@R=1.2em{
    & {E^\cattwo}\ar[d]^-{k} & \\
    & {B\comma p}\ar@{->>}[dl]_{p_1}\ar@{->>}[dr]^{p_0} & \\
    {E}\ar[rr]_{p} && {B}
    \ar@{} "2,2";"3,2" |(0.6){\Leftarrow\phi}
  }} = 
  \vcenter{\xymatrix@C=0.8em@R=1.2em{
    & {E^\cattwo}\ar@{->>}_{q_1}[dl]\ar@{->>}[dr]^{pq_0} & \\
    {E}\ar[rr]_{p} && {B}
    \ar@{} "1,2";"2,2" |(0.6){\Leftarrow p\psi}
  }}
  \end{equation*}
 that are well-defined up to isomorphism over $E \times B$. These functors are used to provide an alternate characterization of cartesian fibrations:

{
\renewcommand{\thethm}{\refIV{thm:cart.fib.chars}}
\begin{thm} For an isofibration $p \colon E \tfib B$, the following are equivalent:
  \begin{enumerate}[label=(\roman*)]
    \item\label{itm:cart.fib.chars.i} $p$ is a cartesian fibration.
    \item\label{itm:cart.fib.chars.ii} The functor $i\colon E\to B\comma p$ admits a right adjoint which is fibred over $B$.
\[
      \xymatrix@R=2em@C=3em{
        {B\comma p}\ar@{->>}[dr]_{p_0} \ar@/_0.6pc/[]!R(0.5);[rr]_{r}^{}="a" & &
        {E}\ar@{->>}[dl]^{p} \ar@/_0.6pc/[ll]!R(0.5)_{i}^{}="b" 
        \ar@{}"a";"b"|{\bot} \\
        & B &
      }
    \]
    \item\label{itm:cart.fib.chars.iii} The functor $k\colon E^\cattwo\to B\comma p$ admits a right adjoint right inverse, i.e., with isomorphic counit.
\[
    \xymatrix@C=6em{
      {B\comma p}\ar@/_0.8pc/[]!R(0.6);[r]!L(0.45)_{\bar{r}}^{}="u" &
      {E^\cattwo}\ar@/_0.8pc/[]!L(0.45);[l]!R(0.6)_{k}^{}="t"
      \ar@{}"u";"t"|(0.6){\bot}
    }
  \]
  \end{enumerate}
\end{thm}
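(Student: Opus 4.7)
The plan is to prove the equivalences by encoding the cartesian lifting property of $p$ in terms of adjoint functor data, using the weak universal properties of the comma object $B\comma p$ and the cotensor $E^\cattwo$. I would prove $(i)\Leftrightarrow(ii)$ first, then $(ii)\Leftrightarrow(iii)$.

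For $(i)\Rightarrow(ii)$: The candidate right adjoint $r\colon B\comma p\to E$ arises by applying the cartesian lifting property to the generic comma 2-cell $\phi\colon p_0\To p\cdot p_1$ on $B\comma p$, producing a $p$-cartesian 2-cell $\chi_\phi\colon r\To p_1$. The relation $p\chi_\phi=\phi$ records fibredness $pr=p_0$. The counit $\epsilon\colon ir\To\id_{B\comma p}$ is then manufactured by 2-cell induction into $B\comma p$, taking $p_0\epsilon=\id_{p_0}$ and $p_1\epsilon=\chi_\phi$. The unit $\eta\colon\id_E\To ri$ is an isomorphism, supplied by the essential uniqueness of cartesian lifts of identity 2-cells (which up to isomorphism are identities themselves). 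Both triangle identities reduce to the conservativity clause in the definition of a cartesian 2-cell applied to $p_1$-components, each side of the proposed equation projecting via $p$ to an identity.

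For $(ii)\Rightarrow(i)$: Given the fibred adjunction $i\dashv r$ with counit $\epsilon$, the cartesian lift of any 2-cell $\alpha\colon b\To pe$ is produced by first using 1-cell induction to obtain a functor $a\colon A\to B\comma p$ classifying $\alpha$, and then whiskering the counit to form $\epsilon a\colon ira\To a$. The $p_1$-component of this yields the required 2-cell $ra\To e$ in $E$, while fibredness ensures $p(ra)=p_0 a=b$. That this 2-cell is cartesian, and that the class of such 2-cells is stable under precomposition, follows from the universal property of $\epsilon$ as the counit of an adjunction.

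For $(ii)\Leftrightarrow(iii)$: This step leverages the relationship between $k\colon E^\cattwo\to B\comma p$ and $i$: specifically, $p_1 k=q_1$ (the codomain projection), matching $p_1(iq_1)=q_1$, and there is a canonical 2-cell $k\To iq_1$ in $\lcat{K}_2$ induced by the universal 2-cell $\psi$ on $E^\cattwo$. Given the fibred right adjoint $r$ to $i$, define $\bar r\colon B\comma p\to E^\cattwo$ by 1-cell induction so that $\bar r(\alpha)\in E^\cattwo$ is the arrow in $E$ with domain $r(\alpha)$, codomain $p_1(\alpha)$, and underlying 2-cell the $p_1$-component of $\epsilon$. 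The RARI condition $k\bar r\cong\id_{B\comma p}$ records that $p$ applied to this arrow is $\alpha$ itself. Conversely, from an RARI $k\dashv\bar r$ one recovers $r\defeq q_0\bar r$, and the identity $pq_0=p_0 k$ combined with $k\bar r\cong\id$ yields $pr=p_0$, making the induced adjunction $i\dashv r$ fibred over $B$.

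The main obstacle will be managing the 2-cell bookkeeping underlying the triangle identities, since comma objects and cotensors are only weakly characterized in the homotopy 2-category. Careful exploitation of the conservativity aspect of these weak universal properties, together with the smothering 2-functor $(\lcat{K}/B)_2\to\lcat{K}_2/B$ of Proposition~\ref{prop:smothering-slice-comparison}, will be needed to lift equalities of whiskered 2-cells to the underlying adjunction data.
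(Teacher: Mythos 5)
You should note first that this paper does not actually prove the statement: Theorem \refIV{thm:cart.fib.chars} is recalled from Part IV, where the argument occupies \S\refIV{ssec:cartesian} and the appendix \S\refIV{sec:appendix}. That said, your outline follows essentially the intended route: $r$ is the domain of a cartesian lift $\chi_\phi\colon r\To p_1$ of the comma cone $\phi$ (so $pr=p_0$), the counit $\epsilon\colon ir\To\id$ is built by 2-cell induction with $p_0\epsilon=\id$ and $p_1\epsilon=\chi_\phi$, cartesian lifts are recovered from $(ii)$ by whiskering $\epsilon$ with 1-cell-induced functors (this is exactly Observation \refIV{obs:constructing-weak-lifts}), and the passage between $(ii)$ and $(iii)$ goes through $\bar r$ classifying $p_1\epsilon$ in one direction and $r\defeq q_0\bar r$ in the other.

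There are, however, two places where what you assert does not follow as stated. First, the triangle identities do not ``reduce to the conservativity clause'': conservativity (whether of 2-cell induction for $B\comma p$ or clause (ii) of cartesianness) can only show that the composites $(\epsilon i)(i\eta)$ and $(r\epsilon)(\eta r)$ are \emph{invertible}, never that they equal identities, since the induced 2-cells are not unique. Promoting ``$\eta$ invertible and triangle composites invertible'' to a genuine fibred adjunction requires the correction devices of Part I --- e.g.\ Lemma \refI{lem:adjunction.from.isos}, the idempotent-isomorphism argument of \refI{rmk:idempotent-isomorphisms}, or Lemma \refI{lem:isofibration-RARI} --- and this is where the real work lies; the smothering 2-functor of Proposition~\ref{prop:smothering-slice-comparison} only transfers an adjunction between $(\lcat{K}/B)_2$ and $\lcat{K}_2/B$, it does not repair triangle identities. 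Second, in $(ii)\Rightarrow(iii)$ you only verify $k\bar r\cong\id_{B\comma p}$, i.e.\ that $\bar r$ is a section up to isomorphism; a right adjoint right inverse requires the adjunction $k\dashv\bar r$, so you must also construct the unit $\id_{E^\cattwo}\To\bar r k$ (its $q_0$-component comes from the induction clause of cartesianness applied to the generic arrow $\psi$ of $E^\cattwo$ against the cartesian cell $(p_1\epsilon)k$) and then verify the triangle identities, again via the lemmas above. Dually, in $(iii)\Rightarrow(ii)$ the invertible counit only gives $pr\cong p_0$, so the adjoint must be rectified along the isofibration $p$ (or the argument carried out in the sliced 2-category) before it is literally fibred. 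With these ingredients added your plan matches the deferred proof; without them the key steps are asserted rather than proved.
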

\addtocounter{thm}{-1}
}

\begin{defn}[groupoidal cartesian fibrations]
  An isofibration $p\colon E\tfib B$  is a {\em group\-oid\-al cartesian fibration\/} if and only it is a cartesian fibration and it is groupoidal as an object of the slice $\lcat{K}_2\slice B$.
\end{defn}

A groupoidal cartesian fibration is a cartesian fibration whose fibers are groupoidal $\infty$-categories. As a consequence of Theorem~\ref{thm:gpd.cart.fib.chars}\ref{itm:gpd.cart.fib.chars.ii} below, if $p \colon E \tfib B$ is a groupoidal cartesian fibration, then all 2-cells with codomain $E$ are $p$-cartesian.

Propositions~\refIV{prop:groupoidal-ext-char} and \refIV{prop:iso-groupoidal-char} combine to give the following alternate characterizations of groupoidal cartesian fibrations.

\begin{thm}\label{thm:gpd.cart.fib.chars}For an isofibration $p \colon E \tfib B$, the following are equivalent:
  \begin{enumerate}[label=(\roman*)]
    \item\label{itm:gpd.cart.fib.chars.i} $p$ is a groupoidal cartesian fibration.
    \item\label{itm:gpd.cart.fib.chars.ii} Every 2-cell $\alpha\colon b\Rightarrow pe\colon X\to B$ has an essentially unique lift $\chi\colon e'\Rightarrow e\colon X\to E$, where the uniqueness is up to a (non-unique) isomorphic 2-cell over an identity.
     \item\label{itm:gpd.cart.fib.chars.iii} The functor $k\colon E^\cattwo\to B\comma p$ is an equivalence.
\end{enumerate}
\end{thm}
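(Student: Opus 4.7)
The plan is to prove (i)$\Leftrightarrow$(ii) directly from the definitions and to deduce (i)$\Leftrightarrow$(iii) from Theorem~\refIV{thm:cart.fib.chars}\ref{itm:cart.fib.chars.iii}, using that an adjunction with invertible counit is an adjoint equivalence precisely when its unit is also invertible.

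For (i)$\Rightarrow$(ii), the cartesian fibration axiom produces a lift $\chi\colon e'\Rightarrow e$ of $\alpha\colon b\Rightarrow pe$; given a second lift $\chi'\colon e''\Rightarrow e$, the induction property of the $p$-cartesian 2-cell $\chi$, applied with $\tau\defeq\chi'$ and $\gamma\defeq\id_b$, yields a factoring 2-cell $\bar\gamma\colon e''\Rightarrow e'$ lying over $\id_b$ with $\chi\cdot\bar\gamma=\chi'$, and the groupoidal hypothesis on $p$ as an object of $\lcat{K}_2\slice B$ forces $\bar\gamma$ to be invertible. For (ii)$\Rightarrow$(i), I would first extract the groupoidal fibre condition directly: any 2-cell $\gamma\colon e_1\Rightarrow e_2$ with $p\gamma=\id_b$ is a lift of $\id_{pe_2}=p(\id_{e_2})$, and essential uniqueness against the tautological lift $\id_{e_2}$ produces an isomorphism $e_1\cong e_2$ over $\id_b$ through which $\gamma$ factors, showing $\gamma$ is invertible. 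Cartesianness of the chosen lift $\chi_\alpha$ then reduces to a routine verification: the induction property follows by applying essential uniqueness to $\chi_\alpha\cdot\delta$ and $\tau$ viewed as two lifts of $p\tau$, where $\delta$ is an essentially unique lift of the given input 2-cell $\gamma$; the conservativity property is a special case of the groupoidal fibre condition just established; and closure under precomposition with any $f$ holds because $\chi_\alpha f$ is a lift of $\alpha f$, hence isomorphic over $B$ to $\chi_{\alpha f}$ and therefore itself cartesian.

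For (i)$\Leftrightarrow$(iii), Theorem~\refIV{thm:cart.fib.chars}\ref{itm:cart.fib.chars.iii} identifies cartesianness of $p$ with $k$ admitting a right adjoint right inverse $\bar{r}$. Since the counit of the resulting adjunction $k\dashv\bar{r}$ is already invertible, the remaining content is to show that the unit is invertible if and only if $p$ has groupoidal fibres. Each component of the unit at an object $\chi\in E^\cattwo$ is a 2-cell in $E$ projecting to an identity in $B$, so the groupoidal fibre condition renders it automatically invertible. Conversely, if $k$ is an equivalence then, for any fibre 2-cell $\gamma\colon e_1\Rightarrow e_2$ with $p\gamma=\id_b$, applying $k$ sends both $\gamma$ and $\id_{e_2}$ to the same comma datum $(b,e_2,\id_b)$ in $B\comma p$; hence $\gamma\cong\id_{e_2}$ in $E^\cattwo$, and unpacking the data of this isomorphism in the arrow-object $E^\cattwo$ shows that $\gamma$ is itself an invertible 2-cell in $E$.

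The principal obstacle is the (ii)$\Rightarrow$(i) step, where essential uniqueness of lifts must be parleyed simultaneously into the induction and conservativity clauses of $p$-cartesianness for $\chi_\alpha$ together with the groupoidal fibre condition---three conditions which, a priori, pull in somewhat different directions. Once these are in hand, the (i)$\Leftrightarrow$(iii) equivalence is a largely formal consequence of the characterisation of cartesian fibrations already recalled.
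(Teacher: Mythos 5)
Your proposal is correct in substance, but it is not the route the paper takes: the paper gives no proof of Theorem~\ref{thm:gpd.cart.fib.chars} at all, simply citing Propositions \refIV{prop:groupoidal-ext-char} and \refIV{prop:iso-groupoidal-char} of the companion paper, so what you have written is a self-contained reproof inside the homotopy 2-category. Your decomposition---(i)$\Leftrightarrow$(ii) directly from the definition of $p$-cartesian 2-cells together with groupoidalness of $p$ in $\lcat{K}_2\slice B$, and (i)$\Leftrightarrow$(iii) by promoting the right adjoint right inverse of Theorem \refIV{thm:cart.fib.chars}\refIV{itm:cart.fib.chars.iii} to an adjoint equivalence---is sensible, and the (ii)$\Rightarrow$(i) argument is sound as sketched; note that under (ii) \emph{every} 2-cell with codomain $E$ is a lift of its own projection, hence equals a chosen cartesian lift composed with an invertible 2-cell over an identity and is therefore itself $p$-cartesian, which disposes of the precomposition clause (and your ``isomorphic over $B$, therefore cartesian'' step) in one stroke. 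Your (iii)$\Rightarrow$(groupoidal) argument is also fine once ``sends both to the same comma datum'' is read correctly: the 1-cells $k$ applied to the classifying maps of $\gamma$ and of $\id_{e_2}$ enjoy the same defining properties as 1-cells induced by the weak universal property of $B\comma p$, hence are isomorphic by Recollection~\ref{rec:ess.unique.1-cell.ind}; an equivalence reflects this isomorphism, and whiskering it with the two projections of $E^\cattwo$ exhibits $\gamma$ as invertible. What your approach buys is self-containedness; what the paper's buys is brevity in a background section.

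The one step you should redo is the invertibility of the unit in (i)$\Rightarrow$(iii). As written the argument is run ``at an object $\chi\in E^\cattwo$'', but in a general $\infty$-cosmos invertibility of a 2-cell cannot be tested on global points, and nothing in the statement of Theorem \refIV{thm:cart.fib.chars}\refIV{itm:cart.fib.chars.iii} guarantees that the unit lies over identities in $B$. The correct form of your idea: the triangle identity $(\epsilon k)\cdot(k\eta)=\id_k$ together with invertibility of $\epsilon$ shows that $k\eta$ is invertible; since $p_1k=q_1$ and $p_0k=pq_0$, the whiskered 2-cells $q_1\eta$ and $pq_0\eta$ are invertible. The first handles the codomain component outright; for the domain component, $q_0\eta$ lies over an \emph{isomorphism} rather than an identity, so lift that isomorphism along the isofibration $p$ (isofibrations representably lift invertible 2-cells, \refIV{lem:isofib.are.representably.so}) to reduce to a 2-cell over an identity and invoke the groupoidal hypothesis. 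Conservativity of 2-cell induction for the weak cotensor $E^\cattwo$ (Recollection~\ref{rec:comma}) then shows $\eta$ itself is invertible, so $k\dashv\bar r$ is an adjoint equivalence. With that repair your proof goes through.
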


Theorems~\refIV{thm:cart.fib.chars} and \ref{thm:gpd.cart.fib.chars} have an important corollary:

\begin{cor} $\quad$
\begin{enumerate}[label=(\roman*)]
\item Any isofibration that is equivalent to a (groupoidal) cartesian fibration is a (group\-oid\-al) cartesian fibration.
\item Cartesian fibrations and groupoidal cartesian fibrations are preserved by functors of $\infty$-cosmoi.
\end{enumerate}
\end{cor}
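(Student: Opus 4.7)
The plan is to reduce both statements to the internal characterizations of (groupoidal) cartesian fibrations given in Theorems~\refIV{thm:cart.fib.chars} and~\ref{thm:gpd.cart.fib.chars}, which recast the fibration property purely in terms of the existence of an adjunction (respectively, an equivalence) between canonically constructed objects of the homotopy 2-category, namely $E$, the cotensor $E^\cattwo$, and the comma object $B \comma p$. These characterizations are well suited both to transport across equivalences (giving (i)) and to transport across 2-functors that preserve the relevant limit structures (giving (ii)).

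For part (i), one interprets ``equivalent'' as equivalent over the common base $B$, so that we are given a (groupoidal) cartesian fibration $p \colon E \tfib B$ together with a fibered equivalence $w \colon E' \we E$ from another isofibration $p' \colon E' \tfib B$. Applying Lemma~\ref{lem:equiv-invariance-of-commas} to the evident map of cospans induced by $w$, one obtains an equivalence $B \comma p' \we B \comma p$ that is compatible with the canonical functors $i' \colon E' \to B \comma p'$ and $i \colon E \to B \comma p$, up to isomorphism over $B \times E'$. Because $(-)^\cattwo = \Del^1 \pwr (-)$ is the cotensor by a finitely presented simplicial set, it also preserves equivalences, producing $(E')^\cattwo \we E^\cattwo$ compatible with the functors $k$ on either side. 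In the groupoidal case, the equivalence $k \colon E^\cattwo \we B \comma p$ of~\ref{thm:gpd.cart.fib.chars}\,(iii) then transports across these equivalences to the corresponding equivalence for $p'$. In the general cartesian case, the fibered right adjoint to $i$ supplied by~\refIV{thm:cart.fib.chars}\,(ii) transports along these fibered equivalences to a fibered right adjoint to $i'$, using the standard 2-categorical fact that adjunctions transport along equivalences, applied inside the slice 2-category over $B$.

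For part (ii), let $F \colon \lcat{K} \to \lcat{L}$ be a functor of $\infty$-cosmoi and let $p \colon E \tfib B$ be a (groupoidal) cartesian fibration. Proposition~\ref{prop:induced-2-functor} asserts that the induced 2-functor $F_2$ preserves isofibrations, adjunctions, equivalences, and comma objects, and since $\Del^1$ is finitely presented, $F$ also preserves the weak cotensor $E^\cattwo$. Applying $F$ to the adjunction of~\refIV{thm:cart.fib.chars}\,(ii) (respectively, to the equivalence of~\ref{thm:gpd.cart.fib.chars}\,(iii)), and invoking the canonical identifications $F(E^\cattwo) \cong F(E)^\cattwo$ and $F(B \comma p) \cong F(B) \comma F(p)$, yields precisely the adjunction (or equivalence) that characterizes $F(p) \colon F(E) \tfib F(B)$ as a (groupoidal) cartesian fibration in $\lcat{L}$. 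The ``fibered over $B$'' clause in the cartesian case is encoded by equations and identities among 2-cells inside the slice 2-category, all of which are preserved by the 2-functor $F_2$, so the transported adjunction is automatically fibered over $F(B)$.

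The only real subtlety lies in part (i), namely verifying that a fibered equivalence genuinely transports a fibered adjunction to another fibered adjunction. This is handled by combining the standard 2-categorical transport of adjunctions along equivalences with Corollary~\ref{cor:slice-2-cat-comparison}, which allows us to move freely between $(\lcat{K}/B)_2$ and $\lcat{K}_2/B$ and which tells us that adjunctions and equivalences in one slice correspond to adjunctions and equivalences in the other.
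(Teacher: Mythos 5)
Your proof is correct and follows the same overall strategy as the paper --- reduce both claims to the characterization theorems \refIV{thm:cart.fib.chars} and \ref{thm:gpd.cart.fib.chars}, using Lemma~\ref{lem:equiv-invariance-of-commas} for equivalence-invariance of the comparison maps and Proposition~\ref{prop:induced-2-functor} for part (ii) --- but in one place you take a genuinely different, and more laborious, route. For the non-groupoidal case of (i) the paper invokes the characterization \refIV{itm:cart.fib.chars.iii}: since equivalent isofibrations have comparison functors $k \colon E^\cattwo \to B \comma p$ connected by equivalences, one needs only the plain 2-categorical fact that a right adjoint (with invertible counit) transports along equivalences in $\lcat{K}_2$, with no fibred bookkeeping whatsoever. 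You instead use the fibred-adjoint characterization \refIV{itm:cart.fib.chars.ii} via $i \colon E \to B \comma p$, which forces you to check that a fibred equivalence transports a fibred adjunction; your appeal to Corollary~\ref{cor:slice-2-cat-comparison} does make this work --- the comma equivalence commutes strictly with the codomain legs, hence is an equivalence in $\lcat{K}_2\slice B$, and $e i' \cong i w$ over the base by uniqueness of 1-cell induction --- but the paper's choice of characterization makes exactly the ``only real subtlety'' you flag evaporate. Two minor further points: you tacitly read ``equivalent'' as equivalent over a fixed base $B$, whereas the paper's argument via Lemma~\ref{lem:equiv-invariance-of-commas} allows the base to vary along an equivalence as well; and in (ii) your separate appeal to preservation of the cotensor $E^\cattwo$ is subsumed by preservation of commas, since $E^\cattwo = E \comma E$, which is how Proposition~\ref{prop:induced-2-functor} is used in the paper.
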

\begin{proof}
For (i), it follows easily from Lemma~\ref{lem:equiv-invariance-of-commas} that the functors $k \colon E^\cattwo \to B \comma p$ induced from equivalent isofibrations $p \colon E \tfib B$ are equivalent. By the 2-of-3 property, the notion of equivalence is equivalence invariant, and so Theorem~\ref{thm:gpd.cart.fib.chars}\ref{itm:gpd.cart.fib.chars.iii} proves this result in the groupoidal case. For general cartesian fibrations, the existence of adjoints in a 2-category is invariant under equivalence, and so  Theorem~\refIV{thm:cart.fun.chars}.\ref{itm:cart.fib.chars.iii} implies that an isofibration that is equivalent to a  cartesian fibration is a cartesian fibration. 

The claim in (ii) follows easily from a combination of Proposition~\ref{prop:induced-2-functor} and Theorem~\refIV{thm:cart.fib.chars} or Theorem~\ref{thm:gpd.cart.fib.chars}, as appropriate.
\end{proof}

Cartesian fibrations are stable under composition (Proposition~\refIV{prop:cart-fib-comp}) and groupoidal cartesian fibrations are additionally stable under left cancellation (Lemma~\refIV{lem:gpd-cart-cancel}).

\begin{defn}\label{defn:cartesian-functor}  A   commutative square 
\[ \xymatrix{ F \ar[r]^g \ar@{->>}[d]_q & E \ar@{->>}[d]^p \\ A \ar[r]_f & B}\]between a pair of cartesian fibrations  $q \colon F \tfib A$  and $p \colon E \tfib B$
 defines  a \emph{cartesian functor} if and only if $g$ preserves cartesian 2-cells: i.e., if whiskering with $g$ carries $q$-cartesian 2-cells to  $p$-cartesian 2-cells.
\end{defn}

\begin{prop}[{\refIV{prop:cart-fib-pullback} and \refIV{cor:groupoidal-pullback}}]\label{prop:cart-fib-pullback}
Consider a simplicial pullback
\[ \xymatrix{ F \pbexcursion \ar@{->>}[d]_q \ar[r]^g & E \ar@{->>}[d]^p \\ A \ar[r]_f & B}\] in $\lcat{K}$. If $p \colon E \tfib B$ is a (groupoidal) cartesian fibration, then $q \colon F \tfib A$ is a (groupoidal) cartesian fibration and the pullback square defines a cartesian functor. % Moreover, a 2-cell $\chi \colon x' \To x \colon X \to F$ is cartesian for $q$ if and only if $g\chi \colon gx' \To gx \colon X \to E$ is cartesian for $p$, and, in particular, the pullback square defines a cartesian functor from $p$ to $q$.
\end{prop}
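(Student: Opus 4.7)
The plan is to verify directly that $q$ is a cartesian fibration by constructing cartesian lifts from those in $E$, using the simplicially enriched pullback structure of $F$; the cartesian functor condition on $g$ will then follow essentially by construction. First, $q$ is an isofibration by axiom \ref{qcat.ctxt.cof.def}\ref{qcat.ctxt.cof:b}. Given a 2-cell $\alpha \colon a \To qe \colon X \to A$, I whisker with $f$ to obtain $f\alpha \colon fa \To pge$ in $B$ and use the cartesian fibration property of $p$ to produce a $p$-cartesian lift $\psi \colon d \To ge$ with $p\psi = f\alpha$. Since $F$ is a simplicial pullback, we have the identification $\map(X,F) \cong \map(X,A) \times_{\map(X,B)} \map(X,E)$, so the compatible pair $(\alpha, \psi)$ induces a 2-cell $\chi \colon e' \To e$ in $F$ with $q\chi = \alpha$ and $g\chi = \psi$.

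To show $\chi$ is $q$-cartesian, I consider a 2-cell $\tau \colon f_2 \To e$ together with $\gamma \colon qf_2 \To a$ satisfying $q\tau = \alpha \cdot \gamma$. Whiskering appropriately yields $pg\tau = p\psi \cdot f\gamma$, and $p$-cartesianness of $\psi$ supplies $\bar\gamma_E \colon gf_2 \To d$ with $p\bar\gamma_E = f\gamma$ and $\psi \cdot \bar\gamma_E = g\tau$. The compatible pair $(\gamma, \bar\gamma_E)$ then induces $\bar\gamma \colon f_2 \To e'$ with $q\bar\gamma = \gamma$ and $g\bar\gamma = \bar\gamma_E$; the required identity $\chi \cdot \bar\gamma = \tau$ holds because both sides have matching projections under $q$ and $g$. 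Conservativity for $\chi$ reduces to conservativity for $\psi$ together with the fact that a 2-cell in $F$ is invertible as soon as both its $q$- and $g$-projections are. Closure of the class of $q$-cartesian 2-cells under precomposition follows analogously from the corresponding property for $p$-cartesian 2-cells. The cartesian functor condition then comes for free: any $q$-cartesian 2-cell is, by essential uniqueness of cartesian lifts (\refIV{obs:weak.cart.lift.unique}), isomorphic over $A$ to one of the form $\chi$ constructed above, so its image under $g$ is isomorphic to $\psi$ over $B$, hence $p$-cartesian by stability of cartesianness under fibred isomorphism. For the groupoidal case, I would invoke characterization \ref{thm:gpd.cart.fib.chars}\ref{itm:gpd.cart.fib.chars.iii}: since cotensors and commas are simplicial limits that preserve the given pullback, the functor $k_q \colon F^\cattwo \to A \comma q$ is built from $k_p \colon E^\cattwo \to B \comma p$ by pullback in an appropriate sense, and equivalences between isofibrations pull back to equivalences by Corollary \ref{cor:slice-2-cat-comparison}\ref{itm:slice-2-cat-comparison-equiv} together with the pullback stability of the equivalences in $\lcat{K}$.

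The principal technical obstacle is the careful handling of 2-cells in the simplicial pullback $F$. Because the homotopy category functor $\ho$ does not preserve pullbacks in general, the induced functor $\hom(X,F) \to \hom(X,A) \times_{\hom(X,B)} \hom(X,E)$ is only smothering rather than an isomorphism, so compatible data in $A$ and $E$ determines 2-cells in $F$ only up to fibred isomorphism. The crucial leverage is that the underlying mapping spaces form a genuine simplicial pullback, so 2-cells in $F$ can always be assembled from compatible simplices in $A$ and $E$, while conservativity of the smothering functor supplies the reflection property needed to verify the cartesian universal properties from their counterparts in $E$.
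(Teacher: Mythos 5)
Your construction of the lift $\chi$ is fine (fullness of the smothering comparison functor from Proposition \refI{prop:weak-homotopy-pullbacks} does give a 2-cell of $F$ with prescribed projections), but the key verification collapses at the induction axiom. Having produced $\bar\gamma$ with $q\bar\gamma=\gamma$ and $g\bar\gamma=\bar\gamma_E$, you conclude $\chi\cdot\bar\gamma=\tau$ ``because both sides have matching projections under $q$ and $g$.'' That inference requires the comparison functor $\hom(X,F)\to\hom(X,A)\times_{\hom(X,B)}\hom(X,E)$ to be \emph{faithful}, whereas — as you yourself observe in your closing paragraph — it is only smothering: surjective on objects, full, and conservative, but not faithful. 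Two parallel 2-cells $f_2\To e$ in $\hom(X,F)$ with identical images may simply be distinct, and conservativity reflects invertibility, not equality, so it cannot supply the strict factorisation $\tau=\chi\cdot\bar\gamma$ demanded by the induction clause in the definition of a cartesian 2-cell. The same defect propagates to your treatment of closure under precomposition and of the cartesian-functor claim, since both reduce to showing that a 2-cell of $F$ whose $g$-image is $p$-cartesian is $q$-cartesian, which again needs the on-the-nose factorisation. (The conservativity clause of cartesianness, by contrast, you handle correctly.)

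This is precisely why the paper does not argue elementwise: its proof consists of observing that the simplicial pullback satisfies the weak 2-categorical universal property of \refIV{defn:pullback} and then invoking \refIV{prop:cart-fib-pullback} and \refIV{cor:groupoidal-pullback}, which are themselves established via the adjunction characterisations of \refIV{thm:cart.fib.chars} — a fibred right adjoint to $i\colon E\to B\comma p$, resp.\ a RARI for $k\colon E^\cattwo\to B\comma p$ — because adjunctions, unlike weak lifting properties, transfer along pullbacks (cf.\ Remark~\ref{rmk:adjunctions-pullback}). A direct repair of your argument would have to descend to the mapping quasi-categories $\map(X,F)\cong\map(X,A)\times_{\map(X,B)}\map(X,E)$ and use that cartesianness is detected representably by cartesian edges (\refIV{cor:cart-fib-rep}), i.e.\ essentially re-derive the cited results. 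Two smaller points: your groupoidal argument appeals to ``pullback stability of the equivalences in $\lcat{K}$,'' which is not available — only trivial fibrations, or fibred equivalences transported by the pullback functor of $\infty$-cosmoi (via Corollary~\ref{cor:slice-2-cat-comparison}), pull back — and the groupoidal part is in any case handled more simply by 2-cell conservativity of the weak pullback, as in the proof of Proposition~\ref{prop:two-sided-pullback}.
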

\begin{proof}
The simplicial pullbacks in $\lcat{K}$ are examples of the pullbacks in the homotopy 2-category $\tcat{K}_2$ of the sort  considered in Proposition~\refIV{prop:cart-fib-pullback} and Corollary~\refIV{cor:groupoidal-pullback}.
\end{proof}

% We don't quite have enough here to prove that projections from arbitrary commas are (co) cartesian fibrations. So I've amended the following example to state what we do know easily at this point.

\begin{ex} Example~\refIV{ex:domain-fibration} shows that the domain-projection functor $p_0 \colon A^\cattwo \tfib A$ from an arrow $\infty$-category is a cartesian fibration. It follows from Proposition~\ref{prop:cart-fib-pullback} that the domain-projection $p_0 \colon f \comma A \tfib B$ defines a cartesian fibration. Interpreting the theory just developed in the dual 2-category $\lcat{K}_2\co$, reversing the 2-cells but not the 1-cells, we see also that the codomain projection functors $p_1 \colon E^\cattwo \tfib E$ and $p_1 \colon A \comma g \tfib C$ are cocartesian fibrations. Indeed, in the next section we shall show that the projection $p_0\colon f\comma g\tfib B$ (resp.\ $p_1\colon f\comma g\tfib C$) from any comma is a cartesian (resp.\ cocartesian) fibration.

For any point $b \colon 1 \to B$ of $B$, Example~\refIV{ex:groupoidal-representable} shows that $p_0 \colon B \comma b \tfib B$ is a groupoidal cartesian fibration. Dually, $p_1 \colon b \comma B \tfib B$ is a groupoidal cocartesian fibration.
\end{ex}

The Yoneda lemma, Theorem~\refIV{thm:yoneda}, supplies an equivalence between  the underlying quasi-category $\map_B (b \colon 1 \to B, p \colon E \tfib B)$  of the fiber of a cartesian fibration $p$ over a point $b$, and the quasi-category of functors between the cartesian fibration represented by $b$ and $p$. In this paper, we'll require only the special case where $p$ is a groupoidal cartesian fibration.

{
\renewcommand{\thethm}{\refIV{cor:groupoidal-yoneda}}
\begin{cor}[Yoneda lemma] Given any groupoidal cartesian fibration $p \colon E \tfib B$ and any point $b \colon 1 \to B$, restriction along the terminal object $t \colon 1 \to B\comma b$ induces an equivalence of quasi-categories
\[ \map_B(p_0 \colon B \comma b \tfib B, p \colon E \tfib B) \simeq \map_B(b \colon 1 \to B, p \colon E \tfib B).\]
\end{cor}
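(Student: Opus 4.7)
The plan is to exploit Theorem \ref{thm:gpd.cart.fib.chars}\ref{itm:gpd.cart.fib.chars.iii}, which characterises groupoidal cartesian fibrations by the fact that $k \colon E^\cattwo \to B \comma p$ is an equivalence, together with the pullback description of commas in Lemma \ref{lem:commas-pullback}, to construct an explicit quasi-inverse to restriction along $t$.

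First I would check that the forward map is well defined. The vertex $t \colon 1 \to B \comma b$ is the 1-cell induced (under the weak universal property of the comma) by the identity 2-cell $\id_b$ on $b$, and in particular $p_0 \circ t = b$. Hence precomposition with $t$ defines a map $\map_B(p_0, p) \to \map_B(b, p)$. To construct the inverse, suppose given a vertex $e \colon 1 \to E$ over $b$. By Lemma \ref{lem:commas-pullback}, the simplicial pullbacks of the codomain projections $p_1 \colon B \comma p \tfib E$ and $p_1 \colon E^\cattwo \tfib E$ along $e$ may be identified with $B \comma b$ and $E \comma e$ respectively; and since equivalences in an $\infty$-cosmos are stable under simplicial pullback along isofibrations, the equivalence $k$ restricts to an equivalence $k|_e \colon E \comma e \we B \comma b$. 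Post-composing a quasi-inverse of $k|_e$ with the source projection $p_0 \colon E \comma e \to E$ produces a functor $B \comma b \to E$, which lies over $B$ because $k$ is compatible with the $B$-valued legs, satisfying $p_0 \circ k = p \circ p_0$ on $E^\cattwo$.

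Finally I would show that these two maps are mutually quasi-inverse. One composite is essentially tautological, since the image of $t$ under $(k|_e)^{-1}$ is the identity arrow at $e$, whose source is $e$ itself. For the other direction, given a fibred $F \colon B \comma b \to E$, the essential uniqueness of cartesian lifts in the groupoidal cartesian fibration $p$ (Theorem \ref{thm:gpd.cart.fib.chars}\ref{itm:gpd.cart.fib.chars.ii}) determines $F$ up to fibred isomorphism from its value $F \circ t$, and Corollary \ref{cor:slice-2-cat-comparison}\ref{itm:slice-2-cat-comparison-equiv} upgrades this homotopy-2-categorical identification into an equivalence of mapping quasi-categories. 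The main obstacle will be careful bookkeeping of the pullback identifications---in particular, verifying that the restriction $k|_e$ really does correspond to restriction along $t$ on the other side, and that the entire construction is natural in $e$ so as to define a map of mapping quasi-categories and not merely a fibrewise correspondence.
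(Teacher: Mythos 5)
There is a genuine gap, and it sits exactly where you flag it as ``bookkeeping.'' Your construction assigns to each \emph{vertex} $e$ of the fibre a functor $F_e = p_0\circ(k|_e)^{-1}\colon B\comma b\to E$, depending on a chosen quasi-inverse of $k|_e$, and then checks the two composites only on vertices up to isomorphism. That yields, at best, a bijection between isomorphism classes of vertices of the two mapping quasi-categories (both of which are Kan complexes here), i.e.\ a $\pi_0$-level statement; it does not produce a simplicial map $\map_B(b,p)\to\map_B(p_0,p)$, let alone the homotopies exhibiting it as inverse to restriction along $t$, and no mechanism in the proposal supplies the required coherence of the choices $(k|_e)^{-1}$ in $e$. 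Corollary~\ref{cor:slice-2-cat-comparison}\ref{itm:slice-2-cat-comparison-equiv} cannot perform this upgrade: it compares notions of equivalence for a \emph{given} functor between objects over $B$, whereas what you need is to promote vertexwise data to an equivalence of hom quasi-categories. This missing step is not incidental---it is the mathematical content of the result. Indeed the present paper does not prove this statement at all; it recalls it as Corollary~\refIV{cor:groupoidal-yoneda}, the groupoidal case of the Yoneda lemma \refIV{thm:yoneda}, whose proof solves precisely this coherence problem by a different route: one shows that restriction along $t$ participates in an adjunction (built from the lax-slice adjunction $!\dashv t$ and cartesian lifts) which $\map_B(-,p)$ carries to an adjoint equivalence because the relevant 2-cells become invertible when mapped into a groupoidal cartesian fibration. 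Your pointwise inverse is, morally, the value of that left adjoint on objects, but without the adjunction (or some other global argument, e.g.\ exhibiting the restriction map as a trivial fibration) the equivalence of quasi-categories does not follow.

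Two smaller corrections to the setup. First, ``equivalences in an $\infty$-cosmos are stable under simplicial pullback along isofibrations'' is not an available principle (only trivial fibrations pull back, Lemma~\ref{lem:triv.fib.stab}), and in any case you are pulling back along $e\colon 1\to E$, which is not an isofibration; the correct justification is that $k\colon E^\cattwo\to B\comma p$ commutes with the codomain projections to $E$, hence is a fibred equivalence over $E$ by Corollary~\ref{cor:slice-2-cat-comparison}\ref{itm:slice-2-cat-comparison-equiv}, and the pullback functor $e^*$ is a functor of $\infty$-cosmoi, so preserves it. Second, for $F_e$ to lie strictly over $B$ you must choose the quasi-inverse of $k|_e$ to be fibred over $B$ (which the same corollary permits, since $p\, p_0\colon E\comma e\tfib B$ and $p_0\colon B\comma b\tfib B$ are isofibrations); an arbitrary quasi-inverse only commutes with the projections up to isomorphism.
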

  \addtocounter{thm}{-1}
}

% Add comment here about map_B(...) when domain is not an isofibration.

%%% Local Variables:
%%% mode: latex
%%% TeX-master: "all"
%%% End:

  %!TEX root = all.tex
% ******************************************************************
% ** Title:            Modules between $\infty$-categories?
% **                   Main file
% ** Precis:        
% ** Author:           Emily Riehl and Dominic Verity
% ** Commenced:        15/3/2015
% ******************************************************************

\section{Modules between \texorpdfstring{$\infty$}{infinity}-categories}\label{sec:modules}

As the name suggests, a \emph{module} from an $\infty$-category $A$ to an $\infty$-category $B$ is an $\infty$-category $E$ equipped with an isofibration $E \tfib A \times B$ with groupoidal fibers that satisfies conditions that can be informally summarized by saying that $A$ acts on the left and $B$ acts on the right. The paradigmatic example is given by the arrow $\infty$-category construction $(p_1,p_0) \colon A^\cattwo \tfib A \times A$, which defines a module from $A$ to itself. We have shown that the domain projection functor $p_0 \colon A^\cattwo \tfib A$ is a cartesian fibration and that the codomain projection functor $p_1 \colon A^\cattwo \tfib A$ is a cocartesian; this is the sense in which $A$ acts on the left and on the right of $A^\cattwo$. But really more is true: as observed in \refIV{obs:domain-cart-lifts}, $p_0$-cartesian lifts can be chosen to lie in the fibers of $p_1$ and similarly that $p_1$-cartesian lifts can be chosen to live in the fibers of $p_0$. The fact that  $(p_1,p_0) \colon A^\cattwo \tfib A \times A$ has groupoidal fibers, or more precisely, is a groupoidal object in the slice $\infty$-cosmos over $A \times A$, is a consequence of conservativity of 2-cell induction.

\subsection{Modules between \texorpdfstring{$\infty$}{infinity}-categories}

Fix an ambient $\infty$-cosmos $\lcat{K}$.

\begin{defn}  A \emph{module} $E$ \emph{from $A$ to $B$} is given by an isofibration $(q,p) \colon E \tfib A \times B$ to the product of $A$ and $B$ so that:
  \begin{enumerate}[label=(\roman*)]
\item\label{itm:cartesian-on-the-right} $(q,p) \colon E \tfib A \times B$ is a cartesian fibration in  $(\lcat{K}/A)_2$.
\item\label{itm:cocartesian-on-the-left}  $(q,p) \colon E \tfib A \times B$ is a cocartesian fibration in $(\lcat{K}/B)_2$.
\item\label{itm:groupoidal-fibers} $(q,p) \colon E \tfib A \times B$ is groupoidal as an object in $\lcat{K}/A \times B$.
\end{enumerate}
\end{defn}

\begin{rmk}
By Definition~\ref{defn:groupoidal-object}, condition \ref{itm:groupoidal-fibers} asks that $(q,p) \colon E \to A \times B$
is groupoidal as an object of $(\lcat{K}/A \times B)_2$ or equivalently, by Corollary~\ref{cor:slice-2-cat-comparison}\ref{itm:slice-2-cat-comparison-groupoidal}, is groupoidal in $\lcat{K}_2/A\times B$. Note that this does not imply that the isofibrations $q \colon E \tfib A$ and $p \colon E \tfib B$ are themselves groupoidal in $\lcat{K}/A$ and $\lcat{K}/B$.

Condition \ref{itm:cartesian-on-the-right} asserts that the isofibration $(q,p) \colon E \tfib A \times B$ is a \emph{cartesian fibration on the right}, while condition \ref{itm:cocartesian-on-the-left} asserts that it is a \emph{cocartesian fibration on the left}. Condition \ref{itm:groupoidal-fibers} implies that these sliced map define, respectively, a groupoidal cartesian fibration in $(\lcat{K}/A)_2$ and groupoidal cocartesian fibration in $(\lcat{K}/B)_2$. However, the condition of being groupoidal in both slices is weaker than being groupoidal in the slice  over the product.
\end{rmk}

Our first task is to demonstrate that the motivating example, the arrow $\infty$-category $(p_1,p_0)\colon A^\cattwo \tfib A \times A$, defines a module from $A$ to $A$.

\begin{lem}\label{lem:disc-cart-on-right} \[\xymatrix{ A^\cattwo \ar[rr]^-{(p_1,p_0)} \ar@{->>}[dr]_{p_1} & & A \times A \ar@{->>}[dl]^{\pi_1} \\ & A}\] is a groupoidal cartesian fibration in $(\lcat{K}/A)_2$.
\end{lem}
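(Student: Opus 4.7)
The plan is to apply the characterization of groupoidal cartesian fibrations provided by Theorem~\ref{thm:gpd.cart.fib.chars}\ref{itm:gpd.cart.fib.chars.ii} inside the sliced $\infty$-cosmos $\lcat{K}/A$ of Definition~\ref{defn:sliced-cosmoi}. Since $(p_1,p_0)\colon A^\cattwo\tfib A\times A$ is an isofibration in $\lcat{K}$ (as a Leibniz cotensor of the boundary inclusion $\boundary\Del^1\inc\Del^1$ with $A\tfib 1$), it is an isofibration in $\lcat{K}/A$, so it suffices to show that every 2-cell in the sliced homotopy 2-category $(\lcat{K}/A)_2$ with codomain $(A\times A,\pi_1)$ admits an essentially unique lift through $(p_1,p_0)$ to a 2-cell with codomain $(A^\cattwo,p_1)$.

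By Definition~\ref{defn:two-slices} and Proposition~\ref{prop:smothering-slice-comparison}, a 2-cell in $(\lcat{K}/A)_2$ between parallel maps $(a,b_0),(a,b_1)\colon X\to A\times A$ that share a first component $a$ is represented by a 2-cell in $\lcat{K}_2$ whose whiskering with $\pi_1$ is the identity on $a$, which is just a 2-cell $\beta\colon b_0\To b_1$ in $\hom(X,A)$. Given a functor $e\colon X\to A^\cattwo$ projecting to $(a,b_1)$, the weak universal property of the cotensor $A^\cattwo=\Del^1\pwr A$ presents it as a 2-cell $\alpha\colon b_1\To a$ in $\hom(X,A)$. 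Applying 1-cell induction for $A^\cattwo$ to the composite 2-cell $\alpha\cdot\beta\colon b_0\To a$ produces a functor $e'\colon X\to A^\cattwo$ with $p_0 e'=b_0$ and $p_1 e'=a$. The square in $\hom(X,A)^\cattwo$ with horizontal edges $\beta$ and $\id_a$ and vertical edges $\alpha\cdot\beta$ and $\alpha$ commutes by construction, so 2-cell induction yields a 2-cell $\chi\colon e'\To e$ in $\hom(X,A^\cattwo)$ with $p_0\chi=\beta$ and $p_1\chi=\id_a$. Since $p_1\chi$ is strictly the identity, $\chi$ represents a 2-cell in $\lcat{K}_2/A$, which lifts along the locally surjective smothering 2-functor $(\lcat{K}/A)_2\to\lcat{K}_2/A$ to the required lift in $(\lcat{K}/A)_2$.

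Essential uniqueness of lifts combines two ingredients: first, the uniqueness-up-to-isomorphism-over-an-identity of 1-cell inductions (Recall~\ref{rec:ess.unique.1-cell.ind}) identifies any two choices of $e'$ up to an invertible 2-cell over the identity on $(a,b_0)$; second, the conservativity clause of the weak universal property of $A^\cattwo$ ensures that any 2-cell in $\hom(X,A^\cattwo)$ whose projections under $p_0$ and $p_1$ are isomorphisms is itself invertible. Together these furnish the essentially unique lift in the sense of Theorem~\ref{thm:gpd.cart.fib.chars}\ref{itm:gpd.cart.fib.chars.ii}.

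The main point requiring care is the strictness of the projection $p_1\chi=\id_a$, which ensures $\chi$ genuinely descends to the slice homotopy 2-category rather than merely satisfying the projection condition up to isomorphism; this is arranged by selecting the bottom edge of the inducing square in $\hom(X,A)^\cattwo$ to be the strict identity, so that 2-cell induction delivers a $\chi$ with the requisite strict trivialisation.
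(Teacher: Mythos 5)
Your route is genuinely different from the paper's---you argue via the lifting criterion of Theorem~\ref{thm:gpd.cart.fib.chars}\ref{itm:gpd.cart.fib.chars.ii}, whereas the paper verifies criterion \ref{itm:gpd.cart.fib.chars.iii}---but as written there is a real gap at the essential-uniqueness step. That criterion asks for uniqueness \emph{as a lift}: given two lifts $\chi'\colon e'\To e$ and $\chi''\colon e''\To e$ of the same 2-cell $\alpha$, one needs an isomorphism $\gamma\colon e'\To e''$ projecting to identities \emph{and} satisfying $\chi''\cdot\gamma=\chi'$. Your two ingredients do yield an isomorphism: by middle-four interchange any lift $\chi_1\colon e_1\To e$ of $\alpha$ has represented 2-cell $\phi e_1=\phi e\cdot\beta$, so $e_1$ satisfies the same induction equations as your $e'$ and Recollection~\ref{rec:ess.unique.1-cell.ind} plus conservativity produce an invertible $\gamma$ over identities. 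But nothing in the weak 2-universal property controls the equation $\chi''\cdot\gamma=\chi'$: the comparison functors $\hom(X,A^\cattwo)\to\hom(X,A)\comma\hom(X,A)$ are smothering but not faithful, so 2-cells into $A^\cattwo$ are \emph{not} determined by their projections, and equations between them cannot be extracted from 1- and 2-cell induction alone. That compatibility---which is what makes every 2-cell $p_0$-cartesian---is the actual content of the lemma, so your sketch stops just short of the point that needs proving.

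A second, related issue is where the verification takes place. Criterion \ref{itm:gpd.cart.fib.chars.ii} must be checked in $(\lcat{K}/A)_2$, whose 2-cells are finer than those of $\lcat{K}_2/A$. For 2-cells into $\pi_1\colon A\times A\tfib A$ the two notions happen to coincide, because the relevant fibre of $\map(X,A\times A)\tfib\map(X,A)$ is $\map(X,A)$ itself; this rescues your existence step, but it needs to be said, and for uniqueness, constructing data in $\lcat{K}_2/A$ and lifting it along the smothering 2-functor of Proposition~\ref{prop:smothering-slice-comparison} does not transport the required equations (it is not locally faithful), nor is the lifting criterion among the transferable properties recorded in Corollary~\ref{cor:slice-2-cat-comparison}. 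This is exactly why the paper argues via \ref{itm:gpd.cart.fib.chars.iii}: being an equivalence does transfer (Corollary~\ref{cor:slice-2-cat-comparison}\ref{itm:slice-2-cat-comparison-equiv}), and on the preferred simplicial models the comparison functor becomes the restriction $\Del^2\pwr A\to\Horn^{2,1}\pwr A$, a trivial fibration by Lemma~\ref{lem:triv.fib.stab}\ref{itm:triv.fib.stab:c}; that trivial fibration is precisely what encodes the coherent uniqueness of lifts your argument is missing. To complete your approach you would have to descend to the mapping quasi-categories $\map(X,A^\cattwo)\cong\Del^1\pwr\map(X,A)$ and argue there---at which point you have essentially reconstructed the paper's proof.
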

\begin{proof}
 Applying Theorem~\ref{thm:gpd.cart.fib.chars}\ref{itm:gpd.cart.fib.chars.iii}, this is the case if and only if the induced functor in $(\lcat{K}/A)_2$ from the $\cattwo$-cotensor of $p_1 \colon A^\cattwo \tfib A$ to the comma object $\pi_1 \comma (p_1,p_0)$ is an equivalence. Applying the forgetful 2-functor $(\lcat{K}/A)_2 \to \lcat{K}_2$, by Corollary~\ref{cor:slice-2-cat-comparison}\ref{itm:slice-2-cat-comparison-equiv}, it suffices to show that the map between the domains defines an equivalence in $\lcat{K}_2$. Of course, the notion of equivalence is equivalence invariant, so we are free to use our preferred models of the $\cattwo$-cotensor and comma constructions, defined using simplicial pullbacks \eqref{eq:comma-as-simp-pullback} in $\lcat{K}$. 

As recalled in \ref{defn:sliced-cosmoi}, the $\cattwo$-cotensor of $p_1 \colon \Del^1 \pwr A \tfib A$ in $(\lcat{K}/A)_2$ is defined to be  the left-hand map formed by the simplicial pullback
\[ \xymatrix{ A' \ar@{->>}[d] \pbexcursion \ar[r] & \Del^1 \pwr (\Del^1 \pwr A) \ar@{->>}[d]^{\Del^1 \pwr p_1} \\ A \ar[r]_-\Delta & \Del^1 \pwr A}\] Up to equivalence over $A$, $A' \tfib A$  is $\ev_{\fbv{2}}\colon \Del^2 \pwr A \tfib A$. 

Similarly, the $\cattwo$-cotensor of $\pi_1 \colon A \times A \tfib A$ in $(\lcat{K}/A)_2$ is defined to be the left-hand map defined by the simplicial pullback:
\[ \xymatrix{ A \times (\Del^1 \pwr A) \ar@{->>}[d] \pbexcursion \ar[r] & \Del^1 \pwr (A \times A) \ar@{->>}[d]^{\Del^1 \pwr \pi_1} \\ A \ar[r]_\Delta & \Del^1 \pwr A}\] Using this, the domain of the comma construction $\pi_1 \comma (p_1,p_0)$ in $(\lcat{K}/A)_2$ is defined by the simplicial pullback
\[ \xymatrix{ \Horn^{2,1} \pwr A \pbexcursion \ar@{->>}[d] \ar[r] & A \times (\Del^1 \pwr A)  \ar@{->>}[d]^{1 \times p_1} \\ \Del^1 \pwr A \ar[r]_-{(p_1,p_0)} & A \times A}\] and the projection $\ev_{\fbv{2}} \colon \Horn^{2,1}\pwr A \to A$ from the comma construction to $A$  is again evaluation at the vertex $\fbv{2}$ in $\Horn^{2,1}$.

In this way, we see that, up to equivalence,  the map considered by Theorem~\ref{thm:gpd.cart.fib.chars}\ref{itm:gpd.cart.fib.chars.iii} is the map
\[ \xymatrix{ \Delta^2\pwr A \ar@{->>}[rr]^\sim \ar@{->>}[dr]_{\ev_{\fbv{2}}} & & \Horn^{2,1}\pwr A \ar@{->>}[dl]^{\ev_{\fbv{2}}} \\ & A}\] Lemma~\ref{lem:triv.fib.stab}\ref{itm:triv.fib.stab:c} implies that this is a trivial fibration, which indeed is an equivalence.
\end{proof}

\begin{prop}\label{prop:hom-is-a-module} The arrow $\infty$-category $(p_1,p_0) \colon A^\cattwo \tfib A \times A$ defines a module from $A$ to $A$.
\end{prop}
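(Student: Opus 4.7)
The plan is to verify the three conditions (i)--(iii) in the definition of a module applied to $E = A^\cattwo$, $A=B$, and the isofibration $(p_1, p_0)\colon A^\cattwo\tfib A\times A$. Condition (i), which asserts that this map defines a cartesian fibration in the sliced 2-category $(\lcat{K}/A)_2$ where $A$ acts on the codomain via the first projection $\pi_1$, is precisely the content of Lemma~\ref{lem:disc-cart-on-right}, which in fact delivers the stronger statement that this fibration is groupoidal.

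For condition (ii), I would run a dual argument. Cocartesian fibrations in $(\lcat{K}/A)_2$ are cartesian fibrations in $(\lcat{K}/A)_2\co$, and the dual of Theorem~\ref{thm:gpd.cart.fib.chars}\ref{itm:gpd.cart.fib.chars.iii} says that $(p_1,p_0)$ defines a groupoidal cocartesian fibration over $A$ via $\pi_0$ if and only if a canonical comparison functor from the $\cattwo$-cotensor of $p_0\colon A^\cattwo\tfib A$ in $(\lcat{K}/A)_2$ to the cocomma object $(p_1,p_0)\comma\pi_0$ is an equivalence in $(\lcat{K}/A)_2$. Running the same explicit simplicial-pullback computations as in the proof of Lemma~\ref{lem:disc-cart-on-right}, but with the roles of the endpoints of $\Del^1$ exchanged (so that $\ev_{\fbv{0}}$ and $\Horn^{2,0}$ replace $\ev_{\fbv{2}}$ and $\Horn^{2,1}$), reduces the claim to the fact that $\Del^2\pwr A\trvfib \Horn^{2,0}\pwr A$ is a trivial fibration, which is an instance of Lemma~\ref{lem:triv.fib.stab}\ref{itm:triv.fib.stab:c}.

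Condition (iii), groupoidality as an object of $\lcat{K}/A\times A$, is implied by condition (i). Any 2-cell $\tau$ with codomain $A^\cattwo$ that lies in $\lcat{K}_2/(A\times A)$ satisfies $(p_1,p_0)\tau = \id$, whence $p_1\tau = \pi_1\cdot(p_1,p_0)\tau = \id$, placing $\tau$ in $\lcat{K}_2/A$ via $p_1$. Lemma~\ref{lem:disc-cart-on-right} asserts that any such $\tau$ is invertible, and Corollary~\ref{cor:slice-2-cat-comparison}\ref{itm:slice-2-cat-comparison-groupoidal} transfers this groupoidality from $\lcat{K}_2/(A\times A)$ to $(\lcat{K}/A\times A)_2$ as required.

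The main subtle point is ensuring that the dualised argument for (ii) goes through unchanged; this is really a formal matter, since the proof of Lemma~\ref{lem:disc-cart-on-right} is 2-categorical throughout and the symmetry of $\Del^1$ swaps $p_0\leftrightarrow p_1$ and $\Horn^{2,1}\leftrightarrow \Horn^{2,0}$ without disturbing the shape of the underlying weighted-limit computation. I expect condition (iii) to be the easiest, being essentially a bookkeeping consequence of (i).
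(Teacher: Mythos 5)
Your treatment of condition (i) matches the paper, and your plan for (ii) — invoke the dual of Lemma~\ref{lem:disc-cart-on-right} — is also exactly what the paper does, but your execution of the dualisation is wrong in a way that matters. Carrying out the dual computation (now slicing over $A$ via $p_0$ and $\pi_0$, and using the co-dual comma appropriate to cocartesian fibrations), the fibrewise comma again consists of a composable pair of arrows: its summit is $\Horn^{2,1}\pwr A$, now projecting to $A$ via $\ev_{\fbv{0}}$, and the comparison from $\Del^2\pwr A$ is again restriction along the \emph{inner} horn. The duality in play reverses 2-cells, i.e.\ swaps the outer vertices of $\Del^2$, and so fixes $\Horn^{2,1}$; it does not produce $\Horn^{2,0}$. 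This is not a cosmetic point: $\Horn^{2,0}\inc\Del^2$ is an outer horn inclusion, which is \emph{not} a trivial cofibration in the Joyal model structure, so Lemma~\ref{lem:triv.fib.stab}\ref{itm:triv.fib.stab:c} does not apply and $\Del^2\pwr A\to\Horn^{2,0}\pwr A$ is not in general a trivial fibration (or even an equivalence). With the horn corrected, the argument closes just as in Lemma~\ref{lem:disc-cart-on-right}.

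For condition (iii) you diverge from the paper, and your justification has a gap. You assert that ``Lemma~\ref{lem:disc-cart-on-right} asserts that any such $\tau$ [a 2-cell over $A$ via $p_1$] is invertible''; it does not. The groupoidality in that lemma is groupoidality of the map $(p_1,p_0)$ as an object of the slice of $(\lcat{K}/A)_2$ over $\pi_1$ — i.e.\ it concerns 2-cells lying over \emph{both} projections, and moreover 2-cells in the finer sliced homotopy 2-category, whose 2-cells are homotopy classes taken fibrewise. The statement you attribute to the lemma, that $p_1\colon A^\cattwo\tfib A$ is groupoidal as an object of $\lcat{K}/A$, is false (its fibres are slices, not groupoids). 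Even with the correct reading, passing from groupoidality in $\bigl((\lcat{K}/A)_2\bigr)/\pi_1$ to groupoidality in $\lcat{K}_2/(A\times A)$ requires lifting the given 2-cell along the smothering 2-functor compared in the proof of Lemma~\ref{lem:sliced-adjunction-on-the-right}; your citation of Corollary~\ref{cor:slice-2-cat-comparison}\ref{itm:slice-2-cat-comparison-groupoidal} handles only the comparison $(\lcat{K}/A\times A)_2$ versus $\lcat{K}_2/(A\times A)$, not this step, and the remark following the definition of module warns precisely against conflating groupoidality in the slices with groupoidality over the product. The paper avoids all of this: condition (iii) is immediate from conservativity of 2-cell induction for the weak cotensor $A^\cattwo$ (Recollection~\ref{rec:comma}) — a 2-cell whose whiskerings with $p_0$ and $p_1$ are identities is invertible — which is both the intended argument and far shorter than deriving it from the fibration lemma.
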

\begin{proof}
Lemma~\ref{lem:disc-cart-on-right} and its dual imply in particular that $(p_1,p_0) \colon A^\cattwo \tfib A \times A$ is  cartesian on the left and cocartesian on the right. Conservativity of 2-cell induction implies that it is groupoidal.
\end{proof}

More generally, given any functors $f \colon B \to A$ and $g \colon C \to A$, the comma $\infty$-category $(p_1,p_0) \colon f \comma g  \tfib C \times B$ encodes a module from $C$ to $B$. The proof makes use of a few intermediate results, which are of interest in their own right.

\begin{lem}\label{lem:sliced-adjunction-on-the-right} An isofibration $(q,p) \colon E \tfib A \times B$
is cartesian on the right if and only if any of the following equivalent conditions are satisfied:
\begin{enumerate}[label=(\roman*)]
\item\label{itm:sliced-right-defn}  $(q,p) \colon E \tfib A \times B$ is a cartesian fibration in $(\lcat{K}/A)_2$.
\item\label{itm:sliced-right-fibred-adj} The functor $i \colon E \to B \comma p$ induced by $\id_p$ admits a right adjoint in $(\lcat{K}/A \times B)_2$.
\item\label{itm:sliced-right-adj} The  functor $i \colon E \to B \comma p$ induced by $\id_p$ admits a right adjoint in $\lcat{K}_2/A \times B$.
\end{enumerate}
\end{lem}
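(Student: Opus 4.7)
Condition~(i) is a literal restatement of the hypothesis that $(q,p)$ is cartesian on the right, so no work is needed there; the content lies in the chain (i) $\Leftrightarrow$ (ii) $\Leftrightarrow$ (iii).

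To prove (i) $\Leftrightarrow$ (ii), my plan is to apply Theorem~\refIV{thm:cart.fib.chars}, specifically the equivalence of items~\refIV{itm:cart.fib.chars.i} and~\refIV{itm:cart.fib.chars.ii}, inside the sliced $\infty$-cosmos $\lcat{K}/A$ of Definition~\ref{defn:sliced-cosmoi}, regarding $(q,p)$ as an isofibration from $(E\tfib A)$ to $(A\times B\tfib A)$. The theorem then asserts that $(q,p)$ is cartesian in $\lcat{K}/A$ if and only if the functor induced by $\id_{(q,p)}$ from $(E\tfib A)$ into the sliced comma object $(A\times B)\comma^{\lcat{K}/A}(q,p)$ admits a right adjoint fibred over $(A\times B\tfib A)$.

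The key computation, which I would carry out using the sliced-cotensor and sliced-pullback formulas of~\ref{defn:sliced-cosmoi}, is that the sliced cotensor $\Delta^1\pwr_{\pi_0}(A\times B)$ inside $\lcat{K}/A$ coincides with $A\times B^\cattwo$, and consequently the sliced comma is isomorphic to the unsliced $B\comma p$ built from~\eqref{eq:comma-as-simp-pullback}, now equipped with the evident isofibration $(qp_0,p_1)\colon B\comma p\tfib A\times B$. Under this identification the functor induced inside $\lcat{K}/A$ agrees with the unsliced $i\colon E\to B\comma p$ induced by $\id_p$ and described in~\ref{rec:comma}. Because the further slice $(\lcat{K}/A)/(A\times B\tfib A)$ is canonically isomorphic, as an $\infty$-cosmos, to $\lcat{K}/(A\times B)$, the fibred right adjoint supplied by the theorem is precisely a right adjoint in $(\lcat{K}/A\times B)_2$, which is~(ii).

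For (ii) $\Leftrightarrow$ (iii) I would simply invoke Corollary~\ref{cor:slice-2-cat-comparison}\ref{itm:slice-2-cat-comparison-adj} applied at the base $A\times B$, which equates the existence of right adjoints in $(\lcat{K}/A\times B)_2$ with their existence in $\lcat{K}_2/A\times B$. The principal obstacle is the identification of the comma object formed in the slice with the unsliced $B\comma p$: although essentially a mechanical unpacking of the defining pullback squares, this is the one substantive translation that lets the characterisation theorem and the slice-comparison corollary be brought to bear on the sliced setting; everything else then follows by citation.
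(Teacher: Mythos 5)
Your route is the same as the paper's: apply Theorem~\refIV{thm:cart.fib.chars}, (i)$\Leftrightarrow$(ii), in the sliced $\infty$-cosmos $\lcat{K}/A$; compute the sliced $\cattwo$-cotensor of $\pi_1\colon A\times B\tfib A$ to be $A\times B^\cattwo$ and hence identify the sliced comma with the unsliced $B\comma p$, carrying the induced functor to $i\colon E\to B\comma p$; and use Corollary~\ref{cor:slice-2-cat-comparison}\ref{itm:slice-2-cat-comparison-adj} to pass between (ii) and (iii). (Two small slips: the cotensor is taken of $\pi_1$, not $\pi_0$, and the resulting isofibration is $(qp_1,p_0)\colon B\comma p\tfib A\times B$.)

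There is, however, a genuine gap at the step where you claim the fibred right adjoint supplied by the theorem ``is precisely a right adjoint in $(\lcat{K}/A\times B)_2$'' because $(\lcat{K}/A)/\pi_1\cong\lcat{K}/(A\times B)$ as $\infty$-cosmoi. Theorem~\refIV{thm:cart.fib.chars} is a statement internal to the homotopy 2-category $(\lcat{K}/A)_2$, so the adjunction it produces lives in the slice of that homotopy 2-category, $(\lcat{K}/A)_2/(\pi_1\colon A\times B\tfib A)$, not in the homotopy 2-category of the iterated sliced cosmos $((\lcat{K}/A)/\pi_1)_2\cong(\lcat{K}/A\times B)_2$. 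These two 2-categories share an underlying 1-category but have different 2-cells --- exactly the distinction drawn in Definition~\ref{defn:two-slices} and Proposition~\ref{prop:smothering-slice-comparison} --- so the word ``precisely'' papers over the very point that carries the weight of the paper's proof: there the commuting square of forgetful 2-functors is used to show that $(\lcat{K}/A)_2/\pi_1\to(\lcat{K}_2/A)/\pi_1\cong\lcat{K}_2/A\times B$ is a smothering 2-functor, so that adjunctions transfer by Lemma~\refI{lem:missed-lemma}. Your argument is repairable with tools you already invoke --- apply Corollary~\ref{cor:slice-2-cat-comparison}\ref{itm:slice-2-cat-comparison-adj} a second time, now in the $\infty$-cosmos $\lcat{K}/A$ with base object $\pi_1$, combined with your iterated-slice isomorphism --- but as written the transfer of the adjunction from the slice of the homotopy 2-category to the homotopy 2-category of the slice is asserted rather than proved.
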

\begin{proof}
Corollary~\ref{cor:slice-2-cat-comparison}\ref{itm:slice-2-cat-comparison-adj} implies that  conditions \ref{itm:sliced-right-fibred-adj} and \ref{itm:sliced-right-adj} are equivalent.

The equivalence with \ref{itm:sliced-right-defn} is an application of Theorem~\refIV{thm:cart.fib.chars}. (i)$\Leftrightarrow$(ii), whose statement is recalled in \S\ref{ssec:cartesian}, to the cartesian fibration $(q,p) \colon E \tfib A \times B$ in the slice 2-category $(\lcat{K}/A)_2$. This tells us that $(q,p) \colon E \tfib A \times B$ is a cartesian fibration if and only if a certain functor admits a right adjoint in $(\lcat{K}/A)_2/(\pi_1 \colon A \times B \tfib A)$. There is a commutative square of forgetful 2-functors, all of which are isomorphisms on underlying 1-categories:
\[ \xymatrix{(\lcat{K}/A\times B)_2 \ar[r] \ar[d] & (\lcat{K}/A)_2/(\pi_1 \colon A \times B \tfib A) \ar[d] \\ \lcat{K}_2/A \times B \ar[r]^-{\cong} & (\lcat{K}_2/A)/(\pi_1 \colon A \times B \tfib A)}\]  The left-hand map is a smothering 2-functor, the bottom functor is an isomorphism, and the top functor is surjective on objects and 1-cells. These properties imply that the right-hand functor is surjective on objects, surjective on 1-cells, and 2-full. The right-hand functor is also 2-conservative, as it commutes with the 2-conservative forgetful 2-functors to $\lcat{K}_2/A$. So the right-hand functor is a smothering 2-functor, and it suffices by Lemma~\refI{lem:missed-lemma} to demonstrate the adjunction in $\lcat{K}_2/A\times B$.

So we have argued, using \refIV{thm:cart.fib.chars}\refIV{itm:cart.fib.chars.i}$\Leftrightarrow$\refIV{itm:cart.fib.chars.ii},  that $(q,p) \colon E \tfib A \times B$ is cartesian on the right if and only if a certain functor admits a right adjoint in $\lcat{K}_2/A\times B$. This proves the equivalence of the stated conditions \ref{itm:sliced-right-defn}  and \ref{itm:sliced-right-adj} because the certain functor turns out to be $i\colon E \to B \comma p$. The following computation, included for the sake of completeness, justifies this claim. 

The $\cattwo$-cotensor of $\pi_1 \colon A \times B \tfib A$ in $(\lcat{K}/A)_2$ is computed by the simplicial pullback in $\lcat{K}$:
\[ \xymatrix{ A \times B^\cattwo \ar@{->>}[d]_{\pi_1} \pbexcursion \ar[r] & (A \times B)^\cattwo \ar@{->>}[d]^{\pi_1^\cattwo} \\ A \ar[r]_\Delta & A^\cattwo}\]
Then the comma object $\pi_1 \comma (q,p)$ in $(\lcat{K}/A)_2$ is defined by the left-hand simplicial pullback square in $\lcat{K}$, which we recognize as the pullback of the composite rectangle:
\[ \xymatrix{ B \comma p \ar@{->>}[d] \pbexcursion \ar[r] & A \times B^\cattwo \ar@{->>}[d]_{A \times p_1} \pbexcursion \ar[r]^-{\pi_0} & B^\cattwo \ar@{->>}[d]^{p_1} \\ E \ar[r]_-{(q,p)}  & A \times B \ar[r]_-{\pi_0} & B}\] 

We leave it to the reader to verify that the induced map from $q \colon E \tfib A$ to $qp_1 \colon B \comma p \to A$ is 
$i \colon E \to B \comma p$.
% defined by 1-cell induction from $1_E \colon E \to E$ and 
\end{proof}

\begin{prop}\label{prop:two-sided-pullback} Suppose $(q,p) \colon E \tfib A \times B$ is cartesian on the right, and consider the simplicial pullback 
$(q',p') \colon E' \tfib A' \times B'$ of $(q,p)$ along a pair of maps $a \times b \colon A' \times B' \to A \times B$. Then $(q',p')$ is again cartesian on the right. In particular, the pullback of a module is a module.
\end{prop}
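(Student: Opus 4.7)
The plan is to reduce the pullback stability of the right cartesian condition to two already-available stability facts for cartesian fibrations. By Lemma~\ref{lem:sliced-adjunction-on-the-right}, saying $(q,p) \colon E \tfib A \times B$ is cartesian on the right is the same as saying that $(q,p)$, regarded as an isofibration in the $\infty$-cosmos $\lcat{K}/A$ between the objects $q \colon E \tfib A$ and $\pi_1 \colon A \times B \tfib A$, is a cartesian fibration in $(\lcat{K}/A)_2$. The key observation is that the pullback $a \times b \colon A' \times B' \to A \times B$ factors as the composite of $1_{A'} \times b \colon A' \times B' \to A' \times B$ with $a \times 1_B \colon A' \times B \to A \times B$, so the pullback can be performed in two steps, the first changing the base of the slice from $A$ to $A'$, the second taking place entirely within $\lcat{K}/A'$.

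For the first step, observe that pulling back $(q,p) \colon E \tfib A \times B$ along $a \times 1_B$ is exactly the effect of applying the functor of $\infty$-cosmoi $a^* \colon \lcat{K}/A \to \lcat{K}/A'$ of Example~\ref{ex:functors} to the isofibration $(q,p)$ viewed in $\lcat{K}/A$; indeed, $a^*$ carries $q \colon E \tfib A$ to $E \times_A A' \tfib A'$, carries $\pi_1 \colon A \times B \tfib A$ to $\pi_1 \colon A' \times B \tfib A'$, and carries the map $(q,p)$ between them to the pullback of $(q,p)$ along $a \times 1_B$. Since functors of $\infty$-cosmoi preserve cartesian fibrations (by the corollary following Theorem~\ref{thm:gpd.cart.fib.chars}), the result is a cartesian fibration in $(\lcat{K}/A')_2$. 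For the second step, we now pull back this cartesian fibration in the $\infty$-cosmos $\lcat{K}/A'$ along the map $1_{A'} \times b$ (which lives in $\lcat{K}/A'$); this simplicial pullback is created from $\lcat{K}$ and hence agrees with the simplicial pullback of $(q,p)$ along $a \times b$ that defines $(q',p')$. Proposition~\ref{prop:cart-fib-pullback} applied in $\lcat{K}/A'$ then tells us that $(q',p')$ is a cartesian fibration in $(\lcat{K}/A')_2$, which by Lemma~\ref{lem:sliced-adjunction-on-the-right} means exactly that it is cartesian on the right.

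For the ``in particular'' clause, we must verify that each of the three defining properties of a module is stable under pullback. The cocartesian-on-the-left condition follows by the dual argument (replacing $\lcat{K}/A$ with $\lcat{K}/B$ and interchanging the roles of $a$ and $b$). For the groupoidal fibre condition, I would argue directly using mapping spaces: since simplicial pullbacks are preserved by the simplicially enriched representables $\map(X,-)$, for any $X$ and any vertex $x' \colon X \to A' \times B'$ the slice mapping space $\map_{A' \times B'}(x', (q',p'))$ is isomorphic to the fibre over $x'$ of $\map(X,E') \to \map(X, A' \times B')$, and the pullback identification
\[ \map(X,E') \cong \map(X,E) \times_{\map(X, A\times B)} \map(X, A' \times B') \]
shows this fibre agrees with the fibre of $\map(X,E) \to \map(X, A \times B)$ over $(a \times b) \circ x'$, i.e., with $\map_{A\times B}((a\times b) x', (q,p))$. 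Since the latter is a Kan complex by hypothesis, so is the former, and hence $(q',p')$ is groupoidal in $\lcat{K}/(A' \times B')$.

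The main substantive point is the first step of the decomposition: recognising the pullback along $a \times 1_B$ as the action of the $\infty$-cosmos functor $a^*$ on an object of $\lcat{K}/A$, which then lets us invoke the general preservation principle for cartesian fibrations under functors of $\infty$-cosmoi rather than having to redo the cartesian lifting analysis from scratch in the sliced setting. The remainder of the proof is a matter of assembling stability results that are already in hand.
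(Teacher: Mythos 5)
Your proof is correct, and it reaches the conclusion by a genuinely different route than the paper's for the key step. The paper factors $a\times b$ in the opposite order: it first pulls back along $1\times b$, observing that this square is a simplicial pullback in $\lcat{K}/A$ so that Proposition~\ref{prop:cart-fib-pullback} applies there, and then handles the remaining change of base along $a\times 1$ by an explicit computation showing that the comparison functor $i\colon E\to B\comma p$ of Lemma~\ref{lem:sliced-adjunction-on-the-right} pulls back to the corresponding functor $i\colon E'\to B\comma pe$, so that the fibred adjunction witnessing condition \ref{itm:sliced-right-adj} pulls back by Remark~\ref{rmk:adjunctions-pullback}. You instead absorb the change of base $A\to A'$ into the observation that pullback along $a\times 1_B$ is (up to isomorphism) the action of the $\infty$-cosmos functor $a^*\colon\lcat{K}/A\to\lcat{K}/A'$ of Example~\ref{ex:functors} on the map $(q,p)$ regarded in $\lcat{K}/A$, and then invoke the corollary to Theorem~\ref{thm:gpd.cart.fib.chars} that functors of $\infty$-cosmoi preserve cartesian fibrations; the residual pullback along $1_{A'}\times b$ is internal to $\lcat{K}/A'$, where Proposition~\ref{prop:cart-fib-pullback} finishes the job. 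This is legitimate and non-circular (that corollary rests only on Proposition~\ref{prop:induced-2-functor} and the characterisation theorems), and it buys you freedom from the comma-object diagram chase; what the paper's route buys is the explicit identification of the pulled-back comma and adjunction data, which is in the spirit of how those fibred adjunctions are used elsewhere.

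For the groupoidal clause, the paper argues via the 2-cell conservativity built into the weak universal property of simplicial pullbacks in $\lcat{K}_2$, whereas you compute fibres of mapping quasi-categories directly. Your identification of $\map_{A'\times B'}(x',(q',p'))$ with $\map_{A\times B}((a\times b)x',(q,p))$ is correct, but note one glossed point: the hypothesis that $(q,p)$ is groupoidal in $\lcat{K}/A\times B$ literally supplies Kan mapping quasi-categories only for test objects of that slice, i.e.\ isofibrations over $A\times B$, while $(a\times b)x'$ is generally not an isofibration. This is harmless — one either treats arbitrary maps as domain objects in the spirit of Observation~\ref{obs:general-functors-in-slices} (the extension is easily supplied by lifting an invertible 2-cell along the isofibration $(q,p)$ to conjugate any 2-cell over $\id_{(a\times b)x'}$ to one over an identity of an isofibration), or one argues as the paper does with 2-cell conservativity of the pullback — and the paper's own "follows directly" is at the same level of detail, so this is a remark rather than a defect.
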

\begin{proof}
We factor the simplicial pullback rectangle:
\[ \xymatrix{ E' \ar@{->>}[d]_{(q',p')} \ar[r] \pbexcursion & \bar{E} \ar@{->>}[d]_{(q,p')} \pbexcursion \ar[r] & E \ar@{->>}[d]^{(q,p)} \\ A' \times B' \ar[r]_{ a \times 1} & A \times B' \ar[r]_{1 \times b} & A \times B}\] 
The right-hand square is also a simplicial pullback in $\lcat{K}/A$. Applying Proposition~\ref{prop:cart-fib-pullback} in $\lcat{K}/A$, $(q,p') \colon \bar{E} \tfib A \times B'$ is cartesian on the right. The general result now follows from the special case where $b=\id_B$:
\[ \xymatrix{ E' \ar@{->>}[d]_{(q',pe)} \ar[r]^e \pbexcursion & E \ar@{->>}[d]^{(q,p)} \\ A' \times B \ar[r]_{ a \times 1} & A \times B }\] 
 and accordingly, we simplify our notation by dropping the now-superfluous primes.

The composite rectangle on the left below
\[ \vcenter{\xymatrix{ E' \ar@{->>}[d]_{(q',pe)} \ar[r]^e \pbexcursion & E \ar@{->>}[d]^{(q,p)} \\ A' \times B \ar[r]_{ a \times 1} \ar@{->>}[d]_{\pi_1} \pbexcursion & A \times B \ar@{->>}[d]^{\pi_1} \\ A' \ar[r]_{a} & A }} \quad \quad \vcenter{\xymatrix{ E' \times B \pbexcursion \ar@{->>}[d]_{q' \times 1} \ar[r]^{e \times 1} & E \times B \ar@{->>}[d]^{q \times 1} \\ A' \times B \ar[r]_{a \times 1} & A \times B}} \] 
defines a pullback in $\lcat{K}$, and thus, so does the right-hand square. Composing this with the pullback square 
\[ \xymatrix{ B \comma pe \ar@{->>}[d]_{(p_1,p_0)} \ar[r] \pbexcursion & B \comma p \ar@{->>}[d]^{(p_1,p_0)} \\ E' \times B \ar[r]_{e \times 1} & E \times B}\] we see that $(qp_1,p_0) \colon B \comma p \tfib A \times B$ pulls back along $a \times 1$ to $(q'p_1, p_0) \colon B \comma pe \tfib A' \times B$. Thus, the map $i \colon E \to B \comma p$ in $\lcat{K}/A \times B$ pulls back to the corresponding map $i \colon E' \to B \comma pe$ in $\lcat{K}/A' \times B$. Applying Lemma~\ref{lem:sliced-adjunction-on-the-right} and Remark~\ref{rmk:adjunctions-pullback}, the adjunction that demonstrates that $(q,p)$ is cartesian on the right also pulls back, proving that $(q',pe)$ is also cartesian on the right, as required.

To conclude that the pullback of a module is a module, it remains only to observe that the pullback of a groupoidal object is a groupoidal object. This follows directly from the fact that simplicial pullbacks in $\lcat{K}$ define weak pullbacks in the homotopy 2-category $\lcat{K}_2$ satisfying the universal property described in Definition~\refIV{defn:pullback}, which includes the usual 2-cell conservativity.
\end{proof}

Combining Propositions~\ref{prop:hom-is-a-module} and \ref{prop:two-sided-pullback}, we have:

\begin{cor}\label{cor:comma-is-a-module} For any pair of functors $f \colon B \to A$ and $g \colon C \to A$, the comma construction $f \comma g \tfib C \times B$ defines a module from $C$ to $B$.\qed
\end{cor}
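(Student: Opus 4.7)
The plan is to deduce this corollary directly from the two immediately preceding propositions, with essentially no further work. By the very definition of the comma $\infty$-category recalled in~\ref{rec:comma}, the isofibration $(p_1,p_0)\colon f\comma g\tfib C\times B$ is constructed as the simplicial pullback of the arrow $\infty$-category $(p_1,p_0)\colon A^\cattwo \tfib A\times A$ along the product functor $g\times f\colon C\times B\to A\times A$, via the defining square~\eqref{eq:comma-as-simp-pullback}.

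Proposition~\ref{prop:hom-is-a-module} supplies the base case: the arrow $\infty$-category $A^\cattwo$ is a module from $A$ to itself. Proposition~\ref{prop:two-sided-pullback}, whose hypotheses explicitly concern simplicial pullbacks along product functors $a\times b\colon A'\times B'\to A\times B$, then asserts that the pullback of any module is again a module from the pulled-back base. Taking $a=g$ and $b=f$, so that $(A',B') = (C,B)$, and applying this stability result to the arrow module on $A$ yields the desired conclusion that $f\comma g\tfib C\times B$ is a module from $C$ to $B$.

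There is no real obstacle to overcome here, since the substantive verifications — that the pullback preserves the cartesian-on-the-right condition, the dual cocartesian-on-the-left condition, and the groupoidal condition on fibers — have already been carried out in the proof of Proposition~\ref{prop:two-sided-pullback}. The corollary is thus obtained simply by transcribing the appropriate instance of that stability result, combined with the fact that the arrow $\infty$-category itself serves as the prototypical example.
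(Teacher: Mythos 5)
Your argument is exactly the paper's: the corollary is stated immediately after the line ``Combining Propositions~\ref{prop:hom-is-a-module} and \ref{prop:two-sided-pullback}, we have:'', so identifying $f\comma g$ as the simplicial pullback \eqref{eq:comma-as-simp-pullback} of the arrow module $A^\cattwo$ along $g\times f$ and invoking the pullback-stability of modules is precisely the intended proof. No gaps.
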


\begin{defn} Given a functor $f \colon A \to B$, Corollary~\ref{cor:comma-is-a-module} implies that $B\comma f$ defines a module from $A$ to $B$ and $f \comma B$ defines a module from $B$ to $A$, which we refer to, respectively, as the \emph{covariant} and \emph{contravariant representable modules} associated to the functor $f \colon A \to B$. 
\end{defn}

\begin{lem}\label{lem:two-sided-groupoidal-cells} If $(q,p) \colon E \tfib A \times B$ is cartesian on the right, then $p$ is a cartesian fibration. Moreover, a $p$-cartesian 2-cell $\lambda \colon e' \To e \colon X \to E$ must have $q\lambda$ an isomorphism, and if $(q,p)$ is groupoidal cartesian on the right, the converse holds: if $q\lambda$ is an isomorphism, then $\lambda$ is $p$-cartesian.
\end{lem}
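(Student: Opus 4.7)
My plan is to split the proof into three parts, one per assertion, and to extract everything from a careful analysis of the sliced adjunction of Lemma~\ref{lem:sliced-adjunction-on-the-right}.

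For the first assertion, I would combine Lemma~\ref{lem:sliced-adjunction-on-the-right}\ref{itm:sliced-right-adj} with Theorem~\refIV{thm:cart.fib.chars}\refIV{itm:cart.fib.chars.ii}. The hypothesis provides a right adjoint $r$ to the comparison functor $i\colon E\to B\comma p$ in the slice 2-category $\lcat{K}_2/A\times B$, and applying the evident forgetful 2-functor $\lcat{K}_2/A\times B \to \lcat{K}_2/B$ induced by post-composition with $\pi_0\colon A\times B\to B$ transports this into a right adjoint to $i$ in $\lcat{K}_2/B$, which by the cited theorem asserts precisely that $p$ is a cartesian fibration.

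For the second assertion, my key observation is that because the adjunction $i\dashv r$ lives over $A\times B$, its counit $\epsilon\colon ir\To\id_{B\comma p}$ satisfies $(qp_1,p_0)\epsilon=\id$, and in particular has trivial $q$-component. Consequently the specific $p$-cartesian lift $\chi\colon e''\To e$ of any $\alpha\colon b\To pe$ produced by this adjunction --- constructed via the standard argument for the implication \refIV{itm:cart.fib.chars.ii}$\Rightarrow$\refIV{itm:cart.fib.chars.i} --- satisfies $q\chi=\id_{qe}$. Given a general $p$-cartesian lift $\lambda\colon e'\To e$ of $\alpha$, the essential uniqueness of cartesian lifts (\refIV{obs:weak.cart.lift.unique}) provides a fibred isomorphism $\iota\colon e'\cong e''$ over $\id_{pe'}$ with $\lambda=\chi\cdot\iota$, whence $q\lambda = q\chi\cdot q\iota = q\iota$ is an isomorphism.

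For the converse in the groupoidal case, I would first reduce to the situation $q\lambda=\id_{qe}$ by appealing to the isofibration property of $(q,p)$ to lift the given isomorphism $q\lambda$ to an isomorphism $\nu\colon e'\cong\tilde e'$ in $E$ with $q\nu=q\lambda$ and $p\nu=\id_{pe'}$, then replacing $\lambda$ by $\lambda\cdot\nu^{-1}$ (which has trivial $q$-component and the same $p$-image as $\lambda$); since precomposition with an isomorphism preserves $p$-cartesianness (\refIV{obs:cart.iso.stab}), this reduction is harmless. Letting $\chi$ be the canonical lift from the previous paragraph applied to $\alpha=p\lambda$, I would then apply the $(q,p)$-cartesianness of $\chi$ in $(\lcat{K}/A)_2$ to the trivial factorization $(q,p)\lambda=(q,p)\chi$ in the slice over $A$, obtaining a 2-cell $\iota\colon e'\To e''$ with $q\iota=\id$ and $p\iota=\id$ and with $\chi\cdot\iota=\lambda$. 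The groupoidal hypothesis on $(q,p)$ --- which by Corollary~\ref{cor:slice-2-cat-comparison}\ref{itm:slice-2-cat-comparison-groupoidal} is equivalent to groupoidality as an object of $\lcat{K}_2/A\times B$ --- forces $\iota$ to be invertible, and isomorphism stability again ensures $\lambda=\chi\cdot\iota$ is $p$-cartesian.

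The main obstacle will be the bookkeeping needed to check that the $p$-cartesian lifts produced by the sliced adjunction genuinely have identity $q$-projection; once this compatibility between the slices over $A\times B$, over $A$, and over $B$ is in hand, the essential uniqueness of cartesian lifts and the groupoidal conservativity combine to give both implications with little additional work.
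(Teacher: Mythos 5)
Your proposal is correct and follows essentially the same route as the paper: the first two assertions are obtained exactly as there (pushing the sliced adjunction of Lemma~\ref{lem:sliced-adjunction-on-the-right} forward to $\lcat{K}_2/B$ and applying \refIV{thm:cart.fib.chars}, then noting that the counit-derived cartesian lifts have identity $q$-component so that essential uniqueness of cartesian lifts forces $q\lambda$ to be invertible), and the converse uses the same reduction to $q\lambda = \id$ followed by the groupoidal hypothesis transported across the smothering 2-functors. The only difference is organizational: where the paper declares the reduced cell itself to be $(q,p)$-cartesian in $(\lcat{K}/A)_2$ and hence $p$-cartesian, you factor it through the canonical counit lift $\chi$ and use groupoidal conservativity to invert the comparison cell --- a harmless variant resting on the same slice-compatibility bookkeeping that you rightly flag.
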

\begin{proof}
Lemma~\ref{lem:sliced-adjunction-on-the-right} tells us that $i \colon E \to B \comma p$ admits a right adjoint in the slice 2-category $\lcat{K}_2/A\times B$. Composition with the projection $\pi_B \colon A \times B \tfib B$ induces a forgetful 2-functor $\lcat{K}_2/A \times B \to \lcat{K}_2/B$. The image of this sliced adjunction tells us, via Theorem~\refIV{thm:cart.fib.chars}, that $p$ is a cartesian fibration in $\lcat{K}_2$. Via Observation~\refIV{obs:constructing-weak-lifts}, any $p$-cartesian 2-cell is isomorphic to one defined to be a whiskered composite of the counit $\epsilon$ of the adjunction $i\dashv r$. As this adjunction lifts to  $\lcat{K}_2/A \times B$, these 2-cells project along $q$ to an identity. Thus, for any $p$-cartesian 2-cell $\lambda$, we must have $q\lambda$ an isomorphism.

Finally,  suppose that $(q,p) \colon E \tfib A \times B$ is a groupoidal cartesian fibration in $(\lcat{K}/A)_2$.  Consider a 2-cell $\lambda \colon e' \To e \colon X \to E$ has $q\lambda$ an isomorphism. Lifting $(q\lambda)^{-1}$ along the isofibration $q \colon E \tfib A$, we see that $\lambda$ is isomorphic in $\lcat{K}_2$ to a 2-cell $\lambda'$ with $q\lambda$ an identity. Now $\lambda'$ is a 2-cell in $\lcat{K}_2/A$ with codomain $q \colon E \tfib A$. By local fullness of the smothering 2-functor $(\lcat{K}/A)_2 \to \lcat{K}_2/A$, it can be lifted to a 2-cell of the same kind in $(\lcat{K}/A)_2$. Since $(q,p)$ is groupoidal, any 2-cell of this form is $(q,p)$-cartesian. So $\lambda'$ is also $p$-cartesian and $\lambda$, which is isomorphic to a $p$-cartesian 2-cell, is itself $p$-cartesian.
\end{proof}

As the motivating example $(p_1,p_0) \colon A^\cattwo \tfib A \times A$ shows,  the legs of a module need not be groupoidal fibrations when considered separately in $\lcat{K}_2$.

\begin{defn}[horizontal composition of isofibrations over products]\label{defn:horiz-comp} Consider a pair of isofibrations $(q,p) \colon E \tfib A \times B$ and $(s,r) \colon F \tfib B \times C$  in an $\infty$-cosmos $\lcat{K}$. This data defines a composable pair of spans of isofibrations. Their \emph{horizontal composite} will define a span of isofibrations from $A$ to $C$ whose summit is formed by the simplicial pullback
\[ \xymatrix@!=5pt{ & & E \times_B F \pbdiamond \ar@{->>}[dl]_-{\pi_1} \ar@{->>}[dr]^-{\pi_0} \\  & E \ar@{->>}[dl]_q \ar@{->>}[dr]^p &  & F \ar@{->>}[dl]_s \ar@{->>}[dr]^r \\ A  & & B & & C}\]
Up to isomorphism, this span is constructed as the composite of the left-hand vertical in the simplicial pullback
\[ \xymatrix{ E \times_B F \pbexcursion \ar@{->>}[d]_{(q\pi_1,\pi_0)} \ar@{->>}[r]^-{\pi_1} & E \ar@{->>}[d]^{(q,p)} \\ A \times F \ar@{->>}[r]_{A \times s} & A \times B}\] with $A \times r \colon A \times F \tfib A \times C$. In particular, the projection map $E \times_B F \tfib A \times C$ is again an isofibration.
\end{defn}

\begin{rmk} In an abstract homotopy 2-category $\tcat{C}$ with finite 2-products, isofibrations $(q,p) \colon E \tfib A \times B$ over a product correspond bijectively to \emph{two-sided isofibrations} $A \xfibt{q} E \xtfib{p} B$ introduced in Definition~\refIV{defn:span-isofib}. Indeed, the 2-category $\isoSpan{A}{B}$ of two-sided isofibrations from $A$ to $B$ is isomorphic to the slice 2-category $\tcat{C}/A\times B$ of isofibrations over the product. 

At that level of generality, the horizontal composition of two-sided isofibrations is constructed via an iso-comma: 
\[ \xymatrix@!=5pt{ & & E \itimes_B F \ar@{}[dd]|\cong \ar@{->>}[dl]_-{\pi_1} \ar@{->>}[dr]^-{\pi_0} \\  & E \ar@{->>}[dl]_q \ar@{->>}[dr]^p &  & F \ar@{->>}[dl]_s \ar@{->>}[dr]^r \\ A  & & B & & C}\] Using Lemma~\refIV{lem:comma-span-isofib} it is easy to see that the composite span is again a two-sided isofibration, and hence defines an isofibration $E \itimes_B F \tfib A\times C$. This construction, via weak 2-limits, is well defined up to equivalence in $\tcat{C}/A\times C$. In the case where $\tcat{C}$ is the homotopy 2-category of an $\infty$-cosmos, this construction is equivalent to the horizontal composition operation defined via simplicial pullback in Definition~\ref{defn:horiz-comp}.
\end{rmk}

\begin{lem}\label{lem:module-pullback-cartesian} 
If $(q,p) \colon E \tfib A \times B$ and $(s,r) \colon F \tfib B \times C$ are each cartesian on the right then the horizontal composite $E \times_B F \tfib A \times C$ is again cartesian on the right.
\end{lem}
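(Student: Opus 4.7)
The plan is to factor the composite isofibration $(q\pi_1, r\pi_0)\colon E \times_B F \tfib A \times C$ as
\[
E \times_B F \xrightarrow{(q\pi_1, \pi_0)} A \times F \xrightarrow{1_A \times r} A \times C,
\]
and to recognise each factor as a cartesian fibration in $(\lcat{K}/A)_2$. For the first factor, inspecting the simplicial pullback square used in Definition~\ref{defn:horiz-comp} reveals that $(q\pi_1,\pi_0)\colon E\times_B F \tfib A\times F$ is precisely the pullback of $(q,p)\colon E\tfib A\times B$ along $1_A\times s\colon A\times F \to A\times B$. Since $(q,p)$ is cartesian on the right by hypothesis, Proposition~\ref{prop:two-sided-pullback} applies directly to say that $(q\pi_1,\pi_0)$ is cartesian on the right, i.e., is a cartesian fibration in $(\lcat{K}/A)_2$.

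For the second factor, I would promote the cartesianness of $r\colon F \tfib C$ from $\lcat{K}$ to $\lcat{K}/A$ by transport along a suitable functor of $\infty$-cosmoi. Namely, Lemma~\ref{lem:two-sided-groupoidal-cells} applied to the hypothesis that $(s,r)$ is cartesian on the right tells us that $r\colon F \tfib C$ is a cartesian fibration in $\lcat{K}_2$. Now the pullback functor $\pi_A^*\colon \lcat{K} = \lcat{K}/1 \to \lcat{K}/A$ along $\pi_A\colon A \to 1$ is a functor of $\infty$-cosmoi (Proposition~\refIV{prop:pullback-functors}), and it sends $r\colon F\tfib C$ to $1_A\times r\colon A\times F \tfib A\times C$. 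Since functors of $\infty$-cosmoi induce 2-functors preserving isofibrations, adjunctions and comma objects (Proposition~\ref{prop:induced-2-functor}), they preserve cartesian fibrations by the adjunction-theoretic characterisation of Theorem~\refIV{thm:cart.fib.chars}\refIV{itm:cart.fib.chars.ii}. Hence $1_A \times r$ is a cartesian fibration in $(\lcat{K}/A)_2$.

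The proof concludes by invoking Proposition~\refIV{prop:cart-fib-comp}, which states that composites of cartesian fibrations are cartesian, applied now inside the homotopy 2-category $(\lcat{K}/A)_2$. This gives $(q\pi_1,r\pi_0)\colon E\times_B F \tfib A\times C$ as a cartesian fibration in $(\lcat{K}/A)_2$, which is exactly the assertion that the horizontal composite is cartesian on the right. None of the steps poses a real obstacle; the only place demanding care is the middle one, where one must resist the temptation to argue directly in $(\lcat{K}/A)_2$ and instead recognise $1_A\times r$ as arising from a functor of $\infty$-cosmoi, so that the hypothesis on $(s,r)$ can be harvested in the form of the cartesian fibration $r$ in $\lcat{K}_2$ and then transported.
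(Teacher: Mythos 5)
Your argument is correct and is essentially the paper's own proof: the same factorisation of $(q\pi_1,r\pi_0)$ through $(q\pi_1,\pi_0)\colon E\times_B F\tfib A\times F$ followed by $A\times r\colon A\times F\tfib A\times C$, with pullback stability handling the first factor and Proposition \refIV{prop:cart-fib-comp} applied in $(\lcat{K}/A)_2$ finishing the job. The only divergence is how the second factor is shown to be a cartesian fibration in $(\lcat{K}/A)_2$: the paper views $(!,r)\colon F\tfib 1\times C$ as cartesian on the right and pulls it back along $!\times C$ via Proposition \ref{prop:two-sided-pullback}, while you transport $r$ along the $\infty$-cosmos functor $A\times-\colon\lcat{K}\to\lcat{K}/A$ using preservation of cartesian fibrations by functors of $\infty$-cosmoi --- both routes are valid and rest on the same underlying facts.
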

\begin{proof}
We have the following simplicial pullback in $\lcat{K}/A$, created from the simplicial pullback in $\lcat{K}$: 
\[ \xymatrix{ E \times_B F \pbexcursion \ar@{->>}[d]_{(q\pi_1,\pi_0)} \ar@{->>}[r]^-{\pi_1} & E \ar@{->>}[d]^{(q,p)} \\ A \times F \ar@{->>}[r]_{A \times s} & A \times B}\] 
Proposition~\ref{prop:cart-fib-pullback} demonstrates that $(q\pi_1,\pi_0) \colon E \times_B F \tfib A \times F$ is cartesian on the right. 

By Lemma~\ref{lem:two-sided-groupoidal-cells}, $r \colon F \tfib C$ is a cartesian fibration, and thus $(!,r) \colon F \tfib 1 \times C$ is cartesian on the right. Pulling back along $! \times C \colon A \times C \to 1 \times C$, Proposition~\ref{prop:two-sided-pullback} provides a cartesian fibration $A \times r \colon A \times F \tfib A \times C$ in $(\lcat{K}/A)_2$. Proposition~\refIV{prop:cart-fib-comp} now implies that the composite $(q\pi_1,r\pi_0) \colon E \times_B F \tfib A \times C$ is a cartesian fibration in $(\lcat{K}/A)_2$, as claimed.
\end{proof}

\begin{ex}\label{ex:modules-do-not-compose} It is not, however, generally the case that the pullback of a pair of modules is again a module. Consider
\[ \xymatrix@!=5pt{ & & {\Horn^{2,1}} \pwr A \pbdiamond \ar@{->>}[dl]_(.6){\pi_1} \ar@{->>}[dr]^(.6){\pi_0} \\ & A^\cattwo \ar@{->>}[dl]_{p_1} \ar@{->>}[dr]^{p_0} &  & A^\cattwo \ar@{->>}[dl]_{p_1} \ar@{->>}[dr]^{p_0} \\ A & & A & & A}\] 
The composite projections $A \fibt {\Horn^{2,1}}\pwr A \tfib A$ are induced by the inclusions of the endpoints $\fbv{0}$ and $\fbv{2}$ into the horn $\Horn^{2,1}$. By Lemma~\refI{lem:pointwise-equiv}, a 2-cell into $ {\Horn^{2,1}}\pwr A $ is an isomorphism if and only if it projects to an isomorphism when evaluated at all three vertices of $\Horn^{2,1}$, and thus this span is not a groupoidal object of $\lcat{K}/A \times A$. 
\end{ex}

\subsection{Module maps}

In this section we study maps between modules. For a pair of modules $E$ and $F$ from $A$ to $B$, a module map from $E$ to $F$ will be an isomorphism class of functors over $A \times B$; Corollary~\ref{cor:slice-2-cat-comparison}\ref{itm:slice-2-cat-comparison-iso-class} implies that it will not matter whether this notion is defined in the 2-category $(\lcat{K}\slice A \times B)_2$ or in $\lcat{K}_2\slice A \times B$. In \S\ref{sec:virtual}, we will see that the module maps define 2-cells in a 2-dimensional categorical structure to be introduced there.

\begin{lem}\label{lem:span-maps-cartesian} A commutative square
\begin{equation}\label{eq:span-map} \xymatrix{ E \ar@{->>}[d]_{(q,p)} \ar[r]^e & \bar{E} \ar@{->>}[d]^{(\bar{q},\bar{p})} \\ A \times B \ar[r]_{ a \times b} & \bar{A} \times \bar{B}}\end{equation} in which the vertical isofibrations define modules, induces a pair of cartesian functors
\[ \xymatrix{ E \ar[r]^e \ar@{->>}[d]_q & \bar{E} \ar@{->>}[d]^{\bar{q}} & E \ar@{->>}[d]_p \ar[r]^e & \bar{E} \ar@{->>}[d]^{\bar{p}} \\ A \ar[r]_a & \bar{A} & B \ar[r]_b & \bar{B}}\]
\end{lem}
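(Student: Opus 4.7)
The plan is to derive this from Lemma~\ref{lem:two-sided-groupoidal-cells} and its dual, which provide a clean characterization of cartesian (resp.\ cocartesian) cells of the legs of a module. Since $(q,p) \colon E \tfib A \times B$ is a module, it is groupoidal cartesian on the right and groupoidal cocartesian on the left; in particular, by Lemma~\ref{lem:two-sided-groupoidal-cells}, $p$ is a cartesian fibration in $\lcat{K}_2$ whose cartesian cells are precisely those 2-cells $\lambda \colon e' \To e \colon X \to E$ for which $q\lambda$ is an isomorphism. Dually, $q$ is a cocartesian fibration in $\lcat{K}_2$ whose cocartesian cells are precisely those 2-cells whose image under $p$ is an isomorphism. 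The same holds for the module $(\bar{q},\bar{p})$.

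Given this reduction, verifying that the right-hand square defines a cartesian functor is a direct calculation. Suppose $\lambda \colon e' \To e \colon X \to E$ is a $p$-cartesian 2-cell, so that $q\lambda$ is an isomorphism. Using commutativity of \eqref{eq:span-map} in the homotopy 2-category we compute
\[ \bar{q}(e\lambda) = (\bar{q}e)\lambda = (aq)\lambda = a(q\lambda), \]
which is an isomorphism since $a$, being a 1-cell in the 2-category $\lcat{K}_2$, preserves invertibility of 2-cells by whiskering. Applying the characterization of $\bar{p}$-cartesian cells to the module $(\bar{q},\bar{p})$, we conclude that $e\lambda$ is $\bar{p}$-cartesian, so $e$ preserves cartesian cells and the right-hand square is a cartesian functor in the sense of Definition~\ref{defn:cartesian-functor}. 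The left-hand square is handled by the dual argument: if $\mu$ is a $q$-cocartesian 2-cell then $p\mu$ is an isomorphism, whence $\bar{p}(e\mu) = b(p\mu)$ is an isomorphism, and so $e\mu$ is $\bar{q}$-cocartesian.

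There is no real obstacle here --- once the characterization from Lemma~\ref{lem:two-sided-groupoidal-cells} is in hand, preservation is automatic from the commutativity of the square and the fact that 2-functoriality of whiskering preserves isomorphisms. The only subtlety worth flagging is notational: strictly speaking the left-hand diagram is a square of cocartesian fibrations, so it defines a \emph{cocartesian functor} (the evident dual of Definition~\ref{defn:cartesian-functor}); we include this under the umbrella phrase ``pair of cartesian functors'' on account of the symmetry afforded by reversing 2-cells.
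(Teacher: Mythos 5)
Your proof is correct and follows essentially the same route as the paper: both arguments rest on Lemma~\ref{lem:two-sided-groupoidal-cells} (and its dual), characterizing the $p$-cartesian (resp.\ $q$-cocartesian) 2-cells of a module as those projecting under $q$ (resp.\ $p$) to isomorphisms, and then transporting this condition across the commutative square. The paper phrases it via cells isomorphic to ones projecting to identities, but this is the same argument in substance.
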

\begin{proof}
By Lemma~\ref{lem:two-sided-groupoidal-cells}, any $p$-cartesian 2-cell is isomorphic to one that projects to an identity upon applying $q$. By commutativity of the left-hand square, the image of such a 2-cell under $e$ likewise projects to an identity under $\bar{q}$, whence it defines a $\bar{p}$-cartesian 2-cell. 
\end{proof}

\begin{defn}
For a fixed pair of  objects $A, B$ in an $\infty$-cosmos $\lcat{K}$, we write $\Mod{A}{B}$ for the full quasi-categorically enriched subcategory of $\lcat{K}/A \times B$ whose objects are modules $(q,p) \colon E \tfib A \times B$ from $A$ to $B$. The quasi-category of maps from $(q,p)$ to a module $(s,r) \colon F \tfib A \times B$ is defined by the simplicial pullback
 \[ \xymatrix{ \map_{A \times B}((q,p),(r,s)) \pbexcursion \ar[d] \ar[r] & \map(E,F) \ar[d]^{\map(E,(r,s))} \\ \Delta^0 \ar[r]_-{(p,q)} & \map(E,A \times B)}\]
which we abbreviate to $\map_{A \times B}(E,F)$ whenever possible.
\end{defn}

As in previous similar situations, when considering mapping quasi-categories between spans we frequently allow the domain object to be an arbitrary span from $A$ to $B$ that is not necessarily a module and whose legs might not be isofibrations. In such situations we continue to insist that codomain spans are modules. The Yoneda lemma provides the following characterization of the quasi-category of maps from a representable module to a generic module.

\begin{prop}\label{prop:two-sided-yoneda} Consider any functor $f \colon A \to B$ and the induced map
\[  \vcenter{\xymatrix@C=0.8em@R=1.2em{
    & {A}\ar[d]^-{t} & \\
    & {B \comma f}\ar@{->>}[dl]_{p_1}\ar@{->>}[dr]^{p_0} & \\
    {A}\ar[rr]_{f} && {B}
    \ar@{} "2,2";"3,2" |(0.6){\Leftarrow\phi}
  }} = 
  \vcenter{\xymatrix@C=0.8em@R=1.2em{
    & {A}\ar@{=}[dl]\ar[dr]^{f} & \\
    {A}\ar[rr]_{f} && {B}
    \ar@{} "1,2";"2,2" |(0.6){=}
  }} \]
over $A \times B$. Then for any module $E$ from $A$ to $B$, precomposition with $t \colon A \to B \comma f$  induces an equivalence of quasi-categories 
\[ \map_{A \times B} ( B \comma f , E) \simeq \map_{A \times B} ( A , E ).\] 
\end{prop}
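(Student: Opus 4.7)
The plan is to apply the groupoidal Yoneda lemma (Corollary~\refIV{cor:groupoidal-yoneda}) in the sliced $\infty$-cosmos $\lcat{K}\slice A$. The module $(q,p)\colon E\tfib A\times B$ becomes a fibration over $\pi_A\colon A\times B\tfib A$ in $\lcat{K}\slice A$, while the functor $f\colon A\to B$ determines a point $(1_A,f)\colon A\to A\times B$ of this base out of the terminal object $1_A$ of $\lcat{K}\slice A$. Under the canonical identification $(\lcat{K}\slice A)\slice(\pi_A\colon A\times B\tfib A)\cong\lcat{K}\slice(A\times B)$ of iterated slices with slices over products, the Yoneda equivalence in $\lcat{K}\slice A$ becomes an equivalence between $\map_{A\times B}$ quasi-categories of the form appearing in the statement.

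The first task is to verify that $(q,p)$ is a groupoidal cartesian fibration over $\pi_A$ in the sense of the $\infty$-cosmos $\lcat{K}\slice A$. Being a cartesian fibration in $(\lcat{K}\slice A)_2$ is precisely clause (i) of the definition of a module. For the groupoidal condition, I will observe that $(q,p)$ is groupoidal as an object over $\pi_A$ in the homotopy 2-category of $\lcat{K}\slice A$ if and only if it is groupoidal in $((\lcat{K}\slice A)\slice\pi_A)_2=(\lcat{K}\slice(A\times B))_2$, which by Corollary~\ref{cor:slice-2-cat-comparison}\ref{itm:slice-2-cat-comparison-groupoidal} is equivalent to being groupoidal in $\lcat{K}_2\slice(A\times B)$ --- exactly clause (iii) of the definition.

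Next I will identify the sliced comma $\pi_A\comma_{\lcat{K}\slice A}(1_A,f)$ --- viewed as an object of $\lcat{K}$ together with its projection to $A\times B$ --- with the comma $(p_1,p_0)\colon B\comma f\tfib A\times B$. Since the $\cattwo$-cotensor in $\lcat{K}\slice A$ of $\pi_A\colon A\times B\tfib A$ is computed by a simplicial pullback to be $\pi_A\colon A\times B^\cattwo\tfib A$, the defining simplicial pullback for the sliced comma, read in $\lcat{K}$, becomes the pullback of $1_A\times(p_1,p_0)\colon A\times B^\cattwo\to A\times B\times B$ along the map $(a,b)\mapsto(a,b,f(a))\colon A\times B\to A\times B\times B$. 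Unpacking, this pullback is canonically isomorphic to $B\comma f$ equipped with its standard projection $(p_1,p_0)$ to $A\times B$. Under this isomorphism the terminal vertex of the sliced comma --- that is, the 1-cell induced in $\lcat{K}\slice A$ by the identity 2-cell on the point $(1_A,f)$ --- corresponds precisely to the functor $t\colon A\to B\comma f$ displayed in the statement, since it sends $a\mapsto(a,f(a),\id_{f(a)})$.

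With these identifications in hand, Corollary~\refIV{cor:groupoidal-yoneda} applied in $\lcat{K}\slice A$ delivers the required equivalence of mapping quasi-categories, realised concretely as precomposition by $t$. The main obstacle is the comma identification, which requires careful bookkeeping of the cotensor and the pullback squares defining the sliced comma so that both the projection to $A\times B$ and the terminal vertex match their counterparts in $\lcat{K}$. Verifying the groupoidal cartesian fibration hypothesis in the slice is, by contrast, immediate once the smothering 2-functor comparisons of Corollary~\ref{cor:slice-2-cat-comparison} are invoked.
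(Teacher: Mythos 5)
Your proposal is correct and follows essentially the same route as the paper: the paper's proof likewise applies Corollary~\refIV{cor:groupoidal-yoneda} in the sliced $\infty$-cosmos $\lcat{K}\slice A$ to the point $(1,f)\colon A\to A\times B$ and the groupoidal cartesian fibration $(q,p)\colon E\tfib A\times B$, identifying the module represented by $(1,f)$ in $\lcat{K}\slice A$ with $(p_1,p_0)\colon B\comma f\tfib A\times B$ and the terminal-element map with $t$. You merely spell out the hypothesis-checking and the comma/cotensor bookkeeping that the paper leaves implicit (cf.\ the remark following the definition of module and the computation in Lemma~\ref{lem:sliced-adjunction-on-the-right}).
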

\begin{proof}
We apply Corollary~\refIV{cor:groupoidal-yoneda} to the groupoidal cartesian fibration $(q,p) \colon E \tfib A \times B$ and the point $(1,f) \colon A \to A \times B$  in $\lcat{K}/A$. The module represented by $(1,f)$ in $\lcat{K}/A$ is $(p_1,p_0) \colon B \comma f \tfib A \times B$ and the map $t$ is induced, as usual, by the identity 2-cell in $\lcat{K}$.
\end{proof}

We know, from Recollection~\ref{rec:ess.unique.1-cell.ind} for example, that 2-cells of the following form \[\xymatrix@=10pt{ & X \ar[dl]_c \ar[dr]^b \ar@{}[dd]|(.4){\chi}|(.6){\Leftarrow} \\ C \ar[dr]_g & & B \ar[dl]^f \\ & A}\] in the homotopy 2-category $\lcat{K}_2$ correspond bijectively to isomorphism classes of functors $X \to f \comma g$ over $C \times B$. With this correspondence in mind, we will define a \emph{module map}, between a pair of modules from $A$ to $B$, to be an isomorphism class of functors over $A \times B$. The module maps form the morphisms in a 1-category $\qMod{A}{B}$, defined as a quotient of $\Mod{A}{B}$. Its definition makes use of  the product-preserving functor $\tau_0\colon\qCat \to \Set$ that sends a quasi-category to the set of isomorphism classes of its objects that carries an equivalence of quasi-categories to a bijection between sets of isomorphism classes of objects.

\begin{defn}\label{defn:1-cat-of-modules}
In an $\infty$-cosmos $\lcat{K}$, define a 1-category $\qMod{A}{B}$ whose:
\begin{itemize}
\item objects are modules from $A$ to $B$, and 
\item whose morphisms are \emph{module maps}. 
\end{itemize}
 The hom-set between a pair of modules $E$ and $F$ from $A$ to $B$ is $\tau_0\map_{A \times B}(E,F)$. On account of the factorization \[ \tau_0 \colon  \qCat \xrightarrow{\ho} \Cat \xrightarrow{\tau_0} \Set,\] the category $\qMod{A}{B}$ could also be regarded as a quotient of the full sub 2-category of  $(\lcat{K}/A\times B)_2$ spanned by the modules. Isomorphism classes of vertices in $\map_{A\times B}(E,F)$ coincide exactly with isomorphism classes of functors over $A\times B$, in either $(\lcat{K}/A\times B)_2$ or $\lcat{K}_2\slice A\times B$ by Corollary~\ref{cor:slice-2-cat-comparison}\ref{itm:slice-2-cat-comparison-iso-class}.

A module map from $E$ to $F$ will be denoted by $E \To F$ because these will be the 2-morphisms in a 2-dimensional categorical structure to be introduced in section \ref{sec:virtual}.
\end{defn}

Note that two modules $E$ and $F$ from $A$ to $B$ are equivalent as objects in $\lcat{K}/A\times B$ if and only if they are isomorphic in $\qMod{A}{B}$. A special case of Proposition~\ref{prop:two-sided-yoneda} allows us to define fully-faithful embeddings $\hom(A,B) \to \qMod{A}{B}$ and $\hom(A,B)\op \to \qMod{B}{A}$  whose images are the full subcategories spanned by the covariant and contravariant representables, respectively.

\begin{lem}\label{lem:full-yoneda} There is a fully-faithful embedding  $\hom(A,B) \to \qMod{A}{B}$ defined on objects by mapping a functor $f \colon A \to B$ to the covariant representable module $B \comma f$. On morphisms, this functor carries a 2-cell $\alpha \colon f \To g \colon A \to B$ to the module map representing the unique isomorphism class of functors $B\comma f \to B\comma g$ over $A \times B$  defined by 1-cell induction from the left-hand pasting diagram:
\[  \vcenter{\xymatrix@C=0.8em@R=1.2em{
    & {B\comma f}\ar@{->>}[dl]_{p_1}\ar@{->>}[dr]^{p_0} & \\
    {A}\ar[rr]|{f} \ar@/_3ex/[rr]_g^{\Downarrow \alpha} && {B}
    \ar@{} "1,2";"2,2" |(0.6){\Leftarrow\phi} 
  }} = 
  \vcenter{\xymatrix@C=0.8em@R=1.2em{    & {B\comma f}\ar@{->>}[ddl]_{p_1}\ar@{->>}[ddr]^{p_0} \ar[d] & \\
    & {B\comma g}\ar@{->>}[dl]|{p_1}\ar@{->>}[dr]|{p_0} & \\ 
    {A}\ar[rr]_{g} && {B}  \ar@{} "2,2";"3,2" |(0.6){\Leftarrow\phi} 
  }} \qquad   \vcenter{\xymatrix@C=0.8em@R=1.2em{
    & {g\comma B}\ar@{->>}[dl]_{p_1}\ar@{->>}[dr]^{p_0} & \\
    {B} && {A} \ar[ll]|{g} \ar@/^3ex/[ll]^f_{\Uparrow \alpha}
    \ar@{} "1,2";"2,2" |(0.6){\Leftarrow\phi} 
  }} = 
  \vcenter{\xymatrix@C=0.8em@R=1.2em{    & {g\comma B}\ar@{->>}[ddl]_{p_1}\ar@{->>}[ddr]^{p_0} \ar[d] & \\
    & {f\comma B}\ar@{->>}[dl]|{p_1}\ar@{->>}[dr]|{p_0} & \\ 
    {B}&& {A} \ar[ll]^f  \ar@{} "2,2";"3,2" |(0.6){\Leftarrow\phi} 
  }} \]
A dual construction defines a fully-faithful embedding  $\hom(A,B)\op \to \qMod{B}{A}$ that carries $f$ to the contravariant represented module $f \comma B$ and carries the 2-cell $\alpha$ to the unique isomorphism class of functors $g \comma B \to f\comma B$ over $B \times A$.
\end{lem}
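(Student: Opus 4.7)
The plan is to establish the functoriality and fully-faithfulness of the assignment $f \mapsto B\comma f$ and then deduce the dual statement by passage to the opposite homotopy 2-category.

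First I would verify that the description on morphisms does define a functor from $\hom(A,B)$ to $\qMod{A}{B}$. By Recollection~\ref{rec:ess.unique.1-cell.ind}, two 1-cells $B\comma f \to B\comma g$ induced from the same pasting datum $\alpha \cdot \phi_{B\comma f}$ over $A \times B$ are canonically isomorphic over $A\times B$, so the induced 1-cell $k_\alpha$ represents a well-defined element of $\qMod{A}{B}(B\comma f, B\comma g) = \tau_0\map_{A\times B}(B\comma f, B\comma g)$. For functoriality, the 1-cell $k_{\id_f}$ satisfies $p_i k_{\id_f} = p_i$ and $\phi_{B\comma f} k_{\id_f} = \phi_{B\comma f}$, so by uniqueness of 1-cell induction it is isomorphic to $\id_{B\comma f}$ over $A\times B$. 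For composition, given $\alpha\colon f\To g$ and $\beta\colon g\To h$, the composite $k_\beta k_\alpha$ satisfies the defining property of $k_{\beta\alpha}$ via a short diagram chase using $p_i^{g} k_\alpha = p_i^{f}$ and the definitions of the $\phi_{B\comma g} k_\beta$ and $\phi_{B\comma f} k_\alpha$. Uniqueness of 1-cell induction then identifies the two module maps.

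For fully-faithfulness, I would apply Proposition~\ref{prop:two-sided-yoneda} to the module $E = B\comma g$, obtaining an equivalence of quasi-categories
\[ \map_{A\times B}(B\comma f, B\comma g) \we \map_{A\times B}(A, B\comma g), \]
where on the right $A$ carries the span structure $(1_A, f)\colon A \to A\times B$ and the equivalence is precomposition with $t\colon A \to B\comma f$. Applying $\tau_0$ yields a bijection on isomorphism classes. The right-hand side is, by Recollection~\ref{rec:ess.unique.1-cell.ind}, in bijection with the set of 2-cells $f \To g$ in $\hom(A,B)$: each $h\colon A \to B\comma g$ over $A\times B$ is uniquely determined up to isomorphism over $A\times B$ by the whiskered cell $\phi_{B\comma g} h\colon f \To g$, and conversely every such 2-cell is so induced. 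Composing these bijections produces the required identification
\[ \hom(A,B)(f,g) \;\cong\; \qMod{A}{B}(B\comma f, B\comma g). \]

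The remaining task is to check that this bijection agrees with the action on morphisms described in the lemma statement; this is the only real bookkeeping step and I expect it to be the main obstacle in the proof. Given $\alpha\colon f \To g$, the 1-cell $k_\alpha \colon B\comma f \to B\comma g$ induced from the pasting diagram, when restricted along $t\colon A \to B\comma f$, yields a 1-cell $k_\alpha t$ whose image under the comma universal property is $\phi_{B\comma g} k_\alpha t = (\alpha\cdot p_1^f) t \cdot \phi_{B\comma f} t = \alpha\cdot 1_A \cdot \id_f = \alpha$, matching the $h$ representing $\alpha$. Together with the equivalence from Proposition~\ref{prop:two-sided-yoneda}, this confirms that the constructed assignment realizes the fully-faithful embedding. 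The dual statement is obtained by applying exactly the argument to the homotopy 2-category $\lcat{K}_2\co$, in which the comma $f\comma B$ plays the role of $B\comma f$ and the direction of 2-cells is reversed, yielding $\hom(A,B)\op \to \qMod{B}{A}$.
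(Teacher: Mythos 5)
Your proof is correct and takes essentially the same route as the paper: fully-faithfulness is obtained by applying Proposition~\ref{prop:two-sided-yoneda} to the module $B\comma g$ and then identifying isomorphism classes of functors $A \to B\comma g$ over $A\times B$ with 2-cells $f\To g$ via Recollection~\ref{rec:ess.unique.1-cell.ind}. The only difference is that you spell out the functoriality of the assignment and the compatibility of the Yoneda bijection with the 1-cell-induction description, checks the paper leaves implicit.
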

\begin{proof}
For any pair of functors $f,g \colon A \to B$, Proposition~\ref{prop:two-sided-yoneda} provides an equivalence of quasi-categories \[\map_{A \times B}(B \comma f, B \comma g) \simeq \map_{A \times B} ((1,f), B \comma g).\] Passing to isomorphism classes of objects, the left-hand side is the set of module maps $B \comma f \To B \comma g$, i.e., the set of isomorphism classes of functors $B\comma f \to B\comma g$ over $A \times B$. The right-hand side is the set of 1-cells \[ \xymatrix{ & A \ar@{=}[dl] \ar[dr]^f \ar@{-->}[d] \\ A & B \comma g \ar@{->>}[l]^{p_1} \ar@{->>}[r]_{p_0} & B}\] up to a 2-cell isomorphism over $A \times B$. By Recollection~\ref{rec:ess.unique.1-cell.ind}, this is isomorphic to the set of 2-cells $f \To g \colon A \to B$.
\end{proof}

We can extend our definition of module map to include maps between modules between different pairs of objects, such as displayed in \eqref{eq:span-map}.

\begin{defn}\label{defn:module-map} Given modules $(q,p) \colon E \tfib A \times B$ and $(\bar{q},\bar{p})\colon \bar{E} \tfib \bar{A} \times \bar{B}$ and a pair of functors $a \colon A \to \bar{A}$ and $b \colon B \to \bar{B}$ a \emph{module map} from $E$ to $\bar{E}$ over $a \times b$ is an isomorphism class of objects in the quasi-category defined by the simplicial pullback
 \[ \xymatrix{ \map_{\bar{A} \times \bar{B}}((aq,bp),(\bar{q},\bar{p})) \pbexcursion \ar[d] \ar[r] & \map(E,\bar{E}) \ar[d]^{\map(E,(\bar{q},\bar{p}))} \\ \Delta^0 \ar[r]_-{(ap,bq)} & \map(E,\bar{A} \times \bar{B})}\] which we abbreviate to $\map_{a,b}(E,\bar{E})$. 
\end{defn}

The following lemma shows that this new definition amounts to no substantial generalization.

\begin{lem}\label{lem:module-map-pullback-equivalence} Given modules $(q,p) \colon E \tfib A \times B$ and $(\bar{q},\bar{p})\colon \bar{E} \tfib \bar{A} \times \bar{B}$ and a pair of functors $a \colon A \to \bar{A}$ and $b \colon B \to \bar{B}$, there is an equivalence of quasi-categories
\[ \xymatrix{\map_{A \times B}(E,\bar{E}(b,a)) \ar[r]^-\simeq & \map_{a,b}(E, \bar{E}) ,}\]
where $\bar{E}(b,a)$ is the module defined by the simplicial pullback
\begin{equation}\label{eq:pullback-for-UP} \xymatrix{ \bar{E}(b,a) \pbexcursion \ar@{->>}[d] \ar[r] & \bar{E} \ar@{->>}[d]^{(\bar{q},\bar{p})} \\ A \times B \ar[r]_{a \times b} & \bar{A} \times \bar{B}}\end{equation}
In particular, there is a bijection between module maps $E \To \bar{E}(b,a)$ and module maps from $E$ to $\bar{E}$ over $a \times b$.
\end{lem}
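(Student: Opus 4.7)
The plan is to observe that the asserted equivalence is actually realized by a canonical isomorphism of quasi-categories obtained by pasting simplicial pullback squares, with the passage to isomorphism classes of objects at the end yielding the stated bijection.

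First I would apply the simplicial functor $\map(E,-) \colon \lcat{K} \to \sSet$ to the defining pullback square \eqref{eq:pullback-for-UP} for $\bar{E}(b,a)$. Since this functor preserves simplicial limits (being representable), and since the pullback in \eqref{eq:pullback-for-UP} is a pullback of an isofibration $(\bar{q},\bar{p})$ and hence a simplicially enriched limit by \ref{qcat.ctxt.cof.def}\ref{qcat.ctxt.cof:a}, we obtain a simplicial pullback square
\[
\xymatrix@=1.5em{
\map(E,\bar{E}(b,a)) \pbexcursion \ar[d] \ar[r] &
\map(E,\bar{E}) \ar[d]^-{\map(E,(\bar{q},\bar{p}))} \\
\map(E,A\times B) \ar[r]_-{\map(E,a\times b)} &
\map(E,\bar{A} \times \bar{B})
}
\]
in $\sSet$.

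Next I would paste this with the defining simplicial pullback for $\map_{A\times B}(E,\bar{E}(b,a))$ over the point $(q,p)\colon \Delta^0 \to \map(E,A\times B)$. The bottom edge of the resulting outer rectangle is the composite $\Delta^0 \xrightarrow{(q,p)} \map(E,A\times B) \xrightarrow{\map(E,a\times b)} \map(E,\bar{A} \times \bar{B})$, which is precisely the point $(aq,bp)$ used in Definition~\ref{defn:module-map} to define $\map_{a,b}(E,\bar{E})$. Hence, by the standard pullback pasting lemma, the outer rectangle is canonically isomorphic to the defining simplicial pullback for $\map_{a,b}(E,\bar{E})$, yielding an isomorphism $\map_{A\times B}(E,\bar{E}(b,a)) \cong \map_{a,b}(E,\bar{E})$ of quasi-categories (and in particular an equivalence).

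For the final clause, module maps $E \To \bar{E}(b,a)$ are by Definition~\ref{defn:1-cat-of-modules} the isomorphism classes of objects of $\map_{A\times B}(E,\bar{E}(b,a))$, while module maps from $E$ to $\bar{E}$ over $a\times b$ are by Definition~\ref{defn:module-map} the isomorphism classes of objects of $\map_{a,b}(E,\bar{E})$; applying $\tau_0$ to the isomorphism above produces the stated bijection. There is no substantial obstacle here since the proof is essentially an unpacking of the pullback definitions together with the standard observation that representable simplicial functors preserve simplicial pullbacks.
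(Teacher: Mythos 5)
Your argument is correct and is essentially the paper's own proof: both exhibit the defining pullback for $\map_{a,b}(E,\bar{E})$ as the pasting of the pullback defining $\map_{A\times B}(E,\bar{E}(b,a))$ with the image of \eqref{eq:pullback-for-UP} under the representable functor $\map(E,-)$, yielding an isomorphism (in particular an equivalence) of quasi-categories. Passing to isomorphism classes for the final bijection is exactly the intended reading, so nothing is missing.
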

\begin{proof}
The simplicial pullback defining $\map_{a,b}(E, \bar{E})$ factors as follows
 \[ \xymatrix@C=6em{ \map_{a,b}(E,\bar{E}) \pbexcursion \ar[d] \ar[r] & \map(E,\bar{E}(b,a)) \pbexcursion \ar[d] \ar[r] & \map(E,\bar{E}) \ar[d]^{\map(E,(\bar{q},\bar{p}))} \\ \Delta^0 \ar[r]_-{(p,q)} & \map(E, A \times B) \ar[r]_{\map(E,a \times b)} & \map(E,\bar{A} \times \bar{B})}\] where the right-hand pullback is the image of  \eqref{eq:pullback-for-UP} under the functor $\map(E,-) \colon \tcat{K} \to \qCat$. The left-hand pullback, which defines $\map_{A \times B}(E,\bar{E}(b,a))$, demonstrates that this hom-quasi-category is isomorphic to $\map_{a,b}(E,\bar{E})$.
\end{proof}

\subsection{Equivalence of modules}

The following lemma defines a suitable notion of  \emph{equivalence} between modules.

\begin{lem}\label{lem:equiv-of-modules} Given a pair of modules $(q,p) \colon E \tfib A\times B$ and $(s,r) \colon F \tfib A \times B$ the following are equivalent:
\begin{enumerate}[label=(\roman*)]
\item There exists a functor $f \colon E \to F$ over $A\times B$ that is  an equivalence in $\lcat{K}$.
\item The isofibrations $E$ and $F$ are equivalent as objects of $(\lcat{K}/A\times B)_2$ or of $\lcat{K}_2/A\times B$.
\item The modules $E$ and $F$ are isomorphic as objects in $\qMod{A}{B}$.
\end{enumerate}
\end{lem}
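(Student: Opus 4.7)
The strategy is to cycle through the three equivalences by exploiting the results on slice 2-categories from Section 2.2 together with the definition of $\qMod{A}{B}$ as a quotient category.

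First, I would establish (i) $\Leftrightarrow$ (ii) directly from Corollary~\ref{cor:slice-2-cat-comparison}\ref{itm:slice-2-cat-comparison-equiv}, which asserts precisely that a functor between isofibrations with codomain $A \times B$ is an equivalence in $(\lcat{K}/A\times B)_2$ if and only if it is an equivalence in $\lcat{K}_2/A\times B$ if and only if its underlying functor is an equivalence in $\lcat{K}$. No further work is required here.

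For the implication (ii) $\Rightarrow$ (iii), suppose $f \colon E \to F$ is an equivalence in $\lcat{K}_2/A \times B$, with pseudo-inverse $g \colon F \to E$ and invertible 2-cells $gf \cong \id_E$ and $fg \cong \id_F$ over $A \times B$. Passing to isomorphism classes of vertices in the mapping quasi-categories, these data yield morphisms $[f] \colon E \to F$ and $[g] \colon F \to E$ in $\qMod{A}{B}$ with $[g][f] = [\id_E]$ and $[f][g] = [\id_F]$, so $[f]$ is an isomorphism in $\qMod{A}{B}$.

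For the converse (iii) $\Rightarrow$ (ii), suppose we are given representatives $f \colon E \to F$ and $g \colon F \to E$ of mutually inverse module maps in $\qMod{A}{B}$. By Definition~\ref{defn:1-cat-of-modules}, the identities $[g][f] = [\id_E]$ and $[f][g] = [\id_F]$ mean precisely that $gf$ and $\id_E$ lie in the same connected component of $\map_{A\times B}(E,E)$, and similarly for $fg$ and $\id_F$. By Corollary~\ref{cor:slice-2-cat-comparison}\ref{itm:slice-2-cat-comparison-iso-class}, this is equivalent to the existence of invertible 2-cells $gf \cong \id_E$ and $fg \cong \id_F$ in either $(\lcat{K}/A \times B)_2$ or $\lcat{K}_2/A\times B$, exhibiting $f$ as an equivalence in these slice 2-categories. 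This completes the cycle.

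No step here presents a real obstacle; the proof is essentially bookkeeping, reassembling the three perspectives on ``equivalence'' (in the $\infty$-cosmos, in the slice 2-categories, and in the quotient 1-category $\qMod{A}{B}$) supplied by the slice 2-category comparison smothering functor of Proposition~\ref{prop:smothering-slice-comparison} and its corollary. The one subtle point worth being careful about is that an isomorphism in the quotient $\qMod{A}{B}$ only a priori supplies 2-cell isomorphisms at the level of homotopy classes in $\map_{A\times B}(E,E)$, but Corollary~\ref{cor:slice-2-cat-comparison}\ref{itm:slice-2-cat-comparison-iso-class} ensures that these lift to genuine invertible 2-cells in either of the slice 2-categories, and hence yield honest equivalences there.
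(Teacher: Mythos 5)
Your proof is correct and follows essentially the same route as the paper: (i)$\Leftrightarrow$(ii) is exactly Corollary~\ref{cor:slice-2-cat-comparison}\ref{itm:slice-2-cat-comparison-equiv}, and (ii)$\Leftrightarrow$(iii) is the same bookkeeping with the $\tau_0$-quotient defining $\qMod{A}{B}$. One small correction: in (iii)$\Rightarrow$(ii) the equality $[g][f]=[\id_E]$ means that $gf$ and $\id_E$ are \emph{isomorphic} as vertices of $\map_{A\times B}(E,E)$ (since $\tau_0$ records isomorphism classes, not connected components), which is precisely what supplies the invertible 2-cells you then use.
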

\begin{proof}
Corollary~\ref{cor:slice-2-cat-comparison}\ref{itm:slice-2-cat-comparison-equiv} establishes the equivalence (i)$\Leftrightarrow$(ii).  The implication (ii)$\Rightarrow$(iii) follows by applying $\tau_0 \colon \Cat\to\Set$ to the hom-categories, which carries an equivalence in the sub 2-category of $(\lcat{K}/A\times B)_2$ spanned by the modules to an isomorphism in the 1-category $\qMod{A}{B}$. Conversely, the data of an isomorphism and its inverse in $\qMod{A}{B}$ provides an equivalence in $(\lcat{K}/A \times B)_2$, proving that (iii)$\Rightarrow$(ii).
\end{proof}

The following result provides a criterion for recognizing when a module $E\tfib A\times B$ is covariantly represented, that is to say when it is equivalent to the representable module $B\comma f$ associated with some functor $f\colon A\to B$. By Lemma~\refI{lem:technicalsliceadjunction}, the codomain-projection functor $p_1 \colon B \comma f \to A$ associated to a covariant representable admits a right adjoint right inverse $t$ induced by the identity 2-cell associated to $f$.
  \begin{equation*}
    \vcenter{\xymatrix@=1em{
      & {A}\ar@{=}[dl]\ar[dr]^{f} & \\
      {A}\ar[rr]_{f} && {B}
      \ar@{} "1,2";"2,2" |(0.6){\textstyle =}
    }}
    \mkern20mu = \mkern20mu
    \vcenter{\xymatrix@=1em{
      & {A}\ar[d]^-t \ar@/^/[ddr]^f \ar@/_/@{=}[ddl] & \\
      & {B\comma f}\ar[dl]|{p_1}\ar[dr]|{p_0} & \\
      {A}\ar[rr]_{f} && {B}
      \ar@{} "2,2";"3,2" |(0.6){\Leftarrow\beta}
    }}
  \end{equation*}
so that the composite $p_1t$ equals $f$.  We now show that this property characterizes the representable modules.

\begin{lem}\label{lem:rep-mod-char} Suppose $(q,p) \colon E \tfib A \times B$ defines a module from $A$ to $B$ and that $q$ admits a right adjoint right inverse $t \colon A \to E$. Then $E$ is equivalent to $B \comma pt$ over $A \times B$.
\end{lem}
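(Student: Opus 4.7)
Denote by $\eta \colon 1_E \To tq$ and $\epsilon \colon qt \To 1_A$ the unit and counit of the adjunction $q \dashv t$. Since $\epsilon$ is an isomorphism, a standard normalisation of the adjunction data allows us to assume $qt = 1_A$ strictly and $\epsilon$ is the identity 2-cell, whence the triangle identities give $q\eta = 1_q$ and $\eta t = 1_t$. Writing $f := pt$, the whiskered 2-cell $p\eta \colon p \To ptq = fq$ is then a comma cone for the cospan $A \xrightarrow{f} B \xleftarrow{1_B} B$ with projections $q$ and $p$; one-cell induction supplies a functor $\psi \colon E \to B\comma f$ over $A \times B$ satisfying $p_1\psi = q$, $p_0\psi = p$, and $\phi\psi = p\eta$. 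To show $\psi$ is an equivalence of modules, by Lemma~\ref{lem:equiv-of-modules} it suffices to establish that $\psi$ becomes an isomorphism in the 1-category $\qMod{A}{B}$.

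The comparison with the canonical representing map $\bar t \colon A \to B\comma f$ (the RARI for $p_1$ induced by $1_f$) is immediate: $\psi t$ and $\bar t$ share projections $1_A$ and $f$ and both induce the comma 2-cell $1_f$ (using $\eta t = 1_t$ to evaluate $\phi\psi t = p\eta t = 1_f$), so Recollection~\ref{rec:ess.unique.1-cell.ind} gives $\psi t \cong \bar t$ over $A \times B$. For any module $F$ from $A$ to $B$ this yields a diagram
\[\xymatrix@C=1em@R=1.5em{
  \map_{A \times B}(B \comma f, F) \ar[rr]^-{\psi^*} \ar[dr]_-{\bar t^*}^-\simeq & & \map_{A \times B}(E, F) \ar[dl]^-{t^*} \\
  & \map_{A \times B}(A, F)
}\]
commutative up to isomorphism, in which $\bar t^*$ is the Yoneda equivalence of Proposition~\ref{prop:two-sided-yoneda}. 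By two-out-of-three, once we prove that $t^*$ is an equivalence for every module $F$, the map $\psi^*$ will be an equivalence for every $F$, and the $1$-categorical Yoneda lemma applied in $\qMod{A}{B}$ will convert the resulting natural isomorphism into an isomorphism $\psi$.

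A quasi-inverse $q^* \colon \map_{A \times B}(A, F) \to \map_{A \times B}(E, F)$ to $t^*$ is constructed using the cartesian structure of $F$ on the right. For $k \in \map_{A \times B}(A, F)$ the composite $kq$ satisfies $s(kq) = q$ but $r(kq) = fq \ne p$; viewing $(s, r) \colon F \tfib A \times B$ as a cartesian fibration in $(\lcat{K}/A)_2$, a cartesian lift $\chi_k \colon q^*k \To kq$ of the 2-cell $(1_q, p\eta) \colon (q, p) \To (q, fq)$ produces an object $q^*k \in \map_{A \times B}(E, F)$. Whiskering with $t$ yields a cartesian lift of identities (using $\eta t = 1_t$ and $qt = 1_A$), hence an isomorphism, giving $t^* q^* \cong 1$. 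Conversely, for $h \in \map_{A \times B}(E, F)$ the 2-cell $h\eta \colon h \To htq$ projects under $(s, r)$ to $(q\eta, p\eta) = (1_q, p\eta)$, so the universal property of $\chi_{ht}$ factors it through a 2-cell $u \colon h \To q^* t^* h$ lying over identities on $A \times B$; since $F$ is groupoidal in $\lcat{K}/A \times B$, this $u$ is automatically invertible, giving $q^* t^* \cong 1$. The central difficulty is this construction of $q^*$, which requires juggling both cartesianness on the right (to take the lifts) and the groupoidal condition on $F$ (to invert the unit) simultaneously---precisely the two module axioms not exploited in forming $\psi$.
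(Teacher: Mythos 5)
Your argument is correct, but it follows a genuinely different route from the paper's. The paper, like you, induces the comparison functor $E \to B\comma pt$ from $p\eta$, but then it constructs an explicit inverse $e \colon B\comma pt \to E$ as the domain of a cartesian lift \emph{in the module $E$ itself} (using that $E$ is cartesian on the right with lifts in the fibres of $q$, via Lemma~\ref{lem:two-sided-groupoidal-cells}), and verifies $er \cong \id_E$ and $re \cong \id_{B\comma pt}$ directly by essential uniqueness of cartesian 2-cells, the second using that $rt$ is a right adjoint right inverse to $p_1$. You instead argue by representability: you show that restriction along $t$ induces a bijection on (isomorphism classes of) module maps into an arbitrary module $F$ — exploiting the cartesian-on-the-right and groupoidal structure of the \emph{target} $F$ rather than of $E$ — and then combine this with the already-established Yoneda equivalence of Proposition~\ref{prop:two-sided-yoneda}, the comparison $\psi t \cong \bar t$ from Recollection~\ref{rec:ess.unique.1-cell.ind}, and the ordinary Yoneda lemma in $\qMod{A}{B}$ together with Lemma~\ref{lem:equiv-of-modules}. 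Your approach avoids the fiddly $re \cong \id$ verification and anticipates the ``bijection on maps into all modules'' style of argument used later (e.g.\ Lemma~\ref{lem:fibred-adjoint-reflection}); the paper's approach is more economical in that it stays at the level of the two spans involved, produces the inverse equivalence explicitly, and does not quantify over all modules $F$ (note that your endgame needs $E$ itself to be an object of $\qMod{A}{B}$ so that one may take $F = E$ and $F = B\comma pt$, so the module hypothesis on $E$ is still essential). One small caveat: your construction of $q^*$ by choosing cartesian lifts objectwise does not literally produce a functor of quasi-categories, so the claim that $t^*$ is an ``equivalence'' is an overstatement; but since the Yoneda step only requires a bijection on isomorphism classes, which your lifts (well-defined up to isomorphism by essential uniqueness of cartesian 2-cells and groupoidality of $F$) do supply, this is a matter of phrasing rather than a gap. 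The implicit smothering-2-functor lifts between $(\lcat{K}/A)_2$ and $\lcat{K}_2/A$, and the normalisation of the RARI data so that $q\eta = \id_q$ and $\eta t = \id_t$, are at the same level of rigour as the paper's own proof, which uses these identities freely.
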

\begin{proof}
The unit $\eta$ of the adjunction $q \dashv t$ induces a 1-cell $r \colon E \to B\comma pt$
  \begin{equation}\label{eq:r-equiv-defn}
    \vcenter{\xymatrix@=12pt{
 & {E}\ar@{=}[dd] \ar@/^/[ddr]^p \ar@/_/[ddl]_q &  & & 
      & {E}\ar[d]^-r \ar@/^/[ddr]^p \ar@/_/[ddl]_q & \\     \ar@{}[dr]|{\Leftarrow\eta}  & &   & = &
      & {B\comma pt}\ar[dl]|{p_1}\ar[dr]|{p_0} & \\    {A}\ar[r]_{t} & E \ar[r]_p & {B} & & 
      {A}\ar[rr]_{pt} && {B}
      \ar@{} "2,6";"3,6" |(0.6){\Leftarrow\beta}
    }}
  \end{equation}
Define $e \colon B\comma pt \to E$ to be the domain component of the cartesian lift of the morphism
\begin{equation}\label{eq:e-equiv-defn} \vcenter{\xymatrix{ B \comma pt \ar[r]^-{p_1} \ar@/_/[drr]_{p_0} \ar@{}[drr]^(.6){\Uparrow\beta} & A \ar[r]^t & E \ar[d]^p \\ & & B}}
    \mkern20mu = \mkern20mu
    \vcenter{\xymatrix{ B \comma pt \ar[r]^-{p_1}  \ar@/_3.5ex/[rr]_(.6)e^{\Uparrow\chi} & A \ar[r]^t & E \ar[d]^p \\ & & B}}
\end{equation} chosen so that $q\chi = \id_{p_1}$; Lemma~\ref{lem:two-sided-groupoidal-cells} tells us this is possible. In particular, $e$ and $r$ are both maps over $A \times B$.

Restricting \eqref{eq:e-equiv-defn} along $r$, we see that $\chi r \colon er \To tq$ is a $p$-cartesian lift of $\beta r = p \eta$. Since $q \dashv t$ is a right-adjoint-right-inverse, $q\eta= \id_q$, and Lemma~\ref{lem:two-sided-groupoidal-cells} implies that $\eta$ is also a $p$-cartesian lift of $p\eta$. Observation~\refIV{obs:weak.cart.conserv} then implies that $er \cong \id_E$ over $A \times B$.

To show that $re \cong \id_{B \comma pt}$, Lemma~\ref{lem:two-sided-groupoidal-cells} implies that $r\chi$ is a $p_0$-cartesian lift of $\beta \colon p_0 \To p_0 rt p_1$. From the defining equation \eqref{eq:r-equiv-defn} and \refI{lem:technicalsliceadjunction}, we see that $rt$ defines a right adjoint right inverse to $p_1 \colon B \comma pt \to A$. The unit of $p_1 \dashv rt$, as constructed in the proof of \refI{lem:technicalsliceadjunction}, defines a lift of $\beta$ along $p_0$, projecting along $p_1$ to an identity. Lemma~\ref{lem:two-sided-groupoidal-cells} tells us this unit is $p_0$-cartesian, and, as before, Observation~\refIV{obs:weak.cart.conserv} provides the desired isomorphism $re \cong \id_{B \comma pt}$ over $A \times B$. This demonstrates that $E$ is equivalent to $B \comma pt$ over $A \times B$.
\end{proof}

 On combining this with the following result for quasi-categories, which is a corollary of Lemma~\refI{lem:RARI-lifting}, we obtain a familiar ``pointwise'' recognition principle for representable modules between quasi-categories:

 \begin{lem}\label{lem:qcat-pointwise-rep}
   A cocartesian fibration $q\colon E\tfib A$ of quasi-categories admits a right adjoint right inverse $t\colon A\to E$ if and only if for each object $a\in A$ the fibre $E_a$ over that object has a terminal object.
 \end{lem}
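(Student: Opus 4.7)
The plan is to prove both implications directly, with the backward direction being the substantive one.

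For the forward direction, which is essentially formal, suppose $t \colon A \to E$ is a right adjoint right inverse to $q$ with counit $\varepsilon \colon tq \Rightarrow 1_E$. Because $qt = \id_A$, the triangle identity forces $q\varepsilon$ to be the identity 2-cell on $q$ (as discussed around \refI{lem:adjunction.from.isos} and \refI{eq:triangle-calculation-1}), so for any $a \in A$ and any object $e \in E_a$ the component $\varepsilon_e \colon t(a) = tq(e) \to e$ lies entirely in the fiber $E_a$. The universal property of the counit, restricted to maps in $E_a$ from $t(a)$, then shows that $t(a)$ is terminal in the fiber $E_a$ in the sense of Definition~\refI{defn:terminal}.

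For the converse, the key idea is to reformulate ``RARI'' as a lifting problem and then solve it using cocartesian lifts together with the fibrewise terminal objects. Specifically, by Lemma~\refI{lem:RARI-lifting} (and its accompanying square \refI{eq:RARI-lifting}), admitting a right adjoint right inverse is equivalent to the existence of a lift in a certain marked simplicial lifting problem built from the isofibration $q$ and the inclusion $\Delta^0 \hookrightarrow \Delta^1$ at the terminal vertex. I would first use the cocartesian fibration hypothesis on $q$ to reduce this lifting problem to a fibrewise one: given a simplex $\sigma \colon \Delta^n \to A$ together with a partial lift, cocartesian lifts (Definition~\refIV{defn:cart-fib}, dualised) along the edges of $\sigma$ allow one to transport the lift into the fiber over the final vertex $\sigma(n)$, where the remaining filling problem becomes a horn-filling problem \emph{inside} $E_{\sigma(n)}$ whose target is the chosen terminal object. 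By the quasi-categorical characterisation of terminal objects (see \refI{defn:terminal} and \refI{lem:min-term-pres}), every such horn extends. Assembling these pointwise fillers coherently produces the required section.

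The main obstacle is the coherent assembly: one must pass from a choice of terminal object in each fiber to an actual functor $t \colon A \to E$, and show the resulting section satisfies the RARI lifting condition. I expect this to be handled cleanly by expressing the lifting problem of Lemma~\refI{lem:RARI-lifting} as a transfinite tower of horn-extensions indexed by the simplices of $A$, at each stage performing the reduction above --- cocartesian transport to the final fiber, followed by terminal-object-driven horn filling --- and verifying that the markings are respected because cocartesian lifts project to identities under $q$ and the final fillers live entirely within a fiber. Once this inductive extension is set up, both hypotheses (cocartesian structure and fibrewise terminal objects) are used exactly once per stage, which is what makes the equivalence hold.
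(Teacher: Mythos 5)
Your converse follows the paper's route in outline---reduce via Lemma~\refI{lem:RARI-lifting} to lifting problems of a sphere over a simplex with prescribed final vertex, then use cocartesian transport into the fibre over the final vertex and fill there using terminality---but the work you defer (``assembling these pointwise fillers coherently'', the transfinite tower indexed by simplices of $A$) is exactly what Lemma~\refI{lem:RARI-lifting} already does for you: it reduces the existence of a RARI to a family of \emph{independent} lifting problems, one for each $x\colon\Delta^n\to A$ and $y\colon\partial\Delta^n\to E$ with $y\fbv{n}=t(x(n))$, so no further gluing over $A$ is required. What your sketch actually omits is the genuinely delicate step: after taking a single cocartesian lift $\chi\colon y\Rightarrow u$ of the 2-cell built from $k\colon[n]\times[1]\to[n]$ (not edgewise lifts) and filling the transported sphere $u$ inside $E_a$---note its final vertex is only \emph{isomorphic} to $ta$, so you need invariance of terminality under isomorphism---you must still convert that fibrewise filler into a solution of the \emph{original} lifting problem over $x$. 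The paper does this by filling the cylinder $\Delta^n\times\Delta^1$ relative to $\partial\Delta^n\times\Delta^1\cup\Delta^n\times\Delta^{\fbv{1}}$ one nondegenerate $(n+1)$-simplex at a time, the last of which is an \emph{outer} horn that is fillable only because its final edge is the image of the invertible $\chi\fbv{n}$, and then restricting to the initial end of the cylinder. Without this mechanism (or an equivalent one) ``transport then fill in the fibre'' does not yet produce the required lift, and your appeal to ``markings being respected'' does not substitute for it.

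In the necessity direction your 2-cell is transposed: for $q\dashv t$ with $qt=\id_A$ the counit lives in $A$, and the datum in $E$ is the unit $\eta\colon 1_E\Rightarrow tq$, whose component at $e\in E_a$ is a map $e\to ta$ lying in the fibre (since $q\eta=\id_q$); the arrow $t(a)\to e$ you wrote points the wrong way for terminality. More substantively, ``the universal property of the counit restricted to $E_a$'' as stated only yields a homotopy-category-level universal property; to get the sphere-filling terminality used in the statement you must either observe that the RARI adjunction is fibred over $A$, pull it back along $a\colon 1\to A$ to exhibit a right adjoint to $!\colon E_a\to 1$, and then invoke the identification of this 2-categorical notion with the quasi-categorical one, or, as the paper does, simply apply the lifting property \refI{eq:RARI-lifting} to a sphere $\partial\Delta^n\to E_a$ over $\Delta^n\to 1\to A$ and use the pullback square defining $E_a$, which gives the horn-filling form of terminality in two lines.
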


 \begin{proof}
    To prove necessity, consider the following commutative diagram:
    \begin{equation*}
      \xymatrix@=2em{
        \Delta^0 \ar[r]_-{\fbv{n}} \ar@/^2ex/[rr]^-{ta} & 
        \boundary\Delta^n \ar@{u(->}[d] \ar[r] & 
        {E_a}\pbexcursion\ar@{->>}[d]^q\ar@{u(->}[r] &
        E \ar@{->>}[d]^q \\ 
         & \Delta^n \ar[r]\ar@{.>}[rru]\ar@{-->}[ru] & 1 \ar[r]_a & A
      }
    \end{equation*}
    Under our assumption that $q$ has a right adjoint right inverse $t$ we may apply the lifting condition depicted in equation~\refI{eq:RARI-lifting} of Lemma~\refI{lem:RARI-lifting} to show that the outer composite square has a lifting (the dotted arrow) and then apply the pullback property of the right hand square to obtain a lifting for the left hand square (the dashed arrow). This lifting property of the left hand square shows that $ta$ is a terminal object in $E_a$.

    To establish sufficiency, we start by taking the object $ta\in E$ to be the terminal object in the fibre $E_a$ for each object $a\in A$. Now by Lemma~\refI{lem:RARI-lifting} our desired result follows if we can show that each lifiting problem
    \begin{equation}\label{eq:lift-term-in-fibre}
      \xymatrix{ 
        \Delta^0 \ar[r]_-{\fbv{n}} \ar@/^2ex/[rr]^{ta} & 
        \boundary\Delta^n \ar@{u(->}[d] \ar[r]_y & 
        E \ar@{->>}[d]^q \\ 
        & \Delta^n  \ar[r]_x & A}
    \end{equation}
    has a solution. Consider the order preserving function $k\colon [n]\times[1]\to [n]$ defined by $k(i,0) \defeq i$ and $k(i,1) \defeq n$. Taking nerves, this gives rise to a simplicial map $k\colon\Del^n\times\Del^1\to\Del^n$ and it is easy to check that this restricts to a simplicial map $k\colon\boundary\Del^n\times\Del^1\to\Del^n$ which we may compose with $x\colon\Del^n\to A$  to give (a representative of) a 2-cell 
    \begin{equation*}
      \xymatrix@=2em{
        \boundary\Delta^n \ar[d]_{!} \ar[r]^y & 
        E \ar@{->>}[d]^q \\ 
        1  \ar[r]_a & A
        \ar@{} "1,1";"2,2"|{{\textstyle\Downarrow}\kappa}
      }
    \end{equation*}
    with the property that $\kappa\fbv{n}$ is the identity 2-cell on $a$. Now we may take a cocartesian lift $\chi\colon y\Rightarrow u$ of $\kappa$ and by construction the image of $u\colon\boundary\Del^n\to E$ is contained entirely in the fibre $E_a\subseteq E$. What is more, we know that $\chi\fbv{n}$ is an isomorphism since we have, by pre-composition stability of cocartesian 2-cells, that it is a cocartesian lift of the identity 2-cell on $a$. Consequently, we see that $u\fbv{n}$ is a terminal object of $E_a$, since it is isomorphic to $ta$, and it follows that we may apply its universal property to extend $u\colon\boundary\Del^n\to E_a$ to a simplex $v\colon\Del^n\to E_a$. 

    We may combine (a representative of) the 2-cell $\chi$ with $v$ to assemble the upper horizontal map in the following commutative square:
    \begin{equation*}
      \xymatrix@=2em{
        {\boundary\Del^n\times\Del^1 \cup \Del^n\times\Del^{\fbv{1}}}
        \ar[r]\ar@{u(->}[d] & E \ar@{->>}[d]^q \\
        {\Del^n\times\Del^1}\ar[r] \ar@{-->}[ur]_\ell & {A}
      }
    \end{equation*}
    Now we can construct a solution for this lifting problem by successively picking fillers for each of the non-degenerate $(n+1)$-simplices in $\Del^n\times\Del^1$. These are guaranteed to exist the for the first $n-1$ of those because $q$ is an isofibrations and they entail the filling of an inner horn. To obtain a filler for the last one we need to fill an outer horn, but observe that its final edge maps to an isomorphism of $E$, since it is the image of $\chi\fbv{n}$, and so it too has a filler. Finally on restricting the resulting map $\ell \colon \Del^n\times\Del^1\to E$ to the initial end of the cylinder that is its domain, we obtain an $n$-simplex $\Del^n\to E$ which is easily seen to be a solution to the original lifting problem in~\eqref{eq:lift-term-in-fibre} as required.
\end{proof}

 \begin{cor}\label{cor:fibrewise-rep}
   Suppose that $(q,p)\colon E\tfib A\times B$ defines a module of quasi-categories from $A$ to $B$. Then $E$ is covariantly represented if and only if for all objects $a\in A$ the module $E(\id_B,a)$ from $1$ to $B$ is covariantly represented by some object $b\in B$.
 \end{cor}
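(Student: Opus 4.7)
The plan is to chain three of our earlier results. By Lemma~\ref{lem:rep-mod-char} and the discussion immediately preceding it, a module $(q,p)\colon E\tfib A\times B$ is covariantly represented---that is, equivalent over $A\times B$ to $B\comma f$ for some $f\colon A\to B$---if and only if the left leg $q\colon E\tfib A$ admits a right adjoint right inverse $t$, in which case the representing functor is $pt$. Because $(q,p)$ is cocartesian on the left, the dual of Lemma~\ref{lem:two-sided-groupoidal-cells} makes $q$ itself a cocartesian fibration of quasi-categories, so Lemma~\ref{lem:qcat-pointwise-rep} applies to tell us that $q$ admits a right adjoint right inverse precisely when every fibre $E_a$ has a terminal object.

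It remains to recast this fibrewise criterion in terms of the modules $E(\id_B,a)$ from $1$ to $B$. By construction (see Lemma~\ref{lem:module-map-pullback-equivalence}), the module $E(\id_B,a)$ is formed by pulling $E$ back along $a\times\id_B\colon 1\times B\to A\times B$; this pullback is a module by Proposition~\ref{prop:two-sided-pullback}, its total quasi-category is the fibre $E_a$, and its left leg is the terminal projection $E_a\to 1$. A second application of Lemma~\ref{lem:rep-mod-char}, this time to modules out of $1$, shows that $E(\id_B,a)$ is covariantly represented---necessarily by some $b=pt\colon 1\to B$---if and only if this terminal projection admits a right adjoint right inverse, which by \refI{defn:terminal} is precisely to say that $E_a$ possesses a terminal object. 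Splicing these equivalences proves the corollary.

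I expect the only obstacle to be bookkeeping: one must verify that the left leg of the pullback module $E(\id_B,a)$ really is the quasi-categorical terminal map $E_a\to 1$, and that a right adjoint right inverse to this leg is the same data as a terminal object of $E_a$. Both verifications are immediate from the defining pullback square and from the characterisation of terminal objects as right adjoints to the unique functor to $1$, so no new categorical technology beyond the lemmas assembled above is required.
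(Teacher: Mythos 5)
Your argument is correct and follows essentially the same route as the paper: both pass through Lemma~\ref{lem:rep-mod-char} (with its converse from the preceding discussion) to reduce covariant representability to a right adjoint right inverse for the left leg, use Lemma~\ref{lem:qcat-pointwise-rep} to recast that fibrewise as terminal objects, and identify $E(\id_B,a)$ with the fibre module $1 \fibt E_a \tfib B$ before applying the same reasoning once more. The only difference is cosmetic: you spell out explicitly (via the dual of Lemma~\ref{lem:two-sided-groupoidal-cells}) that $q$ is a cocartesian fibration, which the paper leaves implicit.
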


 \begin{proof}
    Applying Lemma~\ref{lem:rep-mod-char} and Lemma~\ref{lem:qcat-pointwise-rep}, we find that the module $(q,p)\colon E\tfib A\times B$ is covariantly represented if and only if each fibre of $q\colon E\tfib A$ has a terminal object. For each object $a\in A$ the module $E(\id_B,a)$ is given by pullback along $a\times\id_B\colon 1\times B\to A\times B$, and hence it is isomorphic to the module
    \begin{equation*}
      \xymatrix@R=1em@C=1.5em{
        {} & {E_a}\ar@{->>}[dl]_{!}\ar@{->>}[dr]^{p} & {} \\
        {1} & {} & {B}
      }
    \end{equation*}
    where $E_a$ is the fibre of $q\colon E\tfib A$ at $a$. Applying Lemma~\ref{lem:rep-mod-char}, it follows that $E(\id_B,a)$ is covariantly represented if and only if the map $!\colon E_a\tfib 1$ has a right adjoint right inverse which is equivalent to asking that $E_a$ has a terminal object.
 \end{proof}

We have long been acquainted with a particular instance of equivalence between modules. As the following example recalls, a pair of functors in the homotopy 2-category are adjoints if and only if the contravariant module represented by the left adjoint is equivalent to the covariant module represented by the right adjoint.

\begin{ex}\label{ex:adjoint-equivalence-mods}
  The arguments of \S\refI{sec:limits}  generalise, word for word, to any $\infty$-cosmos $\lcat{K}$ to  demonstrate that a pair of functors $u\colon A\to B$ and $f\colon B\to A$ comprise an adjoint pair $f\dashv u$ if and only if the comma objects $f\comma A$ and $B\comma u$ are equivalent as objects over $A\times B$. This latter condition means that there exists some equivalence $w\colon f\comma A\to B\comma u$ which makes the following triangle 
\begin{equation*}
  \xymatrix@=1em{
    {f\comma A}\ar@{->>}[dr]_(0.3){(p_1,p_0)}\ar[rr]^{w}_{\simeq}
    && {B\comma u}\ar@{->>}[dl]^(0.3){(q_1,q_0)} \\
    & {A\times B}&
  }
\end{equation*}
commute. More precisely, isomorphism classes of such equivalences $w$ in the 2-categorical slice $\lcat{K}_2\slice A\times B$ stand in bijective correspondence with choices of unit and counit for an adjunction $f \dashv u$.

  Using the language established above, we might equivalently observe that a functor $f\colon B\to A$ admits a right adjoint if and only if the contravariant representable module $f\comma A$ is also covariantly represented by some functor $u\colon A\to B$.

  Restricting to the case of quasi-categories, we may apply Lemma~\ref{lem:qcat-pointwise-rep} to show that a functor $f\colon B\to A$ of quasi-categories admits a right adjoint if and only if for all objects $a\in A$ the comma $f\comma a$ has a terminal object. On exploiting the equivalence between the comma $f\comma a$ and the slice $\slicer{f}{a}$, we recover the pointwise criterion for the existence of a right adjoint that is the converse to Proposition~\refI{prop:pointwise-univ-adj} implicit in Theorem~\refI{thm:pointwise}.
\end{ex}

%%% Local Variables:
%%% mode: latex
%%% TeX-master: "all"
%%% End:

  %!TEX root = all.tex
% ******************************************************************
% ** Title:            Modules between $\infty$-categories?
% **                   Main file
% ** Precis:        
% ** Author:           Emily Riehl and Dominic Verity
% ** Commenced:        15/3/2015
% ******************************************************************

\section{The virtual equipment of modules}\label{sec:virtual}

A \emph{double category} is a sort of 2-dimensional category with objects; two varieties of 1-morphisms, the ``horizontal'' and the ``vertical''; and 2-dimensional cells fitting into ``squares'' whose boundaries consist of horizontal and vertical 1-morphisms with compatible domains and codomains. A motivating example from abstract algebra is the double category of modules: objects are rings, vertical morphisms are ring homomorphisms, horizontal morphisms are bimodules, and whose squares are bimodule homomorphisms. In the literature, this sort of structure is sometimes called a \emph{pseudo double category} --- morphisms and squares compose strictly in the ``vertical'' direction but only up to isomorphism in the ``horizontal'' direction --- but we'll refer to this simply as a ``double category'' here as it is the only variety that we will consider.

Our aim in this section is to describe a similar structure whose objects and vertical morphisms are the $\infty$-categories and functors in an $\infty$-cosmos, whose horizontal morphisms are modules, and whose squares are module maps, as defined in \ref{defn:module-map}. The challenge is that composition of modules is a complicated operation, making use of certain colimits that are not within the purview of the axioms of an $\infty$-cosmos. 

Rather than leave the comfort of our axiomatic framework in pursuit of a double category of modules, we instead describe the structure that naturally arises within the axiomatization: it turns out to be familiar to category theorists and robust enough for our desired applications, which will be the subject of the next section. We first demonstrate that $\infty$-categories, functors, modules, and module maps assemble into a \emph{virtual double category}, a weaker structure than a  double category in which cells are permitted to have a multi horizontal source, as a replacement for horizontal composition of modules. We then observe that certain cells in this virtual double category satisfy strict universal properties, defining what Cruttwell and Shulman call a \emph{virtual equipment}  \cite{CruttwellShulman:2010au}. This universal property encodes numerous bijections between module maps, which we exploit in the next section to develop the theory of pointwise Kan extensions for $\infty$-categories.

\subsection{The virtual double category of modules}

\begin{defn}[the double category of isofibrations]\label{defn:double-iso-spans}
The homotopy 2-category $\tcat{K}_2$ of an $\infty$-cosmos supports a \emph{double category of spans} $\SSpan{K}$  whose:
\begin{itemize}
\item objects are $\infty$-categories
\item vertical arrows are functors
\item horizontal arrows $E \colon A \prof B$ are isofibrations  $(q,p) \colon E \tfib A \times B$ together with the identity span from $A$ to $A$
\item 2-cells, with boundary as displayed below
\begin{equation}\label{eq:2-cell.bdary}
  \xymatrix{ A \ar[d]_f \ar[r]^E|{\mid} & B \ar[d]^g \ar@{}[dl]|{\Downarrow} \\ C \ar[r]_F|{\mid} & D}
\end{equation}
are isomorphism classes of maps of spans, i.e., a 2-cell from $A \xfibt{q} E \xtfib{p} B$ to $C \xfibt{s} F \xtfib{r} D$ over $f$ and $g$ is an isomorphism class of objects in the category defined by the pullback diagram
\begin{equation}
  \label{eq:2-cell.hom.defn}
  \xymatrix{ \hom_{f,g}(E,F) \pbexcursion \ar[r] \ar[d] & \hom(E,F) \ar[d]^{\hom(E,(s,r))} \\ \catone \ar[r]_-{(fq,gp)} & \hom(E,C\times D)}
\end{equation}
\end{itemize}
Horizontal composition of two-sided isofibrations are given by forming the simplicial pullback
\[ \xymatrix@!=5pt{ & & E \times_B F \pbdiamond \ar@{->>}[dl]_-{\pi_1} \ar@{->>}[dr]^-{\pi_0} \\  & E \ar@{->>}[dl]_q \ar@{->>}[dr]^p &  & F \ar@{->>}[dl]_s \ar@{->>}[dr]^r \\ A  & & B & & C}\]
as described in Definition~\ref{defn:horiz-comp}. As explained there, this construction indeed defines an isofibration $E \times_B F \tfib A \times C$. Simplicial functoriality of the pullbacks in $\lcat{K}$ implies that horizontal composition of morphisms and 2-cells is associative and unital up to isomorphism.
\end{defn}

\begin{obs}\label{obs:double-iso-spans}
  It is instructive to relate the notion of cell given in the last definition with that of module map given in Definition~\ref{defn:module-map}. Were we to follow that latter definition, we might define a 2-cell of the form displayed in~\eqref{eq:2-cell.bdary} as an isomorphism class of objects in the mapping quasi-category defined in the following pullback:
  \begin{equation*}
    \xymatrix{ \map_{f,g}(E,F) \pbexcursion \ar[r] \ar@{->>}[d] & \map(E,F) \ar@{->>}[d]^{\map(E,(s,r))} \\ \Del^0 \ar[r]_-{(fq,gp)} & \map(E,C\times D)}
  \end{equation*}
  Applying the homotopy category functor to this pullback we obtain a cone over the diagram in ~\eqref{eq:2-cell.hom.defn}, thus inducing a comparison functor $\ho(\map_{f,g}(E,F))\to \hom_{f,g}(E,F)$ which, by Proposition~\refI{prop:weak-homotopy-pullbacks}, is a smothering functor which acts identically on objects. Now we know that isomorphism classes of objects of a quasi-category and its homotopy category correspond, as do those of a pair of categories related by a smothering functor. So it follows that isomorphism classes of objects in $\map_{f,g}(E,F)$ and $\hom_{f,g}(E,F)$ coincide and thus that the 2-cells of $\SSpan{K}$ may be defined equally in terms of isomorphism classes in either of these hom-spaces. Consequently we see that module maps are simply 2-cells in $\SSpan{K}$ whose vertical domain and codomain spans happen to be modules.
\end{obs}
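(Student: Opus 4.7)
The plan is to compare the two pullback diagrams defining $\map_{f,g}(E,F)$ in the $\infty$-cosmos and $\hom_{f,g}(E,F)$ in the homotopy 2-category, and to use the good behaviour of the homotopy category functor $\ho\colon\qCat\to\Cat$ with respect to pullbacks of the appropriate shape.

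First, I would recall that the square
\[ \xymatrix{ \map_{f,g}(E,F) \pbexcursion \ar[r] \ar@{->>}[d] & \map(E,F) \ar@{->>}[d]^{\map(E,(s,r))} \\ \Del^0 \ar[r]_-{(fq,gp)} & \map(E,C\times D)}\]
is a simplicial pullback in which the right hand vertical is an isofibration of quasi-categories (by axiom \ref{qcat.ctxt.cof.def}\ref{qcat.ctxt.cof:b} applied to $(s,r)\colon F\tfib C\times D$). Applying $\ho$ to this diagram produces a commutative square in $\Cat$ with vertex $\ho(\map_{f,g}(E,F))$, together with the defining pullback square of $\hom_{f,g}(E,F)$ in $\Cat$ on the same cospan. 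The universal property of the latter induces a canonical comparison functor
\[ \ho(\map_{f,g}(E,F))\longrightarrow\hom_{f,g}(E,F). \]

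Next, I would invoke Proposition~\refI{prop:weak-homotopy-pullbacks}, which is precisely the result that $\ho$ turns simplicial pullbacks of isofibrations of quasi-categories into weak 2-pullbacks in $\Cat$ witnessed by a smothering comparison functor of this form. Since the lower left corner of the square is $\Del^0$ (equivalently the terminal category under $\ho$), the comparison functor is the identity on the underlying set of $0$-simplices / objects, so the smothering functor is bijective on objects.

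The key general fact I would then use is that a smothering functor, being surjective on objects, locally surjective on arrows, and conservative, is bijective on isomorphism classes of objects; moreover, for any quasi-category $Q$, the sets of isomorphism classes of objects in $Q$ and in $\ho(Q)$ coincide (since $\tau_0$ factors through $\ho$). Combining these two observations yields a natural bijection between isomorphism classes of vertices of $\map_{f,g}(E,F)$ and isomorphism classes of objects of $\hom_{f,g}(E,F)$, which is the desired identification of $2$-cells in $\SSpan{K}$ with module maps in the sense of Definition~\ref{defn:module-map} whenever the spans involved happen to be modules. The only subtlety, and the step I would be most careful with, is verifying that the comparison arising from the universal property of the $\Cat$-pullback really agrees on the nose on objects with the map induced by $\ho$, so that ``bijective on objects'' literally implies ``bijective on isomorphism classes''; this is immediate once one writes out both descriptions.
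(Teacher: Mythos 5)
Your proposal is correct and follows essentially the same route as the paper: apply $\ho$ to the defining simplicial pullback, obtain the comparison functor into $\hom_{f,g}(E,F)$, identify it as a smothering functor via Proposition~\refI{prop:weak-homotopy-pullbacks} acting identically on objects, and conclude by noting that both passing to the homotopy category and passing along a smothering functor preserve isomorphism classes of objects. The only difference is cosmetic — your explicit check that smothering functors are bijective on isomorphism classes and your remark about the $\Del^0$ corner are details the paper leaves implicit.
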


%\begin{rmk}
%A double category contains a vertical 2-category, comprised of the objects, vertical 1-morphisms, and cells whose horizontal sources and targets are identities. It also contains a horizontal bicategory, comprised of the objects, horizontal 1-morphisms, and cells whose vertical sources and targets are identities. In contrast with vertical composition of 1-cells, which is defined strictly, the horizontal composition of 1-cells is not. While simplicial pullbacks are well-defined up to isomorphism, it is more natural to consider as ``the composite'' any $\infty$-category that is equivalent to $E \times_B F$ over $A \times C$, as  an equivalence of spans becomes an isomorphism of parallel 1-cells in the horizontal bicategory.
%\end{rmk}

\begin{rmk}
Lemma~\ref{lem:module-pullback-cartesian} reveals that the substructure of $\SSpan{K}$ obtained by restricting our attention only to those isofibrations that are both cocartesian on the left and cartesian on the right is {\em almost\/} a sub double category of $\SSpan{K}$. It fails to be such only in as much as the identity span $A\leftarrow A\to A$ on a general object $A$ may fail to be in that substructure. While this lack of identities might present only a minor inconvenience, our real interest is in the substructure defined by restricting further to those spans that are \emph{modules}, i.e., groupoidal in addition to being cartesian on the right and cocartesian on the left. Example~\ref{ex:modules-do-not-compose} illustrates that modules do not form a sub double category of the double category of spans in $\lcat{K}_2$. However, if we are instead willing to consider $\SSpan{K}$ as a \emph{virtual double category}, a concept introduced by Leinster \cite{Leinster:1999fc,Leinster:1999ge,Leinster:2002ge} under the name \emph{fc-multicategory} and renamed by Cruttwell and Shulman\cite[2.1]{CruttwellShulman:2010au}, then the substructure $\MMod{K}$ determined the modules is indeed a sub virtual double category of $\SSpan{K}$.
\end{rmk}

\begin{defn}[{virtual double category}] A \emph{virtual double category} consists of
\begin{itemize}
\item a category of \emph{objects} and \emph{vertical arrows}, which we call \emph{functors}
\item for any pair of objects $A, B$, a class of \emph{horizontal arrows} $A \prof B$, which we call \emph{modules}
\item \emph{cells}, with boundary depicted as follows
\begin{equation}\label{eq:generic-cell} \xymatrix{ A_0 \ar[d]_{f} \ar[r]|{\mid}^{E_1} & A_1 \ar[r]|{\mid}^{E_2}  \ar@{}[dr]|{\Downarrow} & \cdots \ar[r]|{\mid}^{E_n} & A_n \ar[d]^g \\ B_0 \ar[rrr]|{\mid}_{F} & & & B_n}\end{equation} including those whose horizontal source has length zero, in the case $A_0 = A_n$.
\item a \emph{composite cell}, for any configuration
\[ \xymatrix@C=35pt{ A_0 \ar[d]_{f_0} \ar@{..>}[r]|{\mid}^{E_{11},\dots, E_{1n_1}} \ar@{}[dr]|{\Downarrow}  & A_1 \ar@{..>}[r]|{\mid}^{E_{21},\ldots, E_{2n_2}} \ar[d]^{f_1}  \ar@{}[dr]|{\Downarrow} & \cdots \ar@{..>}[r]|{\mid}^{E_{n1},\ldots, E_{nn_n}}  \ar@{}[d]|\cdots \ar@{}[dr]|{\Downarrow} & A_n \ar[d]^{f_n} \\ B_0 \ar[d]_{g} \ar[r]|{\mid}^{F_1} & B_1 \ar[r]|{\mid}^{F_2}  \ar@{}[dr]|{\Downarrow} & \cdots \ar[r]|{\mid}^{F_n} & B_n \ar[d]^h \\ C_0 \ar[rrr]|{\mid}_{G} & & & C_n}\] 
\item an \emph{identity cell} for every horizontal arrow 
\[ \xymatrix{ A \ar@{=}[d] \ar[r]|{\mid}^E & B \ar@{=}[d] \ar@{}[dl]|{\Downarrow \id_E} \\ A \ar[r]|{\mid}_E & B}\] 
\end{itemize}
so that composition of cells is associative and unital in the usual multi-categorical sense.
\end{defn}

\begin{obs}[double categories are virtually such]
  Any double category is, in particular, a virtual double category. Specifically $\SSpan{K}$ becomes a virtual double category with the same classes of objects, vertical arrows, and horizontal arrows and with cells as depicted in~\eqref{eq:generic-cell} given as 2-cells \[ \xymatrix@C=8em{ A_0 \ar[d]_{f_0} \ar[r]^{E_1\pbtimes{A_1}\cdots\pbtimes{A_{n-1}} E_n} |{\mid} & A_n \ar[d]^{f_n} \ar@{}[dl]|{\Downarrow} \\ B_0 \ar[r]_F|{\mid} & B_n}\] whose single vertical source is the (n-1)-fold pullback of the sequence of spans comprising the vertical source in~\eqref{eq:generic-cell}. In other words, such a cell is an isomorphism class of objects in the category $\hom_{f_0,f_n}(E_1\pbtimes{A_1}\cdots\pbtimes{A_{n-1}} E_n,F)$ of Definition~\ref{defn:double-iso-spans} or, equivalently, in the quasi-category $\map_{f_0,f_n}(E_1\pbtimes{A_1}\cdots\pbtimes{A_{n-1}} E_n,F)$ of Observation~\ref{obs:double-iso-spans}.

 The 0-fold pullback of an empty sequence of spans is simply an identity span $A\leftarrow A\to A$. So a cell with such an empty sequence as its vertical domain on the left of the following diagram
 \begin{equation*}
  \vcenter{\xymatrix{
    {A} \ar@{.>}[r]|{\mid}\ar[d]_f\ar@{}[dr]|{\Downarrow} & {A} \ar[d]^g \\
    {B} \ar[r]|{\mid}_F & {C}
  }} 
  \mkern20mu\leftrightsquigarrow\mkern20mu
  \vcenter{\xymatrix{
    {A} \ar[r]|{\mid}^A\ar[d]_f\ar@{}[dr]|{\Downarrow} & {A} \ar[d]^g \\
    {B} \ar[r]|{\mid}_F & {C}
  }}
 \end{equation*}
 is simply a 2-cell with vertical domain the identity span as on the right. This, in turn, is an isomorphism class of objects in the category $\hom_{f,g}(A,F)$ (or equivalently in the quasi-category $\map_{f,g}(A,F)$). On comparing the defining pullbacks in Definitions~\ref{defn:two-slices} and~\ref{defn:double-iso-spans} it becomes clear that $\hom_{f,g}(A,F)$ is isomorphic to the hom-category $\hom_{C\times B}(A,F)$ between objects $(g,f)\colon A\to C\times B$ and $(p_1,p_0)\colon F\tfib C\times B$ in the slice 2-category $\lcat{K}_2\slice{C\times B}$. In other words, such cells with empty vertical domains simply correspond to isomorphism classes of functors
 \begin{equation*}
   \xymatrix@C=1em@R=1.5em{
     {A}\ar[dr]_(0.35){(g,f)}\ar[rr]^k && {F}\ar@{->>}[dl]^(0.35){(p_1,p_0)} \\
     & {C\times B} &
   }
 \end{equation*}
in the slice 2-category $\lcat{K}_2\slice{C\times B}$.
\end{obs}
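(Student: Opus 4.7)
The plan is to verify that the structure described in the observation satisfies the axioms of a virtual double category. The claim splits naturally into (i) the general assertion that any pseudo double category $\tcat{D}$ gives rise to a virtual double category $V\tcat{D}$ by ``virtualisation'', and (ii) the specific description of $V\SSpan{K}$ whose multi-ary cells are the 2-cells out of iterated pullbacks of spans.

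For (i), I would define $V\tcat{D}$ to have the same objects, vertical morphisms, and horizontal morphisms as $\tcat{D}$. A cell of $V\tcat{D}$ with multi-ary horizontal source $(E_1,\ldots,E_n)$ and single horizontal target $F$, together with the specified vertical boundary, is declared to be a square of $\tcat{D}$ from some fixed bracketing of the horizontal composite $E_1\otimes\cdots\otimes E_n$ (using the horizontal unit $U_{A_0}$ when $n=0$) to $F$. Identity cells are the identity squares of $\tcat{D}$. Composition of a configuration of cells is defined by first horizontally composing the cells in each row via the horizontal bifunctor of $\tcat{D}$, then vertically composing the results with the cell in the bottom row, and finally reassociating using the coherence isomorphisms of $\tcat{D}$ to produce a single cell whose source is the expected bracketed composite. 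The associativity and unitality axioms of a virtual double category then reduce to the pentagon and triangle axioms of the pseudo double category $\tcat{D}$.

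For (ii), I would specialise this construction to $\SSpan{K}$. Horizontal composition of spans is computed by simplicial pullback in $\lcat{K}$, which exists by the completeness axiom~\ref{qcat.ctxt.cof.def}\ref{qcat.ctxt.cof:a}, and the horizontal unit on $A$ is the identity span. The iterated pullback $E_1\pbtimes{A_1}\cdots\pbtimes{A_{n-1}}E_n$ of a composable sequence of spans is well-defined up to canonical isomorphism by the pullback pasting lemma, and declaring the cells of $V\SSpan{K}$ to be isomorphism classes of objects of $\hom_{f_0,f_n}(E_1\pbtimes{A_1}\cdots\pbtimes{A_{n-1}}E_n,F)$ recovers the description in the observation. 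The case $n=0$ is handled by observing that the empty iterated pullback is the identity span on $A_0 = A_n$, and a direct comparison of the defining pullback for $\hom_{f,g}(A,F)$ with that for the slice hom-category in Definition~\ref{defn:two-slices} yields the identification of empty-source cells with isomorphism classes of functors in $\lcat{K}_2\slice{C\times B}$.

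The main obstacle is coherence: horizontal composition in $\SSpan{K}$ is only strictly associative and unital up to canonical isomorphism, coming from the universal property of pullback. The device that resolves this is precisely the passage to isomorphism classes of objects in the hom-categories, which collapses the associator and unitor isomorphisms to identities and so makes composition in $V\SSpan{K}$ strictly associative and unital on the nose. Equivalently, by the smothering functor argument of Observation~\ref{obs:double-iso-spans}, one may work at the level of homotopy classes of vertices in the mapping quasi-categories $\map_{f_0,f_n}(E_1\pbtimes{A_1}\cdots\pbtimes{A_{n-1}}E_n,F)$ and obtain the same virtual double category structure.
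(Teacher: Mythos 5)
Your proposal takes essentially the same route as the paper: the observation simply invokes the standard fact that a (pseudo) double category is in particular a virtual double category, defines an $n$-ary cell as an isomorphism class of maps of spans out of the iterated pullback $E_1\pbtimes{A_1}\cdots\pbtimes{A_{n-1}}E_n$, and identifies nullary cells with isomorphism classes of functors $A\to F$ over $C\times B$ by comparing the defining pullbacks of Definitions~\ref{defn:two-slices} and~\ref{defn:double-iso-spans}, exactly as you do. Your explicit verification of the axioms, with the coherence isomorphisms of the pseudo structure absorbed by the passage to isomorphism classes of maps of spans, is precisely the detail the paper leaves implicit, so the argument is correct and nothing substantive needs to change.
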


\begin{obs}[full sub virtual double categories]
  Suppose we are given classes of objects and of horizontal arrows between those objects in a virtual double category. We can then form a substructure comprising these chosen objects and horizontal arrows along with all vertical arrows between chosen objects and all cells for which the horizontal arrows in its domain list and its codomain are all in the chosen class. Now the only operations given in the structure of a virtual double category are vertical sources and targets, vertical identities, and vertical composition; so it is clear that this substructure is closed under all of these operations, and it follows easily that it inherits the structure of a virtual double category. We call this the {\em full\/} sub virtual double category \emph{determined\/} by the chosen classes of objects and horizontal arrows.
\end{obs}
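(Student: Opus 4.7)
The plan is to verify that the proposed substructure, call it $\tcat{V}'$, genuinely carries the structure of a virtual double category and that the evident inclusion $\tcat{V}' \hookrightarrow \tcat{V}$ is a faithful morphism of virtual double categories. First I would spell out the data explicitly: objects and horizontal arrows are the chosen classes; vertical arrows are all vertical arrows of $\tcat{V}$ whose (vertical) source and target are chosen objects; and cells are all cells of $\tcat{V}$ every one of whose horizontal source entries and whose horizontal codomain lies in the chosen class of horizontal arrows. The vertical source and target of such a cell automatically have chosen endpoints, so they are vertical arrows of $\tcat{V}'$.

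Next I would confirm that the vertical arrows assemble into a category: identities on chosen objects lie in $\tcat{V}'$ by definition, and the composite of two composable vertical arrows between chosen objects again has chosen objects at both ends. This reduces the remaining work to verifying closure of the cell-level structure under the two primitive operations.

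The key step is to check closure under the multi-categorical composition operation. Given a configuration of composable cells each of which lies in $\tcat{V}'$, the horizontal boundary of the resulting composite cell consists of precisely the horizontal arrows $E_{ij}$ lining the very top row of the configuration together with the single horizontal arrow $G$ along the bottom row; the ``interior'' horizontal arrows $F_k$ are consumed by the composition and do not appear in the boundary of the result. Since each constituent cell was in $\tcat{V}'$, every one of the $E_{ij}$ and the arrow $G$ is a chosen horizontal arrow, so the composite is again a cell of $\tcat{V}'$. Similarly, the identity cell $\id_E$ associated to a chosen horizontal arrow $E$ has source and codomain both equal to $E$, and so lies in $\tcat{V}'$.

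Finally, the associativity and unit axioms for composition in $\tcat{V}'$ are inherited directly from the corresponding axioms in $\tcat{V}$: both sides of each such equation are cells of $\tcat{V}$ which happen to lie in $\tcat{V}'$, and their equality in $\tcat{V}$ immediately implies their equality in $\tcat{V}'$. There is no real obstacle here; the ``main point'' worth emphasising is simply that cell composition never introduces new horizontal arrows into the boundary of a composite, so the closure condition defining $\tcat{V}'$ is automatically preserved by all of the virtual-double-categorical operations.
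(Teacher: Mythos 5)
Your proposal is correct and takes essentially the same route as the paper's own justification: the paper simply observes that the only operations of a virtual double category (vertical sources and targets, vertical identities, and the multicategorical composition) never introduce new horizontal arrows into a cell's boundary, so the chosen substructure is closed under them and inherits the axioms --- which is precisely the closure check you spell out. The one point stated slightly too quickly is that for a cell with empty horizontal source the top boundary is a bare object, so membership of its vertical boundaries in the substructure rests on that object being in the chosen class rather than on any horizontal arrow; this matches the paper's own level of precision and causes no trouble in the intended application to modules.
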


\begin{defn}\label{defn:modules-virtual} The virtual double category $\MMod{K}$ of modules is defined to be the full sub virtual double category of $\SSpan{K}$ determined by the classes of all $\infty$-categories and modules between them. It has objects all $\infty$-categories, vertical arrows all functors, horizontal arrows modules, and cells the module maps of Definition~\ref{defn:module-map} (see Observation~\ref{obs:double-iso-spans}).
\end{defn}

\begin{defn}[composable modules]\label{defn:composable-modules}
We refer to a finite sequence of modules
\[ E_1 \colon A_0 \prof A_1, E_2 \colon A_1 \prof A_2, \ldots, \cdots E_n \colon A_{n-1}\prof A_n,\] in $\MMod{K}$ as a \emph{composable sequence of modules}; this just means that their horizontal sources and targets are compatible in the evident way. The horizontal composition operation described in Definition~\ref{defn:horiz-comp} yields an isofibration
\[E_1 \pbtimes{A_1} \cdots \pbtimes{A_{n-1}} E_n \tfib A_0 \times A_n,\] defined uniquely up to equivalence over $A_0 \times A_n$,  that is cartesian on the left and cartesian on the right. This isofibration is unlikely to define a groupoidal object of $\lcat{K}_2/A_0 \times A_n$ and hence does not define a module. When referring to the horizontal domains of cells in $\MMod{K}$, we frequently drop the subscripts and write simply $E_1 \times \cdots \times E_n$ for the composite isofibration. A cell with this domain is an $n$-\emph{ary cell}. Note that the cells in $\MMod{K}$ with unary source are precisely the module maps over a pair of functors introduced in Definition~\ref{defn:module-map}.
\end{defn}

\begin{obs}\label{obs:cells-above-a-comma} Recollection~\ref{rec:ess.unique.1-cell.ind}, which expresses 1-cell induction as a bijection between isomorphism classes of maps of spans whose codomain is a comma span and certain 2-cells in the homotopy 2-category,  provides an alternate characterization of cells in the virtual double category of modules whose codomain is a comma module. Explicitly, for any cospan  $B_0 \xrightarrow{k} C \xleftarrow{h} B_n$, there is a bijection 
\[ \vcenter{ \xymatrix{ A_0 \ar[d]_{f} \ar[r]|{\mid}^{E_1} & A_1 \ar[r]|{\mid}^{E_2}  \ar@{}[dr]|{\Downarrow} & \cdots \ar[r]|{\mid}^{E_n} & A_n \ar[d]^g \\ B_0 \ar[rrr]|{\mid}_{h \comma k} & & & B_n}}\qquad \leftrightsquigarrow\qquad \vcenter{ \xymatrix@!0@C=40pt@R=30pt{ & E_1 \pbtimes{A_1} \cdots \pbtimes{A_{n-1}} E_n \ar[dl] \ar[dr] \ar@{}[ddd]|{\displaystyle\Leftarrow} \\ A_0  \ar[d]_f & & A_n \ar[d]^g \\ B_0 \ar[dr]_k & & B_n \ar[dl]^h \\ & C}}\] between cells in $\MMod{K}$ whose codomain is the comma module $h \comma k \colon B_0 \prof B_n$ and 2-cells in the homotopy 2-category $\lcat{K}_2$ under the pullback of the spans encoding the domain modules and  over the cospan defining the comma module $h \comma k$. 
\end{obs}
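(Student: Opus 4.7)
The plan is to chain together two bijections we have already established. First, by Definition~\ref{defn:composable-modules} together with the pullback-in-definition formulation, a cell in $\MMod{K}$ of the displayed shape is by definition an isomorphism class of functors from the horizontal composite isofibration $E_1 \pbtimes{A_1} \cdots \pbtimes{A_{n-1}} E_n \tfib A_0 \times A_n$ to the comma module $(p_1,p_0)\colon h\comma k \tfib B_0 \times B_n$ lying over the pair $(f,g)$, taken modulo isomorphism. Applying the bijection between module maps over a pair of functors and module maps into a pulled-back codomain from Lemma~\ref{lem:module-map-pullback-equivalence}, such a cell corresponds to an isomorphism class of maps of spans from $E_1 \pbtimes{A_1} \cdots \pbtimes{A_{n-1}} E_n$ to the pullback $(h\comma k)(g,f)$ over $A_0 \times A_n$. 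But by Lemma~\ref{lem:commas-pullback}, this pullback is isomorphic to the comma span $hg \comma kf$ from $A_0$ to $A_n$.

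Second, I apply Recollection~\ref{rec:ess.unique.1-cell.ind}, which is exactly the statement that isomorphism classes of maps of spans into a comma span $hg\comma kf$ from some fixed span with the same feet stand in bijective correspondence with 2-cells in the homotopy 2-category $\lcat{K}_2$ of the shape displayed on the right of the diagram. Specifically, to a map of spans the correspondence assigns the whiskered pasting of the canonical comma 2-cell $\phi$ for $h\comma k$ with the induced 1-cell into the comma, while the inverse construction uses 1-cell induction from the weak universal property of the comma object $h\comma k$. Composing these two bijections yields the stated bijection.

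The only step requiring any delicacy is verifying that the recipe from Recollection~\ref{rec:ess.unique.1-cell.ind} respects the slight reshuffling of domain and codomain data effected by Lemma~\ref{lem:module-map-pullback-equivalence} --- i.e., that passing from a map over $(f,g)$ into $h\comma k$ to the corresponding map over $A_0 \times A_n$ into the pulled-back comma, and then applying 1-cell induction, yields precisely the 2-cell obtained by pasting with $f$ and $g$ on the bottom. This is a straightforward diagram chase on the defining pullback squares and presents no substantive obstacle. No other obstacle is anticipated: both ingredients are already in hand, and the statement is essentially a repackaging of the weak universal property of comma objects translated into the language of the virtual double category $\MMod{K}$.
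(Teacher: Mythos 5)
Your proposal is correct, and it reaches the statement by a slightly different decomposition than the paper's intended (one-step) reading. The paper treats the observation as an immediate instance of Recollection~\ref{rec:ess.unique.1-cell.ind}: by Observation~\ref{obs:double-iso-spans} and the definition of the hom-category in \eqref{eq:2-cell.hom.defn}, a cell over $(f,g)$ with codomain $h \comma k$ is already, on the nose, an isomorphism class of maps of spans over $B_0 \times B_n$ from the composite span $E_1 \pbtimes{A_1} \cdots \pbtimes{A_{n-1}} E_n$ (with its legs composed with $f$ and $g$) into the comma span $h \comma k$, so 1-cell induction applies directly and produces exactly the displayed 2-cell under the composite span and over the cospan $(k,h)$. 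You instead first transport the cell along Lemma~\ref{lem:module-map-pullback-equivalence} to a map over $A_0 \times A_n$ into the pulled-back module $(h\comma k)(g,f)$, identify that pullback with $hg \comma kf$ using Lemma~\ref{lem:commas-pullback} on each leg, and only then invoke Recollection~\ref{rec:ess.unique.1-cell.ind}, now for the cospan $(kf,hg)$. This is legitimate: the domain of Lemma~\ref{lem:module-map-pullback-equivalence} need not be a module (its proof only manipulates the defining pullback of the mapping quasi-category, and the paper explicitly permits arbitrary domain spans), and the pulled-back codomain is again a module by Proposition~\ref{prop:two-sided-pullback}. The detour buys nothing essential, however: the 2-cell in $\lcat{K}_2$ you end up with is literally the same one, merely read as a comma cone over $(kf,hg)$ rather than as a cone over $(k,h)$ whose span legs have been composed with $f$ and $g$; in particular the compatibility check you flag at the end is pure bookkeeping, and the direct reading avoids it altogether.
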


\subsection{The virtual equipment of modules}

Proposition~\ref{prop:two-sided-pullback} tells us that modules in an $\infty$-cosmos can be pulled back. Given $E \colon A \prof B$ and functors $a \colon A' \to A$ and $b \colon B' \to B$, we write $E(b,a) \colon A' \prof B'$ for the pullback module 
\begin{equation}\label{eq:simplicial-pullback-of-module} \xymatrix{ E(b,a) \pbexcursion \ar@{->>}[d]_{(q',p')} \ar[r]^-\rho & E \ar@{->>}[d]^{(q,p)} \\ A' \times B' \ar[r]_{a \times b} & A \times B}\end{equation} The horizontal functor $\rho$ defines a cell in the virtual double category of modules with a universal property that we now describe.

\begin{prop}\label{prop:cartesian-pullback-cells} In $\MMod{K}$, the cell 
\[ \xymatrix{ A' \ar[d]_a \ar[r]|{\mid}^{E(b,a)} \ar@{}[dr]|{\Downarrow\rho} & B' \ar[d]^b \\ A \ar[r]|{\mid}_E & B}\]
defined by pulling back a module $E \colon A \prof B$ along functors $a \colon A' \to A$ and $b \colon B' \to B$ has the property that any cell as displayed on the left
\begin{equation}\label{eq:cartesian-cell-UP} \vcenter{ \xymatrix{  X_0 \ar[d]_{af} \ar[r]|{\mid}^{E_1} & X_1 \ar[r]|{\mid}^{E_2}  \ar@{}[dr]|{\Downarrow} & \cdots \ar[r]|{\mid}^{E_n} & X_n \ar[d]^{bg} \\ A \ar[rrr]_E|{\mid} & & & B}} \quad = \quad \vcenter{ \xymatrix{  X_0  \ar[d]_{f} \ar[r]|{\mid}^{E_1} & X_1 \ar[r]|{\mid}^{E_2}  \ar@{}[dr]|{\Downarrow \exists !} & \cdots \ar[r]|{\mid}^{E_n} & X_n \ar[d]^{g} \\ A' \ar[d]_a\ar[rrr]^{E(b,a)}|{\mid} & \ar@{}[dr]|{\Downarrow\rho} & & B' \ar[d]^b \\  A \ar[rrr]_E|{\mid} & & & B}}  \end{equation}  
factors uniquely as displayed on the right.
\end{prop}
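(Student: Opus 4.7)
The plan is to exploit the strict simplicially enriched universal property of the pullback square \eqref{eq:simplicial-pullback-of-module} that defines the module $E(b,a)$ and the cell $\rho$. The cartesianness of $\rho$ in $\MMod{K}$ will follow more or less formally once the definitions are unpacked; the key input is Lemma~\ref{lem:module-map-pullback-equivalence} applied not to a module domain but to the composite horizontal source.

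First I would unwind what a cell of the form on the left of \eqref{eq:cartesian-cell-UP} really is. By Definition~\ref{defn:composable-modules} and the description of cells in $\MMod{K}$ given in Observation~\ref{obs:double-iso-spans}, writing $P \defeq E_1\pbtimes{X_1}\cdots\pbtimes{X_{n-1}}E_n \tfib X_0\times X_n$ for the iterated simplicial pullback, such a cell is precisely an isomorphism class of span maps $P\to E$ over $af\colon X_0\to A$ and $bg\colon X_n\to B$, i.e.\ an element of $\tau_0\map_{af,bg}(P,E)$. Similarly, a cell on the right of \eqref{eq:cartesian-cell-UP} is specified by choosing an isomorphism class in $\tau_0\map_{f,g}(P,E(b,a))$ and then vertically composing with the unary cell $\rho$; the latter operation is simply postcomposition in the underlying category of spans.

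Next I would invoke Lemma~\ref{lem:module-map-pullback-equivalence}, replacing its $E$ by our $P$ and its $\bar E$ by our $E$. Inspecting the proof of that lemma, one sees that the pullback rectangle appearing there is exactly the composite of the simplicial pullback~\eqref{eq:simplicial-pullback-of-module} (applied under $\map(P,-)$) with the pullback defining $\map_{A'\times B'}(P,E(b,a))$. The outer composite pullback is, on the other hand, precisely the defining pullback for $\map_{af,bg}(P,E)$. Consequently postcomposition with $\rho$ induces an isomorphism of quasi-categories
\[
  \rho_*\colon \map_{A'\times B'}(P,E(b,a))\xrightarrow{\ \cong\ }\map_{af,bg}(P,E).
\]
Applying $\tau_0$ yields a bijection between isomorphism classes of span maps, and by construction this bijection sends an isomorphism class $[\psi]$ to the isomorphism class of the composite of $\psi$ with $\rho$. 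In other words, it identifies cells of the shape of the right-hand side of \eqref{eq:cartesian-cell-UP} with cells of the shape of the left-hand side, producing exactly the claimed unique factorisation.

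The only step requiring any care is verifying that the pullback rectangle whose outer composite computes $\map_{af,bg}(P,E)$ is the same rectangle that produces $\map_{A'\times B'}(P,E(b,a))$ via its left-hand square; this is immediate from applying the representable $\map(P,-)$ to the simplicial pullback~\eqref{eq:simplicial-pullback-of-module} and pasting, exactly as in the proof of Lemma~\ref{lem:module-map-pullback-equivalence}. There is no genuine obstacle here, since the argument is purely a matter of pasting pullback squares and translating between mapping quasi-categories and sets of isomorphism classes; the substantive content is entirely encoded in the simplicially enriched universal property of the pullback defining $E(b,a)$.
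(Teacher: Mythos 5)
Your proof is correct and takes essentially the same route as the paper: the paper's proof likewise just applies the argument of Lemma~\ref{lem:module-map-pullback-equivalence} with the composite span $E_1\pbtimes{X_1}\cdots\pbtimes{X_{n-1}}E_n$ in the domain slot, obtaining the equivalence $\map_{af,bg}(E_1\times\cdots\times E_n,E)\simeq\map_{f,g}(E_1\times\cdots\times E_n,E(b,a))$ induced by composing with $\rho$, and then passes to isomorphism classes of objects. The only quibble is a notational slip in your displayed isomorphism, where $\map_{A'\times B'}(P,E(b,a))$ should read $\map_{f,g}(P,E(b,a))$ (as you yourself wrote a few lines earlier), since $P$ is a span over $X_0\times X_n$ rather than over $A'\times B'$.
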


Proposition~\ref{prop:cartesian-pullback-cells} asserts that $\rho$ is a \emph{cartesian cell} in $\MMod{K}$.

\begin{proof}
As in Lemma~\ref{lem:module-map-pullback-equivalence}, the simplicial pullback \eqref{eq:simplicial-pullback-of-module}, induces an equivalence of hom quasi-categories
\[ \map_{af,bg}(E_1 \times \cdots \times E_n, E) \simeq \map_{f,g}(E_1 \times \cdots \times E_n, E(b,a)). \qedhere \] 
\end{proof}

Each module $A^\cattwo \colon A \prof A$ defined by the arrow construction comes with a canonical cell with nullary source. Under the identification of Observation~\ref{obs:cells-above-a-comma}, this cell corresponds via 1-cell induction to the isomorphism class of maps of spans representing the identity 2-cell at the identity 1-cell of the object $A$.
\[ \vcenter{\xymatrix {  A \ar@{=}[d] \ar@{=}[r] \ar@{}[dr]|{\Downarrow\iota} & A \ar@{=}[d]  \\ A \ar[r]_{A^\cattwo}|{\mid}  & A}} \quad \leftrightsquigarrow \quad    \vcenter{
  \xymatrix@R=2.5em@C=0.8em{
    {A}\ar@/^2ex/[]!D(0.4);[d]!U(0.5)^{\id_A}
    \ar@/_2ex/!D(0.4);[d]!U(0.5)_{\id_A}
    \ar@{}[d]|{=} \\ {A}
  }} \quad = \quad   \vcenter{
    \xymatrix@R=2.5em@C=0.8em{
      {A}\ar[d]^-{j} \\
      {A^\cattwo}\ar@{->>}@/^2ex/[]!D(0.4);[d]!U(0.5)^{q_0}
      \ar@{->>}@/_2ex/!D(0.4);[d]!U(0.5)_{q_1}
      \ar@{}[d]|{\Leftarrow\psi} \\ {A}
    }} \] 
This cell also has a universal property in the virtual double category of modules.

\begin{prop}\label{prop:arrows-are-units} Any cell in the virtual double category of modules whose horizontal source includes the object $A$, as displayed on the left
\[  \vcenter{\xymatrix@C=15pt{ X \ar[d]_{f} \ar[r]|{\mid}^{E_1} & \cdots \ar[r]|-{\mid}^-{E_n} &  A  \ar@{}[d]|{\Downarrow}    \ar[r]|-{\mid}^-{F_1} & \cdots \ar[r]|{\mid}^{F_m} & Y \ar[d]^g \\ B \ar[rrrr]|{\mid}_{G} & &  &  & C}} \quad=\quad  \vcenter{ \xymatrix@C=20pt{ X \ar@{=}[d] \ar[r]|{\mid}^{E_1} \ar@{}[dr]|{\Downarrow\id_{E_1}{~}} & \cdots \ar@{=}@<-1.5ex>[d]^{\cdots} \ar@{=}@<1.5ex>[d] \ar[r]|-{\mid}^-{E_n} \ar@{}[dr]|{{~}{~}\Downarrow\id_{E_n}}&  A \ar@{=}[r] \ar@{=}[d] \ar@{}[dr]|{\Downarrow\iota}  & \ar@{=}[d]A \ar@{}[dr]|{\Downarrow\id_{F_1}{~}} \ar[r]|-{\mid}^-{F_1} & \cdots \ar@{=}@<-1.5ex>[d]^{\cdots} \ar@{=}@<1.5ex>[d] \ar[r]|{\mid}^{F_m} \ar@{}[dr]|{{~}{~}\Downarrow\id_{F_m}}& Y \ar@{=}[d]  \\ X \ar[d]_{f} \ar[r]|{\mid}^{E_1} & \cdots \ar[r]|-{\mid}^-{E_n} &  A \ar[r]|{\mid}^{A^\cattwo} \ar@{}[dr]|{\Downarrow\exists !}  & A  \ar[r]|-{\mid}^-{F_1} & \cdots \ar[r]|{\mid}^{F_m} & Y \ar[d]^g \\ B \ar[rrrrr]|{\mid}_{G} & & & &  & C}}\]
factors uniquely through $\iota$ as displayed on the right.
\end{prop}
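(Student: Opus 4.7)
The plan is to reduce the claim, via Lemma~\ref{lem:module-map-pullback-equivalence} and Proposition~\ref{prop:two-sided-pullback}, to the case where $f$ and $g$ are identities and $G$ is a module from $X \defeq A_0$ to $Y \defeq A_n$. Writing $A \defeq A_n = B_0$ and setting $\bar E \defeq E_1 \pbtimes{A_1}\cdots\pbtimes{A_{n-1}} E_n$ and $\bar F \defeq F_1 \pbtimes{B_1}\cdots\pbtimes{B_{m-1}} F_m$, Lemma~\ref{lem:module-pullback-cartesian} and its dual ensure that these iterated pullbacks are cartesian on the right and cocartesian on the left respectively, with $A$-legs $p \colon \bar E \to A$ and $s \colon \bar F \to A$.

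A routine manipulation of defining simplicial pullbacks, using the identification $A^\cattwo = \id_A \comma \id_A$ together with Lemma~\ref{lem:commas-pullback}, identifies $\bar E \pbtimes{A} A^\cattwo \pbtimes{A} \bar F$ with the comma $\infty$-category $s \comma p$ of the cospan $\bar F \xrightarrow{s} A \xleftarrow{p} \bar E$ as an object over $X \times Y$. Under this identification, the horizontal pasting with $\iota$ on the right-hand side of the asserted factorization becomes precisely precomposition with the canonical map $t \colon \bar E \pbtimes{A} \bar F \to s \comma p$ obtained by 1-cell induction from the identity 2-cell at the common $A$-projection of the simplicial pullback.

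It therefore suffices to show that precomposition with $t$ induces a bijection on isomorphism classes between maps of spans $s \comma p \to G$ and $\bar E \pbtimes{A} \bar F \to G$ over $X \times Y$; in fact I expect to obtain this as a full equivalence of the relevant mapping quasi-categories. For this I will iterate Proposition~\ref{prop:two-sided-yoneda}. Applied in the sliced $\infty$-cosmos $\tcat{K}/X$ to the functor $p \colon \bar E \to A$, the Yoneda equivalence recognises $\bar E \pbtimes{A} A^\cattwo \cong A \comma p$ as the covariantly represented module and provides an equivalence between maps out of it and maps out of $\bar E$ into any module over $\bar E \times A$. A dual application, for the functor $s \colon \bar F \to A$, handles the $\bar F$ side. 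Composing these two Yoneda equivalences and tracing through the structure maps gives the desired bijection.

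The main obstacle will be the careful tracking of structure maps across the two sliced applications of Proposition~\ref{prop:two-sided-yoneda}, verifying that the composite equivalence really is induced by precomposition with the canonical $t$ identified in the second paragraph rather than by some other natural comparison. Once this alignment is in place, the proposition reduces to the basic fact that $A^\cattwo$ is the covariantly representable module for $\id_A$, so that 1-cell induction from the identity 2-cell plays the role of a unit for the otherwise deficient horizontal composition in the virtual double category $\MMod{K}$.
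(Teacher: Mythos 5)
Your reduction to identity vertical boundaries and your identification of $\bar{E}\pbtimes{A}A^\cattwo\pbtimes{A}\bar{F}$ with the comma of the cospan $\bar{F}\xrightarrow{s}A\xleftarrow{p}\bar{E}$ (equivalently with $A\comma p\pbtimes{A}\bar{F}$) agree with the paper's setup, but the step that is supposed to do the real work is not justified. Proposition~\ref{prop:two-sided-yoneda}, even applied in the sliced cosmos $\lcat{K}/X$, only yields an equivalence $\map_{\bar{E}\times A}(A\comma p,M)\simeq\map_{\bar{E}\times A}(\bar{E},M)$ for $M$ a \emph{module over $\bar{E}\times A$}; the codomain you actually need to map into is $G$ (or its pullback $G(g,f)$), a module over $X\times Y$. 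To rewrite $\map_{X\times Y}\bigl((-)\pbtimes{A}\bar{F},G\bigr)$ as maps over $\bar{E}\times A$ into a fixed module you would need an internal hom/right extension of modules absorbing $\bar{F}$, and precisely such constructions are unavailable in a general $\infty$-cosmos (the paper only builds right extensions of modules for quasi-categories, in Remark~\ref{rmk:qcat-derivator}, using homotopy exponentiability). Symptomatically, your argument never invokes the hypothesis that $G$ is groupoidal over $X\times Y$, which is indispensable: it is what turns the comparison into a bijection rather than merely one leg of an adjunction between hom-categories.

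The mechanism the paper uses, and which you should substitute for the ``iterated Yoneda'' step, is a fibred adjunction: assuming (WLOG) the $E$-side is nonempty, $\bar{E}$ is cartesian on the right, so Lemma~\ref{lem:sliced-adjunction-on-the-right} provides a right adjoint over $X\times A$ to $i\colon\bar{E}\to A\comma p$; pulling this adjunction back along $X\times s$ and composing with $X\times r$ (Remark~\ref{rmk:adjunctions-pullback}) gives an adjunction over $X\times Y$ between $\bar{E}\pbtimes{A}A^\cattwo\pbtimes{A}\bar{F}$ and $\bar{E}\pbtimes{A}\bar{F}$, and since $G(g,f)$ is groupoidal in $\lcat{K}_2/X\times Y$, the 2-functor $\map_{X\times Y}(-,G(g,f))$ inverts the fibred unit and counit, producing the required equivalence of mapping quasi-categories and hence the bijection on isomorphism classes (this is the pattern later abstracted as Lemma~\ref{lem:fibred-adjoint-reflection}). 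Two further points: when both sequences are empty your appeal to Lemma~\ref{lem:module-pullback-cartesian} fails, since the identity span is not an isofibration, let alone cartesian on the right; the paper handles this case separately, exactly by Yoneda, i.e., restriction along $j\colon A\to A^\cattwo$ via Proposition~\ref{prop:two-sided-yoneda} together with Lemma~\ref{lem:module-map-pullback-equivalence}. When exactly one sequence is empty, only the nonempty side need be a composite of modules, which is all the adjunction argument requires.
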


Proposition~\ref{prop:arrows-are-units} asserts that $\iota$ is a \emph{cocartesian cell} in $\MMod{K}$.

\begin{proof}
In the case where both of the sequences $E_i$ and $F_j$ are empty, the Yoneda lemma, in the form of Proposition~\ref{prop:two-sided-yoneda}, and  Lemma~\ref{lem:module-map-pullback-equivalence} supply  an equivalence of quasi-categories
\[ \xymatrix{ \map_{f,g}(A^\cattwo, G) \simeq \map_{A \times A}(A^\cattwo, G(g,f)) \ar[r]^-{j^*}_{\simeq} & \map_{A \times A} (A, G(g,f)) \simeq \map_{f,g}(A,G) .}\] This equivalence descends to a bijection between isomorphism classes of objects, i.e., to a bijection between cells
\[ \vcenter{\xymatrix{A \ar[d]_f \ar[r]|{\mid}^{A^\cattwo} \ar@{}[dr]|{\Downarrow} & A \ar[d]^g \\ B \ar[r]|\mid_G & C}} \qquad \stackrel{\cong}{\mapsto}\qquad  \vcenter{\xymatrix{A \ar[d]_f \ar@{=}[r] \ar@{}[dr]|{\Downarrow} & A \ar[d]^g \\ B \ar[r]|\mid_G & C}} \] 
implemented by restricting along the cocartesian cell $\iota$. 

In general, write $(q,p) \colon E \tfib X \times A$ and $(s,r) \colon F \tfib A \times Y$ for the composite spans $E_1 \times \cdots \times E_n$ and $F_1 \times \cdots \times F_m$, which we take to be the identity span $A \leftarrow A \rightarrow A$ if the sequence of modules is empty. In the remaining cases, at least one of the sequences $E_i$ and $F_j$ is non-empty, so we may assume without loss of generality, by Lemma~\ref{lem:module-pullback-cartesian}, that $(q,p) \colon E \tfib X \times A$ is cartesian on the left and on the right. By Lemma~\ref{lem:sliced-adjunction-on-the-right},  the functor $i \colon E \to A \comma p$, which is isomorphic to the pullback $E \pbtimes{A} j$, admits a right adjoint $t$ over $X \times A$. This adjunction may be pulled back along $X \times s$ and pushed forward along $X \times r$ to define an adjunction 
\[ \adjdisplay E \pbtimes{A} j \pbtimes{A} F -| t\pbtimes{A} F : A \comma p \pbtimes{A} F \cong E \pbtimes{A} A^\cattwo \pbtimes{A} F -> E \pbtimes{A} F.\] over $X \times Y$. The module $G(g,f) \colon X \prof Y$ is a groupoidal object in the slice 2-category $\lcat{K}_2/X \times Y$. Therefore, the functor \[\map_{X \times Y}(-,G(g,f)) \colon (\lcat{K}_2/X\times Y)\op \to\qCat_2\] carries the fibered unit and counit 2-cells to isomorphisms. In particular,  the induced map \[ (E \pbtimes{A} j \pbtimes{A} F)^* \colon \map_{X \times Y}(E \pbtimes{A} A^\cattwo \pbtimes{A} F, G(g,f)) \to \map_{X \times Y}(E \pbtimes{A} F, G(g,f))\] defines an (adjoint) equivalence of quasi-categories. Passing to isomorphism classes of objects, we obtain the claimed bijection between cells in $\MMod{K}$.
\end{proof}

Propositions~\ref{prop:cartesian-pullback-cells} and \ref{prop:arrows-are-units} imply that the virtual double category of modules is a \emph{virtual equipment} in the sense introduced by Cruttwell and Shulman.

\begin{defn}[{\cite[\S 7]{CruttwellShulman:2010au}}]\label{defn:virtual-equipment} A \emph{virtual equipment} is a virtual double category such that 
\begin{enumerate}
\item For any module $E \colon A \prof B$ and pair of functors $a \colon A' \to A$ and $b \colon B' \to B$, there exists a module $E(b,a) \colon A' \prof B'$ together with a cartesian cell $\rho$ satisfying the universal property of Proposition~\ref{prop:cartesian-pullback-cells}.
\item Every object $A$ admits a \emph{unit} module $A^\cattwo \colon A \prof A$ equipped with a nullary cocartesian  cell $\iota$ satisfying the universal property of Proposition~\ref{prop:arrows-are-units}.
\end{enumerate}
\end{defn}

\begin{thm}\label{thm:virtual-equipment} The virtual double category $\MMod{K}$ of modules in an $\infty$-cosmos $\lcat{K}$ is a virtual equipment.\qed
\end{thm}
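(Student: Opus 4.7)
The plan is to observe that this theorem is an immediate consequence of the two propositions that precede it, together with the fact (Proposition~\ref{prop:two-sided-pullback}) that modules are stable under simplicial pullback. The two axioms in Definition~\ref{defn:virtual-equipment} unpack exactly to the universal properties already verified.

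First, for any module $E \colon A \prof B$ and pair of functors $a \colon A' \to A$, $b \colon B' \to B$, Proposition~\ref{prop:two-sided-pullback} guarantees that the simplicial pullback $E(b,a) \tfib A' \times B'$ defined in~\eqref{eq:simplicial-pullback-of-module} is again a module, hence a horizontal arrow in $\MMod{K}$. The horizontal map $\rho \colon E(b,a) \to E$ in the defining pullback square represents a cell in $\MMod{K}$ with boundary as in Proposition~\ref{prop:cartesian-pullback-cells}, and that proposition verifies the cartesian universal property demanded by clause (1) of Definition~\ref{defn:virtual-equipment}.

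Second, for any object $A$, Proposition~\ref{prop:hom-is-a-module} shows that $(p_1,p_0) \colon A^\cattwo \tfib A \times A$ is a module, so it defines a horizontal arrow $A^\cattwo \colon A \prof A$ in $\MMod{K}$. The canonical cell $\iota$ displayed before Proposition~\ref{prop:arrows-are-units}, corresponding to the identity 2-cell on $\id_A$ via the 1-cell induction bijection of Recollection~\ref{rec:ess.unique.1-cell.ind}, then satisfies the cocartesian universal property established by Proposition~\ref{prop:arrows-are-units}, which is precisely clause (2) of Definition~\ref{defn:virtual-equipment}.

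There is essentially no obstacle here: the nontrivial content of the theorem is contained in Propositions~\ref{prop:cartesian-pullback-cells} and~\ref{prop:arrows-are-units}, whose proofs rely on the equivalence $\map_{f,g}(E,F) \simeq \map_{A \times B}(E, F(g,f))$ of Lemma~\ref{lem:module-map-pullback-equivalence} and the Yoneda lemma in the form of Proposition~\ref{prop:two-sided-yoneda}. The theorem itself is just the observation that these two propositions, together with the fact that modules are closed under pullback, match the definition of a virtual equipment on the nose, so the proof can be discharged in a single sentence.
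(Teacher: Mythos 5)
Your proposal is correct and matches the paper's own treatment: the theorem is stated with an immediate \qed precisely because it follows from Propositions~\ref{prop:cartesian-pullback-cells} and~\ref{prop:arrows-are-units}, with Propositions~\ref{prop:two-sided-pullback} and~\ref{prop:hom-is-a-module} guaranteeing that the restricted and unit modules exist as horizontal arrows of $\MMod{K}$. Nothing further is needed.
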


The \emph{virtual equipment of modules} in $\lcat{K}$ has a lot of pleasant properties, which follow formally from the axiomatization of Definition~\ref{defn:virtual-equipment}  \cite[\S 7]{CruttwellShulman:2010au}. These include Lemma~\ref{lem:comp-with-unit}, Lemma~\ref{lem:comp-with-unit-cells}, Theorem~\ref{thm:companion-conjoint}, Corollary~\ref{cor:companion-conjoint}, Lemma~\ref{lem:one-sided-rep-comp}, Corollary~\ref{cor:two-sided-rep-comp}, and the portion of Lemma \ref{lem:yoneda-embedding} describing bijections between cells in the virtual equipment.

However, rather than take these facts (whose proofs are hard to find in the literature) for granted and given the fact that the virtual equipment of modules in an $\infty$-cosmos is the only example that concerns us here, we find it more illuminating to give direct proofs. Many of our arguments are the formal ones but others make use of the particular structure of $\MMod{K}$, such as Observation~\ref{obs:cells-above-a-comma} and the fibered adjunctions of Lemma \ref{lem:sliced-adjunction-on-the-right}. Our efforts to this end in the remainder of this section aim to better acquaint the reader with the calculus of models between $\infty$-categories, as encapsulated by the virtual equipment of Theorem~\ref{thm:virtual-equipment}.

\subsection{Composition and units}

\begin{ntn}
To unclutter displayed diagrams, we adopt the convention that an unlabeled unary cell in a virtual equipment whose vertical arrows are identities and whose horizontal source and target agree is an identity cell. 

Cells whose vertical boundary functors are identities, and hence whose source and target spans lie between the same pair of $\infty$-categories, may be displayed inline using the notation $\mu \colon E_1 \times \cdots \times E_n \To E$. In the unary case, i.e., for ordinary module maps, this notation was already introduced in Definition~\ref{defn:1-cat-of-modules}. Whenever we write a cell in this form, our use of this notation implicitly asserts that:
\begin{itemize}
\item the modules $E_1,\ldots, E_n$ define a composable sequence, in the sense of Definition~\ref{defn:composable-modules},
\item the source spans $E_1 \times \cdots \times E_n$ and target module $E$ lie between the same pair of objects, $A_0$ and $A_n$, 
\item $\mu$ is a cell from $E_1,\ldots, E_n$ to $E$ over the identities, i.e., $\mu$ is an isomorphism class of objects in $\map_{A_0 \times A_n}(E_1 \times \cdots \times E_n, E)$.  
\end{itemize}
\end{ntn}

\begin{defn}[composition of modules]
A composable sequence of modules
\begin{equation}\label{eq:multi-domain} E_1 \colon A_0 \prof A_1, E_2 \colon A_1 \prof A_2, \ldots, \cdots E_n \colon A_{n-1}\prof A_n,\end{equation} \emph{admits a composite} if there exists a module $E \colon A_0 \prof A_n$ and a cell 
\begin{equation}\label{eq:generic-composite}\xymatrix{ A_0 \ar@{=}[d] \ar[r]|{\mid}^{E_1} & A_1 \ar[r]|{\mid}^{E_2}  \ar@{}[dr]|{\Downarrow\mu} & \cdots \ar[r]|{\mid}^{E_n} & A_n \ar@{=}[d]\\ A_0 \ar[rrr]|{\mid}_{E} & & & A_n}\end{equation} that is an cocartesian cell in the virtual double category of modules: any cell of the form
\[  \vcenter{\xymatrix@C=15pt{ X \ar[d]_{f} \ar[r]|{\mid}^{F_1} & \cdots \ar[r]|-{\mid}^-{F_k} &  A_0 \ar[r]|{\mid}^{E_1} & \cdots \ar@{}[d]|{\Downarrow} \ar[r]|\mid^{E_n} & A_n  \ar[r]|-{\mid}^-{G_1} & \cdots \ar[r]|{\mid}^{G_m} & Y \ar[d]^g \\ B \ar[rrrrrr]|{\mid}_{H} & &  & & &  & C}} \]
factors uniquely along the cell $\mu$ together with the identity cells for the modules $F_i$ and $G_j$
\[ \vcenter{ \xymatrix@C=15pt{ X \ar@{=}[d] \ar[r]|{\mid}^{F_1}  & \cdots \ar@{=}@<-1.5ex>[d]^{\cdots} \ar@{=}@<1.5ex>[d] \ar[r]|-{\mid}^-{F_k} &  A_0\ar@{=}[d] \ar[r]|{\mid}^{E_1} & \cdots \ar[r]|{\mid}^{E_n}  \ar@{}[d]|{\Downarrow\mu}   &  \ar@{=}[d]A_n  \ar[r]|-{\mid}^-{G_1} & \cdots \ar@{=}@<-1.5ex>[d]^{\cdots} \ar@{=}@<1.5ex>[d] \ar[r]|{\mid}^{G_m} & Y \ar@{=}[d]  \\ X \ar[d]_{f} \ar[r]|{\mid}^{F_1} & \cdots \ar[r]|-{\mid}^-{F_k} &  A \ar[rr]|{\mid}^{E}  & \ar@{}[d]|{\exists !\Downarrow}   & B  \ar[r]|-{\mid}^-{G_1} & \cdots \ar[r]|{\mid}^{G_m} & Y \ar[d]^g \\ B \ar[rrrrrr]|{\mid}_{H} & &&& &   & C}}\]

Thus, a composite $\mu \colon E_1 \times \cdots \times E_n \To E$ can be used to reduce the domain of a cell  by replacing any occurrence of a sequence $E_1 \times \cdots \times E_n$ from $A_0$ to $A_n$ with the single module $E$.  Particularly in the case of binary composites, we write $E_1 \otimes E_2$ to denote the composite of $E_1$ and $E_2$, a module equipped with a binary cocartesian cell $E_1 \times E_2 \To E_1 \otimes E_2$.
\end{defn}

\begin{obs}[nullary and unary composites]\label{obs:unary-comp}
Proposition~\ref{prop:arrows-are-units}  asserts that arrow $\infty$-categories act as nullary composites in $\MMod{K}$. It's easy to see that a unary cell $\mu \colon E \To F$ between modules is a composite if and only if it is an isomorphism in the vertical 2-category of $\MMod{K}$, i.e., if and only if the modules $E$ and $F$ are equivalent as spans. 
\end{obs}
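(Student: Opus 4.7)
The first assertion I would read off directly from Proposition~\ref{prop:arrows-are-units}: its stated universal property---that any cell whose horizontal source passes through an occurrence of $A$ extends uniquely through $\iota$ padded with identities---is precisely the universal property making $\iota \colon A^\cattwo \To A^\cattwo$ a composite of the empty composable sequence of modules at $A$. The unary claim I would then prove in two directions.

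For the forward direction, suppose $\mu \colon E \To F$ is a composite. Specialising the cocartesian universal property to the trivial context (empty sandwiching sequences, identity vertical boundaries, target module $E$), I would factor the identity cell $\id_E$ uniquely through $\mu$ to obtain a module map $\nu \colon F \To E$ with $\nu \cdot \mu = \id_E$. The cell $\mu \nu \colon F \To F$ then satisfies $(\mu \nu) \cdot \mu = \mu \cdot (\nu \cdot \mu) = \mu$, as does $\id_F$; applying the uniqueness clause to the factorisation of $\mu$ itself through $\mu$ forces $\mu \nu = \id_F$. Hence $\mu$ is an isomorphism in the 1-category $\qMod{A_0}{A_n}$, and Lemma~\ref{lem:equiv-of-modules} identifies such isomorphisms with equivalences of $E$ and $F$ as spans over $A_0 \times A_n$.

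Conversely, suppose $\mu$ represents an equivalence $E \we F$ of spans over $A_0 \times A_n$; I would verify the cocartesian universal property of $\mu$ in an arbitrary context. Since simplicial pullback preserves equivalences, whiskering $\mu$ with identities on any composable sequences $F_1,\dots,F_k$ ending at $A_0$ and $G_1,\dots,G_m$ starting at $A_n$ produces an equivalence of spans
\[ F_1 \times \cdots \times F_k \times E \times G_1 \times \cdots \times G_m \we F_1 \times \cdots \times F_k \times F \times G_1 \times \cdots \times G_m\]
over the appropriate base $X \times Y$. For any module $H$ with compatible endpoints and any pair of vertical boundary functors $(f,g)$, this equivalence of source spans induces an equivalence of the mapping quasi-categories $\map_{f,g}(-,H)$ that define the cells of $\MMod{K}$; passing to $\tau_0$ delivers the required bijection on cells.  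The principal subtlety, which I expect to dispose of by inspection, is confirming that this bijection coincides with factorisation through $\mu$ whiskered by the identities on the $F_i$ and $G_j$---but this is built into the construction of horizontal composition of cells via simplicial pullback of spans, so no essentially new argument is needed.
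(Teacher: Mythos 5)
Your argument is correct: the paper leaves this observation unproved (``it's easy to see''), and your two directions are exactly the intended ones---the formal uniqueness argument showing that a unary cocartesian cell is invertible in $\qMod{A_0}{A_n}$ (whence Lemma~\ref{lem:equiv-of-modules} applies), and, conversely, equivalence-invariance of the hom quasi-categories $\map_{f,g}(-,H)$, which is precisely the strategy the paper codifies in Observation~\ref{obs:comp-proof-strategy} and reuses in Lemma~\ref{lem:fibred-adjoint-reflection}. The only phrase to sharpen is ``simplicial pullback preserves equivalences'': what you actually use is that the representing map of $\mu$ is a fibred equivalence between isofibrations over $A_0 \times A_n$, so its image under the pullback functor of $\infty$-cosmoi (and hence the whiskered map of composite spans over $X \times Y$) is again an equivalence---which is valid here precisely because modules are isofibrations over the product.
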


\begin{obs}[associativity of composition]\label{obs:comp-assoc}
Suppose the cells $\mu_i \colon E_{i1} \times \cdots \times E_{in_i} \To E_i$, for $i=1,\ldots, n$, exhibit each $E_i$ as a composite of the corresponding $E_{ij}$, and suppose further that the $E_i$ define a composable sequence of modules \eqref{eq:multi-domain}. If $\mu \colon E_1 \times\cdots \times E_n \To E$ exhibits $E$ as a composite of the $E_i$, then 
\[ \xymatrix@C=40pt{ E_{11} \times \cdots \times E_{nn_n} \ar@{=>}[r]^-{\mu_1 \times \cdots \times \mu_n} & E_1 \times \cdots \times E_n \ar@{=>}[r]^-{\mu} & E}\] exhibits $E$ as a composite of $E_{11} \times \cdots \times E_{nn_n}$. The required bijection factors as a composite of $n+1$-bijections induced by the maps $\mu_1,\ldots, \mu_n,\mu$.
\end{obs}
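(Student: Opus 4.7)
The strategy is to verify directly that the composite cell $\mu\cdot(\mu_1\times\cdots\times\mu_n)$ enjoys the cocartesian universal property that characterises composites. Concretely, I need to show that for any configuration of the form
\[
  \xymatrix@C=10pt{
    X\ar[d]_f\ar[r]|\mid^{F_1} & \cdots\ar[r]|-\mid^-{F_k} &
    A_0\ar[r]|\mid^{E_{11}} & \cdots\ar@{}[d]|{\Downarrow} \ar[r]|\mid^{E_{nn_n}} & A_n\ar[r]|-\mid^-{G_1} & \cdots\ar[r]|\mid^{G_m} & Y\ar[d]^g \\
    B\ar[rrrrrr]|\mid_H &&&&&& C
  }
\]
there exists a unique cell from $F_1,\ldots,F_k,E_1,\ldots,E_n,G_1,\ldots,G_m$ to $H$ which, when precomposed with the identity cells on the $F_i$ and $G_j$ together with $\mu_1\times\cdots\times\mu_n$ and then with $\mu$, recovers the original.

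The plan is simply to apply the given universal properties in two stages. First, for each $i=1,\ldots,n$ in turn, apply the defining universal property of $\mu_i$ (with the $F$'s, the $E_{j\ell}$ for $j\neq i$, and the $G$'s absorbed into the ``outer'' horizontal context as identity cells) to factor the original cell uniquely through $\mu_i$; doing this for all $i$ produces a unique cell with horizontal source $F_1,\ldots,F_k,E_1,\ldots,E_n,G_1,\ldots,G_m$ whose precomposition with $\mu_1\times\cdots\times\mu_n$ (and the identity cells on the outer modules) recovers the original. Second, apply the universal property of $\mu$ to this new cell to factor it uniquely through $\mu$, yielding a unique cell with horizontal source $F_1,\ldots,F_k,E,G_1,\ldots,G_m$ whose precomposition with $\mu$ and the outer identity cells recovers the intermediate cell. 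Composing the two factorisations and appealing to functoriality of cell composition shows that this final cell is the required factorisation through $\mu\cdot(\mu_1\times\cdots\times\mu_n)$, and uniqueness at each stage guarantees uniqueness overall.

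Phrased in terms of the hom-set bijections, the argument amounts to composing $n+1$ bijections: for each $i$, the universal property of $\mu_i$ supplies a natural bijection between cells out of a horizontal source involving $E_{i1},\ldots,E_{in_i}$ and cells out of the same source with that block replaced by $E_i$; taken together these yield a bijection between cells out of $F_\bullet,E_{\bullet\bullet},G_\bullet$ and cells out of $F_\bullet,E_\bullet,G_\bullet$. The universal property of $\mu$ then supplies a further bijection with cells out of $F_\bullet,E,G_\bullet$. The composite bijection is implemented by precomposition with $\mu\cdot(\mu_1\times\cdots\times\mu_n)$, exhibiting this cell as a composite.

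There is no real obstacle here; the only minor care needed is to check that the cells one obtains at the intermediate stages genuinely match up as horizontally composable data, so that the two universal properties can be invoked in sequence. This is a routine bookkeeping exercise in the definition of a virtual double category, and once it is recorded the result is immediate.
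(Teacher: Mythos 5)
Your argument is correct and is exactly the paper's: the paper's justification is the single remark that the required bijection factors as the composite of the $n+1$ bijections induced by $\mu_1,\ldots,\mu_n,\mu$, which is precisely your two-stage (block-by-block, then $\mu$) factorisation, with the same appeal to associativity of cell composition to see that the composite bijection is implemented by precomposition with $\mu\cdot(\mu_1\times\cdots\times\mu_n)$. Your added bookkeeping remark---that the intermediate horizontal contexts still consist of modules, so each universal property applies---is the only point the paper leaves implicit, and it checks out.
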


\begin{obs}[left cancelation of composites]\label{obs:comp-cancel}
Suppose the cells $\mu_i \colon E_{i1} \times \cdots \times E_{in_i} \To E_i$, for $i=1,\ldots, n$, exhibit each $E_i$ as a composite of the corresponding $E_{ij}$, and suppose further that the $E_i$ define a composable sequence of modules \eqref{eq:multi-domain}. If $\mu \colon E_1 \times\cdots \times E_n \To E$ is any cell so that
\[ \xymatrix@C=40pt{ E_{11} \times \cdots \times E_{nn_n} \ar@{=>}[r]^-{\mu_1 \times \cdots \times \mu_n} & E_1 \times \cdots \times E_n \ar@{=>}[r]^-{\mu} & E}\] exhibits $E$ as a composite of $E_{11} \times \cdots \times E_{nn_n}$, then $\mu \colon E_1 \times \cdots \times E_n \To E$ exhibits $E$ as a composite of $E_1 \times \cdots \times E_n$. The required bijection composes with the bijections supplied by the maps  $\mu_1,\ldots, \mu_n$ to a bijection, and is thus itself a bijection by the 2-of-3 property for isomorphisms.
\end{obs}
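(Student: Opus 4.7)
The plan is to verify the required universal property of $\mu$ directly, by exhibiting the relevant map of hom-sets of cells as a bijection via a 2-of-3 style argument applied to a composite of bijections. Fix an ambient configuration of modules $F_1,\ldots,F_k$ from $X$ to $A_0$, modules $G_1,\ldots,G_m$ from $A_n$ to $Y$, functors $f \colon X \to B$ and $g \colon Y \to C$, and a module $H \colon B \prof C$. We need to show that precomposition with $\mu$ supplies a bijection
\[
\mu_* \colon \{\text{cells } F_1 \times \cdots \times F_k \times E \times G_1 \times \cdots \times G_m \To H \text{ over } f,g\} \longrightarrow \{\text{cells } F_1 \times \cdots \times F_k \times E_1 \times \cdots \times E_n \times G_1 \times \cdots \times G_m \To H \text{ over } f,g\}.
\]

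First, by hypothesis the composite cell $\mu \cdot (\mu_1 \times \cdots \times \mu_n)$ is cocartesian, so precomposition with it defines a bijection of the above shape, but with domain expanded all the way out to $F_1 \times \cdots \times F_k \times E_{11} \times \cdots \times E_{nn_n} \times G_1 \times \cdots \times G_m$. Second, each cell $\mu_i \colon E_{i1} \times \cdots \times E_{in_i} \To E_i$ is cocartesian, and so, one module at a time, precomposition with $\mu_i$ defines a bijection between cells in which the $i$th entry $E_i$ of the domain is replaced by the sequence $E_{i1},\ldots, E_{in_i}$ (the remaining data of the configuration being unchanged, and in particular the ambient modules $F_j$, $G_j$ and the outer modules $E_{i'}$ for $i' \neq i$ playing the role of the generic ``$F_\bullet$'' and ``$G_\bullet$'' modules of the cocartesian universal property of $\mu_i$). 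Composing these $n$ bijections yields a bijection $(\mu_1 \times \cdots \times \mu_n)_*$ between the $E_1 \times \cdots \times E_n$-indexed hom-set and the $E_{11} \times \cdots \times E_{nn_n}$-indexed one.

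By functoriality of cell composition in the virtual double category, the composite bijection $(\mu_1 \times \cdots \times \mu_n)_* \circ \mu_*$ equals $(\mu \cdot (\mu_1 \times \cdots \times \mu_n))_*$, which is a bijection. Since $(\mu_1 \times \cdots \times \mu_n)_*$ is a bijection, the 2-of-3 property for bijective functions forces $\mu_*$ to be a bijection as well. As the ambient $F_j$, $G_j$, $f$, $g$, and $H$ were arbitrary, this proves that $\mu$ is cocartesian, i.e., exhibits $E$ as the composite of $E_1, \ldots, E_n$.

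The argument is essentially a diagram chase and presents no genuine obstacle; the one point requiring care is making sure that when applying the universal property of a single $\mu_i$ we correctly absorb the neighboring modules $E_{i'}$ (for $i' \neq i$) into the generic outer context $F_\bullet, G_\bullet$ allowed by that universal property, so that the $n$-fold composite of bijections is well-formed. Once that bookkeeping is noted, the proof reduces to the 2-of-3 property for bijections, exactly as suggested in the observation's statement.
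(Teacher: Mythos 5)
Your proposal is correct and is essentially the paper's own argument, just written out in full: the paper likewise observes that precomposition with $\mu$ composes with the bijections supplied by the cocartesian cells $\mu_1,\ldots,\mu_n$ (applied one factor at a time, with the neighboring modules absorbed into the ambient context) to give the bijection for the composite cell, whence $\mu_*$ is a bijection by the 2-of-3 property. Your added bookkeeping about the generic outer modules $F_\bullet, G_\bullet$ and the functoriality identity $(\mu\cdot(\mu_1\times\cdots\times\mu_n))_* = (\mu_1\times\cdots\times\mu_n)_*\circ\mu_*$ is exactly the detail the paper leaves implicit.
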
 

\begin{obs}\label{obs:comp-proof-strategy}
On account of the universal property described by Proposition~\ref{prop:cartesian-pullback-cells} of the cells encoding pullback modules, to prove that a cell  \eqref{eq:generic-composite} is a composite, it suffices to consider cells whose vertical 1-morphisms are all identities.

To prove that a cell \eqref{eq:generic-composite} is a composite in $\MMod{K}$, we frequently exhibit a stronger  universal property. Writing $F \tfib B \times A_0$ and $ G \tfib A_n \times C$ for the pullbacks of finite composable sequences $F_1,\ldots, F_k$ and $G_1, \ldots, G_m$ of modules, it (more than) suffices to show that restriction along $\mu$ induces an equivalence of quasi-categories
\[ \map_{B \times C}(F \pbtimes{A_0} E \pbtimes{A_n} G,H) \xrightarrow{\map_{B \times C}(F \pbtimes{A_0} \mu \pbtimes{A_n} G,H)} \map_{B \times C}(F \pbtimes{A_0} E_1 \times \cdots \times E_n \pbtimes{A_n} G, H)\] for every module $H \colon B \prof C$. This equivalence of hom quasi-categories induces a bijection between sets of cells whose vertical boundaries are comprised of identities. This strategy was employed in the proof of Proposition~\ref{prop:arrows-are-units}.
\end{obs}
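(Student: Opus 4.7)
The plan is to justify two reductions. For the first, to show that the composite universal property for a cell $\mu$ as in \eqref{eq:generic-composite} need only be checked against test cells whose vertical boundaries are identities, I would leverage the cartesian cell property of Proposition~\ref{prop:cartesian-pullback-cells}. Given any test cell $\xi$ with source sequence $F_1, \ldots, F_k, E_1, \ldots, E_n, G_1, \ldots, G_m$, target module $H \colon B \prof C$, and vertical boundary functors $f \colon X \to B$ and $g \colon Y \to C$, Proposition~\ref{prop:cartesian-pullback-cells} supplies a unique factorization of $\xi$ through the cartesian cell $\rho \colon H(g,f) \To H$ as a cell $\bar\xi$ with identity vertical boundaries landing in the pullback module $H(g,f) \colon X \prof Y$. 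This transposition is natural with respect to precomposition in the horizontal source: in particular, precomposing $\xi$ with $\mathrm{id}_{F_1} \times \cdots \times \mu \times \cdots \times \mathrm{id}_{G_m}$ is transposed under $\rho$ to the corresponding precomposition of $\bar\xi$. Consequently, unique factorization through $\mu$ at the level of $\xi$ is equivalent to unique factorization at the level of $\bar\xi$. Since $H$ was an arbitrary module and every module is the pullback of itself along the pair of identities, requiring the identity-boundary version of the composite condition for every module $H$ recovers the full universal property.

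For the second reduction, having now passed to identity-boundary cells, I would observe that such cells are, tautologically, isomorphism classes of objects in the appropriate mapping quasi-category. Writing $F \tfib B \times A_0$ and $G \tfib A_n \times C$ for the pullback spans associated to $F_1, \ldots, F_k$ and $G_1, \ldots, G_m$ as in Definition~\ref{defn:composable-modules}, cells with identity vertical boundaries from $F \pbtimes{A_0} E_1 \times \cdots \times E_n \pbtimes{A_n} G$ (respectively from $F \pbtimes{A_0} E \pbtimes{A_n} G$) to $H$ are exactly isomorphism classes of vertices in the mapping quasi-category $\map_{B \times C}(F \pbtimes{A_0} E_1 \times \cdots \times E_n \pbtimes{A_n} G, H)$ (respectively in $\map_{B \times C}(F \pbtimes{A_0} E \pbtimes{A_n} G, H)$). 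The functor $\tau_0 \colon \qCat \to \Set$ sending a quasi-category to its set of isomorphism classes of objects carries equivalences of quasi-categories to bijections of sets. Hence any equivalence of these mapping quasi-categories implemented by restriction along $F \pbtimes{A_0} \mu \pbtimes{A_n} G$ descends to the required bijection of cell-sets, and therefore suffices to exhibit $\mu$ as a composite.

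The main observation in this argument is that the reduction is free of circularity and accommodates the degenerate cases in which the flanking sequences $F_1, \ldots, F_k$ or $G_1, \ldots, G_m$ are empty; in those cases $F$ or $G$ may be taken to be the identity span on $A_0$ or $A_n$ respectively, and the same mapping-quasi-category argument goes through. The template for deploying this strategy in practice is precisely the proof of Proposition~\ref{prop:arrows-are-units}, where the identity-boundary equivalence of mapping quasi-categories for $H \colon A \prof A$ was established by combining Proposition~\ref{prop:two-sided-yoneda} with the fibered adjunction $i \dashv t$ of Lemma~\ref{lem:sliced-adjunction-on-the-right} to produce an adjoint equivalence between the relevant mapping quasi-categories, from which the bijection of cells was extracted by passage to isomorphism classes.
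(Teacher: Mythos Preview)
Your proposal is correct and follows exactly the reasoning the paper leaves implicit: the first reduction is precisely the application of Proposition~\ref{prop:cartesian-pullback-cells} that the observation invokes, and the second reduction is the passage from equivalences of mapping quasi-categories to bijections on isomorphism classes via $\tau_0$, which is how the paper defines cells in $\MMod{K}$. Since this is an \emph{Observation} rather than a theorem, the paper does not supply a separate proof; your write-up simply fills in the details behind the two sentences of justification the paper gives.
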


\begin{lem}[composites with units]\label{lem:comp-with-unit}
Given any module $E \colon A \prof B$, the unique cell $\circ \colon A^\cattwo \times E \times B^\cattwo \To E$ defined using the universal properties of the cocartesian cells associated to the unit modules
\begin{equation}\label{eq:comp-with-unit} \vcenter{\xymatrix{ A \ar@{=}[d] \ar@{=}[r] \ar@{}[dr]|{\Downarrow\iota} & \ar@{=}[d] A \ar[r]|{\mid}^E & B \ar@{=}[d] \ar@{=}[r] \ar@{}[dr]|{\Downarrow\iota} & B \ar@{=}[d]   \\A \ar@{=}[d] \ar[r]|{\mid}^{A^\cattwo} & A \ar[r]|{\mid}^E\ar@{}[dr]|{\Downarrow\circ} & B \ar[r]|{\mid}^{B^\cattwo} & B \ar@{=}[d] \\ A \ar[rrr]|{\mid}_E & & &  B}} \qquad \defeq \qquad \vcenter{ \xymatrix{ A \ar@{=}[d] \ar[r]|{\mid}^E & B \ar@{=}[d] \\ A \ar[r]|{\mid}_E & B}} \end{equation} displays $E$ as a composite of $E$ with the units $A^\cattwo$ and $B^\cattwo$ at its domain and codomain objects. 
\end{lem}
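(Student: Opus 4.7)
The plan is to verify directly that $\circ$ satisfies the universal property of a cocartesian cell, reducing to two applications of the cocartesian universal property of the arrow modules $A^\cattwo$ and $B^\cattwo$ established in Proposition~\ref{prop:arrows-are-units}. Following the strategy of Observation~\ref{obs:comp-proof-strategy}, it suffices to fix arbitrary composable sequences of modules $F_\bullet = F_1, \ldots, F_k$ (ending at $A$) and $G_\bullet = G_1, \ldots, G_m$ (starting at $B$) together with a target module $H$, and to establish that precomposition with $\id_{F_\bullet} \times \circ \times \id_{G_\bullet}$ gives a bijection between cells
\[
\beta \colon F_1 \times \cdots \times F_k \times E \times G_1 \times \cdots \times G_m \To H
\]
and cells
\[
\alpha \colon F_1 \times \cdots \times F_k \times A^\cattwo \times E \times B^\cattwo \times G_1 \times \cdots \times G_m \To H.
\]

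To construct the inverse bijection, I would apply Proposition~\ref{prop:arrows-are-units} first to the $A^\cattwo$-slot in the source of $\alpha$, producing a unique factorization $\alpha = \alpha' \cdot \bigl(\id_{F_\bullet} \times \iota \times \id_{E} \times \id_{B^\cattwo} \times \id_{G_\bullet}\bigr)$ for a cell $\alpha'$ whose source no longer contains $A^\cattwo$, and then apply the same universal property to the remaining $B^\cattwo$-slot to factor $\alpha' = \beta \cdot \bigl(\id_{F_\bullet} \times \id_E \times \iota \times \id_{G_\bullet}\bigr)$ uniquely through a cell $\beta$ with source $F_\bullet \times E \times G_\bullet$. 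The composite of these two applications gives the candidate inverse $\alpha \mapsto \beta$.

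It remains to verify that this bijection is implemented by precomposition with $\circ$. The defining equation~\eqref{eq:comp-with-unit} asserts $\circ \cdot (\iota \times \id_E \times \iota) = \id_E$, so for any $\beta$ the cell $\beta \cdot (\id_{F_\bullet} \times \circ \times \id_{G_\bullet})$ satisfies
\[
\bigl(\beta \cdot (\id_{F_\bullet} \times \circ \times \id_{G_\bullet})\bigr) \cdot \bigl(\id_{F_\bullet} \times \iota \times \id_E \times \iota \times \id_{G_\bullet}\bigr) = \beta \cdot (\id_{F_\bullet} \times \id_E \times \id_{G_\bullet}) = \beta,
\]
using associativity of multi-composition in the virtual double category. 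By the uniqueness clause established in the previous paragraph, $\beta \cdot (\id_{F_\bullet} \times \circ \times \id_{G_\bullet})$ is therefore the unique cell $\alpha$ that restricts back to $\beta$ under the two applications of Proposition~\ref{prop:arrows-are-units}, as required.

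There is no serious obstacle here; the only care needed is bookkeeping with the multi-ary composition notation and correctly handling the absorption of the nullary cells $\iota$ into the horizontal composites, both of which are automatic from the virtual double category axioms. The argument is a direct paradigm of how the units $A^\cattwo$ in the virtual equipment $\MMod{K}$ inherit the role of strict horizontal identities up to the canonical cocartesian cells.
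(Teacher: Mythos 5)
Your argument is correct and is essentially the paper's proof: the paper deduces the lemma in one line from Proposition~\ref{prop:arrows-are-units} together with the left-cancellation property of composites (Observation~\ref{obs:comp-cancel}), and your direct verification simply inlines that cancellation argument — precomposition with $\circ$ composed with precomposition along the two $\iota$'s is the identity, so it is the inverse of the bijection supplied by Proposition~\ref{prop:arrows-are-units}. The reduction to identity vertical boundaries that you invoke via Observation~\ref{obs:comp-proof-strategy} is exactly how the paper handles that point as well.
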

\begin{proof}
The result is immediate from Proposition~\ref{prop:arrows-are-units} and Observation~\ref{obs:comp-cancel}.
\end{proof}

\begin{obs}\label{obs:comp-with-unit-over-comma}
In the case of a comma module $h \comma k \colon  A \prof B$ associated to a cospan $A \xrightarrow{k} C \xleftarrow{h} B$, the cell $\circ \colon A^\cattwo \times h \comma k \times B^\cattwo \To h \comma k$ in $\MMod{K}$ defined by Lemma~\ref{lem:comp-with-unit} corresponds, under the identification of Observation~\ref{obs:cells-above-a-comma}, to the pasting diagram
\[ \xymatrix@=15pt{ & & & {}\save[]+<0cm,.5pc>*{A^\cattwo \pbtimes{A} h \comma k \pbtimes{B} B^\cattwo}\ar[dll] \ar[drr] \ar[d] \restore \\ 
 & A^\cattwo \ar@{->>}[dl]_{q_1} \ar@{->>}[dr]^{q_0} \ar@{}[d]|(.4){\psi}|(.6){\Leftarrow} & & h \comma k  \ar@{->>}[dl]_{p_1} \ar@{->>}[dr]^{p_0} \ar@{}[dd]|{\Leftarrow} & & B^\cattwo  \ar@{->>}[dl]_{q_1} \ar@{->>}[dr]^{q_0} \ar@{}[d]|(.4){\psi}|(.6){\Leftarrow}  \\ A \ar@{=}[rr] & & A \ar[dr]_k & & B \ar[dl]^h \ar@{=}[rr]& & B \\ & & & C}\]
in $\lcat{K}_2$.
\end{obs}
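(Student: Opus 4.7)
The plan is to verify this identification of $\circ$ using the uniqueness half of the universal property that defines it, together with the bijection of Observation~\ref{obs:cells-above-a-comma} between cells into a comma module and 2-cells in $\lcat{K}_2$. By definition~\eqref{eq:comp-with-unit}, the cell $\circ$ is the unique filler whose restriction along $\iota_A$ on the left and $\iota_B$ on the right recovers the identity cell $\id_{h\comma k}$. Applied twice, Proposition~\ref{prop:arrows-are-units} guarantees that any candidate cell with this restriction property must coincide with $\circ$; so it suffices to show that the proposed pasting diagram has the same restriction behaviour.

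Next, I would translate this characterization across the bijection of Observation~\ref{obs:cells-above-a-comma}. On the target side, the identity cell $\id_{h\comma k}\colon h\comma k\To h\comma k$ corresponds, by 1-cell induction, to the canonical comma 2-cell $\phi$ associated to the cospan $A\xrightarrow{k}C\xleftarrow{h}B$. On the source side, recall that $\iota$ was defined by 1-cell induction from the identity 2-cell at $\id_A$, which is to say that $\iota$ is represented on the span level by the functor $j\colon A\to A^\cattwo$ characterised by $\psi\cdot j=\id_A$ (and similarly for $B$). It follows that precomposing a cell with codomain $h\comma k$ by the cocartesian cell $\iota_A$ (respectively $\iota_B$) corresponds, under the bijection, to restricting its associated 2-cell in $\lcat{K}_2$ along $j\colon A\to A^\cattwo$ (respectively $j\colon B\to B^\cattwo$).

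The essential verification is then a direct pasting-diagram computation: restrict the displayed pasting along $j\times\id\times j$, which collapses the two outer $\psi$ cells to identity 2-cells, since $\psi\cdot j=\id$ by the definition of $j$. What remains is precisely the canonical comma 2-cell $\phi$ of $h\comma k$, matching the 2-cell associated to $\id_{h\comma k}$. This is the only computational step, and it is immediate from the defining identities $\psi\cdot j=\id$.

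Finally, invoking the uniqueness of the factorisation supplied by Proposition~\ref{prop:arrows-are-units} (applied successively to the two unit cells $\iota_A$ and $\iota_B$) together with the bijection of Observation~\ref{obs:cells-above-a-comma}, the 2-cell in $\lcat{K}_2$ associated with $\circ$ must agree with the displayed pasting. The only potential obstacle is bookkeeping of the simplicial pullbacks encoding the composite span $A^\cattwo\pbtimes{A}h\comma k\pbtimes{B}B^\cattwo$ and ensuring that the compatibility of the bijection with precomposition by $\iota$ is genuinely captured by restriction along $j$, but this is immediate from the construction of $\iota$ given before Proposition~\ref{prop:arrows-are-units}.
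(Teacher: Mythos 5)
Your proof is correct, and since the paper states this as an observation with no explicit proof, your argument is precisely the intended unwinding: you characterise $\circ$ via the uniqueness supplied by Proposition~\ref{prop:arrows-are-units} (applied at both units) and then check, using that $\iota$ is represented by $j$ with $\psi j = \id_{\id_A}$, that the displayed pasting restricts along $j \pbtimes{A} \id \pbtimes{B} j$ to the comma cone of $h \comma k$, i.e.\ to the 2-cell corresponding to $\id_{h\comma k}$ under Observation~\ref{obs:cells-above-a-comma}. The only cosmetic caveat is that $\psi j$ is the identity 2-cell on $\id_A$ (not the 1-cell $\id_A$), and that ``restriction along $j$'' should be read as precomposition of representing span maps with $j \pbtimes{A} \id \pbtimes{B} j$, both of which you effectively use correctly.
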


\begin{defn}[unit cells]\label{defn:unit-cells}
Using the unit modules in $\MMod{K}$, we can define unit cells 
\[ \xymatrix{ A \ar[d]_f \ar[r]|\mid^{A^\cattwo} \ar@{}[dr]|{\Downarrow f^\cattwo} & A \ar[d]^f \\ B \ar[r]|\mid_{B^\cattwo} & B}\] 
associated to a (vertical) functor $f \colon A \to B$ between $\infty$-categories. By the universal property of the cocartesian cell associated to the unit $A^\cattwo$, it suffices to define the left-hand composite
\[ \vcenter{\xymatrix{ A \ar@{=}[d] \ar@{=}[r] \ar@{}[dr]|{\Downarrow\iota} & A \ar@{=}[d] & & A \ar[d]_f \ar@{=}[r] & A \ar[d]^f \\ A \ar[d]_f \ar[r]|\mid^{A^\cattwo} \ar@{}[dr]|{\Downarrow f^\cattwo} & A \ar[d]^f  & \defeq & B \ar@{=}[d] \ar@{=}[r] \ar@{}[dr]|{\Downarrow \iota} & B \ar@{=}[d] \\ B \ar[r]|\mid_{B^\cattwo} & B & & B \ar[r]|\mid_{B^\cattwo} & B}} \qquad \leftrightsquigarrow \qquad   \vcenter{
  \xymatrix@R=2.5em@C=0.8em{
    {A}\ar@/^2ex/[]!D(0.4);[d]!U(0.5)^{f}
    \ar@/_2ex/!D(0.4);[d]!U(0.5)_{f}
    \ar@{}[d]|(.4){\id_f}|(.6){\Leftarrow} \\ {B}
  }} \] and we take this to be the composite of the cocartesian cell associated to the unit $B^\cattwo$ with a nullary morphism. Applying Observation~\ref{obs:cells-above-a-comma} both composites correspond to the identity 2-cell $\id_f \colon f \To f \colon A \to B$ in the homotopy 2-category $\lcat{K}_2$.
\end{defn}

\begin{lem}[composite with unit cells]\label{lem:comp-with-unit-cells} For any cell $\alpha$ whose boundary is of the form displayed below-left, the composite cell
\[  \vcenter{\xymatrix{ A \ar@{=}[d] \ar@{}[dr]|{\Downarrow\iota} \ar@{=}[r]  & A \ar@{=}[d]  \ar[r]|{\mid}^{E_1} & A_1\ar@{=}[d]  \ar[r]|{\mid}^{E_2}  & \cdots \ar@{=}@<-1.5ex>[d]^{\cdots} \ar@{=}@<1.5ex>[d] \ar[r]|{\mid}^{E_n}  & C \ar@{=}[d]  \ar@{}[dr]|{\Downarrow\iota} \ar@{=}[r] & C \ar@{=}[d]   \\ A \ar[d]_f \ar[r]|{\mid}^{A^\cattwo} \ar@{}[dr]|{\Downarrow f^\cattwo} &  A \ar[d]^{f} \ar[r]|{\mid}^{E_1} & A_1 \ar[r]|{\mid}^{E_2}  \ar@{}[dr]|{\Downarrow\alpha} & \cdots \ar[r]|{\mid}^{E_n} & C \ar[d]_g \ar[r]|\mid^{C^\cattwo} \ar@{}[dr]|{\Downarrow g^\cattwo} & C \ar[d]^g  \\ B \ar@{=}[d] \ar[r]|\mid_{B^\cattwo} & B \ar[rrr]|{\mid}_{E} & \ar@{}[dr]|{\Downarrow\circ} & & D \ar[r]|\mid_{D^\cattwo} & D \ar@{=}[d] \\ B \ar[rrrrr]|{\mid}_{E} &  & & & & D}}
 =   \vcenter{\xymatrix{ A \ar[d]_{f} \ar[r]|{\mid}^{E_1} & A_1 \ar[r]|{\mid}^{E_2}  \ar@{}[dr]|{\Downarrow\alpha} & \cdots \ar[r]|{\mid}^{E_n} & C \ar[d]^g \\ B \ar[rrr]|{\mid}_{E} & & & D}}\]
equals $\alpha$.
\end{lem}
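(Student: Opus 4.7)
The strategy is to apply associativity of composition in the virtual double category $\MMod{K}$ to peel off the outer cells of the LHS pasting, reduce to a simpler composite, and then invoke the defining equation of the composition cell $\circ$ from Lemma~\ref{lem:comp-with-unit}.

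First, by associativity and the unit laws for composition in $\MMod{K}$ (which absorb the identity cells $\id_{E_i}$ appearing in the top row), the LHS rewrites as
\[
\circ \cdot (f^\cattwo \cdot \iota_A,\ \alpha,\ g^\cattwo \cdot \iota_C).
\]
By Definition~\ref{defn:unit-cells}, each of $f^\cattwo \cdot \iota_A$ and $g^\cattwo \cdot \iota_C$ is, by construction, the specific nullary cell with target $B^\cattwo$ or $D^\cattwo$ and vertical boundaries $(f,f)$ or $(g,g)$ that corresponds under Observation~\ref{obs:cells-above-a-comma} to the identity 2-cell $\id_f$ or $\id_g$. Writing $\iota_f$ and $\iota_g$ for these cells, the LHS reduces to $\circ \cdot (\iota_f,\alpha,\iota_g)$.

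It remains to establish the core identity $\circ \cdot (\iota_f,\alpha,\iota_g) = \alpha$. Both sides are cells with source $(E_1,\ldots,E_n)$, target $E$, and vertical boundaries $(f,g)$. By Proposition~\ref{prop:arrows-are-units}, every such cell admits a unique factorization through the tuple $(\iota_A,\id_{E_1},\ldots,\id_{E_n},\iota_C)$, say $\alpha = \widehat{\alpha} \cdot (\iota_A,\id_{E_1},\ldots,\iota_C)$. Associativity yields $\circ \cdot (\iota_f,\alpha,\iota_g) = (\circ \cdot (f^\cattwo,\alpha,g^\cattwo)) \cdot (\iota_A,\id_{E_1},\ldots,\iota_C)$, so by uniqueness of factorization it suffices to check $\widehat{\alpha} = \circ \cdot (f^\cattwo,\alpha,g^\cattwo)$. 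This last identification is obtained by a direct appeal to Lemma~\ref{lem:comp-with-unit}: its defining equation $\circ \cdot (\iota_B,\id_E,\iota_D) = \id_E$, together with the observation that $\iota_f$ and $\iota_g$ are the images of $\iota_B$ and $\iota_D$ under the boundary-shift effected by $f^\cattwo$ and $g^\cattwo$, delivers the required equality.

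The main obstacle is the careful tracking of vertical boundaries through the associativity manipulations. The cells $\iota_f$ and $\iota_g$ genuinely differ from $\iota_B$ and $\iota_D$ in their vertical boundaries, and one cannot naively substitute them: the composite $\circ \cdot (\iota_B,\alpha,\iota_D)$ is not even well-defined since the boundaries of $\iota_B$ and $\alpha$ fail to chain. The proof must therefore route through the cocartesian universal property of $\iota_A$ and $\iota_C$ at the opposite end of the composition, using the unit cells $f^\cattwo$ and $g^\cattwo$ to mediate the boundary change.
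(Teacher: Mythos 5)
Your opening reduction is fine and agrees with the paper's first move: by a legitimate instance of associativity and the unit laws, the left-hand pasting equals $\circ\cdot(f^\cattwo\cdot\iota_A,\;\alpha,\;g^\cattwo\cdot\iota_C)$, and by Definition~\ref{defn:unit-cells} this is $\circ\cdot(\iota_f,\alpha,\iota_g)$. The problem is that the remaining identity $\circ\cdot(\iota_f,\alpha,\iota_g)=\alpha$ --- which is the entire content of the lemma --- is never actually established. Your appeal to the uniqueness clause of Proposition~\ref{prop:arrows-are-units} is circular: $\widehat{\alpha}$ is characterised only by $\widehat{\alpha}\cdot(\iota_A,\id_{E_1},\ldots,\iota_C)=\alpha$, so verifying your claimed identification $\widehat{\alpha}=\circ\cdot(f^\cattwo,\alpha,g^\cattwo)$ means verifying $(\circ\cdot(f^\cattwo,\alpha,g^\cattwo))\cdot(\iota_A,\id's,\iota_C)=\alpha$, which by your own associativity computation is exactly $\circ\cdot(\iota_f,\alpha,\iota_g)=\alpha$ again. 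And the closing sentence --- that the defining equation $\circ\cdot(\iota_B,\id_E,\iota_D)=\id_E$ ``together with the boundary-shift effected by $f^\cattwo$ and $g^\cattwo$ delivers the required equality'' --- is not a licit deduction: there is no boundary-shift operation in a virtual double category, and as you yourself observe, the naive substitution $\circ\cdot(\iota_B,\alpha,\iota_D)$ is ill-formed, so the defining equation of $\circ$ is never used in a well-formed composite involving $\alpha$.

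What is actually needed, and what the paper does, is to evaluate the pasting rather than to invoke uniqueness. By Definition~\ref{defn:unit-cells}, $\iota_f=f^\cattwo\cdot\iota_A$ is the nullary cell obtained by pasting the vertical arrow $f$ on top of $\iota_B$ (concretely, it is represented by the map of spans $A\to B\to B^\cattwo$ classifying $\id_f$), and similarly for $\iota_g$. Hence the pasting $\circ\cdot(\iota_f,\alpha,\iota_g)$ can be re-bracketed --- this is where the vertical boundaries legitimately change from identities to $(f,g)$, and it is checked at the level of representing maps of spans, not by the bare multicategorical associativity axiom --- as the cell $\alpha$ composed vertically with the sub-pasting $\circ\cdot(\iota_B,\id_E,\iota_D)$, which equals $\id_E$ by the defining equation \eqref{eq:comp-with-unit} of $\circ$; the unit law then gives $\id_E\cdot\alpha=\alpha$. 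That re-bracketing step is the crux your argument routes around and never supplies, so as written the proof has a genuine gap precisely at the point you flagged as the ``main obstacle.''
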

\begin{proof}
 By Definition~\ref{defn:unit-cells} and the identity laws in a virtual double category, the left-hand side  is the composite cell displayed on the left
 \[  \vcenter{\xymatrix{ A \ar[d]_f \ar@{=}[r]  &  A \ar[d]^{f} \ar[r]|{\mid}^{E_1} & A_1 \ar[r]|{\mid}^{E_2}  \ar@{}[dr]|{\Downarrow\alpha} & \cdots \ar[r]|{\mid}^{E_n} & C \ar[d]_g \ar@{=}[r] & C \ar[d]^g  \\B  \ar@{=}[d] \ar@{=}[r] \ar@{}[dr]|{\Downarrow \iota} &  B \ar@{=}[d] \ar[rrr]|{\mid}_{E} &   \ar@{}[dr]|{\Downarrow\id_E} &  & D \ar@{=}[d] \ar@{=}[r] \ar@{}[dr]|{\Downarrow \iota} & D \ar@{=}[d]  \\ B \ar@{=}[d] \ar[r]|\mid_{B^\cattwo} & B \ar[rrr]|{\mid}_{E} & \ar@{}[dr]|{\Downarrow\circ} & & D \ar[r]|\mid_{D^\cattwo} & D \ar@{=}[d] \\ B \ar[rrrrr]|{\mid}_{E} &  & & & & D}} = \vcenter{\xymatrix{   A \ar[d]_{f} \ar[r]|{\mid}^{E_1} & A_1 \ar[r]|{\mid}^{E_2}  \ar@{}[dr]|{\Downarrow\alpha} & \cdots \ar[r]|{\mid}^{E_n} & C \ar[d]^g   \\  B \ar@{=}[d] \ar[rrr]|{\mid}_{E} &   \ar@{}[dr]|{\Downarrow\id_E} &  & D \ar@{=}[d]  \\ B \ar[rrr]|{\mid}_{E} & & & D}} \] which equals the composite cell displayed on the right by the definition \eqref{eq:comp-with-unit} of $\circ \colon B^\cattwo \times E \times D^\cattwo \To E$. Applying the virtual double category identity laws, the right-hand side equals $\alpha$.
\end{proof}

\subsection{Representable modules}

The restriction and unit cells present in any virtual equipment imply that any vertical morphism has a pair of associated horizontal morphisms together with cells that have universal properties similar to companions and conjoints in an ordinary double category. In $\MMod{K}$, the horizontal morphisms associated to a functor $f \colon A \to B$ are the covariant $B \comma f \colon A \prof B$ and contravariant $f \comma B \colon B \prof A$ represented modules. This section is devoted to exploring their properties. 

\def\hquad{\hskip0.5em\relax}

\begin{defn}\label{defn:companion-conjoint}
The covariant and contravariant representable modules associated to a functor $f \colon A \to B$ are defined by pulling back the module $B^\cattwo \colon B \prof B$. Thus Proposition~\ref{prop:cartesian-pullback-cells} implies that the cells, defined using the identification of Observation~\ref{obs:cells-above-a-comma} by the pasting diagrams
\[ \vcenter{\xymatrix{ A \ar[d]_f \ar[r]|{\mid}^{B \comma f} & B \ar@{=}[d] \ar@{}[dl]|{\Downarrow\rho} \\ B \ar[r]|{\mid}_{B^\cattwo} & B}} \hquad \leftrightsquigarrow \hquad
\vcenter{ \xymatrix@R=15pt@C=10pt{ & B \comma f \ar[dl] \ar[dr] \ar@{}[d]|(.6){\Leftarrow} \\ A \ar[d]_f \ar[rr]_f & & B \ar@{=}[d] \\ B \ar@{=}[rr] & &B}} \qquad\quad \vcenter{\xymatrix{ B \ar@{=}[d] \ar[r]|{\mid}^{f \comma B} & A \ar[d]^f \ar@{}[dl]|{\Downarrow\rho} \\ B \ar[r]|{\mid}_{B^\cattwo} & B}} \quad \leftrightsquigarrow \quad
\vcenter{ \xymatrix@R=15pt@C=10pt{ & f \comma B \ar[dl] \ar[dr] \ar@{}[d]|(.6){\Leftarrow} \\ B \ar@{=}[d]  & & A \ar[ll]^f \ar[d]^f \\ B \ar@{=}[rr] & &B}}\] are cartesian cells in the virtual equipment of modules. 

We also have cells
\[ \vcenter{ \xymatrix{ A \ar[r]|{\mid}^{A^\cattwo} \ar@{=}[d] & A \ar[d]^f \ar@{}[dl]|{\Downarrow\kappa }  \\ A \ar[r]|\mid_{B \comma f} & B}} \hquad\leftrightsquigarrow\hquad \vcenter{ \xymatrix@R=15pt@C=10pt{ & A^\cattwo \ar@{->>}[dl] \ar@{->>}[dr] \ar@{}[d]|(.6){\Leftarrow} \\ A \ar@{=}[rr] \ar@{=}[d] & & A \ar[d]^f \\ A \ar[rr]_f & & B}}\qquad\quad\vcenter{ \xymatrix{ A \ar[r]|{\mid}^{A^\cattwo} \ar[d]_f & A \ar@{=}[d] \ar@{}[dl]|{\Downarrow\kappa }  \\ B \ar[r]|\mid_{f \comma B} & A}} \hquad\leftrightsquigarrow\hquad \vcenter{ \xymatrix@R=15pt@C=10pt{ & A^\cattwo \ar@{->>}[dl] \ar@{->>}[dr] \ar@{}[d]|(.6){\Leftarrow} \\ A \ar@{=}[rr] \ar[d]_f & & A \ar@{=}[d] \\ B & & A \ar[ll]^f }}\] which compose vertically to the unit cell $f^\cattwo$ associated to the functor $f$, introduced in Definition~\ref{defn:unit-cells}
\[ \vcenter{ \xymatrix{ A \ar[r]|{\mid}^{A^\cattwo} \ar@{=}[d] & A \ar[d]^f \ar@{}[dl]|{\Downarrow\kappa }  \\ A \ar[d]_f \ar[r]|{\mid}^{B \comma f} & B \ar@{=}[d] \ar@{}[dl]|{\Downarrow\rho} \\ B \ar[r]|{\mid}_{B^\cattwo} & B}} \hquad=\hquad \vcenter{ \xymatrix{ A \ar[r]|{\mid}^{A^\cattwo} \ar[d]_f & A \ar@{=}[d] \ar@{}[dl]|{\Downarrow\kappa }  \\B \ar@{=}[d] \ar[r]|{\mid}^{f \comma B} & A \ar[d]^f \ar@{}[dl]|{\Downarrow\rho} \\ B \ar[r]|{\mid}_{B^\cattwo} & B}} \hquad=\hquad \vcenter{ \xymatrix{A \ar[r]|{\mid}^{A^\cattwo} \ar[d]_f & A  \ar[d]^f \ar@{}[dl]|{\Downarrow f^{\cattwo}} \\ B \ar[r]|{\mid}_{B^\cattwo} & B}} \hquad \leftrightsquigarrow \hquad  \vcenter{
    \xymatrix@R=2.5em@C=0.8em{
         {A^\cattwo}\ar@{->>}@/^2ex/[]!D(0.4);[d]!U(0.5)^{q_0}
      \ar@{->>}@/_2ex/!D(0.4);[d]!U(0.5)_{q_1}
      \ar@{}[d]|{\Leftarrow\psi} \\ {A}  \ar[d]^-{f} \\ {B}
    }}  =   \vcenter{
    \xymatrix@R=2.5em@C=0.8em{
      {A^\cattwo}\ar[d]^-{f^\cattwo} \\
      {B^\cattwo}\ar@{->>}@/^2ex/[]!D(0.4);[d]!U(0.5)^{q_0}
      \ar@{->>}@/_2ex/!D(0.4);[d]!U(0.5)_{q_1}
      \ar@{}[d]|{\Leftarrow\psi} \\ {B}
    }} 
\] 

Moreover, by Observation~\ref{obs:comp-with-unit-over-comma}, we have identities
\[ \vcenter{\xymatrix{ A \ar@{=}[d] \ar[r]|{\mid}^{A^\cattwo} \ar@{}[dr]|{\Downarrow\kappa} & A \ar[r]|{\mid}^{B \comma f} \ar[d]^f & B \ar@{=}[d] \ar@{}[dl]|{\Downarrow\rho} \\ A \ar[r]|{\mid}^{B \comma f} \ar@{=}[d] & B \ar[r]|{\mid}^{B^\cattwo} \ar@{}[d]|{\Downarrow\circ} & B \ar@{=}[d] \\ A \ar[rr]|{\mid}_{B \comma f} & &B}} \hquad \leftrightsquigarrow
 \hquad \vcenter{ \xymatrix@R=15pt@C=10pt{ & A^\cattwo \ar@{->>}[dr]\ar@{->>}[dl] \ar@{}[d]|(.6){\Leftarrow} & & B \comma f \ar@{->>}[dl] \ar@{->>}[dr] \ar@{}[d]|(.6){\Leftarrow} \\ A \ar@{=}[rr] \ar@{=}[d] & & A \ar[rr]_f & & B \ar@{=}[d] \\ A \ar[rrrr]_f & &  & & B}} \hquad \leftrightsquigarrow 
\hquad \vcenter{\xymatrix{ A \ar@{=}[d] \ar[r]|{\mid}^{A^\cattwo} & A \ar[r]|{\mid}^{B \comma f} \ar@{}[d]|{\Downarrow\circ} & B \ar@{=}[d] \\ A \ar[rr]|{\mid}_{B \comma f} & &B}} \] 
and dually
\[ \vcenter{\xymatrix{ B \ar@{=}[d] \ar[r]|{\mid}^{f \comma B} \ar@{}[dr]|{\Downarrow\rho} & A \ar[r]|{\mid}^{A^\cattwo} \ar[d]^f & A \ar@{=}[d] \ar@{}[dl]|{\Downarrow\kappa} \\ B \ar[r]|{\mid}^{B^\cattwo} \ar@{=}[d] & B \ar[r]|{\mid}^{f \comma B} \ar@{}[d]|{\Downarrow\circ} & A \ar@{=}[d] \\ B \ar[rr]|{\mid}_{f \comma B} & &A}} \quad = \quad \vcenter{\xymatrix{ B \ar[r]|{\mid}^{f \comma B}  \ar@{=}[d] & A \ar[r]|{\mid}^{A^\cattwo} \ar@{}[d]|{\Downarrow\circ} & A\ar@{=}[d] \\ B \ar[rr]|{\mid}_{f \comma B} & &A}} \] 
relating these canonical cells to the composition cells introduced in Lemma~\ref{lem:comp-with-unit}. To summarize this situation, we say that these cells display $f \colon A \to B$ and $B \comma f \colon A \prof B$ as \emph{companions} and display $f \colon A \to B$ and $f \comma B \colon B \prof A$ as \emph{conjoints} in a sense appropriate for a virtual equipment. 
\end{defn}

\begin{thm}\label{thm:companion-conjoint} In the virtual equipment of modules, there are bijections between cells 
\[ \xymatrix{ & & & B \ar@{=}[d] \ar[r]|\mid^{f \comma B} & A \ar[r]|\mid^E \ar@{}[d]|{\Downarrow} & C \ar[d]^g  \\ A \ar[d]_f \ar[r]|{\mid}^{E} \ar@{}[dr]|{\Downarrow\alpha} & C \ar[d]^g &  \ar@{}[d]|{\displaystyle\leftrightsquigarrow} & B \ar[rr]|{\mid}_F & & D & \ar@{}[d]|{\displaystyle\leftrightsquigarrow}  & B \ar@{=}[d] \ar[r]|{\mid}^{f \comma B} & A \ar[r]|{\mid}^E\ar@{}[dr]|{\Downarrow\beta} & C \ar[r]|{\mid}^{D \comma g} & D \ar@{=}[d] \\ B \ar[r]|{\mid}_F & D &  & A \ar[d]_f \ar[r]|\mid^{E} & C \ar[r]|\mid^{D \comma g} \ar@{}[d]|{\Downarrow} & D \ar@{=}[d] & & B \ar[rrr]|{\mid}_F & & &  D\\ & & & B \ar[rr]|{\mid}_F & & D }\] implemented by composing with the canonical cells $\kappa$ and $\rho$ and with the composition and nullary cells associated with the units. 
\end{thm}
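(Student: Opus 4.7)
My plan is to exhibit mutually inverse bijections for the outer correspondence---between cells $\alpha \colon E \To F$ over $(f,g)$ and cells $\beta \colon f \comma B \times E \times D \comma g \To F$ over identities---and to observe that the two inner bijections arise as special cases obtained by inserting only a single $\rho$/$\kappa$ pair on the appropriate side, with the outer bijection factoring as the composite of either pair. The forward map $\alpha \mapsto \beta$ horizontally composes the cells $\rho \colon f \comma B \To B^\cattwo$, $\alpha$, and $\rho \colon D \comma g \To D^\cattwo$ from Definition~\ref{defn:companion-conjoint} (the intermediate verticals $f$ and $g$ line up with those of $\alpha$), producing a cell $f \comma B \times E \times D \comma g \To B^\cattwo \times F \times D^\cattwo$ over identities, which is then post-composed with the composition cell $\circ \colon B^\cattwo \times F \times D^\cattwo \To F$ of Lemma~\ref{lem:comp-with-unit}. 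The reverse map $\beta \mapsto \alpha$ first forms $\gamma \defeq \beta \circ (\kappa \times \id_E \times \kappa) \colon A^\cattwo \times E \times C^\cattwo \To F$ over $(f,g)$ by pre-composition with the cells $\kappa$ from Definition~\ref{defn:companion-conjoint}, and then appeals to the composite universal property of $\circ \colon A^\cattwo \times E \times C^\cattwo \To E$ (Lemma~\ref{lem:comp-with-unit}) to factor $\gamma$ uniquely as $\alpha \circ \circ_E$ for a unique $\alpha \colon E \To F$.

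To verify the round-trip $\alpha \mapsto \beta \mapsto \alpha$, I compute $\gamma$ built from the $\beta$ built from $\alpha$: associativity of virtual-double-category composition yields a cell of the form $\circ \bigl((\rho \circ \kappa) \times \alpha \times (\rho \circ \kappa)\bigr)$, which by the snake identities $\rho \circ \kappa = f^\cattwo$ and $\rho \circ \kappa = g^\cattwo$ recorded at the end of Definition~\ref{defn:companion-conjoint} equals $\circ \bigl(f^\cattwo \times \alpha \times g^\cattwo\bigr)$. Lemma~\ref{lem:comp-with-unit-cells} says that this expression, restricted along $\iota \times \id_E \times \iota$, recovers $\alpha$; on the other hand $\alpha \circ \circ_E$ restricted along the same $\iota$'s is also $\alpha$ because $\circ_E \circ (\iota \times \id_E \times \iota) = \id_E$ (an instance of Lemma~\ref{lem:comp-with-unit-cells} applied to $\id_E$). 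Since restriction along $\iota \times \id_E \times \iota$ is injective on cells out of $A^\cattwo \times E \times C^\cattwo$, by the cocartesian universal property of the $\iota$'s from Proposition~\ref{prop:arrows-are-units}, we conclude $\gamma = \alpha \circ \circ_E$, and uniqueness of factorization through the composite $\circ_E$ returns the original $\alpha$.

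The reverse round-trip $\beta \mapsto \alpha \mapsto \beta'$ requires showing $\beta = \beta'$. The identical calculation above shows that $\beta' \circ (\kappa \times \id_E \times \kappa)$ reduces via $\rho \circ \kappa = f^\cattwo$, $\rho \circ \kappa = g^\cattwo$, and Lemma~\ref{lem:comp-with-unit-cells} to the same cell $\gamma = \beta \circ (\kappa \times \id_E \times \kappa)$, so $\beta$ and $\beta'$ agree after restriction along $\kappa$ on both sides. To promote this restricted equality to the full equality $\beta = \beta'$, I invoke the companion snake identity for $f \comma B$ displayed at the end of Definition~\ref{defn:companion-conjoint} and its dual for $D \comma g$; together with the unit-composition cells $\circ \colon f \comma B \times A^\cattwo \To f \comma B$ and $\circ \colon C^\cattwo \times D \comma g \To D \comma g$ from Lemma~\ref{lem:comp-with-unit}, these identities allow one to recover a cell out of $f \comma B \times E \times D \comma g$ from its restriction along $\kappa$ by re-inserting $\rho$'s and composing, showing that the restriction assignment $\beta \mapsto \beta \circ (\kappa \times \id_E \times \kappa)$ is injective.

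The main obstacle throughout is notational bookkeeping: each horizontal composition requires verifying that intermediate vertical boundaries of cells agree, and each re-association invokes the associativity axioms of a virtual double category in the form articulated in Observations~\ref{obs:comp-assoc} and~\ref{obs:comp-cancel}. Conceptually, however, no new input is needed beyond the snake identities of Definition~\ref{defn:companion-conjoint} and the composite structure supplied by Lemmas~\ref{lem:comp-with-unit} and~\ref{lem:comp-with-unit-cells}.
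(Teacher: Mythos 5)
Your proposal is correct and takes essentially the same route as the paper's own proof: the same forward map (flank $\alpha$ with the cells $\rho$ and compose with $\circ\colon B^\cattwo\times F\times D^\cattwo\To F$), the same reverse map (your factorization through the composite $\circ\colon A^\cattwo\times E\times C^\cattwo\To E$ is literally restriction along the $\kappa$'s and $\iota$'s), the vertical identities $\rho\cdot\kappa=f^\cattwo$, $g^\cattwo$ together with Lemma~\ref{lem:comp-with-unit-cells} for one round trip, and the displayed identities of Definition~\ref{defn:companion-conjoint} for the other. The only step you leave implicit is in the injectivity claim for restriction along $\kappa\times\id_E\times\kappa$: carrying out your ``re-insert the $\rho$'s'' recipe requires, before the identities of Definition~\ref{defn:companion-conjoint} can be applied (note $f\comma B$ is the conjoint and $D\comma g$ the companion, so the labels are swapped in your write-up), the intermediate rewriting of $\circ\cdot(\id_{B^\cattwo}\times\beta\times\id_{D^\cattwo})$ as $\beta\cdot(\circ\times\id_E\times\circ)$, justified by the uniqueness clause of the cocartesian cells $\iota$ at $B$ and $D$ --- exactly the paper's middle move --- which is a routine addition rather than a change of method.
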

\begin{proof}
The composite bijection carries the cells $\alpha$ and $\beta$ to the cells displayed on the left and right, respectively:
\[ \hat\alpha \defeq \vcenter{ \xymatrix{ B  \ar[r]|{\mid}^{f \comma B} \ar@{=}[d] & A \ar@{}[dl]|{\Downarrow\rho}  \ar[r]|{\mid}^E \ar[d]_f  \ar@{}[dr]|{\Downarrow\alpha} & C \ar[d]^g  \ar[r]|{\mid}^{D \comma g} & D \ar@{=}[d] \ar@{}[dl]|{\Downarrow\rho} \\ B \ar@{=}[d] \ar[r]|{\mid}^{B^\cattwo} & B \ar[r]|{\mid}^F \ar@{}[dr]|{\Downarrow\circ} & D \ar[r]|{\mid}^{D^\cattwo} & D \ar@{=}[d] \\ B \ar[rrr]|{\mid}_F & & & D}} \qquad\qquad \bar\beta \defeq \vcenter{\xymatrix{   A \ar@{=}[d] \ar@{=}[r] & A \ar@{=}[d] \ar@{}[dl]|{\Downarrow \iota} \ar[r]|{\mid}^E & C \ar@{=}[d] \ar@{=}[r] & C \ar@{=}[d] \ar@{}[dl]|{\Downarrow \iota}\\ A \ar[d]_f \ar[r]|{\mid}^{A^\cattwo} & A \ar@{=}[d] \ar[r]|{\mid}^E  \ar@{}[dl]|{\Downarrow\kappa}& C \ar@{=}[d] \ar[r]|{\mid}^{C^\cattwo} & C \ar[d]^g \ar@{}[dl]|{\Downarrow\kappa} \\ B \ar@{=}[d] \ar[r]|{\mid}^{f \comma B} & A \ar[r]|{\mid}^E\ar@{}[dr]|{\Downarrow\beta} & C \ar[r]|{\mid}^{D \comma g} & D \ar@{=}[d] \\  B \ar[rrr]|{\mid}_F & & &  D}} \]
We have
\[ \bar{\hat{\alpha}} \defeq \vcenter{ \xymatrix@=2em{ A \ar@{=}[d] \ar@{=}[r] & A \ar@{=}[d] \ar@{}[dl]|{\Downarrow \iota} \ar[r]|{\mid}^E & C \ar@{=}[d] \ar@{=}[r] & C \ar@{=}[d] \ar@{}[dl]|{\Downarrow \iota}\\ A \ar[d]_f \ar[r]|{\mid}^{A^\cattwo} & A \ar@{=}[d] \ar[r]|{\mid}^E  \ar@{}[dl]|{\Downarrow\kappa}& C \ar@{=}[d] \ar[r]|{\mid}^{C^\cattwo} & C \ar[d]^g \ar@{}[dl]|{\Downarrow\kappa} \\ B  \ar[r]|{\mid}^{f \comma B} \ar@{=}[d] & A \ar@{}[dl]|{\Downarrow\rho}  \ar[r]|{\mid}^E \ar[d]_f  \ar@{}[dr]|{\Downarrow\alpha} & C \ar[d]^g  \ar[r]|{\mid}^{D \comma g} & D \ar@{=}[d] \ar@{}[dl]|{\Downarrow\rho} \\ B \ar@{=}[d] \ar[r]|{\mid}^{B^\cattwo} & B \ar[r]|{\mid}^F \ar@{}[dr]|{\Downarrow\circ} & D \ar[r]|{\mid}^{D^\cattwo} & D \ar@{=}[d] \\ B \ar[rrr]|{\mid}_F & & & D}} = \vcenter{ \xymatrix@=2em{ A \ar@{=}[d] \ar@{=}[r] & A \ar@{=}[d] \ar@{}[dl]|{\Downarrow \iota} \ar[r]|{\mid}^E & C \ar@{=}[d] \ar@{=}[r] & C \ar@{=}[d] \ar@{}[dl]|{\Downarrow \iota}\\ A \ar[d]_f \ar[r]|{\mid}^{A^\cattwo} & A \ar[d]^f \ar[r]|{\mid}^E \ar@{}[dr]|{\Downarrow\alpha} \ar@{}[dl]|{\Downarrow f^\cattwo}& C \ar[d]_g \ar[r]|{\mid}^{C^\cattwo} & C \ar[d]^g \ar@{}[dl]|{\Downarrow g^\cattwo} \\ B \ar@{=}[d] \ar[r]|{\mid}^{B^\cattwo} & B \ar[r]|{\mid}^F \ar@{}[dr]|{\Downarrow\circ} & D \ar[r]|{\mid}^{D^\cattwo} & D \ar@{=}[d] \\ B \ar[rrr]|{\mid}_F & & & D}}  =  \vcenter{\xymatrix@=2em{ A \ar[d]_f \ar[r]|{\mid}^{E} \ar@{}[dr]|{\Downarrow\alpha} & C \ar[d]^g  \\ B \ar[r]|{\mid}_F & D }}\] 
by applying the companion and conjoint identities and Lemma~\ref{lem:comp-with-unit-cells}.

The other composite is displayed below-left:
\[  \vcenter{\xymatrix@=1.8em{ B \ar@{=}[d] \ar[r]|{\mid}^{f \comma B} &  A \ar@{=}[d] \ar@{=}[r] & A \ar@{=}[d] \ar@{}[dl]|{\Downarrow \iota} \ar[r]|{\mid}^E & C \ar@{=}[d] \ar@{=}[r] & C \ar@{=}[d] \ar@{}[dl]|{\Downarrow \iota} \ar[r]|{\mid}^{D \comma g} & D \ar@{=}[d] \\  B \ar@{=}[d] \ar[r]|{\mid}^{f \comma B} \ar@{}[dr]|{\Downarrow\rho} &  A \ar[d]_f \ar[r]|{\mid}^{A^\cattwo} & A \ar@{=}[d] \ar[r]|{\mid}^E  \ar@{}[dl]|{\Downarrow\kappa}& C \ar@{=}[d] \ar[r]|{\mid}^{C^\cattwo} & C \ar[d]^g \ar@{}[dl]|{\Downarrow\kappa} \ar[r]|{\mid}^{D \comma g} & D \ar@{=}[d] \ar@{}[dl]|{\Downarrow\rho}  \\ B \ar@{=}[d]  \ar[r]|{\mid}^{B^\cattwo} & B \ar@{=}[d] \ar[r]|{\mid}^{f \comma B} & A \ar[r]|{\mid}^E\ar@{}[dr]|{\Downarrow\beta} & C \ar[r]|{\mid}^{D \comma g} & D \ar[r]|{\mid}^{D^\cattwo} \ar@{=}[d] & D \ar@{=}[d] \\  B \ar@{=}[d]  \ar[r]|{\mid}^{B^\cattwo} & B \ar[rrr]|{\mid}_F & \ar@{}[dr]|{\Downarrow\circ} & & D   \ar[r]|{\mid}^{D^\cattwo} & D \ar@{=}[d] \\ B \ar[rrrrr]|{\mid}_F & & &  & & D }} = \vcenter{\xymatrix@=1.8em{ B \ar@{=}[d] \ar[r]|{\mid}^{f \comma B} &  A \ar@{=}[d] \ar@{=}[r] & A \ar@{=}[d] \ar@{}[dl]|{\Downarrow \iota} \ar[r]|{\mid}^E & C \ar@{=}[d] \ar@{=}[r] & C \ar@{=}[d] \ar@{}[dl]|{\Downarrow \iota} \ar[r]|{\mid}^{D \comma g} & D \ar@{=}[d] \\  B \ar@{=}[d] \ar[r]|{\mid}^{f \comma B} \ar@{}[dr]|{\Downarrow\rho} &  A \ar[d]_f \ar[r]|{\mid}^{A^\cattwo} & A \ar@{=}[d] \ar[r]|{\mid}^E  \ar@{}[dl]|{\Downarrow\kappa}& C \ar@{=}[d] \ar[r]|{\mid}^{C^\cattwo} & C \ar[d]^g \ar@{}[dl]|{\Downarrow\kappa} \ar[r]|{\mid}^{D \comma g} & D \ar@{=}[d] \ar@{}[dl]|{\Downarrow\rho}  \\ B \ar@{=}[d]  \ar[r]|{\mid}^{B^\cattwo} & B \ar@{}[d]|{\Downarrow\circ}  \ar[r]|{\mid}^{f \comma B} & A \ar@{=}[d]  \ar[r]|{\mid}^E & C \ar@{=}[d] \ar[r]|{\mid}^{D \comma g} & D \ar[r]|{\mid}^{D^\cattwo} \ar@{}[d]|{\Downarrow\circ} & D \ar@{=}[d] \\  B \ar@{=}[d]  \ar[rr]|{\mid}^{f \comma B} & & A \ar[r]|{\mid}^E  \ar@{}[dr]|{\beta} & C    \ar[rr]|{\mid}^{D\comma g} &  & D \ar@{=}[d] \\ B \ar[rrrrr]|{\mid}_F & & &  & & D }} \]
The composite of the cells in the bottom two rows in the figure on the left equals the composite of the cells in the bottom two rows in the figure on the right because both compose with the unit cells $\iota$ for $B^\cattwo$ and $D^\cattwo$ to $\beta$. Applying the conjoint identities to the right-hand figure and the definition \eqref{eq:comp-with-unit} of the cells $\circ$ in Lemma~\ref{lem:comp-with-unit}, we recover $\beta$. 

Vertically bisecting these constructions, one obtains the one-sided versions of these bijections with the cells displayed in the middle column of the statement.
\end{proof}

We frequently apply Theorem~\ref{thm:companion-conjoint} in an alternate form enabled by Proposition~\ref{prop:cartesian-pullback-cells}:

\begin{cor}\label{cor:companion-conjoint}
For any modules $E \colon A \prof C$ and $F \colon B \prof D$ and functors $f \colon A \to B$ and $g \colon C \to D$ there are bijections between cells
\[ \xymatrix{ A \ar@{=}[d] \ar[r]|{\mid}^{E} \ar@{}[dr]|{\Downarrow\alpha} & C \ar@{=}[d] & \ar@{}[d]|{\displaystyle\leftrightsquigarrow} & B \ar@{=}[d] \ar[r]|{\mid}^{f \comma B} & A \ar[r]|{\mid}^E\ar@{}[dr]|{\Downarrow\beta} & C \ar[r]|{\mid}^{D \comma g} & D \ar@{=}[d] \\ A \ar[r]|{\mid}_{F(g,f)} & C & & B \ar[rrr]|{\mid}_F & & &  D}\] 
\end{cor}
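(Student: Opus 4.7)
The plan is to obtain the stated bijection as the composite of two bijections already established. The first is Proposition~\ref{prop:cartesian-pullback-cells}, which, applied to the pullback module $F(g,f)$, exhibits a bijection between cells $\alpha \colon E \To F(g,f)$ over identities and cells $E \To F$ over the pair of functors $f \colon A \to B$ and $g \colon C \to D$, implemented by postcomposition with the cartesian cell $\rho \colon F(g,f) \To F$. The second is the bijection of Theorem~\ref{thm:companion-conjoint} between cells $E \To F$ over $f,g$ and cells $\beta \colon (f \comma B) \times E \times (D \comma g) \To F$ over identities, implemented by composing with the canonical cells $\kappa,\rho$ associated with the companion and conjoint structures together with the composition cell for the unit modules on $B$ and $D$.

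Concretely, I would just remark that pasting these two bijections together, and invoking the fact that the composite of bijections is a bijection, immediately yields the desired correspondence. No further calculations are required, since both component bijections are already in hand.

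The only substantive content worth recording is the explicit description of one direction of the resulting bijection: to pass from $\alpha$ to $\beta$, one first applies the Theorem~\ref{thm:companion-conjoint} construction to the cell $\rho \circ \alpha \colon E \To F$ over $f,g$; equivalently, one may compose $\alpha$ with the cell
\[ \vcenter{\xymatrix{ B \ar@{=}[d] \ar[r]|{\mid}^{f \comma B} \ar@{}[dr]|{\Downarrow} & A \ar[r]|{\mid}^{F(g,f)} \ar@{=}[d] & C \ar[r]|{\mid}^{D \comma g} \ar@{=}[d] \ar@{}[dr]|{\Downarrow} & D \ar@{=}[d] \\ B \ar[rrr]|{\mid}_F & & & D}}\]
assembled from $\kappa, \rho$ and the composition cells for the unit modules, as in Definition~\ref{defn:companion-conjoint} and Lemma~\ref{lem:comp-with-unit}. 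There is no genuine obstacle here; the content of the corollary is purely that combining Proposition~\ref{prop:cartesian-pullback-cells} with Theorem~\ref{thm:companion-conjoint} yields a more symmetric reformulation of the latter which will be convenient in the applications to come.
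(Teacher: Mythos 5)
Your proposal is correct and matches the paper exactly: the corollary is stated there without proof, introduced only by the remark that it is Theorem~\ref{thm:companion-conjoint} ``in an alternate form enabled by Proposition~\ref{prop:cartesian-pullback-cells},'' i.e., precisely the composite of the two bijections you describe. Your explicit description of the $\alpha\mapsto\beta$ direction via the cartesian cell $\rho$ for $F(g,f)$ is a harmless (and accurate) elaboration beyond what the paper records.
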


Our aim now is to prove that certain composites involving represented modules exist. Several of these proofs will take advantage of the following lemma.

\begin{lem}\label{lem:fibred-adjoint-reflection}
Consider a cell $\mu \colon E_1 \times \cdots \times E_n \To E$, where $E \colon A \prof B$ is a module from $A$ to $B$, and choose a representing map of spans
\[ \xymatrix{ {}\save[]*{E_1 \times \cdots \times E_n }  \ar[rr]^-m \ar@{->>}[dr] \restore & & E \ar@{->>}[dl]  \\ & A \times B}\]  If $m$ admits an adjoint over $A \times B$, then $\mu$ exhibits $E$ as a composite of the sequence $E_1,\ldots, E_n$.
\end{lem}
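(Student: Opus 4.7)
The plan is to apply the strategy articulated in Observation~\ref{obs:comp-proof-strategy}: it suffices to show that for any composable sequences $F_1,\ldots,F_k$ and $G_1,\ldots,G_\ell$ of modules with boundaries matching $A$ and $B$ appropriately, and any module $H \colon B' \prof C'$ spanning their outer boundaries, restriction along $\mu$ induces an equivalence of mapping quasi-categories
\[ \map_{B'\times C'}(F \pbtimes{A} E \pbtimes{B} G, H) \we \map_{B'\times C'}(F \pbtimes{A} E_{\bullet} \pbtimes{B} G, H), \]
where $F \tfib B' \times A$, $G \tfib B \times C'$, and $E_{\bullet} \defeq E_1 \pbtimes{A_1}\cdots\pbtimes{A_{n-1}} E_n \tfib A \times B$ abbreviate the pullback-composites of the three sequences in question. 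Passing to isomorphism classes of objects will then yield the required bijections on cells. The cases where $F$ or $G$ arises from an empty sequence are handled by taking it to be the relevant identity span, exactly as in the proof of Proposition~\ref{prop:arrows-are-units}.

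To construct this equivalence, note that the hypothesis supplies, without loss of generality, a right adjoint $r$ with $m \dashv r$ (the case where $m$ is a right adjoint is entirely dual) in $\lcat{K}_2/A\times B$, whose unit and counit are 2-cells fibered over $A \times B$. By Corollary~\ref{cor:slice-2-cat-comparison}\ref{itm:slice-2-cat-comparison-adj}, this adjunction lifts to the homotopy 2-category $(\lcat{K}/A\times B)_2$ of the sliced $\infty$-cosmos. Now pullback stability of fibered adjunctions (see Remark~\ref{rmk:adjunctions-pullback}) permits us to pull back this adjunction first along $F \tfib B' \times A$ inside the sliced $\infty$-cosmos $\lcat{K}/A$, and then along $G \tfib B \times C'$ inside $\lcat{K}/B$, yielding a fibered adjunction
\[ F \pbtimes{A} m \pbtimes{B} G \dashv F \pbtimes{A} r \pbtimes{B} G \]
between $F \pbtimes{A} E_{\bullet} \pbtimes{B} G$ and $F \pbtimes{A} E \pbtimes{B} G$ in $(\lcat{K}/B' \times C')_2$, whose unit and counit are 2-cells over $B' \times C'$.

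Finally, because $H$ is a groupoidal object of $\lcat{K}_2/B' \times C'$, every 2-cell with codomain $H$ in that slice 2-category is invertible. Consequently, the representable 2-functor $\map_{B' \times C'}(-,H)\colon (\lcat{K}/B' \times C')_2\op \to \qCat_2$ carries the fibered unit and counit of the preceding adjunction to isomorphisms, and hence sends the adjunction to an adjoint equivalence of quasi-categories of exactly the form displayed above. The main point requiring care is the two successive pullback transfers of the adjunction $m \dashv r$, but these proceed in precisely the manner employed in the final paragraph of the proof of Proposition~\ref{prop:arrows-are-units}, so the argument is essentially a parametrised version of that special case.
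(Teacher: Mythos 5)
Your proof is correct and follows essentially the same route as the paper: invoking the strategy of Observation~\ref{obs:comp-proof-strategy}, transferring the fibred adjunction for $m$ to one over the outer product by pullback (Remark~\ref{rmk:adjunctions-pullback}) and composition, and then using groupoidalness of $H$ to turn it into an adjoint equivalence under $\map(-,H)$ before passing to isomorphism classes. The only difference is cosmetic --- you organise the transfer as two successive pullbacks in the slices over $A$ and $B$, whereas the paper pulls back once along $p\times s\colon F\times G\to A\times B$ and then composes with $q\times r$ --- so no further comment is needed.
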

\begin{proof}
We will employ the proof strategy outlined in Observation~\ref{obs:comp-proof-strategy}. Given isofibrations $(q,p) \colon F \tfib \bar{A} \times A$ and $(s,r) \colon G \tfib B \times \bar{B}$ defined as pullbacks of finite composable sequences of modules, we use Remark~\ref{rmk:adjunctions-pullback} to pull back the adjunction over $A \times B$ along $p \times s \colon F \times G \to A \times B$. Then composing with $q \times r \colon F \times G \tfib \bar{A} \times \bar{B}$, we obtain an adjunction over $\bar{A} \times \bar{B}$.

For any module $H \colon \bar{A} \prof \bar{B}$, the 2-functor $\map_{\bar{A} \times \bar{B}}(-,H) \colon \lcat{K}_2/\bar{A} \times \bar{B} \to \qCat_2$ transforms this adjunction into an adjoint equivalence: the isofibration $H \tfib \bar{A} \times \bar{B}$ is a groupoidal object in $\lcat{K}_2/\bar{A} \times \bar{B}$ and thus the unit and counit 2-cells map to isomorphisms. Passing to isomorphism classes of objects, the equivalence
\[ \xymatrix@C=40pt{ \map_{\bar{A} \times \bar{B}}( F \times E \times G,H)  \ar[r]^-{(F \times m \times G)^*}_-{\simeq} & \map_{\bar{A} \times \bar{B}}(F \times E_1\times \cdots \times E_n \times G,H)}\] induces the required bijection between cells in $\MMod{K}$.
\end{proof}

\begin{lem}\label{lem:one-sided-rep-comp} For any module $E \colon A \prof B$ and functor $g \colon C \to A$, the composite $A \comma g \otimes_A E$ exists and is given by $E(1,g) \colon C \prof B$, the pullback of $(q,p) \colon E \tfib A \times B$ along $g \times B$. 
\end{lem}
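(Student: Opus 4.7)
The plan is to apply Lemma~\ref{lem:fibred-adjoint-reflection}: it suffices to exhibit a map of spans $m \colon A \comma g \pbtimes{A} E \to E(1, g)$ over $C \times B$ admitting an adjoint in $\lcat{K}_2/C \times B$, for then the cell $\mu$ it represents will be cocartesian and display $E(1,g)$ as the required composite $A \comma g \otimes_A E$.

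First I would construct $m$ using the cocartesian fibration structure of $E$ on the left. Restricting the canonical comma 2-cell $\phi \colon p_0 \To g p_1$ on $A \comma g$ along the pullback projection $\pi_{A \comma g} \colon A \comma g \pbtimes{A} E \to A \comma g$ yields a 2-cell $\phi \pi_{A \comma g} \colon q \pi_E \To g p_1 \pi_{A \comma g}$. By the dual of Lemma~\ref{lem:two-sided-groupoidal-cells}, this 2-cell lifts to a cocartesian 2-cell $\chi \colon \pi_E \To e'$ with $p\chi$ an identity; the codomain $e'$ then projects under $(q, p)$ through $g \times 1 \colon C \times B \to A \times B$, and the pullback property of $E(1, g)$ induces the desired $m$ satisfying $\rho m = e'$, where $\rho \colon E(1, g) \to E$ is the pullback projection.

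Next I would construct the adjoint $n \colon E(1, g) \to A \comma g \pbtimes{A} E$ using the right adjoint right inverse $t \colon C \to A \comma g$ of $p_1$ furnished by Lemma~\refI{lem:technicalsliceadjunction}, which satisfies $p_0 t = g$ and $\phi t = \id_g$ and whose unit $\eta \colon \id_{A \comma g} \To t p_1$ projects under $p_0$ to $\phi$. The maps $t \pi_C \colon E(1,g) \to A \comma g$ and $\rho$ agree on their projections to $A$ (both equal $g \pi_C$), so the pullback property of $A \comma g \pbtimes{A} E$ induces $n$.

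Finally I would verify that $m \dashv n$ in $\lcat{K}_2/C \times B$. For the counit, $\chi n \colon \rho \To \rho m n$ is a cocartesian lift of the identity 2-cell $\phi t \pi_C = \id_{g \pi_C}$ (by pre-composition stability of cocartesian cells), hence an isomorphism in $E$; transferring it through the pullback $E(1, g)$ and using that $E(1, g)$ is groupoidal in $\lcat{K}_2/C \times B$ yields $mn \cong \id_{E(1,g)}$ over $C \times B$. The unit $\id \To nm$ is induced componentwise by $\eta \pi_{A \comma g}$ and $\chi$, which are compatible since $p_0 (\eta \pi_{A \comma g}) = \phi \pi_{A \comma g} = q \chi$. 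The main obstacle will be the careful bookkeeping required to track the various projections to $C$, $A$, and $B$, in order to confirm that the resulting unit and counit genuinely live in the slice $\lcat{K}_2/C \times B$, as Lemma~\ref{lem:fibred-adjoint-reflection} demands.
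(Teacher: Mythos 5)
Your proposal is correct in outline and funnels through the same key step as the paper, namely Lemma~\ref{lem:fibred-adjoint-reflection}, but you manufacture the required fibred adjunction differently. The paper never constructs a unit or counit by hand: it observes (the dual of Lemma~\ref{lem:sliced-adjunction-on-the-right}) that $i \colon E \to q \comma A$ admits a left adjoint $\ell$ over $A \times B$, uses Lemma~\ref{lem:commas-pullback} and pullback pasting to identify the pullback of $q \comma A$ along $g \times B$ with $q \comma g \cong A \comma g \pbtimes{A} E$, and then simply pulls the adjunction $\ell \dashv i$ back along $g \times B$ via Remark~\ref{rmk:adjunctions-pullback} to obtain $\ell' \dashv i'$ over $C \times B$, after which Lemma~\ref{lem:fibred-adjoint-reflection} finishes the argument. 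Your explicit construction of $m$ as a $q$-cocartesian lift of $\phi\pi_{A\comma g}$ with identity $p$-component, and of $n$ from the right adjoint right inverse $t$ of $p_1 \colon A \comma g \to C$ supplied by \refI{lem:technicalsliceadjunction}, produces (up to isomorphism) exactly the maps $\ell'$ and $i'$, so your route is a legitimate, more hands-on version of the same argument, close in spirit to the proof of Lemma~\ref{lem:rep-mod-char}; what the paper's route buys is that no unit, counit, or triangle identities ever have to be exhibited.

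The one point to repair is the claim ``$m \dashv n$'': you produce a candidate unit $\id \To nm$ by 2-cell induction and an invertible comparison $mn \cong \id$ over $C \times B$ (note also that $\chi n$ naturally points from $\rho$ to $\rho m n$, so it is the inverse of your counit), but a unit and counit chosen independently need not satisfy the triangle identities, and Lemma~\ref{lem:fibred-adjoint-reflection} as stated asks for a genuine fibred adjoint. This is not fatal: either verify the triangles using the cocartesian universal property of $\chi$ and the conservativity of 2-cell induction for the simplicial pullbacks, or observe, as the paper does in the proof of Lemma~\ref{lem:product-exact-square}, that a zig-zag of fibred 2-cells connecting $nm$ and $mn$ to the identities already suffices, since such fibred cells are inverted upon mapping into any module (a groupoidal object of the slice) --- which is all that the proof of Lemma~\ref{lem:fibred-adjoint-reflection} actually uses.
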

\begin{proof}
By Lemma~\ref{lem:sliced-adjunction-on-the-right}, the functor $i \colon E \to q \comma A$ admits a left adjoint $\ell$ over $A \times B$. By Remark~\ref{rmk:adjunctions-pullback}, $\ell \dashv i$ pulls back along $g \times B$ to define an adjunction 
\[ \adjdisplay \ell' -| i' : E(1,g) -> q \comma g .\] over $C \times B$. Here we use familiar composition and cancelation results for simplicial pullbacks to form a diagram of pullback squares and rectangles
\[ \xymatrix{ q \comma g \ar@{->>}[d] \pbexcursion \ar[r]  & q \comma A \ar@{->>}[d]_{(p_1,p_0)} \pbexcursion \ar[r] & A^\cattwo \ar@{->>}[d]^{(p_1,p_0)}  \\ C \times E \pbexcursion \ar@{->>}[d]_{C \times p} \ar[r]^{g \times E} & A \times E \ar@{->>}[d]^{A \times p} \ar[r]_{A \times q} & A \times A \\ C \times B \ar[r]_{g \times B} & A \times B} \] allowing us to recognize the pullback of $q \comma A$ along $g \times B$ as the module $q \comma g$.

The simplicial pullback diagram of Lemma~\ref{lem:commas-pullback}
\[ \xymatrix@!=5pt{ & & q \comma g \pbdiamond \ar@{->>}[dl]_-{\pi_1} \ar@{->>}[dr]^-{\pi_0} \\  & A \comma g \ar@{->>}[dl]_{p_1} \ar@{->>}[dr]^{p_0} &  & E \ar@{->>}[dl]_q \ar@{->>}[dr]^p \\ C  & & A & & B}\] reveals that $q \comma g$ is the horizontal composite of the isofibrations $(p_1,p_0) \colon A \comma g \tfib C \times A$ and $(q,p)\colon E \tfib A \times B$. Applying Lemma~\ref{lem:fibred-adjoint-reflection}, the left adjoint $\ell' \colon A \comma g \pbtimes{A} E \to E(1,g)$ over $C \times B$ represents a binary cell $A \comma g \pbtimes{A} E \To E(1,g)$ that exhibits $E(1,g)$ as the composite $A \comma g \otimes E$, as claimed.
\end{proof}

\begin{obs}
Unpacking the proof of Lemma~\ref{lem:one-sided-rep-comp}, the composition cell $\mu \colon A \comma g \times E \To E(1,g)$ represented by the map $\ell'$  is defined in the following pasting diagram via  the universal property of the cartesian cell defining the pullback $E(1,g)$:
\[ \vcenter{\xymatrix{ C \ar[d]_g \ar[r]|\mid^{A \comma g} \ar@{}[dr]|{\Downarrow\rho} & A \ar@{=}[d] \ar[r]|\mid^E & B \ar@{=}[d] \\ A \ar[r]|\mid_{A^\cattwo} \ar@{=}[d] & A \ar[r]|\mid_E \ar@{}[d]|{\Downarrow\circ} & B \ar@{=}[d]  \\ A \ar[rr]|\mid_{E} & & B}} = \vcenter{\xymatrix{ C \ar@{=}[d] \ar[r]|\mid^{A \comma g}  & A \ar@{}[d]|{\Downarrow\mu} \ar[r]|\mid^E & B \ar@{=}[d] \\ C \ar[rr]|\mid_{E(1,g)} \ar[d]_g & \ar@{}[d]|{\Downarrow\rho} & B \ar@{=}[d]  \\ A \ar[rr]|\mid_{E} & & B}}
\] 
\end{obs}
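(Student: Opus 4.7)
The plan is to verify the displayed pasting identity as a direct consequence of the construction of $\mu$ in the proof of Lemma~\ref{lem:one-sided-rep-comp}. Both composite cells in the equation are cells from the pullback span $A \comma g \pbtimes{A} E$ to the module $E$ over the pair of vertical functors $g \colon C \to A$ and $\id_B \colon B \to B$. By the cartesian universal property established in Proposition~\ref{prop:cartesian-pullback-cells}, such cells correspond bijectively with cells $A \comma g \pbtimes{A} E \To E(1,g)$ over identities via pasting below with $\rho \colon E(1,g) \To E$. Thus the asserted identity is equivalent to the statement that $\mu$ is the unique such factorization of the left-hand composite, and our task reduces to exhibiting a common map-of-spans representative.

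First I would identify explicit representatives for the cells in play. The cartesian cell $\rho \colon E(1,g) \To E$ is represented by the top horizontal map in the pullback square~\eqref{eq:simplicial-pullback-of-module}. The cartesian cell $\rho \colon A \comma g \To A^\cattwo$ is represented by the map $A \comma g \to A^\cattwo$ arising from the simplicial pullback~\eqref{eq:comma-as-simp-pullback}. Under the identification $q \comma A \cong A^\cattwo \pbtimes{A} E$ supplied by Lemma~\ref{lem:commas-pullback}, the binary composition cell $\circ \colon A^\cattwo \pbtimes{A} E \To E$, extracted from the ternary version of Lemma~\ref{lem:comp-with-unit} by restricting along the unit cell for $B^\cattwo$, is represented by the left adjoint $\ell \colon q \comma A \to E$ to $i \colon E \to q \comma A$ over $A \times B$. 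This last identification is a relativized instance of the argument in Proposition~\ref{prop:arrows-are-units}: the fibered adjunction $\ell \dashv i$ implements the bijection defining $\circ$, under which the identity cell on $E$ (represented by $\id_E$) pulls back to $\circ$ (represented by $\ell$).

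Second I would trace the composite of these representatives around the left-hand side of the pasting equation. Horizontally pairing the projection $A \comma g \to A^\cattwo$ with the identity on $E$ over $A$ and then applying $\ell$ yields a map of spans
\[
A \comma g \pbtimes{A} E \longrightarrow A^\cattwo \pbtimes{A} E \cong q \comma A \xrightarrow{\ell} E
\]
over $g \times \id_B$, which represents the left-hand composite cell. Now recall from the proof of Lemma~\ref{lem:one-sided-rep-comp} that $q \comma g$ sits in a simplicial pullback square exhibiting it simultaneously as the pullback of $q \comma A$ along $g \times B$ and, via Lemma~\ref{lem:commas-pullback}, as the horizontal composite $A \comma g \pbtimes{A} E$; moreover $\ell' \colon q \comma g \to E(1,g)$ is defined as the pullback of $\ell$ along $g \times B$. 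Consequently the composite of $\ell'$ with the pullback projection $E(1,g) \to E$ coincides, under the identification $q \comma g \cong A \comma g \pbtimes{A} E$, with the representative of the left-hand composite obtained above. Since this second composite represents $\rho \cdot \mu$ (the right-hand side of the equation), the desired identity follows.

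The main obstacle is purely bookkeeping: one must carefully track the several simplicial pullback identifications in play — notably $q \comma g \cong A \comma g \pbtimes{A} E$ and the defining pullback of $E(1,g)$ along $g \times B$ — and verify that the binary composition cell $\circ$ really is represented by $\ell$ in the relevant sense. Once these representatives are fixed, the verification is a single diagram chase of maps of spans over $C \times B$.
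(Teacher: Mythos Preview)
Your verification is correct. The paper states this observation without proof, so there is nothing to compare against; your argument correctly supplies the details. The key identifications you make---that $\circ \colon A^\cattwo \pbtimes{A} E \To E$ is represented by $\ell$ (since the counit $\ell i \Rightarrow \id_E$ becomes an isomorphism when mapped into the groupoidal object $E$ over $A \times B$), and that the pullback square defining $\ell'$ yields $\rho_{E(1,g)} \circ \ell' = \ell \circ (q\comma g \to q\comma A)$---are exactly what is needed, and the diagram chase goes through as you describe.
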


Dually, for any functor $f \colon D \to B$, the composite $E \otimes_B f\comma B$ exists in $\Mod{A}{D}$ and equals $E(f,1)$, the pullback of $(q,p) \colon E \tfib A \times B$ along $A \times f$. Via Observation~\ref{obs:comp-assoc}, these results combine to prove:

\begin{cor}\label{cor:two-sided-rep-comp} For any module $E \colon A \prof B$ and pair of functors $g \colon C \to A$ and $f \colon D \to B$, the composite $A\comma g \otimes_A E \otimes_B f \comma B$ exists and is given by  $E(f,g) \colon C \prof D$, the pullback of $(q,p) \colon E \tfib A \times B$ along $g \times f \colon C \times D \to A \times B$. \qed
\end{cor}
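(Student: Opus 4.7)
The plan is to derive the ternary composite by iterating the binary composition result of Lemma~\ref{lem:one-sided-rep-comp} and then invoking the associativity principle stated in Observation~\ref{obs:comp-assoc}. First I would apply Lemma~\ref{lem:one-sided-rep-comp} directly to the module $E \colon A \prof B$ and the functor $g \colon C \to A$ to obtain a binary composition cell
\[ \mu_L \colon A \comma g \times_A E \To E(1,g) \]
exhibiting $E(1,g)$ as the composite $A \comma g \otimes_A E$. Dually (applying the analogous lemma referenced immediately after Lemma~\ref{lem:one-sided-rep-comp} in $\lcat{K}\co$, interchanging the roles of cartesian and cocartesian) I would produce a binary composition cell
\[ \mu_R \colon E(1,g) \times_B f \comma B \To E(1,g)(f,1) \]
exhibiting $E(1,g)(f,1)$ as the composite $E(1,g) \otimes_B f \comma B$.

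Next I would identify the iterated pullback with the simultaneous pullback: standard composition and cancellation of simplicial pullback squares shows that $E(1,g)(f,1)$, obtained by first pulling $(q,p) \colon E \tfib A \times B$ back along $g \times B$ and then along $C \times f$, is canonically isomorphic to $E(f,g)$, the pullback of $(q,p)$ along $g \times f \colon C \times D \to A \times B$. Under this identification $\mu_R$ becomes a cell $E(1,g) \times_B f \comma B \To E(f,g)$.

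Finally I would invoke Observation~\ref{obs:comp-assoc} applied to the sequence of binary composites: since $\mu_L$ exhibits $E(1,g)$ as a composite of $A \comma g$ and $E$, and the identity cell trivially exhibits $f \comma B$ as a composite of itself, we may horizontally combine these to produce
\[ \mu_L \times f \comma B \colon A \comma g \times_A E \times_B f \comma B \To E(1,g) \times_B f \comma B. \]
Composing this with $\mu_R$ (using the identification with $E(f,g)$ above), Observation~\ref{obs:comp-assoc} guarantees that the resulting ternary cell
\[ A \comma g \times_A E \times_B f \comma B \To E(f,g) \]
exhibits $E(f,g)$ as the composite $A \comma g \otimes_A E \otimes_B f \comma B$ in the virtual equipment $\MMod{K}$.

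No step presents a real obstacle here: the only mild subtlety is the pullback-pasting identification $E(1,g)(f,1) \cong E(f,g)$, which is routine. The substantive content has already been absorbed into Lemma~\ref{lem:one-sided-rep-comp} (and its dual) via the fibred adjoint argument of Lemma~\ref{lem:fibred-adjoint-reflection}, so the corollary really is just a formal combination of these binary results through Observation~\ref{obs:comp-assoc}.
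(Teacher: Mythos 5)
Your proposal is correct and follows essentially the same route as the paper: the paper likewise deduces the corollary by combining Lemma~\ref{lem:one-sided-rep-comp} with its stated dual ($E \otimes_B f\comma B \simeq E(f,1)$) via the associativity principle of Observation~\ref{obs:comp-assoc}, with the pullback-pasting identification of the iterated pullback with $E(f,g)$ left implicit. Your choice to compose with $A\comma g$ first and then with $f\comma B$ (using that $E(1,g)$ is again a module, by Proposition~\ref{prop:two-sided-pullback}) is just the symmetric instantiation of the same argument.
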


\begin{ex}\label{ex:comp-of-rep} In particular, for any functors $A \xrightarrow{f} B \xrightarrow{g} C$, the cell $B \comma f \times_B C \comma g \To C \comma gf$ encoded by the pasting diagram
\[ \vcenter{\xymatrix@!=5pt{   && B \comma f \times_B C \comma g \pbdiamond \ar[dl] \ar[dr] \\ & B \comma f \ar[dl] \ar[dr] \ar@{}[d]|(.6){\Leftarrow} & &  C \comma g \ar[dl] \ar[dr] \ar@{}[d]|(.6){\Leftarrow}   \\  A \ar[rr]_f & & B \ar[rr]_g  & & C}} \]
 displays $C \comma gf$ as the composite $B \comma f \otimes_B C \comma g$.
\end{ex}

\begin{ex}\label{ex:two-sided-comma-as-composite} For any cospan $C \xrightarrow{g} A \xleftarrow{f} B$,  by  Corollary~\ref{cor:two-sided-rep-comp} the composite $A \comma g \otimes_A f \comma A$ is given by the module $f \comma g\colon C \prof B$. Under the interpretation of Observation~\ref{obs:cells-above-a-comma}, the cell $m \colon A \comma g \times_A f \comma A \To f \comma g$ witnessing the composite is encoded by the map of spans defined by the following pasting equality:
\[ \vcenter{\xymatrix@!=5pt{   && A \comma g \times_A f \comma A \pbdiamond \ar[dl] \ar[dr] \\ & A \comma g \ar[dl] \ar[dr] \ar@{}[d]|(.6){\Leftarrow} & &  f \comma A \ar[dl] \ar[dr] \ar@{}[d]|(.6){\Leftarrow}   \\  C \ar[rr]_g & & A  & & B \ar[ll]^f }} \qquad = \qquad \vcenter{\xymatrix@!0@=30pt{ & A \comma g \pbtimes{A} f \comma A \ar@/^2ex/[ddr] \ar@/_2ex/[ddl] \ar[d]^m \\ & f \comma g \ar[dl] \ar[dr] \ar@{}[dd]|{\Leftarrow} \\ C \ar[dr]_g & & B \ar[dl]^f \\ & A}}\]

In the context of Observation~\ref{obs:cells-above-a-comma}, if the above left pasting diagram appears as part of a 2-cell representing a multimap whose domain includes the product $A \comma g \times_A f \comma A$, then the corresponding multimap whose domain substitutes $f \comma g$ replaces this 2-cell by the canonical 2-cell displayed above right, with the map $m$ omitted.
\end{ex}

\begin{lem}\label{lem:module-as-rep-composite} Any module  $E \colon A \prof B$, encoded by an isofibration $(q,p) \colon E \tfib A \times B$,  can be regarded as a composite $E \cong q \comma A \otimes_E B \comma p$ of representable modules. More generally for any span $A \xleftarrow{g} X \xrightarrow{f} B$, not necessarily even comprised of isofibrations, there is a bijection between  cells whose horizontal domain is comprised of a list of spans, one component being $X$, and whose horizontal codomain is a module whose horizontal domain contains  one additional variable, with $g \comma A \times_X B \comma f$ in place of $X$.
\end{lem}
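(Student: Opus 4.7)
My plan is to prove the general bijection (the second clause) first, and obtain the first clause as a specialisation by taking $X = E$, $g = q$, $f = p$. The bijection is implemented by restriction along the canonical functor
\[ c \colon X \to g\comma A \pbtimes{X} B\comma f \]
over $A\times B$, determined via the 1-cell induction property of the comma objects (Recollection~\ref{rec:comma}) applied to the pair of identity 2-cells on $g$ and $f$. Concretely, $c$ corresponds under this universal property to the pair $\id_{g} \colon g \To g$ and $\id_f \colon f \To f$, and projects identically onto the common middle object $X$ along $p_0 \colon g\comma A \to X$ and $p_1 \colon B\comma f \to X$.

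The main step is to show that for any module $F \colon A \prof B$, precomposition with $c$ induces an equivalence of quasi-categories
\[ c^* \colon \map_{A\times B}(g\comma A \pbtimes{X} B\comma f, F) \xrightarrow{\simeq} \map_{A\times B}(X, F). \]
Since cells in the virtual equipment are defined as isomorphism classes of such spans, passing to $\tau_0$ then yields the required bijection. The equivalence itself is a two-step Yoneda reduction. Using Lemma~\ref{lem:module-map-pullback-equivalence} to replace the ambient base by a pullback of $F$, I first apply the dual of the Yoneda lemma (Proposition~\ref{prop:two-sided-yoneda}) to the functor $g\colon X\to A$, giving an equivalence $\map_{A\times X}(g\comma A, F(f,1)) \simeq \map_{A\times X}(X, F(f,1))$ via restriction along $j_g \colon X \to g\comma A$. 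Then I apply the lemma again to $f\colon X\to B$, giving an equivalence $\map_{X\times B}(B\comma f, F(1,g)) \simeq \map_{X\times B}(X, F(1,g))$ via restriction along $j_f \colon X \to B\comma f$. Composing these two equivalences through the pullback decomposition of $g\comma A \pbtimes{X} B\comma f$ reproduces precisely restriction along $c$. The generalisation to cells with further spans in the horizontal domain follows by pulling back the outer composable sequence and repeating the argument verbatim.

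For the first clause, apply the bijection to $X = E$, $g = q$, $f = p$, with target module $F = E$. The identity cell $\id_E$ on the $X$-side corresponds under the bijection to a binary cell $\mu \colon q\comma A \times B\comma p \To E$ in $\MMod{K}$. The bijection, stated in its full multi-source form, is itself the defining universal property of a composite: any cell into any module $F$ from a composable sequence containing $q\comma A, B\comma p$ factors uniquely through $\mu$ by replacing this subsequence with $E$. Hence $\mu$ exhibits $E$ as the composite $q\comma A \otimes_E B\comma p$.

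The hard part of the argument is organising the two iterated Yoneda applications so that they match, on the nose, restriction along the single canonical map $c$. The pullback $g\comma A \pbtimes{X} B\comma f$ needs to be decomposed in a way compatible with both Yoneda reductions; I expect to do this by observing that $g\comma A \pbtimes{X} B\comma f \cong B\comma f \pbtimes{X \times B} (g \times B)^* F \pbtimes{A \times X} g\comma A$ (or a similar rewriting) arising from the simplicial-pullback models recorded in Lemma~\ref{lem:commas-pullback}, and tracking the induced adjunctions from Lemma~\ref{lem:sliced-adjunction-on-the-right} through each factor.
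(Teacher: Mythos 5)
There is a genuine gap, and it sits exactly where you flag the ``hard part.'' The two instances of the Yoneda lemma you invoke --- Proposition~\ref{prop:two-sided-yoneda} applied to $g\colon X\to A$ and to $f\colon X\to B$ separately --- compute $\map_{A\times X}(g\comma A,-)$ and $\map_{X\times B}(B\comma f,-)$, but neither computes maps out of the glued span $g\comma A\pbtimes{X}B\comma f$, and they do not compose: the intermediate equivalence you would need, namely that restriction along $g\comma A\to g\comma A\pbtimes{X}B\comma f$ induces an equivalence on maps into modules over $A\times B$, is not an instance of Proposition~\ref{prop:two-sided-yoneda} as stated, and your proposed rewriting $g\comma A\pbtimes{X}B\comma f\cong B\comma f\pbtimes{X\times B}(g\times B)^*F\pbtimes{A\times X}g\comma A$ is not even type-correct (the left side does not involve $F$). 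This particular step can be repaired --- e.g.\ by observing $g\comma A\pbtimes{X}B\comma f\cong B\comma(fp_0)$ and applying the Yoneda lemma to the composite functor $fp_0\colon g\comma A\to B$, then the dual Yoneda for $g$ --- but as written the binary equivalence $c^*$ is not established.

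The more serious gap is the multi-source case, which you dismiss with ``pulling back the outer composable sequence and repeating the argument verbatim.'' The lemma's content (and what the first clause needs, since a composite is by definition cocartesian against \emph{all} cells) is an equivalence of the form $\map(F\pbtimes{A}(g\comma A\pbtimes{X}B\comma f)\pbtimes{B}G,H)\simeq\map(F\pbtimes{A}X\pbtimes{B}G,H)$ with additional factors $F$, $G$ glued over $A$ and $B$ and the target $H$ a module over the outer endpoints. An equivalence of mapping quasi-categories over $A\times B$ induced by the non-invertible map $c$ does not automatically survive this pasting, and there is no ``pullback of the outer sequence'' that reduces it to the bare Yoneda statement (the framework has no internal homs or right extensions of modules to move $F$ and $G$ across). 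This is precisely why the paper's proof instead reuses the argument for Proposition~\ref{prop:arrows-are-units}: the unit insertions are placed adjacent to factors that are cartesian on the right (resp.\ cocartesian on the left), where Lemma~\ref{lem:sliced-adjunction-on-the-right} supplies fibred adjoints that can be pulled back and pushed forward (Remark~\ref{rmk:adjunctions-pullback}) past arbitrary extra factors, and groupoidal-ness of $H$ then converts them into equivalences; the identification $F\pbtimes{A}A^\cattwo\pbtimes{A}X\pbtimes{B}B^\cattwo\pbtimes{B}G\simeq F\pbtimes{A}g\comma A\pbtimes{X}B\comma f\pbtimes{B}G$ finishes the argument. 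Without either this mechanism or a ``relative'' Yoneda lemma stable under such composites, your bijection in its multi-source form --- and hence the claimed composite $E\cong q\comma A\otimes_E B\comma p$ --- remains unproven.
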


\begin{proof}
In the case where $(q,p) \colon E \tfib A \times B$ defines a module $E \colon A \prof B$ there are bijections
\[ \xymatrix{ A \ar@{=}[d] \ar[r]|\mid^{q \comma A} & E \ar@{}[d]|{\Downarrow} \ar[r]|\mid^{B \comma p} & B \ar@{=}[d] & \ar@{}[d]|{\displaystyle\leftrightsquigarrow} & A \ar@{=}[d] \ar[r]|\mid^{A^\cattwo} & A \ar[r]|\mid^E \ar@{}[dr]|{\Downarrow} & B \ar[r]|\mid^{B^\cattwo} & B \ar@{=}[d] \\ A \ar[rr]|\mid_{E} & & B & & A \ar[rrr]|\mid_{E} & & & B}\] 
because the simplicial pullbacks $q \comma A \pbtimes{E} B \comma p$ and $A^\cattwo \pbtimes{A} E \pbtimes{B} B^\cattwo$ are equivalent over $A \times B$. In particular, the canonical cell $\circ \colon q \comma A \times B \comma p \To E$ defined in Lemma~\ref{lem:comp-with-unit} displays $E$ as the composite of the representables at its legs. 

The point is that the proof of Proposition~\ref{prop:arrows-are-units}, which supplied the universal property used in  Lemma~\ref{lem:comp-with-unit} applies more generally. Given isofibrations $F \tfib A' \times A$ and $G \tfib B \times B'$ that are cartesian on the left and right and a module $H \colon A' \prof B'$, the proof of Proposition~\ref{prop:arrows-are-units} defines an equivalence 
\[ \map_{A' \times B'}( F \pbtimes{A} A^\cattwo \pbtimes{A} X \pbtimes{B} B^\cattwo \pbtimes{B} G, H) \xrightarrow{\simeq} \map_{A' \times B'} (F \pbtimes{A} X \pbtimes{B} G, H).\]
The domain of the left-hand hom quasi-category is equivalent to $F \times_A g \comma A \times_X B \comma f \times_B G$, completing the proof.
\end{proof}

\begin{lem}\label{lem:yoneda-embedding} For any pair of parallel functors there are natural bijections between 2-cells
\[ \xymatrix{ A \ar@/^3ex/[r]^f \ar@/_3ex/[r]_g \ar@{}[r]|{\Downarrow} & B}\] in the homotopy 2-category and cells
\[ \xymatrix{  A \ar@{=}[d] \ar[r]|\mid^{B \comma f} \ar@{}[dr]|{\Downarrow} & B \ar@{=}[d] & \ar@{}[d]|{\displaystyle\leftrightsquigarrow} & A \ar[r]|\mid^{A^{\cattwo}} \ar[d]_g  \ar@{}[dr]|{\Downarrow} & A \ar[d]^f &  \ar@{}[d]|{\displaystyle\leftrightsquigarrow} & B \ar@{}[dr]|{\Downarrow} \ar@{=}[d] \ar[r]|\mid^{g \comma B} & A \ar@{=}[d] \\ A \ar[r]|\mid_{B \comma g} & B & & B \ar[r]|\mid_{B^\cattwo} & B & & B \ar[r]|\mid_{f \comma B} & A}\]  
in the virtual equipment of modules.
\end{lem}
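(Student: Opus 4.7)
The plan is to exhibit each of the three sets of cells appearing in the virtual equipment as canonically in bijection with the set of 2-cells $f \To g$ in the homotopy 2-category $\lcat{K}_2$. The bijection between such 2-cells and cells of the form $B\comma f \To B\comma g$ is exactly the content of Lemma~\ref{lem:full-yoneda}, which is itself a direct application of the Yoneda-style equivalence of Proposition~\ref{prop:two-sided-yoneda} together with the span-map interpretation of Recollection~\ref{rec:ess.unique.1-cell.ind}. A strictly dual argument, exchanging left and right, supplies the corresponding bijection with cells $g\comma B \To f\comma B$.

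For the middle case, my plan is to perform three successive reductions of cells of the form displayed in the statement. First, Proposition~\ref{prop:arrows-are-units}, which identifies $A^\cattwo$ as a nullary cocartesian arrow via the cell $\iota$, yields a bijection between cells of the third type and nullary cells of the form
\[ \xymatrix{ A \ar@{=}[r] \ar[d]_g  \ar@{}[dr]|{\Downarrow} & A \ar[d]^f \\ B \ar[r]|\mid_{B^\cattwo} & B}\]
Second, Proposition~\ref{prop:cartesian-pullback-cells}, applied to the cartesian cell defining the pullback module $B^\cattwo(f,g)$, factors such cells uniquely through $\rho$ to produce a bijection with nullary cells $\To B^\cattwo(f,g)$ over identities. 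Direct inspection of the relevant simplicial pullback, compared with the defining pullback~\eqref{eq:comma-as-simp-pullback} for a comma, shows that $B^\cattwo(f,g)$ is precisely the comma $\infty$-category $f\comma g \colon A \prof A$. Third, Observation~\ref{obs:cells-above-a-comma} identifies such nullary cells with 2-cells in $\lcat{K}_2$ under the identity span on $A$ and over the cospan $A \xrightarrow{f} B \xleftarrow{g} A$, which is to say with 2-cells $\alpha \colon f \To g \colon A \to B$.

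The naturality asserted in the lemma amounts to compatibility of the three bijections with vertical composition and whiskering of 2-cells. This compatibility is visible once the relevant cells are unpacked, since each step of the reduction is patently functorial in $f$ and $g$. The main bookkeeping obstacle is really just to confirm that the orientation of the 2-cell recovered via the final 1-cell induction, produced from the canonical comma 2-cell $\phi \colon fp_0 \To gp_1$ specialized along $p_0 = p_1 = \id_A$, matches the orientation $f \To g$ produced by the bijections with the outer two cell types. A careful appeal to Recollection~\ref{rec:ess.unique.1-cell.ind} resolves any residual ambiguity routinely.
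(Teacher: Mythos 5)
Your proof is correct, but it routes the bijections differently from the paper. For the middle cell type the paper applies Observation~\ref{obs:cells-above-a-comma} directly to $B^\cattwo=B\comma B$ together with Proposition~\ref{prop:arrows-are-units}, whereas you first strip off $A^\cattwo$ via Proposition~\ref{prop:arrows-are-units}, then factor through the cartesian cell of Proposition~\ref{prop:cartesian-pullback-cells} and identify the pullback $B^\cattwo(f,g)$ with $f\comma g$ before invoking Observation~\ref{obs:cells-above-a-comma}; this is a slightly longer but equivalent reduction (note, incidentally, that your phrase ``cells of the third type'' should read the middle type). The genuinely different choice is for the outer two bijections: the paper obtains them formally from the middle one by the companion/conjoint bijections of Theorem~\ref{thm:companion-conjoint} and Corollary~\ref{cor:companion-conjoint}, staying entirely within the calculus of the virtual equipment, while you instead quote Lemma~\ref{lem:full-yoneda} (and its dual), i.e.\ the fully faithful embeddings built from the Yoneda-type equivalence of Proposition~\ref{prop:two-sided-yoneda}. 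Your route buys economy --- the outer bijections are already on record, with naturality coming for free from functoriality of the embeddings --- at the cost of leaning on the $\infty$-cosmos-level Yoneda equivalence rather than purely equipment-theoretic manipulations, and of the orientation/compatibility bookkeeping you flag at the end; the paper's route makes the three bijections manifestly induced from one another by companion/conjoint pasting, which is convenient for the subsequent Remark~\ref{rmk:yoneda-embedding} identifying the covariant and contravariant embeddings as locally fully faithful homomorphisms into $\MMod{K}$.
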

\begin{proof}
Observation~\ref{obs:cells-above-a-comma} and Proposition~\ref{prop:arrows-are-units} imply that cells in the middle square correspond to cells
\[ \xymatrix{ A \ar@/^3ex/[d]^f \ar@/_3ex/[d]_g \ar@{}[d]|{\Leftarrow} \\ B}\]  in the homotopy 2-category. Theorem~\ref{thm:companion-conjoint} and Corollary~\ref{cor:companion-conjoint} supply the bijections to the cells displayed on the left and on the right.
\end{proof}

\begin{rmk}\label{rmk:yoneda-embedding} Lemma~\ref{lem:yoneda-embedding} and Example~\ref{ex:comp-of-rep} imply that there are two locally-fully-faithful homomorphisms $\lcat{K}_2 \hookrightarrow \MMod{K}$ and $\lcat{K}_2\coop \hookrightarrow\MMod{K}$ embedding the homotopy 2-category into the sub bicategory of $\MMod{K}$ comprised only of unary cells whose vertical boundaries are identities. The modules in the image of the first homomorphism are the covariant representables and the modules in the image of the second homomorphism are the contravariant representables. We refer to these as the \emph{covariant} and \emph{contravariant embeddings}, respectively.
\end{rmk}

%%% Local Variables:
%%% mode: latex
%%% TeX-master: "all"
%%% End:

  %!TEX root = all.tex
% ******************************************************************
% ** Title:            Modules between $\infty$-categories?
% **                   Main file
% ** Precis:        
% ** Author:           Emily Riehl and Dominic Verity
% ** Commenced:        15/3/2015
% ******************************************************************

\section{Pointwise Kan extensions}\label{sec:pointwise-kan}

Right and left Kan extensions can be defined internally to any 2-category --- a right Kan extension is comprised of a 1-cell and a 2-cell that define a terminal object in an appropriate category. However, in many 2-categories, as is the case for instance in the homotopy 2-category of an $\infty$-cosmos, the notion of right Kan extension defined in this way fails to be sufficiently robust.  The more useful universal property is associated to the stronger notion is of a \emph{pointwise Kan extension}. Our aim in this section is to define and study pointwise Kan extensions for functors between $\infty$-categories.

In fact, we give multiple definitions of pointwise Kan extension. One is fundamentally 2-categorical: a pointwise Kan extension is an ordinary 2-categorical Kan extension in the homotopy 2-category that is stable under pasting with comma squares. Another definition is that a 2-cell defines a pointwise right Kan extension if and only if its image under the covariant embedding into the virtual equipment of modules defines a right Kan extension there. Proposition~\ref{prop:pointwise-kan} proves that these two notions coincide.

Before turning our attention to pointwise Kan extensions, we first introduce exact squares in \S\ref{ssec:exact-squares}, a class of squares in the homotopy 2-category that include comma squares and which will be used to define initial and final functors. Pointwise Kan extensions are introduced in a variety of equivalent ways in \S\ref{ssec:pointwise-kan}. In \S\ref{ssec:pointwise-kan-closed}, we conclude with a discussion of pointwise Kan extensions in a cartesian closed $\infty$-cosmos, in which context these relate to the absolute lifting diagrams and limits and colimits studied in \S\refI{sec:limits}.

\subsection{Exact squares}\label{ssec:exact-squares}

\setcounter{thm}{0}
\begin{defn}[exact squares]\label{defn:exact-square} By Lemma~\ref{lem:yoneda-embedding} there are bijections between 2-cells in a square in the homotopy 2-category and cells in the virtual double category of modules:
\[ \xymatrix{ D \ar[r]^h \ar[d]_k  \ar@{}[dr]|{\Leftarrow\lambda} & B \ar[d]^f & \ar@{}[d]|{\displaystyle\leftrightsquigarrow} & D \ar@{=}[d] \ar[r]|\mid^{A \comma fh} \ar@{}[dr]|{\Downarrow\lambda} & A \ar@{=}[d]  \\ C \ar[r]_g & A & &  D \ar[r]|\mid_{A \comma gk} & A}\] These cells correspond bijectively to cells
\[ \xymatrix{D \ar[d]_k \ar[r]|\mid^{A \comma fh} \ar@{}[dr]|{\Downarrow} & A \ar@{=}[d] & \ar@{}[d]|{\displaystyle\leftrightsquigarrow} & D \ar[d]_k \ar[r]|\mid^{B \comma h} & B \ar@{}[d]|{\Downarrow} \ar[r]|\mid^{A \comma f} & A \ar@{=}[d] &  \ar@{}[d]|{\displaystyle\leftrightsquigarrow}  &  C \ar@{=}[d] \ar[r]|\mid^{k \comma C} & D \ar@{}[d]|{\Downarrow} \ar[r]|\mid^{B \comma h} & B \ar[d]^f    \\ C \ar[r]|\mid_{A \comma g} & A & & C \ar[rr]|\mid_{A \comma g} & & A & & C \ar[rr]|\mid_{A \comma g} & & A  }\] by Proposition~\ref{prop:cartesian-pullback-cells}, Lemma~\ref{lem:one-sided-rep-comp}, and Theorem~\ref{thm:companion-conjoint}, respectively. Applying Proposition~\ref{prop:cartesian-pullback-cells} again, these cells are in bijection with cells as displayed on the left:
\[ \vcenter{\xymatrix{  C \ar@{=}[d] \ar[r]|\mid^{k \comma C} & D \ar@{}[d]|{\Downarrow\hat{\lambda}} \ar[r]|\mid^{B \comma h} & B \ar@{=}[d] \\
C \ar[rr]|\mid_{f \comma g} & & B}} \qquad \leftrightsquigarrow \qquad
\vcenter{ \xymatrix@!0@C=40pt{ & k \comma C \pbtimes{D} B \comma h \pbdiamond \ar[dr] \ar[dl] \\  k \comma C \ar[dr] \ar[dd] \ar@{}[dddr]|(.3){\Leftarrow} & & B \comma h \ar[dl] \ar[dd] \ar@{}[dddl]|(.3){\Leftarrow} \\ & D \ar[dr]^h \ar[dl]_k \ar@{}[dd]|(.4){\lambda}|(.6){\Leftarrow} \\ C \ar[dr]_g & & B \ar[dl]^f \\ & A}}
\] 
Under the isomorphism described by Observation~\ref{obs:cells-above-a-comma}, we can represent the 2-cell $\hat{\lambda}$ as the pasting diagram displayed above right in the homotopy 2-category $\lcat{K}_2$. 
If $\hat{\lambda}$ displays $f \comma g$ as the composite $k \comma C \otimes_D B \comma h$ in $\MMod{K}$, then we say that the square $\lambda \colon fh \To gk$ in $\lcat{K}_2$ is \emph{exact}.
\end{defn}

\begin{lem}[composites of exact squares]\label{lem:exact-squares-comp} Exact squares can be composed both ``horizontally'' and ``vertically'': given a diagram in the homotopy 2-category
\[ \xymatrix{ & H \ar[d]_t \ar[r]^s \ar@{}[dr]|{\Leftarrow\tau}& G \ar[d]^r \\ F \ar[d]_q \ar[r]^\ell \ar@{}[dr]|{\Leftarrow\mu}& D \ar[d]_k \ar[r]^h \ar@{}[dr]|{\Leftarrow\lambda}  & B \ar[d]^f \\ E \ar[r]_p & C \ar[r]_g & A}\] if $\lambda \colon fh \To gk$, $\mu \colon k\ell \To pq$, and $\tau \colon rs \To ht$ are exact, then so are their composites $fh\ell \xTo{\lambda \ell} gk\ell \xTo{g\mu} gpq$ and $frs \xTo{f\tau} fht \xTo{\lambda t} gkt$.
\end{lem}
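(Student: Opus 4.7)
My plan is to reduce both claims to the statement that the composition operations in the virtual equipment $\MMod{K}$ preserve cocartesian cells. Recall that a square $\sigma$ in $\lcat{K}_2$ is exact precisely when the associated cell $\hat\sigma$ in $\MMod{K}$ is cocartesian in the sense of displaying the target comma module as a composite of the two flanking representables. The composition results for cocartesian cells are furnished by the associativity (Observation~\ref{obs:comp-assoc}) and left cancellation (Observation~\ref{obs:comp-cancel}) properties of composites in $\MMod{K}$, together with the cocartesian cells that witness the decomposition of representables of composite functors. The latter come from Example~\ref{ex:comp-of-rep} (companions compose: $B\comma(h\ell) \simeq D\comma\ell \otimes_D B\comma h$) and its conjoint dual (conjoints compose: $(pq)\comma C \simeq p\comma C \otimes_E q\comma E$), as well as the cocartesian cell of Example~\ref{ex:two-sided-comma-as-composite} decomposing two-sided commas into one-sided representables.

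For the vertical composite $g\mu\cdot\lambda\ell$, the idea is to work with the $4$-fold composable sequence $p\comma C \times q\comma E \times D\comma\ell \times B\comma h$ in $\MMod{K}$ and show that the cell $\widehat{g\mu\cdot\lambda\ell}$, precomposed with the cocartesian cells $\beta\colon p\comma C\times q\comma E \To (pq)\comma C$ and $\alpha\colon D\comma\ell \times B\comma h \To B\comma(h\ell)$, coincides with an alternate composite built from $\hat\mu$ (in the middle two slots) followed by a further decomposition using $\hat\lambda$. The crucial equality is most cleanly verified via Observation~\ref{obs:cells-above-a-comma}: each composite corresponds to a $2$-cell in $\lcat{K}_2$ obtained by pasting $\lambda$ and $\mu$ with the comma $2$-cells of all the representables in the source, and the $2$-categorical pasting laws for $\lcat{K}_2$ guarantee that both assemblages produce the same pasted $2$-cell. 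Because the alternate composite is, by construction, built entirely from cocartesian cells, Observation~\ref{obs:comp-assoc} implies that the common $4$-fold composite is itself cocartesian. Since $\beta\times\alpha$ is cocartesian, the left cancellation property of Observation~\ref{obs:comp-cancel} then forces $\widehat{g\mu\cdot\lambda\ell}$ to be cocartesian too, which is the exactness of the vertical composite.

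The horizontal case is entirely parallel. This time one decomposes $(kt)\comma C \simeq k\comma C \otimes_D t\comma D$ via the conjoint composition and $(fr)\comma g \simeq f\comma g \otimes_B r\comma B$ via Lemma~\ref{lem:one-sided-rep-comp} and Example~\ref{ex:two-sided-comma-as-composite}, then compares the cell $\widehat{\lambda t \cdot f\tau}$ with a composite assembled from $\hat\tau$ and $\hat\lambda$, again using Observation~\ref{obs:cells-above-a-comma} and the pasting laws in $\lcat{K}_2$ to identify the two and Observations~\ref{obs:comp-assoc} and~\ref{obs:comp-cancel} to transfer cocartesianness.

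The main obstacle is purely bookkeeping: unpacking the chain of bijections in Definition~\ref{defn:exact-square}, tracking the direction conventions for the contravariant and covariant representable modules and their comma $2$-cells, and verifying that the two assemblages of cells in $\MMod{K}$ genuinely represent the same pasted $2$-cell in $\lcat{K}_2$. Once this identification is in place the cocartesianness follows formally from the virtual equipment axioms and the hypothesized exactness of the constituent squares.
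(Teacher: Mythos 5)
Your high-level strategy is the paper's: translate exactness into compositeness of cells in $\MMod{K}$, compare two assemblies of cells (via Observation~\ref{obs:cells-above-a-comma} and pasting in $\lcat{K}_2$), and transfer compositeness using Observations~\ref{obs:comp-assoc} and~\ref{obs:comp-cancel}. But the concrete implementation of your first case does not typecheck, and the step it skips is the actual crux. For the pasted square with boundary $q$, $h\ell$, $gp$, $f$, Definition~\ref{defn:exact-square} produces the cell $\widehat{g\mu\cdot\lambda\ell}\colon q\comma E\times_F B\comma(h\ell)\To f\comma(gp)$, a cell between modules from $E$ to $B$. The module $(pq)\comma C$, a module from $C$ to $F$, occurs nowhere in this data: the left leg of the pasted square is $q$, not $pq$, so the first source factor is the single conjoint $q\comma E$, and there is no slot into which your cocartesian cell $\beta\colon p\comma C\times q\comma E\To(pq)\comma C$ could be precomposed (precomposition requires a cell whose \emph{codomain} is one of the source factors). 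The only source decomposition available, and needed, is $\alpha\colon D\comma\ell\times B\comma h\To B\comma(h\ell)$.

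What your sketch is missing is how to handle the composite functor $gp$ sitting in the \emph{codomain} $f\comma(gp)$: cocartesian cells give bijections for occurrences in the source, not in the target, so one cannot simply ``decompose'' $f\comma(gp)$ (nor $(fr)\comma g$ in your second case). The paper deals with this using the one-sided composites of Lemma~\ref{lem:one-sided-rep-comp}: both $\circ\colon C\comma p\times k\comma C\To k\comma p$ and $\circ\colon C\comma p\times f\comma g\To f\comma(gp)$ are composite cells, and the universal property of the former induces an auxiliary cell $\tilde\lambda\colon k\comma p\times B\comma h\To f\comma(gp)$ determined by the equation $\tilde\lambda\cdot(\circ\times\id)=\circ\cdot(\id\times\hat\lambda)$; exactness of $\lambda$ together with Observation~\ref{obs:comp-cancel} makes $\tilde\lambda$ a composite, and then $\widehat{g\mu\cdot\lambda\ell}=\tilde\lambda\cdot(\hat\mu\times\id)$ is a composite by exactness of $\mu$ and Observation~\ref{obs:comp-assoc}. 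Without some such mediation the ``alternate composite built from $\hat\mu$ and $\hat\lambda$'' that you propose to compare with $\widehat{g\mu\cdot\lambda\ell}$ cannot even be written down, since $\hat\lambda$ has codomain $f\comma g$ while the target you need is $f\comma(gp)$. The same issue recurs in your second case, where you correctly name $(kt)\comma C\simeq k\comma C\otimes_D t\comma D$ and $(fr)\comma g\simeq f\comma g\otimes_B r\comma B$ but give no mechanism for exploiting the latter, codomain-side, equivalence. (Your labels ``horizontal'' and ``vertical'' are also swapped relative to the paper's usage, but that is cosmetic.)
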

\begin{proof}
We prove the result for horizontal composition; a similar argument shows that exact squares can also be composed vertically. The cell induced by the composite $\lambda\mu \colon fh\ell \To gpq$ factors as 
\[ \vcenter{\xymatrix{ E \ar@{=}[d] \ar[r]|\mid^{q\comma E} & F \ar[r]|\mid^{D \comma \ell} \ar@{}[dr]|{\Downarrow{\widehat{\lambda\mu}}} & D \ar[r]|\mid^{B \comma h} & B \ar@{=}[d] \\ E \ar[rrr]|\mid_{f \comma gp} & & & B}} = \vcenter{\xymatrix{ E \ar@{=}[d] \ar[r]|\mid^{q\comma E} & \ar@{}[d]|{\Downarrow\hat\mu}  F \ar[r]|\mid^{D \comma \ell} & D \ar@{=}[d]  \ar[r]|\mid^{B \comma h} & B \ar@{=}[d] \\  E \ar@{=}[d] \ar[rr]|\mid^{k \comma p} & & D \ar@{}[dl]|{\Downarrow{\tilde{\lambda}}} \ar[r]|\mid^{B \comma h} & B \ar@{=}[d] \\ E \ar[rrr]|\mid_{f \comma gp} & & & B}}\]
where $\tilde{\lambda}$ is the cell defined by by the pasting equality
\[\vcenter{\xymatrix{ E \ar@{=}[d] \ar[r]|\mid^{C \comma p} & \ar@{}[d]|{\Downarrow\circ}  C \ar[r]|\mid^{k \comma C} & D \ar@{=}[d]  \ar[r]|\mid^{B \comma h} & B \ar@{=}[d] \\  E \ar@{=}[d] \ar[rr]|\mid^{k \comma p} & & D \ar@{}[dl]|{\Downarrow{\tilde{\lambda}}} \ar[r]|\mid^{B \comma h} & B \ar@{=}[d] \\ E \ar[rrr]|\mid_{f \comma gp} & & & B}} \defeq \vcenter{\xymatrix{ E \ar@{=}[d] \ar[r]|\mid^{C \comma p} & \ar@{=}[d]  C \ar[r]|\mid^{k \comma C} & D \ar@{}[d]|{\Downarrow\hat\lambda}  \ar[r]|\mid^{B \comma h} & B \ar@{=}[d] \\  E \ar@{=}[d] \ar[r]|\mid^{C \comma p} & C \ar@{}[dr]|{\Downarrow{\circ}} \ar[rr]|\mid^{f \comma g} &  & B \ar@{=}[d] \\ E \ar[rrr]|\mid_{f \comma gp} & & & B}} \] 
via the universal property of the composite $\circ \colon C \comma p \times k \comma C \To k \comma p$ of Lemma~\ref{lem:one-sided-rep-comp}. By exactness of $\lambda$ and Lemma~\ref{lem:one-sided-rep-comp}, the cell $\hat\lambda$ and both cells labelled $\circ$ are composites; thus Observation~\ref{obs:comp-cancel} implies that $\tilde\lambda\colon k \comma p \times B \comma h \To f \comma gp$ is also a composite. Now Observation~\ref{obs:comp-assoc} and exactness of $\mu$ implies that $\widehat{\lambda\mu} \colon q \comma E \times D \comma \ell \times B \comma h \To f \comma gp$ is also a composite, proving that the composite 2-cell is exact.
\end{proof}

\begin{lem}\label{lem:comma-exact} Any comma square is exact.
\end{lem}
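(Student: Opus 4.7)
The plan is to derive this directly from Lemma~\ref{lem:module-as-rep-composite} applied to the module presented by the comma itself. By Corollary~\ref{cor:comma-is-a-module}, the isofibration $(p_1,p_0) \colon f \comma g \tfib C \times B$ underlying the comma square defines a module $f \comma g \colon C \prof B$. Hence Lemma~\ref{lem:module-as-rep-composite} (applied with $q \defeq p_1$ and $p \defeq p_0$) supplies a canonical composition cell
\[
\circ \colon p_1 \comma C \times B \comma p_0 \To f \comma g
\]
which displays $f \comma g$ as the horizontal composite $p_1 \comma C \otimes_{f \comma g} B \comma p_0$ in $\MMod{K}$. By Definition~\ref{defn:exact-square}, the comma square is exact precisely when the cell $\hat\phi \colon p_1\comma C \times B \comma p_0 \To f \comma g$ built from $\phi$ exhibits $f \comma g$ as this composite. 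So it suffices to identify $\hat\phi$ with $\circ$.

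Both cells have the same source and target in $\MMod{K}$, so under the bijection of Observation~\ref{obs:cells-above-a-comma} each is determined by a 2-cell in $\lcat{K}_2$ from the doubly-pulled-back span $p_1 \comma C \pbtimes{f \comma g} B \comma p_0$ (its two legs landing in $C$ and $B$) to the cospan $C \xrightarrow{g} A \xleftarrow{f} B$. Tracing the sequence of bijections set out in Definition~\ref{defn:exact-square} for $\lambda \defeq \phi$, the cell $\hat\phi$ corresponds by construction to the pasting diagram
\[
\xymatrix@!0@C=40pt{ & p_1 \comma C \pbtimes{f\comma g} B \comma p_0 \ar[dr] \ar[dl] \\  p_1 \comma C \ar[dr] \ar[dd] \ar@{}[dddr]|(.3){\Leftarrow} & & B \comma p_0 \ar[dl] \ar[dd] \ar@{}[dddl]|(.3){\Leftarrow} \\ & f \comma g \ar[dr]^{p_0} \ar[dl]_{p_1} \ar@{}[dd]|(.4){\phi}|(.6){\Leftarrow} \\ C \ar[dr]_g & & B \ar[dl]^f \\ & A}
\]
obtained by whiskering the comma 2-cell $\phi$ with the canonical comma cones for $p_1 \comma C$ and $B \comma p_0$.

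It then remains to verify that the canonical cell $\circ$ from Lemma~\ref{lem:module-as-rep-composite} corresponds under Observation~\ref{obs:cells-above-a-comma} to the very same pasting. This is where to apply Observation~\ref{obs:comp-with-unit-over-comma}, which explicitly identifies the cell $\circ \colon C^\cattwo \times f\comma g \times B^\cattwo \To f \comma g$ of Lemma~\ref{lem:comp-with-unit} with the pasting of $\phi$ whiskered by the canonical 2-cells of $C^\cattwo$ and $B^\cattwo$. Transporting this description along the equivalence $p_1 \comma C \pbtimes{f \comma g} B \comma p_0 \simeq C^\cattwo \pbtimes{C} (f \comma g) \pbtimes{B} B^\cattwo$ over $C \times B$ that is used in the proof of Lemma~\ref{lem:module-as-rep-composite} (and under which the two whiskering comma cones of the representables $p_1\comma C$ and $B\comma p_0$ go over to the canonical 2-cells of the units $C^\cattwo$ and $B^\cattwo$) delivers the same pasting diagram.

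Thus $\hat\phi$ and $\circ$ represent the same cell in $\MMod{K}$, and since $\circ$ is a composite, so is $\hat\phi$, proving that $\phi$ is exact. The only non-trivial part of this plan is the bookkeeping in the last paragraph: carefully matching the pasting diagrams produced by Definition~\ref{defn:exact-square} and by Observation~\ref{obs:comp-with-unit-over-comma} through the pullback equivalence of Lemma~\ref{lem:module-as-rep-composite}. This is routine unwinding of 1-cell induction and universal properties of pullbacks, but must be done with some care.
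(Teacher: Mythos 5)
Your proof is correct and is essentially the paper's own argument: both hinge on applying Lemma~\ref{lem:module-as-rep-composite} to the comma module $f \comma g$ itself and then identifying the resulting canonical composite cell with $\hat\phi$. The only difference is in how that identification is justified --- the paper cites Example~\ref{ex:two-sided-comma-as-composite}, while you unwind it via Observation~\ref{obs:comp-with-unit-over-comma} and the pullback equivalence from the proof of Lemma~\ref{lem:module-as-rep-composite} --- which is the same bookkeeping spelled out in more detail.
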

\begin{proof}
Consider
\[ \xymatrix{ f \comma g \ar[d]_q \ar[r]^-p \ar@{}[dr]|{\Leftarrow\lambda} & B \ar[d]^f \\ C \ar[r]_g & A}\] 
Applying Lemma~\ref{lem:module-as-rep-composite} to $f \comma g \colon C \prof B$, the canonical cell $q \comma C \times B \comma p \To f \comma g$ is a composite. Example~\ref{ex:two-sided-comma-as-composite} explains that this is $\hat{\lambda}$.
\end{proof}

\begin{lem}\label{lem:pullback-exact} Consider a pullback square
\[ \xymatrix{ P \pbexcursion \ar[d]_{\pi_1} \ar[r]^{\pi_0} & B \ar[d]^f \\ C \ar[r]_g & A}\] If $g$ is a cartesian fibration or if $f$ is a cocartesian fibration, then the pullback square is exact.
\end{lem}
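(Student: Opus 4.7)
By Definition~\ref{defn:exact-square}, exactness of the pullback square reduces to showing that the canonical cell $\hat\lambda \colon \pi_1\comma C \pbtimes{P} B\comma \pi_0 \To f \comma g$ in $\MMod{K}$ exhibits $f\comma g$ as a composite of $\pi_1\comma C$ and $B\comma\pi_0$. Applying Lemma~\ref{lem:module-as-rep-composite} to the span $C \xfibt{\pi_1} P \xtfib{\pi_0} B$ (whose legs are isofibrations, since $P$ is a simplicial pullback of isofibrations in $\lcat{K}$), cells with horizontal source $\pi_1\comma C \pbtimes{P} B\comma\pi_0$ correspond bijectively to cells with horizontal source $P$ viewed as a span from $C$ to $B$. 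Under this correspondence $\hat\lambda$ is transported to the canonical 1-cell $w\colon P \to f\comma g$ over $C\times B$ induced by the identity 2-cell $f\pi_0 = g\pi_1$. Hence the pullback square is exact if and only if, for every module $H\colon C \prof B$, restriction along $w$ induces an equivalence of hom quasi-categories $w^* \colon \map_{C\times B}(f\comma g, H) \xrightarrow{\sim} \map_{C\times B}(P,H)$.

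Assume $g$ is a cartesian fibration. Theorem~\refIV{thm:cart.fib.chars}\ref{itm:cart.fib.chars.ii} provides an adjunction $i \dashv r$ in $\lcat{K}_2\slice A$, where $i\colon C \to A\comma g$ is the canonical functor induced by $\id_g$. Since $r$ sends a 2-cell $\alpha\colon a \to gc$ to the domain of a $g$-cartesian lift of $\alpha$, the cartesian lift of an identity is an identity and so $ri \cong \id_C$; that is, the unit of this adjunction is invertible. Pulling back along $f\colon B \to A$ via Remark~\ref{rmk:adjunctions-pullback}, we obtain an adjunction $w \dashv w^r$ in $\lcat{K}_2\slice B$ in which the left adjoint is precisely the canonical map $w$ and the unit $\id_P \To w^r w$ remains invertible.

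This $B$-fibred adjunction immediately makes $w^*$ an equivalence on mapping quasi-categories over $B$, but the required equivalence lives over $C\times B$, and the obstruction is that $w^r$ is not a map over $C$: its $C$-projection $p_1 w^r$ differs from $p_1$ by a 2-cell $p_1\epsilon$ whose components, where $\epsilon\colon ww^r \To \id_{f\comma g}$ is the counit, are precisely the $g$-cartesian lifts that factor an arbitrary 2-cell $fb \to gc$ through an identity followed by a cartesian arrow. We promote the equivalence by exploiting the cocartesian-on-the-left structure of the module $H \tfib C\times B$: given a map $\alpha\colon P \to H$ over $C\times B$, the composite $\tilde\alpha \defeq \alpha\circ w^r \colon f\comma g \to H$ lies over $B$ but over the ``wrong'' $C$-projection $\pi_1 w^r$. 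A cocartesian lift in $H$ of the 2-cell $(p_1\epsilon,\id_{p_0})$ in $C\times B$ beginning at $\tilde\alpha$ produces a 2-cell $\tilde\alpha \To \tilde\alpha'$ whose target $\tilde\alpha'$ now lies over $C\times B$. The triangle identity $\epsilon w\cdot w\eta = \id_w$, combined with invertibility of $\eta$, forces $(p_1\epsilon)w = \id_{\pi_1}$, so the restriction $\tilde\alpha'\circ w$ is related to $\tilde\alpha\circ w \cong \alpha$ by a 2-cell in $H$ over the identity in $C\times B$ -- hence an isomorphism, since $H$ is groupoidal in $\lcat{K}\slice{C\times B}$. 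Parallel arguments establish fullness and faithfulness of $w^*$ on 2-cells. The case in which $f$ is cocartesian is strictly dual: a dual form of Theorem~\refIV{thm:cart.fib.chars}\ref{itm:cart.fib.chars.ii} applied to $f$, pulled back along $g$, supplies an adjunction in $\lcat{K}_2\slice C$ with invertible counit, and the mismatch in the $B$-direction is corrected using the cartesian-on-the-right structure of $H$. The main technical difficulty is precisely this marriage between a sliced adjunction produced by the (co)cartesian hypothesis (which lives over only one factor of the product base) and the complementary fibred structure inherent in the target module.
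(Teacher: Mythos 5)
The central problem is your reduction ``the square is exact if and only if $w^*\colon \map_{C\times B}(f\comma g,H)\to\map_{C\times B}(P,H)$ is an equivalence for every module $H\colon C\prof B$.'' Exactness means that $\hat\lambda$ is a \emph{composite} in $\MMod{K}$, and that universal property quantifies over cells with arbitrary composable sequences of modules flanking the domain on both sides (see Observation~\ref{obs:comp-proof-strategy}, and note how Proposition~\ref{prop:arrows-are-units} and Lemma~\ref{lem:fibred-adjoint-reflection} are proved). Your biconditional silently discards the flanking modules, and the implication you actually need (the unflanked statement implies the flanked one) is not available in a general $\infty$-cosmos. Nor does your method extend to the flanked case: the adjunction $w\dashv w^r$ obtained by pulling back $i\dashv r$ along $f$ lives only over $B$, so while $w$ can be whiskered with flanking modules $F\colon X\prof C$ and $G\colon B\prof Y$, the right adjoint $w^r$ cannot (it is not a map over $C$), and your cocartesian-lift correction would then have to be performed in $H$ over $X$, interacting with the right $C$-action on $F$ --- none of which is addressed. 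This is exactly why Lemma~\ref{lem:fibred-adjoint-reflection} demands an adjoint over the product $C\times B$, which $w$ does not admit here. There are also smaller slips: the triangle identity together with invertibility of $\eta$ only makes $(p_1\epsilon)w$ an isomorphism, not an identity, and the ``parallel arguments'' for injectivity on isomorphism classes are not supplied.

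The paper sidesteps the flanking issue entirely by chaining bijections each of which already respects flanking modules: writing $B\comma\pi_0=B\comma pt$ (where $t\colon P\to f\comma g$ is the induced map) and invoking Example~\ref{ex:comp-of-rep} and Lemma~\ref{lem:one-sided-rep-comp} to trade $B\comma \pi_0$ for $(f\comma g)\comma t$ together with $B\comma p$; using the adjunction $s\dashv t$ over $C$ (the pullback along $g$ of the fibred adjoint supplied by the cocartesian hypothesis) and Proposition~\refI{prop:adjointequiv} to replace $(f\comma g)\comma t$ by $s\comma P$; collapsing $\pi_1\comma C$ with $s\comma P$ to $q\comma C$; and finally observing that $\hat{\id}$ corresponds to $\hat\lambda$ under these bijections because $\id=\lambda t$ is the transpose of $\lambda$ along $s\dashv t$, so exactness follows from the exactness of comma squares (Lemma~\ref{lem:comma-exact}). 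If you wish to salvage your direct verification you must carry arbitrary flanking modules through your correction argument, which in effect reconstructs this calculus of composites with representables.
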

\begin{proof}
The two cases are dual. Suppose that $f$ is a cocartesian fibration and consider the induced map
\[ \xymatrix{ P \ar[dr]^t\ar@/_1ex/[ddr]_{\pi_1} \ar@/^1ex/[drr]^{\pi_0}  \\ & f \comma g \ar[d]_q \ar[r]^p \ar@{}[dr]|{\Leftarrow\lambda} & B \ar[d]^f \\ & C \ar[r]_g & A}\]   Observe that $t \colon P \to f \comma g$ is the pullback of the map $i \colon B \to f \comma A$ along $g \colon C \to A$. 
  \begin{equation*}
    \begin{xy}
      0;<5pc,0pc>:
      *{\xybox{
        \POS(0,0)*+{C}="A",
        \POS(1.3,0)*+{A}="B",
        \POS(-0.3,1.3)*+{P}="F",
        \POS(0.4,0.7)*+{f \comma g}="F'",
        \POS(1,1.3)*+{B}="E",
        \POS(1.6,0.7)*+{f \comma A}="E'",	
        \ar"A";"B"_g
        \ar@{->>}@/_0.2pc/"F";"A"_(0.6){\pi_1}
        \ar@{->>}@/^0.2pc/"F'";"A"^(0.4){q}
        \ar@{->>}@/_0.2pc/"E";"B"_(0.6){f}|!{"F'";"E'"}\hole
        \ar@{->>}@/^0.2pc/"E'";"B"^(0.4){p_1}
        \ar"F";"E"^{\pi_0}
        \ar"F'";"E'"
        \ar@{<-}@/^0.8pc/"E";"E'"|{\ell}_{}="h'"
        \ar@/_0.8pc/"E";"E'"|{i}^{}="h"
        \ar@{<-}@/^0.8pc/"F";"F'"|{s}_{}="k'"
        \ar@/_0.8pc/"F";"F'"|{t}^{}="k"
        \ar@{}"h";"h'"|{\rotatebox{-45}{$\perp$}}
        \ar@{}"k";"k'"|{\rotatebox{-45}{$\perp$}}
      }}
    \end{xy}
  \end{equation*}
By Theorem~\refIV{thm:cart.fib.chars}, $i$ has a left adjoint over $A$. By Remark~\ref{rmk:adjunctions-pullback}, this pulls back to define a left adjoint $s \dashv t$ over $C$. 

We wish to show that the cell $\hat{\id} \colon \pi_1 \comma C \times B \comma \pi_0 \To f \comma g$ is a composite. By 
Lemma~\ref{lem:one-sided-rep-comp}, the canonical cell induces a bijection between cells with $\pi_1 \comma C \times B \comma \pi_0$ among their horizontal domain and cells with $\pi_1 \comma C \times (f \comma g) \comma t \times B \comma p$ among their domains. By Proposition~\refI{prop:adjointequiv}, the adjoint $s \dashv t$ implies that the modules $(f \comma g) \comma t$ and $s \comma P$ are equivalendt, so these cells are in bijection with cells that have 
$\pi_1 \comma C \times s \comma P \times B \comma p$ among their horizontal domains. Applying Lemma~\ref{lem:one-sided-rep-comp} again, the canonical cell induces a bijection between these cells and those with $q \comma C \times B \comma p$ among their domains.

The equation $\id = \lambda t \colon f \pi_0 = f p t \To gq t = g \pi_1$ can be interpreted as saying that this 2-cell is the transpose along $s \dashv t$ of the 2-cell $\lambda \colon fp \To g  q = g \pi_1 s$. This relation tells us that the cells
\[ \hat{\id} \colon  \pi_1 \comma C \times B \comma pt \To f \comma g \qquad \mathrm{and} \qquad \hat\lambda \colon \pi_1 s \comma C \times B \comma p \To f \comma g \] correspond under the bijection just described. By Lemma~\ref{lem:comma-exact} $\hat\lambda$ is a composite; thus $\hat\id$ is as well.
\end{proof}

We conclude this section with a pair of technical lemmas that will be used to prove Proposition~\ref{prop:ran-abs-lifting}.

\begin{lem}\label{lem:product-exact-square} For any pair of functors $k \colon A \to B$ and $h \colon C \to D$, the square
\[ \xymatrix{ A \times C \ar[r]^{k \times C} \ar[d]_{A \times h} & B \times C \ar[d]^{B \times h} \\ A \times D \ar[r]_{k \times D} & B \times D}\] is exact.
\end{lem}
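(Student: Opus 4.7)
The plan is to apply Lemma~\ref{lem:fibred-adjoint-reflection}, which reduces exactness of the square to exhibiting a representative of $\hat{\id}$ that admits a fibred adjoint over the base of the target module. The key structural observation is that commas, simplicial pullbacks, and hence the horizontal composition of spans all distribute over the cartesian product in $\lcat{K}$: since the cotensor $\Del^1 \pwr -$ commutes with products and commas are computed as simplicial pullbacks against this cotensor, one obtains canonical isomorphisms
\[ (A \times h) \comma (A \times D) \cong A^\cattwo \times (h \comma D), \quad (B \times C) \comma (k \times C) \cong (B \comma k) \times C^\cattwo, \]
\[ (B \times h) \comma (k \times D) \cong (B \comma k) \times (h \comma D), \]
together with a compatible identification
\[ (A \times h) \comma (A \times D) \pbtimes{A \times C} (B \times C) \comma (k \times C) \cong \bigl(A^\cattwo \pbtimes{A} B \comma k\bigr) \times \bigl((h \comma D) \pbtimes{C} C^\cattwo\bigr). \]

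Next, I would unpack the construction of $\hat{\id}$ from the chain of bijections in Definition~\ref{defn:exact-square} and verify that, under these product decompositions, the map of spans representing $\hat{\id}$ is precisely the product $\ell'_1 \times \ell'_2$ of the canonical maps
\[ \ell'_1 \colon A^\cattwo \pbtimes{A} B \comma k \to B \comma k \quad \text{and} \quad \ell'_2 \colon (h \comma D) \pbtimes{C} C^\cattwo \to h \comma D. \]
The proof of Lemma~\ref{lem:one-sided-rep-comp}, applied with $E = B \comma k$ and $g = \id_A$, supplies a left adjoint to $\ell'_1$ fibred over $A \times B$; dually, it provides a left adjoint to $\ell'_2$ fibred over $C \times D$. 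Their product furnishes a fibred adjunction over $A \times B \times C \times D \cong (A \times D) \times (B \times C)$, which is precisely the base of the module $(B \times h) \comma (k \times D)$.

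With this fibred adjoint in hand, Lemma~\ref{lem:fibred-adjoint-reflection} applies to conclude that $\hat{\id}$ exhibits $(B \times h) \comma (k \times D)$ as the horizontal composite of $(A \times h) \comma (A \times D)$ and $(B \times C) \comma (k \times C)$, which is to say the square is exact. The main obstacle is the naturality check confirming that $\ell'_1 \times \ell'_2$ represents $\hat{\id}$ under the product identifications; this amounts to a careful trace through the bijections of Lemma~\ref{lem:yoneda-embedding}, Proposition~\ref{prop:cartesian-pullback-cells}, Lemma~\ref{lem:one-sided-rep-comp}, and Theorem~\ref{thm:companion-conjoint}, each of which is manifestly compatible with binary products in $\lcat{K}$.
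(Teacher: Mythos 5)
Your setup matches the paper's: both arguments identify the relevant commas with products ($(A\times h)\comma(A\times D)\cong A^\cattwo\times(h\comma D)$, $(B\times C)\comma(k\times C)\cong (B\comma k)\times C^\cattwo$, $(B\times h)\comma(k\times D)\cong (B\comma k)\times(h\comma D)$) and recognize the representing map of $\widehat{\id_{k\times h}}$ as the product of the two canonical one-sided composition maps from Lemma~\ref{lem:one-sided-rep-comp} and its dual. However, there is a genuine gap at the key step. The two factors do \emph{not} admit fibred adjoints of the same handedness: the map $A^\cattwo\pbtimes{A}B\comma k\to B\comma k$ (unit composed on the left, coming from the cocartesian-on-the-left structure) admits a fibred \emph{right} adjoint, i.e.\ it is itself the left adjoint $\ell'$ in the proof of Lemma~\ref{lem:one-sided-rep-comp}, whereas the dual map $h\comma D\pbtimes{C}C^\cattwo\to h\comma D$ (unit composed on the right, coming from the cartesian-on-the-right structure) admits a fibred \emph{left} adjoint. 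Your claim that both admit left adjoints is therefore incorrect, and more importantly the product of a left adjoint with a right adjoint need not admit any fibred adjoint, so Lemma~\ref{lem:fibred-adjoint-reflection} cannot be invoked as a black box for the product map.

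The conclusion can still be reached, but it requires the extra argument the paper supplies: take the product of the two opposite-handed adjoints to obtain a fibred functor $B\comma k\times h\comma D\to(A^\cattwo\pbtimes{A}B\comma k)\times(h\comma D\pbtimes{C}C^\cattwo)$, observe that its two composites with $\ell'\times r'$ are connected to the respective identities not by single unit/counit 2-cells but by a \emph{zig-zag} of fibred 2-cells (one leg from each adjunction), and then repeat the argument inside the proof of Lemma~\ref{lem:fibred-adjoint-reflection}: since any module is a groupoidal object of the relevant slice, these fibred 2-cells are inverted upon applying $\map_{-}(-,H)$, so the induced map of hom quasi-categories is still an equivalence and $\widehat{\id_{k\times h}}$ is a composite. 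Without this zig-zag step (or some other repair of the handedness mismatch), your proof is incomplete.
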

\begin{proof}
Following the prescription of Definition~\ref{defn:exact-square}, the identity 2-cell $\id_{k \times h}$ transposes to define a cell $\widehat{\id_{k \times h}}$ in $\MMod{K}$ whose horizontal domain is the span computed by the simplicial pullback
\[ \xymatrix@=12pt{ & & \save[] { (A^\cattwo \pbtimes{A} B \comma k) \times (h \comma D \pbtimes{C} C^\cattwo)} \ar@{->>}[dl] \ar@{->>}[dr]\restore \pbdiamond \\ & \save[]{ A^\cattwo \times h \comma D}  \ar@{->>}[dl]_{p_1 \times p_1} \ar@{->>}[dr]^{p_0 \times p_0 } \restore & & \save[]{B \comma k \times C^\cattwo} \ar@{->>}[dl]_{ p_1 \times p_1} \ar@{->>}[dr]^{p_0 \times p_0}\restore  \\ A \times D & & A \times C & & B \times C}\] 
The horizontal codomain is isomorphic to the span
\[ \xymatrix{ & B \comma k \times h \comma D \ar@{->>}[dl]_{p_1 \times p_1} \ar@{->>}[dr]^{p_0 \times p_0} \\ A \times D & & B \times C}\]
as this is $(B \times h) \comma (k \times D)$. 

By inspection, the cell $\widehat{\id_{k \times h}} \colon (A^\cattwo \pbtimes{A} B \comma k) \times (h \comma D \pbtimes{C} C^\cattwo) \To B \comma k \times h \comma D$ that we seek to show defines a composite in $\MMod{K}$ is represented in the slice 2-category over $A \times D \times B \times C$ by the product of the functors considered in Lemma~\ref{lem:one-sided-rep-comp} and its dual:
\[ \xymatrix@=1.5em{ A^\cattwo \times_A B \comma k \ar@{->>}[dr] \ar[rr]^-{\ell'} & & B \comma k \ar@{->>}[dl] & & h \comma D \times_C C^\cattwo \ar[rr]^-{r'} \ar@{->>}[dr] & & h \comma D \ar@{->>}[dl] \\ & A \times B & & & & D \times C}\] The former admits a fibered right adjoint while the latter admits a fibered left adjoint. The product of these adjoints defines a fibered functor $B \comma k \times h \comma D \to (A^\cattwo \pbtimes{A} B \comma k) \times (h \comma D \pbtimes{C} C^\cattwo)$ whose composites with $\ell' \times r' \colon (A^\cattwo \pbtimes{A} B \comma k) \times (h \comma D \pbtimes{C} C^\cattwo) \to B \comma k \times h \comma D$ are connected to the identity functors via a zig-zag of fibered 2-cells. As in the proof of Lemma~\ref{lem:fibred-adjoint-reflection}, these fibred cells are inverted upon mapping into a groupoidal object, exhibiting $\widehat{\id_{k \times h}}$ as a composite, as required. 
\end{proof}

\begin{lem}\label{lem:product-comma-exact} If the left-hand square is a comma square in $\lcat{K}_2$ and $K$ is any object, then the right-hand square is exact. 
\[ \xymatrix{ D \ar[r]^h \ar[d]_k  \ar@{}[dr]|{\Leftarrow\lambda} & B \ar[d]^f &  & D \times K \ar[r]^{h \times K} \ar[d]_{k \times K} \ar@{}[dr]|{\Leftarrow \lambda \times K} & B \times K \ar[d]^{f \times K} \\ C \ar[r]_g & A & &  C \times K \ar[r]_{g \times K} & A \times K}\]  
\end{lem}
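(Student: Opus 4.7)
The plan is to reduce exactness of $\lambda \times K$ to the combination of Lemma~\ref{lem:comma-exact} (applied to the comma square $\lambda$) and Lemma~\ref{lem:pullback-exact} (applied to the identity square on $\id_K \colon K \to K$) via a product argument in $\lcat{K}$. The key observation is that comma constructions in $\lcat{K}$ commute with binary products, since they are built from cotensors and simplicial pullbacks, each of which commutes with products.

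Applying this to the commas appearing in the analysis of Definition~\ref{defn:exact-square} for $\lambda \times K$, one obtains isomorphisms in $\lcat{K}$:
\[ (f \times K) \comma (g \times K) \cong (f \comma g) \times K^\cattwo, \qquad (k \times K) \comma (C \times K) \cong (k \comma C) \times K^\cattwo, \]
and $(B \times K) \comma (h \times K) \cong (B \comma h) \times K^\cattwo$. Consequently the simplicial pullback appearing in the horizontal domain of $\widehat{\lambda \times K}$ decomposes as
\[ \bigl((k \times K) \comma (C \times K)\bigr) \pbtimes{D \times K} \bigl((B \times K) \comma (h \times K)\bigr) \cong \bigl(k \comma C \pbtimes{D} B \comma h\bigr) \times \bigl(K^\cattwo \pbtimes{K} K^\cattwo\bigr). \]
Inspection of the pasting diagram of Definition~\ref{defn:exact-square} defining $\widehat{\lambda \times K}$ shows that, under these identifications, its representing map of spans is identified with the product $m \times n$, where $m$ represents $\hat\lambda \colon k \comma C \pbtimes{D} B \comma h \To f \comma g$ and $n$ represents the cell $\widehat{\id_K} \colon K^\cattwo \pbtimes{K} K^\cattwo \To K^\cattwo$ associated to the identity square on $\id_K$.

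By Lemma~\ref{lem:comma-exact} applied to $\lambda$, the cell $\hat\lambda$ is a composite in $\MMod{K}$, and tracing through its proof via Lemma~\ref{lem:module-as-rep-composite} and the proof of Proposition~\ref{prop:arrows-are-units} extracts a fibered adjunction whose underlying map is $m$ over $C \times B$, constructed from the adjunction of Lemma~\ref{lem:sliced-adjunction-on-the-right}. Similarly, the identity square on $\id_K$ is a simplicial pullback square along the (trivially) cartesian fibration $\id_K$, so Lemma~\ref{lem:pullback-exact} applies and its proof produces a fibered adjunction whose underlying map is $n$ over $K \times K$. Taking the product of these two fibered adjunctions yields a fibered adjunction whose underlying map is $m \times n$ over $(C \times K) \times (B \times K) \cong C \times B \times K \times K$. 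Applying Lemma~\ref{lem:fibred-adjoint-reflection} to this fibered adjoint, I conclude that $\widehat{\lambda \times K}$ exhibits $(f \times K) \comma (g \times K)$ as the composite of the pair $\bigl((k \times K) \comma (C \times K),\, (B \times K) \comma (h \times K)\bigr)$ in $\MMod{K}$, that is, that $\lambda \times K$ is exact.

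The main obstacle is bookkeeping: carefully verifying that the product of the comma constructions in $\lcat{K}$ matches the comma constructions for the product functors, and that the representing maps together with their fibered adjunctions respect this product decomposition. These verifications involve tracking universal properties through the slices over $C \times B$, $K \times K$, and $(C \times K) \times (B \times K)$, but each ingredient is routine given the simplicially enriched nature of the limits involved.
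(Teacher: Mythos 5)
Your overall strategy is exactly the paper's: identify $(f\times K)\comma(g\times K)$ with $(f\comma g)\times K^\cattwo$, decompose the representing map of $\widehat{\lambda\times K}$ as a product of the representing map of $\hat\lambda$ with the ``composition'' functor $K^\cattwo\pbtimes{K}K^\cattwo\to K^\cattwo$, equip each factor with a fibred adjoint, and apply Lemma~\ref{lem:fibred-adjoint-reflection} to the product adjunction. The treatment of the first factor (extracting the fibred adjunction over $C\times B$ from Lemma~\ref{lem:sliced-adjunction-on-the-right} via Lemma~\ref{lem:module-as-rep-composite} and Proposition~\ref{prop:arrows-are-units}) matches the paper.

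The one step that does not hold up as justified is your claim that the proof of Lemma~\ref{lem:pullback-exact}, applied to the identity square on $K$, ``produces a fibered adjunction whose underlying map is $n$ over $K\times K$.'' It does not: that proof establishes exactness of a pullback square by a chain of bijections --- it constructs an adjoint $s\dashv t$ to the comparison functor $P\to f\comma g$ (not to the representing map of $\hat{\id}$), invokes Proposition~\refI{prop:adjointequiv} to trade $(f\comma g)\comma t$ for $s\comma P$, and uses Lemma~\ref{lem:one-sided-rep-comp} twice --- so no fibred adjoint to $n\colon K^\cattwo\pbtimes{K}K^\cattwo\to K^\cattwo$ can be ``traced through'' from it. And mere compositeness of $\widehat{\id_K}$ would not suffice for your argument, since you genuinely need the adjoint in order to apply Lemma~\ref{lem:fibred-adjoint-reflection} to the product map. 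The fix is a different citation: the paper appeals to \refI{ex:comp.ident.adj}, which constructs explicit left and right fibred adjoints to the composition functor $K^\cattwo\pbtimes{K}K^\cattwo\to K^\cattwo$; alternatively, the same adjoint over $K\times K$ can be extracted from the proof of Lemma~\ref{lem:one-sided-rep-comp} specialized to the module $K^\cattwo$ and the functor $\id_K$ (ultimately again Lemma~\ref{lem:sliced-adjunction-on-the-right}). With that substitution your argument is the paper's proof.
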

\begin{proof}
The proof that comma squares are exact is derived from Lemma~\ref{lem:fibred-adjoint-reflection}: the cell $\hat\lambda \colon k \comma C \times B \comma h \To f \comma g$ is represented by a functor $\ell \colon k \comma C \times B \comma h \to f \comma g$ over $C \times B$ that admits a fibred adjoint. Similarly, the cell $\widehat{\lambda \times K} \colon ((k \times K) \comma (C \times K)) \times_{D \times K} ((B \times K) \comma (h \times K)) \To (f \times K) \comma (g \times K)$ is represented by a fibred functor 
\[ \xymatrix{ k \comma C \pbtimes{D}  B \comma h \times (K^\cattwo \pbtimes{K} K^\cattwo) \ar[rr]^-{\ell \times m} \ar@{->>}[dr] & & f \comma g \times K^\cattwo \ar@{->>}[dl] \\ & C \times B \times K \times K}\] admitting a fibred adjoint: left and right fibred adjoints to the ``composition functor'' $m \colon K^\cattwo \times_K K^\cattwo \to K^\cattwo$ are constructed in \refI{ex:comp.ident.adj}. Applying Lemma~\ref{lem:fibred-adjoint-reflection}, we conclude that $\widehat{\lambda \times K}$ is a composite, so $\lambda \times K$ is exact.
\end{proof}

\subsection{Pointwise Kan Extensions}\label{ssec:pointwise-kan}

In this section, we give two definitions of pointwise right Kan extension in the homotopy 2-category of an $\infty$-cosmos and prove that they are equivalent.

\begin{defn}[right extension of modules]\label{defn:right-extension-mod}
In the virtual equipment $\MMod{K}$ of modules, a \emph{right extension} of a module $F \colon A \prof C$ along a module $K \colon A \prof B$ is given by a module $R \colon B \prof C$ together with a cell $\mu \colon K \times R \To F$  so that for any composable sequence of modules $E_1,\ldots, E_n$ from $B$ to $C$, composition with $\mu$ defines a bijection
 \[\vcenter{ \xymatrix@=1em{ A \ar[rr]|\mid^K \ar[dd]|{\rotatebox{90}{$\labelstyle\mid$}}_F &  & B \ar[d]|(.4){\rotatebox{90}{$\labelstyle\mid$}}^{E_1} \\ & &   A_1\ar@/^/@{..>}[dl]    \\C & A_{n-1} \ar[l]|(.6)\mid^{E_n} \ar@{}[uu]|{\Leftarrow\chi}}} \qquad = \qquad \vcenter{ \xymatrix@=1em{ A \ar[rr]|\mid^K  \ar@{}[ddrr]|(.3){\Leftarrow\nu}|(.7){\Leftarrow\exists!} \ar[dd]|{\rotatebox{90}{$\labelstyle\mid$}}_F &  & B \ar[ddll]|{\rotatebox{45}{$\labelstyle\mid$}}^R  \ar[d]|(.4){\rotatebox{90}{$\labelstyle\mid$}}^{E_1} \\ & &   A_1\ar@/^/@{..>}[dl]    \\C & A_{n-1} \ar[l]|(.6)\mid^{E_n} &  }} \]
\end{defn}

In the case where the modules $K \colon A \prof B$, $F \colon A \prof C$, and $R \colon B \prof C$ are all covariant representables, the Yoneda lemma, in the form of Lemma~\ref{lem:yoneda-embedding}, implies that the binary cell arises from a 2-cell in the homotopy 2-category. The following lemma shows that Definition~\ref{defn:right-extension-mod} implies that this 2-cell is a right extension in $\lcat{K}_2$, in the usual sense.

\begin{lem}\label{lem:mod-extensions-in-2-cat}
If $\mu \colon B \comma k \times C \comma r \To C \comma f$ displays $C \comma r \colon B \prof C$ as a right extension of $C \comma f \colon A \prof C$ along $B \comma k \colon A \prof B$ in $\MMod{K}$, then $\mu \colon rk \To f$ displays $r$ as the right extension of $f$ along $k$ in $\lcat{K}_2$.
\end{lem}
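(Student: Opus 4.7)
The plan is to invoke the universal property of $\mu$ as a right extension in $\MMod{K}$ from Definition~\ref{defn:right-extension-mod} in the case of a single module $E_1 = C \comma s$, and translate the resulting bijection of cells in $\MMod{K}$ into a bijection of 2-cells in $\lcat{K}_2$ using the fact that all the modules involved are covariant representables. Concretely, a right extension of $f$ along $k$ in $\lcat{K}_2$ requires that for every functor $s \colon B \to C$ and 2-cell $\chi \colon sk \To f$, there exists a unique $\nu \colon s \To r$ with $\mu \cdot (\nu k) = \chi$, so I would fix such data and produce the required $\nu$.

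First I would apply the $n = 1$ case of the universal property of $\mu$ with $E_1 = C \comma s$, obtaining a bijection between binary cells $\chi' \colon B \comma k \times C \comma s \To C \comma f$ and unary cells $\nu' \colon C \comma s \To C \comma r$, characterized by $\chi' = \mu \circ (\id_{B \comma k} \times \nu')$. Next, I would identify each side of this bijection with 2-cells in $\lcat{K}_2$. By Lemma~\ref{lem:yoneda-embedding}, unary cells $C \comma s \To C \comma r$ between covariant representables correspond to 2-cells $\nu \colon s \To r$. For the binary cells, Example~\ref{ex:comp-of-rep} exhibits $C \comma sk$ as the composite $B \comma k \otimes C \comma s$, so binary cells $B \comma k \times C \comma s \To C \comma f$ are in bijection with unary cells $C \comma sk \To C \comma f$, and these in turn correspond by Lemma~\ref{lem:yoneda-embedding} to 2-cells $\chi \colon sk \To f$ in $\lcat{K}_2$. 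Composing the three bijections produces a unique $\nu$ for each $\chi$.

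The main step remaining, and the subtlest part of the argument, is to verify that this composite bijection matches $\chi$ with $\nu$ precisely when $\chi = \mu \cdot (\nu k)$ in $\lcat{K}_2$, rather than some unrelated relation. This is where functoriality of the covariant embedding $\lcat{K}_2 \hookrightarrow \MMod{K}$ from Remark~\ref{rmk:yoneda-embedding} does the work. Horizontal composition of 2-cells in $\lcat{K}_2$ corresponds to composition of cells along horizontal composites of the representable modules, so the whiskered 2-cell $\nu k \colon sk \To rk$ corresponds, under the identification $B \comma k \otimes C \comma (-) \cong C \comma (-k)$ of Example~\ref{ex:comp-of-rep}, to the cell obtained by composing $\id_{B \comma k} \times \nu'$ horizontally. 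Vertical composition with $\mu \colon rk \To f$ then translates to vertical composition with $\mu$ in $\MMod{K}$, so the identity $\mu \cdot (\nu k) = \chi$ in $\lcat{K}_2$ transports to the defining identity $\mu \circ (\id_{B\comma k} \times \nu') = \chi'$ for $\nu'$ in the right extension universal property. Existence and uniqueness of $\nu$ in $\lcat{K}_2$ thus follow from existence and uniqueness of $\nu'$ in $\MMod{K}$, completing the proof.
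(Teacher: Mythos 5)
Your proposal is correct and follows essentially the same route as the paper's proof: both use Example~\ref{ex:comp-of-rep} to identify binary cells out of $B \comma k \times C \comma (-)$ with unary cells out of the composite $C \comma (-k)$, and the local full-faithfulness of the covariant embedding of Lemma~\ref{lem:yoneda-embedding}/Remark~\ref{rmk:yoneda-embedding} to translate the right-extension bijection in $\MMod{K}$ into the bijection $\hom(B,C)(s,r) \cong \hom(A,C)(sk,f)$ implemented by pasting with $\mu$. Your final paragraph simply spells out the compatibility check that the paper compresses into the phrase ``derived from the similar bijection between cells between the corresponding covariant represented modules.''
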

\begin{proof}
By Example~\ref{ex:comp-of-rep}, the binary cell $\mu$ is represented by a unary cell $C \comma rk \To C \comma f$ in $\MMod{K}$. The covariant embedding $\lcat{K}_2 \hookrightarrow \MMod{K}$ described in Remark~\ref{rmk:yoneda-embedding} is locally fully faithful, so this cell comes from a unique 2-cell $\mu \colon rk \To f$ in $\lcat{K}_2$. Local fully-faithfulness implies immediately that for any $e \colon B \to C$ pasting with $\mu$ defines a bijection
\[ \hom(B,C)(e,r) \xrightarrow{ \mu\cdot(-\circ k)} \hom(A,C)(ek,f),\] derived from the similar bijection between cells between the corresponding covariant represented modules. 
\end{proof}

\begin{defn}[stability of extensions under pasting]
In any 2-category, a right extension diagram 
\[ \xymatrix{ A \ar[rr]^k \ar[dr]_f & \ar@{}[d]|(.4){\Leftarrow\mu} &  B \ar[dl]^r \\ & C}\]
is said to be \emph{stable under pasting with a square}
\[ \xymatrix{ D \ar[d]_g \ar[r]^h \ar@{}[dr]|{\Leftarrow\lambda} & E \ar[d]^b \\ A \ar[r]_k & B}\]
if the pasted diagram
\[ \xymatrix{ D \ar[rr]^h \ar[d]_g \ar@{}[drr]|{\Leftarrow\lambda} & & E \ar[d]^b\\
A \ar[rr]^k \ar[dr]_f & \ar@{}[d]|(.4){\Leftarrow\mu} &  B \ar[dl]^r \\ & C}\]  displays $br$ as a right extension of $fg$ along $h$.
\end{defn}

\begin{prop}\label{prop:pointwise-kan} For a diagram
\begin{equation}\label{eq:pointwise-right-extension} \xymatrix{ A \ar[rr]^k \ar[dr]_f & \ar@{}[d]|(.4){\Leftarrow\mu} &  B \ar[dl]^r \\ & C}\end{equation}
in the homotopy 2-category  of an $\infty$-cosmos $\lcat{K}$ the following are equivalent. 
\begin{enumerate}[label=(\roman*)]
\item\label{itm:pointwise-exact-stable} $\mu \colon rk \To f$ defines a right extension in $\lcat{K}_2$ that is stable under pasting with exact squares.
\item\label{itm:pointwise-comma-stable} $\mu \colon rk \To f$ defines a right extension in $\lcat{K}_2$ that is stable under pasting with comma squares.
\item\label{itm:pointwise-in-mod} The image $\mu \colon B \comma k \times C \comma r \To C \comma f$ of $\mu$ under the covariant embedding $\lcat{K}_2\hookrightarrow\MMod{K}$ defines a right extension in $\MMod{K}$.
\item\label{itm:pointwise-exact-stable-in-mod} The image of the pasted composite of $\mu$ with any exact square under the covariant embedding $\lcat{K}_2\hookrightarrow\MMod{K}$ defines a right extension in $\MMod{K}$.
\end{enumerate}
\end{prop}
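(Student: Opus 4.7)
The plan is to establish the cycle of implications (i) $\Rightarrow$ (ii) $\Rightarrow$ (iii) $\Rightarrow$ (iv) $\Rightarrow$ (i). Three of these are essentially routine, and I would isolate the substantive work in the single non-trivial direction. The implication (i) $\Rightarrow$ (ii) is immediate from Lemma~\ref{lem:comma-exact}, which records that every comma square is exact. The implication (iv) $\Rightarrow$ (iii) is obtained by applying (iv) with $\lambda$ taken to be the identity 2-cell $\id_k$ on $k$, viewed as a square with identity vertical boundaries, which is trivially exact. The implication (iv) $\Rightarrow$ (i) then follows by combining (iv) $\Rightarrow$ (iii) with Lemma~\ref{lem:mod-extensions-in-2-cat}, which descends right extensions of covariantly-represented modules in $\MMod{K}$ to right extensions in $\lcat{K}_2$; applying the same descent to (iv) at each exact square yields the stability clause of (i).

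For (iii) $\Rightarrow$ (iv), given an exact square $\lambda \colon bh \To kg$, I would exhibit the image of the pasted 2-cell $\mu g \cdot r\lambda \colon brh \To fg$ under the covariant embedding $\lcat{K}_2 \hookrightarrow \MMod{K}$ as the vertical composition of $\mu$ (regarded as a cell in $\MMod{K}$ between representables) with the cocartesian cell $\hat{\lambda}$ furnished by exactness of $\lambda$, using the compositional machinery of Lemma~\ref{lem:exact-squares-comp} and the description of composites in $\MMod{K}$ from Corollary~\ref{cor:two-sided-rep-comp}. A test cell for the right extension property of the pasted composite would then factor uniquely, first along the cocartesian cell $\hat{\lambda}$ witnessing its composite nature, and then along $\mu$ by the hypothesized right extension property (iii), delivering the required bijection.

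The substantive direction is (ii) $\Rightarrow$ (iii). My approach is first to use Lemma~\ref{lem:module-as-rep-composite} to decompose each module $E_i$ of a composable sequence as the composite $q_i \comma A_{i-1} \otimes A_i \comma p_i$ of its represented legs; by Observations~\ref{obs:comp-assoc} and~\ref{obs:comp-cancel}, the universal property of right extension in $\MMod{K}$ tested against the original sequence reduces to the corresponding property tested against the longer sequence of alternating contravariant and covariant representables. Cells from such a sequence of representables into $C \comma r$ then translate, via Observation~\ref{obs:cells-above-a-comma} together with iterated applications of Theorem~\ref{thm:companion-conjoint}, Lemma~\ref{lem:yoneda-embedding}, and Corollary~\ref{cor:two-sided-rep-comp}, into 2-cells in $\lcat{K}_2$ whose data amount to a pasting diagram of comma squares arising from the contravariant representables in the sequence, together with a 2-cell at the apex. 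The stability hypothesis (ii), applied iteratively to these comma squares (composing them via Lemma~\ref{lem:exact-squares-comp}), then supplies a bijection with 2-cells obtained by pasting with the comma square associated to $B \comma k$ and adjusting the apex to land in $f$, which back-translates to the desired bijection with cells $B \comma k \times E_1 \times \cdots \times E_n \To C \comma f$.

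The main obstacle is managing the combinatorics in (ii) $\Rightarrow$ (iii): carrying out the iterated Yoneda-type translations between cells in $\MMod{K}$ with multi-sources of representables and the corresponding pasting diagrams in $\lcat{K}_2$, and then checking that iterated stability under pasting with the individual comma squares assembles coherently into the full right extension property of $\mu$ in the virtual equipment.
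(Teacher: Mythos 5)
The weak point is the direction you yourself single out as substantive, (ii)~$\Rightarrow$~(iii), and there the mechanism you describe does not work as stated. Hypothesis~\ref{itm:pointwise-comma-stable} licenses pasting exactly \emph{one} comma square directly onto $\mu$ along the edge $k$; it says nothing about stability of the resulting pasted diagram, and a composite of comma squares is in general only an exact square, not a comma square. So applying (ii) ``iteratively to these comma squares (composing them via Lemma~\ref{lem:exact-squares-comp})'' either presupposes stability under exact squares---which is precisely statement~\ref{itm:pointwise-exact-stable}, not available when proving (ii)~$\Rightarrow$~(iii)---or presupposes that each intermediate pasted right extension is itself stable under further pasting, which (ii) does not assert. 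This is exactly the ``assembly'' step you flag as the main obstacle, and it is a genuine gap, not bookkeeping.

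The cure, and the paper's actual argument, is that no iteration is needed: for each test sequence $E_1,\ldots,E_n$ from $B$ to $C$ one applies (ii) to a \emph{single} comma square. Form the composite span $(q,p)\colon E\tfib B\times C$ of the sequence (no decomposition of the individual $E_i$ is required). The general, span-level clause of Lemma~\ref{lem:module-as-rep-composite}, together with Lemma~\ref{lem:one-sided-rep-comp}, Corollary~\ref{cor:companion-conjoint}, and Observation~\ref{obs:cells-above-a-comma}, identifies cells $B\comma k\times E_1\times\cdots\times E_n\To C\comma f$ with $2$-cells $pt\To fs$ out of the one comma object $q\comma k$ (with legs $s,t$ to $A$ and $E$), and identifies cells $E_1\times\cdots\times E_n\To C\comma r$ with $2$-cells $p\To rq$; a single application of (ii), to the comma square $q\comma k$, then yields the bijection. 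In your decomposed picture the only relevant comma square is likewise the one over the cospan formed by $k$ and the $B$-leg of the composite of your representables---there is never more than one, and Lemma~\ref{lem:exact-squares-comp} plays no role in this direction. A smaller under-specification occurs in your (iii)~$\Rightarrow$~(iv): a test cell there has codomain $C\comma fg$ and its domain contains no occurrence of $g\comma A$, so it cannot be factored ``first along $\hat{\lambda}$''; one must first transpose via Corollary~\ref{cor:companion-conjoint} to move $g$ into the domain and replace the codomain by $C\comma f$, then use exactness, restrict along the composite $B\comma k\times b\comma B\To b\comma k$ of Lemma~\ref{lem:one-sided-rep-comp}, apply (iii), and transpose back---which is the precise form of what you sketch, so that step is reparable, unlike the iterated use of (ii).
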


If any of these equivalent conditions hold, we say that \eqref{eq:pointwise-right-extension} defines a \emph{pointwise right Kan extension} in the homotopy 2-category $\lcat{K}_2$. 

\begin{proof}
Lemma~\ref{lem:comma-exact} proves that \ref{itm:pointwise-exact-stable}$\To$\ref{itm:pointwise-comma-stable}.

To show \ref{itm:pointwise-comma-stable}$\Rightarrow$\ref{itm:pointwise-in-mod}, suppose \eqref{eq:pointwise-right-extension} defines a right extension in $\lcat{K}_2$ that is stable under pasting with comma squares and consider a cone over the cell $\mu \colon B \comma k \times C \comma r \To C \comma f$ with summit given by an isofibration $(q,p)\colon E \tfib B \times C$. By our hypothesis \ref{itm:pointwise-comma-stable}, the pasted composite
\[ \xymatrix{ q \comma k \ar[rr]^-t \ar[d]_s \ar@{}[drr]|{\Leftarrow\lambda} & & E \ar[d]^q\\
A \ar[rr]^k \ar[dr]_f & \ar@{}[d]|(.4){\Leftarrow\mu} &  B \ar[dl]^r \\ & C}\] defines a right extension in $\lcat{K}_2$.
A 2-cell $B\comma k \pbtimes{B} E \To C \comma f$ is, by Lemma~\ref{lem:module-as-rep-composite}, the same as a 2-cell
\[ B \comma k \pbtimes{B} q \comma B \pbtimes{E} C \comma p \To C \comma f.\] Using Corollary~\ref{cor:companion-conjoint} and Lemma~\ref{lem:one-sided-rep-comp} this is the same as $q \comma k \To p \comma f$, which, by Observation~\ref{obs:cells-above-a-comma} is the same as a 2-cell
\[ \xymatrix{ q \comma k \ar[d]_s \ar[r]^-t \ar@{}[dr]|{\Leftarrow} & E \ar[d]^p \\ A \ar[r]_f & C}\] Using the hypothesis that $rq$ is the right extension of $fs$ along  $t$ in the homotopy 2-category this is the same as a 2-cell $p \To rq$, or by Lemma~\ref{lem:yoneda-embedding}, as a cell $C \comma p \To C \comma rq$. By Corollary~\ref{cor:companion-conjoint}, this is the same as a cell \[q \comma B \pbtimes{E} C \comma p \To C \comma r,\] which by Lemma~\ref{lem:module-as-rep-composite} produces the desired factorization $E \To C \comma r$.

To show \ref{itm:pointwise-in-mod}$\Rightarrow$\ref{itm:pointwise-exact-stable-in-mod} consider a diagram 
\[ \xymatrix{ D \ar[rr]^h \ar[d]_g \ar@{}[drr]|{\Leftarrow\lambda} & & E \ar[d]^b\\
A \ar[rr]^k \ar[dr]_f & \ar@{}[d]|(.4){\Leftarrow\mu} &  B \ar[dl]^r \\ & C}\] in which $\lambda$ is exact and $\mu$ displays $C \comma r$ as the right extension of $C \comma f$ along $B \comma k$. We will show that the pasted composite again defines a right extension diagram at the level of modules. 

To that end, observe that a cell \[ E \comma h \times E_1 \times \cdots \times E_n \To C\comma fg\] corresponds to a cell \[ g \comma A \times E \comma h \times E_1 \times \cdots \times E_n \To C \comma f\] by Corollary~\ref{cor:companion-conjoint}. By exactness of $\lambda$, this corresponds to a cell \[ b \comma k \times E_1 \times \cdots \times E_n \To C \comma f,\] or equivalently, upon restricting along the composition map $B \comma k \times b \comma B \To b \comma k$ of Lemma~\ref{lem:one-sided-rep-comp} to a cell \[ B \comma k \times b \comma B  \times E_1 \times \cdots \times E_n \To C \comma f.\]
 As $C \comma r$ is the right extension of $C \comma f$ along $B \comma k$, this corresponds to a cell \[  b \comma B  \times E_1 \times \cdots \times E_n  \To C\comma r,\] which transposes, via Corollary~\ref{cor:companion-conjoint}, to the the desired factorization \[ E_1 \times \cdots \times E_n \To  C \comma rb.\]

To see that this bijection is implemented by composing with $\mu\lambda \colon E \comma h \times C \comma rb \To C \comma fg$, it suffices, by the Yoneda lemma, to start with the identity cell $C \comma rb \To C \comma rb$ and trace backwards through each step in this bijection to see that the result is $\mu\lambda \colon E \comma h \times C \comma rb \To C \comma fg$. Employing Observation~\ref{obs:cells-above-a-comma} to represent each cell in the virtual double category as a pasting diagram in the homotopy 2-category, this is straightforward.

Finally, Lemma~\ref{lem:mod-extensions-in-2-cat}, together with the trivial observation that the identity 2-cell defines an exact square
\[\xymatrix{ A \ar[r]^k \ar@{=}[d] & B \ar@{=}[d] \\ A \ar[r]_k & B}\] proves that \ref{itm:pointwise-exact-stable-in-mod}$\To$\ref{itm:pointwise-exact-stable}.
\end{proof}

\begin{obs} Lemma~\ref{lem:exact-squares-comp} implies that the pasted composite of a pointwise Kan extension with an exact square again defines a pointwise Kan extension.
\end{obs}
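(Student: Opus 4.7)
The plan is to verify the observation using characterisation~\ref{itm:pointwise-exact-stable} of Proposition~\ref{prop:pointwise-kan}: a pointwise right Kan extension is a right extension in $\lcat{K}_2$ that is stable under pasting with exact squares. So given a pointwise right Kan extension $\mu\colon rk \To f$ and an exact square $\lambda\colon kh \To \ell g$ (or any compatible exactness configuration), I would consider the pasted composite
\[
\xymatrix{
D \ar[rr]^h \ar[d]_\ell \ar@{}[drr]|{\Leftarrow\lambda} & & E \ar[d]^b \\
A \ar[rr]^k \ar[dr]_f & \ar@{}[d]|(.4){\Leftarrow\mu} & B \ar[dl]^r \\
& C
}
\]
and check that it continues to satisfy condition~\ref{itm:pointwise-exact-stable}.

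First, the pasted diagram is a right extension in $\lcat{K}_2$. This is simply the stability of $\mu$ under pasting with the single exact square $\lambda$, which is part of the hypothesis that $\mu$ is a pointwise right Kan extension.

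Second, I must show this new right extension is itself stable under pasting with exact squares. Suppose we are given a further exact square $\lambda'$ compatible with the top of the pasted diagram. Then the total diagram is the pasting of $\mu$ with the composite square $\lambda' \cdot \lambda$, and by Lemma~\ref{lem:exact-squares-comp} (vertical composition of exact squares) the composite $\lambda' \cdot \lambda$ is again exact. Applying the pointwise hypothesis on $\mu$ one more time to this composite exact square yields that the full pasting is a right extension in $\lcat{K}_2$. This establishes the required stability, so by Proposition~\ref{prop:pointwise-kan} the pasted composite $\mu\lambda$ is again a pointwise right Kan extension.

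I expect no real obstacle here: the argument is entirely formal, turning on the combination of (a) the characterisation of pointwise Kan extensions as exact-pasting-stable right extensions and (b) the closure of exact squares under pasting supplied by Lemma~\ref{lem:exact-squares-comp}. The only thing to be careful about is to apply the correct variant of exact-square composition (horizontal or vertical) corresponding to the shape of the pasting being performed, but both variants are already supplied by Lemma~\ref{lem:exact-squares-comp}.
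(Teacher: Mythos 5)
Your argument is correct and is precisely the intended one: the observation leaves the proof implicit, citing only Lemma~\ref{lem:exact-squares-comp}, and the expected verification is exactly your use of criterion \ref{itm:pointwise-exact-stable} of Proposition~\ref{prop:pointwise-kan} — the pasted composite is a right extension by stability of $\mu$, and its own stability follows because a further exact square composes with the first to give an exact square, to which the stability of $\mu$ applies again. No gaps; your caveat about using the appropriate (vertical) variant of the composition lemma is handled by the lemma as stated.
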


\begin{defn}[fully faithful]\label{defn:fully-faithful} A functor $k \colon A \to B$ is \emph{fully faithful} if and only if the square
\[ \xymatrix{ A \ar@{=}[r] \ar@{=}[d] & A \ar[d]^k \\ A \ar[r]_k & B}\] is exact, i.e., if and only if $A^\cattwo \To k \comma k$ is a composite. Observation~\ref{obs:unary-comp} reminds us that a cell between parallel modules is a composite if and only if it is an isomorphism in the 1-category of modules between a pair of fixed objects, so this is the case if and only if the canonical cell $A^\cattwo \To k \comma k$ defines an equivalence of modules from $A$ to $A$. 
\end{defn}

\begin{lem} If 
\[ \xymatrix{ A \ar[rr]^k \ar[dr]_f & \ar@{}[d]|(.4){\Leftarrow\mu} &  B \ar[dl]^r \\ & C}\] is a pointwise right extension and $k$ is fully faithful, then $\mu$ is an isomorphism.
\end{lem}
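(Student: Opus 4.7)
The plan is to exploit the stability of pointwise right Kan extensions under pasting with exact squares (Proposition~\ref{prop:pointwise-kan}\ref{itm:pointwise-exact-stable}), applied to the specific exact square furnished by Definition~\ref{defn:fully-faithful} in the fully faithful case. The key observation is that this particular paste produces a right extension along the identity functor, which then forces $\mu$ itself to be invertible by essential uniqueness of right extensions.

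Concretely, first note that since $k \colon A \to B$ is fully faithful, the square
\[
\xymatrix{ A \ar@{=}[r] \ar@{=}[d] \ar@{}[dr]|{\Leftarrow\id_k} & A \ar[d]^k \\ A \ar[r]_k & B}
\]
with $\lambda = \id_k$ is exact. Pasting this square on top of the right extension diagram yields the composite
\[
\xymatrix{ A \ar@{=}[rr] \ar@{=}[d] \ar@{}[drr]|{\Leftarrow\id_k} & & A \ar[d]^k \\
A \ar[rr]^k \ar[dr]_f & \ar@{}[d]|(.4){\Leftarrow\mu} & B \ar[dl]^r \\ & C}
\]
whose 2-cell is simply $\mu \colon rk \To f$. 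By the pointwise hypothesis in the form of Proposition~\ref{prop:pointwise-kan}\ref{itm:pointwise-exact-stable}, this pasted composite displays $rk$ as a right extension of $f \cdot \id_A = f$ along $\id_A \colon A \to A$.

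To conclude, I would invoke the fact that $(f, \id_f \colon f \To f)$ is trivially a right extension of $f$ along $\id_A$: any 2-cell $\sigma \colon s \To f$ factors uniquely as $\id_f \cdot \sigma$. Since right extensions are essentially unique, applying the universal property of $(rk, \mu)$ to $(f, \id_f)$ produces a unique $\alpha \colon f \To rk$ with $\mu \cdot \alpha = \id_f$. Applying the universal property of $(rk, \mu)$ to itself, we see that both $\id_{rk}$ and $\alpha \cdot \mu$ factor $\mu$ through $\mu$, so $\alpha \cdot \mu = \id_{rk}$ by uniqueness. Hence $\mu$ is invertible with inverse $\alpha$, that is, $\mu$ is an isomorphism.

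The argument is essentially formal and presents no real obstacle; the only point to verify is that the pasted 2-cell is indeed just $\mu$ (which is immediate since we pasted with an identity 2-cell in a degenerate square), and that the degenerate notion of ``right extension along an identity'' is reassuringly trivial.
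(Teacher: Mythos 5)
Your proof is correct and follows essentially the same route as the paper: both paste $\mu$ with the exact identity square supplied by fully faithfulness of $k$ to exhibit $(rk,\mu)$ as a right extension of $f$ along $\id_A$, and then use the trivial extension $(f,\id_f)$ along the identity together with uniqueness of factorizations to produce a two-sided inverse to $\mu$. The only cosmetic difference is that the paper cites Proposition~\ref{prop:arrows-are-units} for the identity extension where you use the elementary 2-categorical observation, which suffices.
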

\begin{proof}
Pasting $\mu$ with the exact square $\id_k$ yields, by Proposition~\ref{prop:pointwise-kan}.\ref{itm:pointwise-exact-stable}, a pointwise right extension diagram
\[ \xymatrix{ A \ar[rr]^{\id_A} \ar[dr]_f & \ar@{}[d]|(.4){\Leftarrow\mu} &  A \ar[dl]^{rk} \\ & C}\] 
Proposition~\ref{prop:arrows-are-units} asserts that any functor $f \colon A \to C$ defines a pointwise extension of itself along $\id_A \colon A \to A$ in the sense of \ref{prop:pointwise-kan}\ref{itm:pointwise-in-mod}. The unique factorization in $\lcat{K}_2$ of the pointwise right extension $\id_f$ through $rk$ defines an inverse isomorphism to $\mu$.
\end{proof}

\begin{lem} A right adjoint $u \colon A \to B$ is fully faithful if and only if the counit $\epsilon \colon fu \To 1_A$ of the adjunction is an isomorphism.
\end{lem}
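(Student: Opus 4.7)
The plan is to reduce fully faithfulness of $u$ to a condition that can be recognised as the Yoneda image of the counit, by using the adjunction to replace the module $u \comma u$ by an equivalent module built from $fu$. By Definition~\ref{defn:fully-faithful}, $u$ is fully faithful precisely when the canonical module map $A^\cattwo \To u \comma u$ is an equivalence, so our task reduces to comparing this single module map with the one associated to $\epsilon$ under the Yoneda embedding.

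First I will invoke Example~\ref{ex:adjoint-equivalence-mods}, which tells us that the adjunction $f\dashv u$ is witnessed by an equivalence $B\comma u\simeq f\comma A$ of modules from $A$ to $B$. Pulling both of these modules back along $1_A\times u\colon A\times A\to A\times B$ yields, by direct inspection of the defining pullbacks \eqref{eq:comma-as-simp-pullback}, equivalences $(B\comma u)(u,1_A)\cong u\comma u$ and $(f\comma A)(u,1_A)\cong fu\comma A$. Combining these with Proposition~\ref{prop:two-sided-pullback} gives an equivalence $u\comma u\simeq fu\comma A$ of modules from $A$ to $A$. Then by the contravariant Yoneda embedding of Lemma~\ref{lem:full-yoneda}, the counit $\epsilon\colon fu\To 1_A$ induces a module map $A^\cattwo=1_A\comma A\To fu\comma A$, which is an equivalence if and only if $\epsilon$ is invertible, since that embedding is fully faithful.

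The main step, which I expect to be the only non-trivial check, is to identify the composite
\[
A^\cattwo \To u\comma u \simeq fu\comma A
\]
with the Yoneda image of $\epsilon$. I will do this by passing through 1-cell induction as in Recollection~\ref{rec:comma}: the canonical map $A^\cattwo\to u\comma u$ corresponds to the whiskered 2-cell $u\phi\colon up_0\To up_1$, where $\phi$ is the generic arrow of $A^\cattwo$, while the induced map $A^\cattwo\to fu\comma A$ of Lemma~\ref{lem:full-yoneda} corresponds to the pasted 2-cell $\phi\cdot \epsilon p_0\colon fup_0\To p_0\To p_1$. The equivalence $B\comma u\simeq f\comma A$ of Example~\ref{ex:adjoint-equivalence-mods} implements the hom-set bijection associated to the adjunction, so after pullback along $1_A\times u$ it translates a 2-cell $up_0\To up_1$ into its mate $fup_0\To p_1$ under $f\dashv u$. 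A single application of a triangle identity shows that the mate of $u\phi$ is precisely $\phi\cdot\epsilon p_0$, completing the identification.

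The hard part will be bookkeeping the 1-cell induction correspondences and confirming that the equivalence extracted from $f\dashv u$ really acts as the mate correspondence on the level of comma 2-cells; everything else is formal consequence of Lemmas \ref{lem:full-yoneda} and \ref{lem:equiv-of-modules}, together with the pullback stability of modules established in Proposition~\ref{prop:two-sided-pullback}.
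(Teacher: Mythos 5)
Your argument is correct, but it is organised differently from the paper's. The paper treats the two implications by separate, purely formal arguments in the virtual equipment: for ``fully faithful implies $\epsilon$ invertible'' it notes that the fully-faithfulness square and the square encoding the equivalence $B\comma u\simeq f\comma A$ are both exact and composes them via Lemma~\ref{lem:exact-squares-comp}, so that the contravariant Yoneda image $A^\cattwo\To fu\comma A$ of $\epsilon$ is forced to be an isomorphism; for the converse it compares the composites $B\comma u\times_B u\comma B\To u\comma u$ (Example~\ref{ex:two-sided-comma-as-composite}) and $f\comma A\times_B u\comma B\To fu\comma A$ (Example~\ref{ex:comp-of-rep}) and factors one through the other to obtain the equivalence $fu\comma A\simeq u\comma u$. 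You instead produce that equivalence once and for all by pulling the adjunction equivalence $B\comma u\simeq f\comma A$ back along $1_A\times u$ --- though the justification should be that $(1_A\times u)^*$ is a functor of $\infty$-cosmoi (equivalently, use Corollary~\ref{cor:slice-2-cat-comparison} and Remark~\ref{rmk:adjunctions-pullback}) and hence preserves fibred equivalences, rather than Proposition~\ref{prop:two-sided-pullback}, which concerns cartesianness --- and then you establish the single identification that the composite $A^\cattwo\To u\comma u\simeq fu\comma A$ is the Yoneda image of $\epsilon$, after which both implications drop out simultaneously from the 2-of-3 property and full faithfulness of the contravariant embedding of Lemma~\ref{lem:full-yoneda}. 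What your route buys is a unified proof, and your explicit mate computation (1-cell induction plus a triangle identity, or equivalently naturality of $\epsilon$, depending on which direction of the equivalence one uses) makes precise exactly the identification that the paper's converse invokes only implicitly when it asserts that the composite equivalence $A^\cattwo\To fu\comma A\To u\comma u$ witnesses fully faithfulness of $u$; what it costs is that hands-on 2-categorical bookkeeping, which you correctly flag as the only nontrivial check, in place of the paper's exact-square and composite-cancellation calculus.
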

\begin{proof} If $f \dashv u$ with counit $\epsilon \colon fu \To 1_A$, then Proposition~\refI{prop:adjointequiv} demonstrates that composing with $\epsilon$ defines an isomorphism of modules $B \comma u \To f \comma A$, as recalled in Example~\ref{ex:adjoint-equivalence-mods}. By Observation~\ref{obs:unary-comp}, this says that the bottom square is exact.
\[ \xymatrix{ A \ar@{=}[d] \ar@{=}[r] & A \ar[d]^u \\ A \ar[r]^u \ar@{}[dr]|{\Leftarrow\epsilon} \ar@{=}[d] & B \ar[d]^f \\ A \ar@{=}[r] & A}\]
If $u$ is fully faithful, then by Lemma~\ref{lem:exact-squares-comp}, then so is the composite rectangle. This says that the contravariant embedding of $\epsilon$ into $\MMod{K}$ defines an isomorphism $A^\cattwo \To fu \comma A$ of modules from $A$ to $A$, which by fully faithfulness of the Yoneda embedding implies that $\epsilon$ is an isomorphism.

Conversely, Example~\ref{ex:two-sided-comma-as-composite} tells us that $B \comma u \times_B u \comma B \To u \comma u$ is a composite. Substituting the equivalent module $f \comma A$, Example~\ref{ex:comp-of-rep} provides another composite $f \comma A \times_B u \comma B \To fu \comma A$. Factoring one composite through the other, we obtain an equivalence $fu \comma A \To u \comma u$. If $\epsilon$ is an isomorphism, we have a composite equivalence $A^\cattwo \To fu \comma A \To u \comma u$, which proves that $u$ is fully faithful.
\end{proof}%Sort of; I haven't argued that this equivalence is induced by the identity on u.

\subsection{Pointwise Kan extensions in a cartesian closed \texorpdfstring{$\infty$}{infinity}-cosmos}\label{ssec:pointwise-kan-closed}

In this section we work in the homotopy 2-category of a cartesian closed $\infty$-cosmos $\lcat{K}$.

\begin{prop}\label{prop:ran-abs-lifting} Suppose \[ \xymatrix{  A \times K \ar[rr]^{ k \times K} \ar[dr]_f & \ar@{}[d]|(.3){\nu}|{\Leftarrow} &   B \times K \ar[dl]^r \\ & E}\] is a pointwise right Kan extension in a cartesian closed $\infty$-cosmos $\lcat{K}$. Then the transpose 
\begin{equation}\label{eq:pointwise-kan-abs-lifting} \xymatrix{ \ar@{}[dr]|(.7){\Downarrow\nu} & E^B \ar[d]^{E^k} \\ K \ar[r]_-{f} \ar[ur]^r & E^A}\end{equation} defines an absolute right lifting diagram in $\lcat{K}_2$ and moreover this absolute lifting diagram is stable under pasting with $E^\lambda$ for any comma square $\lambda$. %Conversely, any absolute lifting diagram \eqref{eq:pointwise-kan-abs-lifting} that is stable under pasting with $E^\lambda$ for any comma square $\lambda$ transposes to define a pointwise right Kan extension.
\end{prop}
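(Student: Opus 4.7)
The plan is to prove both claims by translating to the homotopy 2-category via cartesian closed structure and then applying the stability of pointwise right Kan extensions under pasting with exact squares (Proposition~\ref{prop:pointwise-kan}\ref{itm:pointwise-exact-stable}), where the exact squares in question are supplied by Lemmas~\ref{lem:product-exact-square} and~\ref{lem:product-comma-exact} respectively. Throughout, I shall use the 2-natural adjunction $\hom(X \times Y, E) \cong \hom(Y, E^X)$ provided by Proposition~\ref{prop:cartesian-closure}, which interchanges products with exponentials and exchanges horizontal pre-composition on one side with horizontal post-composition on the other; under this adjunction the original 2-cell $\nu$ on $A \times K$ transposes precisely to the 2-cell $\nu$ displayed in~\eqref{eq:pointwise-kan-abs-lifting}.

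To verify the absolute right lifting property, fix any functor $h \colon Y \to K$. The right lifting condition at $h$ requires, for every $b \colon Y \to E^B$ and every 2-cell $\alpha \colon E^k b \To fh$, a unique $\beta \colon b \To rh$ with $\nu h \cdot E^k\beta = \alpha$. Transposing, this becomes the right extension condition: given $\bar b \colon B \times Y \to E$ (the transpose of $b$) and $\bar\alpha \colon \bar b \cdot (k \times Y) \To f \cdot (A \times h)$ (the transpose of $\alpha$), produce a unique $\bar\beta \colon \bar b \To r \cdot (B \times h)$ with $\bar\alpha = (\nu \cdot (A \times h)) \circ (\bar\beta \cdot (k \times Y))$. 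This is precisely the universal property of the 2-cell obtained by pasting $\nu$ onto the commutative naturality square
\[ \xymatrix@R=1.5em{ A \times Y \ar[r]^{k \times Y} \ar[d]_{A \times h} & B \times Y \ar[d]^{B \times h} \\ A \times K \ar[r]_{k \times K} & B \times K}\]
which is exact by Lemma~\ref{lem:product-exact-square}. By hypothesis and Proposition~\ref{prop:pointwise-kan}\ref{itm:pointwise-exact-stable}, the pasted composite is again a pointwise right Kan extension, hence in particular an ordinary right extension, and so provides the required factorization $\bar\beta$. Since this works for every $h$, the transposed diagram is an absolute right lifting.

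For stability under pasting with $E^\lambda$, let $\lambda$ be any comma square in $\lcat{K}_2$. Lemma~\ref{lem:product-comma-exact} then asserts that $\lambda \times K$ is an exact square in $\lcat{K}_2$. Pasting $\lambda \times K$ on the domain side of $\nu$ yields, by Proposition~\ref{prop:pointwise-kan}\ref{itm:pointwise-exact-stable}, a new pointwise right Kan extension. A direct transpose calculation using that $(-)^K$ is 2-functorial and carries products to exponentials identifies this new pointwise right Kan extension with the cartesian-closure transpose of the composite of the original absolute lifting pasted with $E^\lambda$; applying the argument of the second paragraph to the new pointwise right Kan extension then shows that this pasted composite is itself an absolute right lifting.

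The principal obstacle is bookkeeping: because the cartesian closure transpose interchanges horizontal and vertical compositions and trades pre-composition on one side for post-composition on the other, some care is required to ensure that the two pasted shapes on the product side and the exponential side correspond as claimed, and to verify that pasting $\lambda \times K$ onto $\nu$ on the appropriate (domain) side transposes to pasting $E^\lambda$ onto the lifting diagram in the natural way. Once this bookkeeping is resolved, both claims reduce formally to instances of Proposition~\ref{prop:pointwise-kan}\ref{itm:pointwise-exact-stable}.
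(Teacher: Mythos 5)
Your proposal is correct and follows essentially the same route as the paper's proof: test the lifting property, transpose across $-\times K \dashv (-)^K$, and invoke exactness of the naturality square (Lemma~\ref{lem:product-exact-square}) together with stability of pointwise right Kan extensions under pasting with exact squares to produce the factorization, then handle stability under $E^\lambda$ via exactness of $\lambda\times K$ (Lemma~\ref{lem:product-comma-exact}) and the same transposition. The only cosmetic difference is notational bookkeeping for the test data, which the paper resolves exactly as you describe.
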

\begin{proof}
Given a cone as displayed on the left, we construct the required factorization as displayed on the right
\[ \xymatrix{ X \ar[r]^q \ar[d]_{p} \ar@{}[dr]|{\Downarrow\chi} & E^B \ar[d]^{E^k} \ar@{}[dr]|{\displaystyle\ =} & X \ar[d]_{p} \ar[r]^q \ar@{}[dr]|(.3){\Downarrow\phi}|(.7){\Downarrow\nu} & E^B \ar[d]^{E^k} \\ K \ar[r]_f & E^A & K \ar[ur]|r \ar[r]_f & E^A}\] by solving this problem in transposed form:
\[ \xymatrix@C=10pt{  A \times X \ar[d]_{A \times p}  \ar[rr]^{k \times X} \ar@{}[ddrr]|(.4){\Leftarrow\chi} & &  B \times X \ar@/^/[ddl]^q & &  A \times X \ar[d]_{A \times p}  \ar[rr]^{k \times X} & & B \times X \ar[d]_{B \times p} \ar@/^12ex/[ddl]^q\\  A  \times K \ar[dr]_f &  & & = &  A \times K  \ar[dr]_f \ar[rr]^{ k \times K} & \ar@{}[d]|(.3){\nu}|\Leftarrow &  B \times K \ar[dl]^r \ar@{}[d]^{\exists !\Leftarrow\phi} \\ & E  & & & & E  & }\] 
Lemma~\ref{lem:product-exact-square} tells us that the top right square is exact. Thus, $r ( B \times p)$ is a pointwise right Kan extension of $f (A \times p)$ along $k \times X$, inducing the desired 2-cell $\phi$.

Now the pasted composite of $\nu$ with an exponentiated comma square, as displayed below-left, transposes to the diagram displayed below right.
\[ \vcenter{ \xymatrix{ \ar@{}[dr]|(.7){\Downarrow\nu} & E^B \ar[d]^{E^k} \ar[r]^{E^h} \ar@{}[dr]|{~\Downarrow E^\lambda} & E^D \ar[d]^{E^p} \\ K \ar[r]_-{f} \ar[ur]^r & E^A \ar[r]_{E^q} & E^C}} \qquad \leftrightsquigarrow \qquad
\vcenter{ \xymatrix@C=10pt{  C \times K \ar[rr]^{p \times K} \ar[d]_{q \times K} \ar@{}[drr]|{\Leftarrow \lambda \times K} & & D \times K \ar[d]^{h \times K} \\ A \times K  \ar[dr]_f \ar[rr]^{ k \times K} & \ar@{}[d]|(.3){\nu}|\Leftarrow &  B \times K \ar[dl]^r \\ & E}} \] By Lemma~\ref{lem:product-comma-exact}, $\lambda \times K$ is exact, so the right-hand pasting diagram defines a pointwise Kan extension. The universal property of this right Kan extension diagram in $\lcat{K}_2$ transposes across $-\times K \dashv (-)^K$ to demonstrate that the left-hand side defines an absolute right lifting diagram.
\end{proof}

Recall Definition~\refI{defn:limit}: in a cartesian closed $\infty$-cosmos, the \emph{limit} of a diagram $f \colon A \to E$ is a point $\ell \colon 1 \to E$ equipped with an  absolute right lifting diagram
\begin{equation}\label{eq:limit-abs-lifting}  \xymatrix{ \ar@{}[dr]|(.7){\Downarrow\nu} & E \ar[d]^{E^{!}} \\ 1 \ar[r]_-{f} \ar[ur]^\ell & E^A}\end{equation} Here the 2-cell $\nu$ encodes the data of the limit cone. 

\begin{prop}\label{prop:limits-as-pointwise-kan} In a cartesian closed $\infty$-cosmos $\lcat{K}$, any limit \eqref{eq:limit-abs-lifting} defines a pointwise right Kan extension 
\[ \xymatrix{ A \ar[dr]_f \ar[rr]^{!} & \ar@{}[d]|(.3){\nu}|{\Leftarrow} & 1 \ar[dl]^\ell \\ & E}\] Conversely, any pointwise right Kan extension of this form transposes to define a limit \eqref{eq:limit-abs-lifting} in $E$.
\end{prop}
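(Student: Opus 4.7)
The second assertion, that a pointwise right Kan extension of the displayed shape transposes to an absolute right lifting diagram, is immediate from Proposition~\ref{prop:ran-abs-lifting} specialised to $K=1$, $k = !_A \colon A \to 1$, and $r = \ell$: in this case $E^K = E$, so the transposed diagram \eqref{eq:pointwise-kan-abs-lifting} is precisely \eqref{eq:limit-abs-lifting}.

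For the forward direction, I would apply Proposition~\ref{prop:pointwise-kan}(ii): it suffices to verify that $\nu$ defines a right extension in $\lcat{K}_2$ that is stable under pasting with comma squares. Both halves of this verification reduce, via the 2-adjunction $- \times A \dashv (-)^A$ furnished by cartesian closure, to instances of the universal property of the absolute right lifting \eqref{eq:limit-abs-lifting}. For the basic right extension property in $\lcat{K}_2$, given $r' \colon 1 \to E$ and a 2-cell $\mu \colon r' \cdot !_A \Rightarrow f$, transposition produces a 2-cell $\tilde\mu \colon E^{!_A}(r') \Rightarrow \tilde f \colon 1 \to E^A$; applying the universal property of \eqref{eq:limit-abs-lifting} at the test object $1$ with $p = \id_1$ yields a unique $\bar\mu \colon r' \Rightarrow \ell$ factoring $\tilde\mu$ through $\nu$, and transposition back gives the required unique factorisation of $\mu$.

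For stability under comma squares, I would first observe that since $1$ is a terminal $\infty$-category, any comma square beneath $!_A$ has cospan $A \xrightarrow{!} 1 \xleftarrow{!} X$ for some $X$; all 2-cells into $1$ are identities, so the product projections $A \xleftarrow{\pi_A} A \times X \xrightarrow{\pi_X} X$ with identity 2-cell display $A\times X$ as the comma object $! \comma !$ by Lemma~\ref{lem:equiv-to-comma}. It therefore suffices to show that $\ell \cdot !_X \colon A \times X \to E$ is the right extension of $f \cdot \pi_A$ along $\pi_X$ in $\lcat{K}_2$. Given $r' \colon X \to E$ and $\mu' \colon r' \cdot \pi_X \Rightarrow f \cdot \pi_A$, transpose across $-\times A \dashv (-)^A$ to obtain $\tilde{\mu'} \colon E^{!_A}(r') \Rightarrow \tilde f \cdot !_X \colon X \to E^A$; the universal property of the absolute right lifting now applies with test object $X$ and $p = !_X$, producing the required unique factorisation, which transposes back to a factorisation of $\mu'$ through the pasted composite of $\nu$ and the comma 2-cell.

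The content of the argument is essentially an unpacking of definitions; the only technical point to handle carefully is the naturality of the 2-adjunction transpose, ensuring that the whiskerings appearing in the statement of absolute right lifting correspond under transposition to the whiskerings appearing in the right extension factorisation condition (in particular that $r' \cdot !_A$ transposes to $E^{!_A} \cdot r'$ and that transposition carries pasting with $!_X$ to pasting with $\pi_X$). This boils down to the fact that the adjunction $-\times A \dashv (-)^A$ is simplicially enriched, hence in particular 2-enriched on passing to homotopy 2-categories, so that transposition is natural in all arguments.
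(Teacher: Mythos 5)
Your proposal is correct and follows essentially the same route as the paper: the converse is exactly the specialisation of Proposition~\ref{prop:ran-abs-lifting}, and the forward direction identifies comma squares over the terminal object with product-projection squares and reduces the (stable) right extension property to the universal property of the absolute right lifting diagram by transposing across the cartesian closure 2-adjunction. The only cosmetic difference is that the paper packages the verification as showing that the transposed pasted diagram is an absolute right lifting diagram (transposing across $X \times - \dashv (-)^X$), whereas you check the right-extension factorisation in $\lcat{K}_2$ directly; these are the same computation.
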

\begin{proof}
 Comma squares over the terminal object have the form 
\[ \xymatrix{ A \times X \ar[d]_{\pi_1} \ar[r]^-{\pi_0} & X \ar[d]^{!} \\ A \ar[r]_{!} & 1}\] for some $X$. We can show that the pasted composite of $\nu \colon \ell ! \To f$ with this comma square defines a right extension diagram in $\lcat{K}_2$ by proving that the transposed diagram
\[  \xymatrix{ \ar@{}[dr]|(.7){\Downarrow\nu} & E \ar[d]^{E^{!}} \ar[r]^{E^{!}} & E^X \ar[d]^{E^{\pi_0}} \\ 1 \ar[r]_-{f} \ar[ur]^\ell & E^A \ar[r]_-{E^{\pi_1}} & E^{A \times X}}\] 
defines a right lifting diagram. In fact this diagram is an absolute right lifting diagram. This follows easily from the universal property of the absolute lifting diagram \eqref{eq:limit-abs-lifting} by transposing across the 2-adjunction $X \times - \dashv (-)^X$.

The converse is a special case of Proposition~\ref{prop:ran-abs-lifting}.
\end{proof}

\begin{defn}[initial/final functor]\label{defn:initial-final} A functor $k \colon A \to B$ is \emph{final} if and only if the left-hand square is exact
\[ \xymatrix{ A \ar[d]_{!} \ar[r]^k & B \ar[d]^{!} & & A \ar[d]_k \ar[r]^{!} & 1 \ar@{=}[d]  \\ 1 \ar@{=}[r] & 1 & & B \ar[r]_{!} & 1}\]  
and \emph{initial} if and only if the right-hand square is exact.
\end{defn}

\begin{prop} In a cartesian closed $\infty$-cosmos, if $k \colon A \to B$ is initial and $f \colon B \to C$ is any diagram, then a limit of $f$ also defines a limit of $fk \colon A \to C$. Conversely, if the limit of $fk \colon A \to C$ exists then so does the limit of $f$ and it is given by the same point $\ell \colon 1 \to C$. 
\end{prop}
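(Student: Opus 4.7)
I would prove the equivalence by establishing each direction separately, using the characterization of limits as pointwise right Kan extensions (Proposition~\ref{prop:limits-as-pointwise-kan}) and the stability properties from Proposition~\ref{prop:pointwise-kan}.

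The forward direction is a direct pasting argument. If $\ell \colon 1 \to C$ is a limit of $f$, then Proposition~\ref{prop:limits-as-pointwise-kan} produces a pointwise right Kan extension
\[\xymatrix{ B \ar[dr]_f \ar[rr]^{!_B} & \ar@{}[d]|(.3){\nu}|\Leftarrow & 1 \ar[dl]^\ell \\ & C }\]
in $\lcat{K}_2$. The exact square witnessing the initiality of $k$,
\[\xymatrix{ A \ar[d]_k \ar[r]^{!_A} \ar@{}[dr]|= & 1 \ar@{=}[d] \\ B \ar[r]_{!_B} & 1 }\]
(whose 2-cell is literally the identity since $!_B k = !_A$) pastes onto the top of this Kan extension, and Proposition~\ref{prop:pointwise-kan}\ref{itm:pointwise-exact-stable} ensures that the pasted diagram is again a pointwise right Kan extension of $fk$ along $!_A$. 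Applying Proposition~\ref{prop:limits-as-pointwise-kan} in reverse, this identifies $\ell$ as a limit of $fk$.

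The converse is subtler: we must first construct a candidate 2-cell $\nu \colon \ell' !_B \To f$ from the hypothesized $\nu' \colon \ell' !_A \To fk$, then verify its pointwise universality. I would work in the virtual equipment of modules via Proposition~\ref{prop:pointwise-kan}\ref{itm:pointwise-in-mod}, which translates the hypothesis into the statement that $\tilde{\nu'} \colon 1\comma !_A \times C\comma \ell' \To C\comma fk$ exhibits $C\comma \ell'$ as a right extension in $\MMod{K}$. By Lemma~\ref{lem:one-sided-rep-comp} (and its two-sided extension, Corollary~\ref{cor:two-sided-rep-comp}) the composites $B\comma k \otimes_B 1\comma !_B \simeq 1\comma !_A$ and $B\comma k \otimes_B C\comma f \simeq C\comma fk$ exist, while the exactness of the initial square provides a composite cell $\hat\lambda \colon k\comma B \pbtimes{A} 1\comma !_A \To 1\comma !_B$ in $\MMod{K}$. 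These composite relations supply a chain of universal bijections: cells into $C\comma fk$ factor through $B\comma k \otimes C\comma f$, while cells whose source lists contain $1\comma !_B$ correspond via $\hat\lambda$ to cells whose source lists contain $k\comma B \pbtimes 1\comma !_A$. Combining these with the representability Lemma~\ref{lem:yoneda-embedding} and Corollary~\ref{cor:companion-conjoint}, I construct $\tilde\nu \colon 1\comma !_B \times C\comma \ell' \To C\comma f$ as the unique cell whose ``restriction along $B\comma k$'' recovers $\tilde{\nu'}$, and verify that $\tilde\nu$ exhibits $C\comma \ell'$ as a right extension in $\MMod{K}$. Unwinding via Proposition~\ref{prop:pointwise-kan}\ref{itm:pointwise-in-mod} and Proposition~\ref{prop:limits-as-pointwise-kan} identifies the apex $\ell'$ as the limit of $f$.

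The main obstacle lies in the bookkeeping of the converse: one must keep straight which universal properties belong to the composites produced by Lemma~\ref{lem:one-sided-rep-comp} (acting on the source list of a cell) versus which belong to the exact cell $\hat\lambda$ (replacing $1\comma !_B$ in the source with the two-module sequence $k\comma B, 1\comma !_A$), and then use Corollary~\ref{cor:companion-conjoint} to shuffle $B\comma k$ between the source and target so the bijections actually compose. Once these correspondences are assembled into a commutative diagram of bijections natural in the auxiliary module $R \colon 1 \prof C$, the right extension property transfers immediately. I expect this calculus-of-cells manipulation, carried out pictorially in the pasting notation of $\MMod{K}$, to be routine but finicky.
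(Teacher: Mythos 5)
Your proposal is correct and follows essentially the same route as the paper: the forward direction is the identical pasting of the exact initiality square onto the pointwise right Kan extension (via Propositions~\ref{prop:limits-as-pointwise-kan} and~\ref{prop:pointwise-kan}), and your converse runs the same chain of bijections in $\MMod{K}$ --- the hypothesized right-extension property of $1\comma{!_A}\times C\comma\ell' \To C\comma fk$, the conjoint shuffle of Corollary~\ref{cor:companion-conjoint} prepending $k\comma B$ to the source, and the composite cell $k\comma B\times 1\comma{!_A}\To 1\comma{!_B}$ supplied by initiality --- before translating back through Propositions~\ref{prop:pointwise-kan} and~\ref{prop:limits-as-pointwise-kan}. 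The only cosmetic slip is the phrase ``cells into $C\comma fk$ factor through $B\comma k\otimes_B C\comma f$'' (cocartesian composites rewrite source lists, not codomains), but your appeal to Corollary~\ref{cor:companion-conjoint} is exactly the correct mechanism and is what the paper uses.
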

\begin{proof}
By Proposition~\ref{prop:limits-as-pointwise-kan}, a limit of $f$ is a pointwise right Kan extension.
\[ \xymatrix{ B \ar[dr]_f \ar[rr]^{!} & \ar@{}[d]|(.3){\nu}|{\Leftarrow} & 1 \ar[dl]^\ell \\ & C}\] If $k$ is final, then by  \ref{prop:pointwise-kan}\ref{itm:pointwise-exact-stable},
\[ \xymatrix{A \ar[d]_k \ar[rr]^{!} &  & 1  \ar@{=}[d] \\ B \ar[dr]_f \ar[rr]^{!} & \ar@{}[d]|(.3){\nu}|{\Leftarrow} & 1 \ar[dl]^\ell \\ & C}\] 
is again a pointwise right Kan extension, which defines a limit of $fk$ by  Proposition~\ref{prop:limits-as-pointwise-kan}.

For the converse, suppose we are given a pointwise right Kan extension diagram \[ \xymatrix{ A \ar[dr]_{fk} \ar[rr]^{!} & \ar@{}[d]|(.3){\nu}|{\Leftarrow} & 1 \ar[dl]^\ell \\ & C}\] in $\lcat{K}_2$, which Proposition~\ref{prop:pointwise-kan} tells us defines a right extension between covariant represented modules in $\MMod{K}$. This universal property tells us that for any composable sequence of modules $E_1, \ldots, E_n$ from $1$ to $C$, composing with $\nu \colon 1\comma ! \times C \comma \ell \To C \comma fk$ defines a bijection between cells $E_1 \times \cdots \times E_n \To C \comma \ell$ and cells
\[ 
\xymatrix{ A \ar@{=}[d] \ar[r]|\mid^{1 \comma !} & 1 \ar[r]|\mid^{E_1} \ar@{}[dr]|{\Downarrow} & \cdots \ar[r]|\mid^{E_n} & C \ar@{=}[d] \\ A \ar[rrr]|\mid_{C \comma fk}  && & C}
\] By Corollary~\ref{cor:companion-conjoint}, composing with $\nu \colon 1\comma ! \times C \comma \ell \To C \comma fk$ also defines a bijection between cells  $E_1 \times \cdots \times E_n \To C \comma \ell$ and cells 
\[ 
\xymatrix{ B \ar@{=}[d] \ar[r]|\mid^{k \comma A} & A  \ar[r]|\mid^{1 \comma !} & 1 \ar[r]|\mid^{E_1} \ar@{}[d]|{\Downarrow} & \cdots \ar[r]|\mid^{E_n} & C \ar@{=}[d] \\ B \ar[rrrr]|\mid_{C \comma f} &   && & C}
\]  As $k \colon A \to B$ is initial, the induced cell $k \comma A \times 1 \comma ! \To 1 \comma !$ of modules from $B$ to $1$ is a composite. Thus, composing with $\nu \colon 1\comma ! \times C \comma \ell \To C \comma fk$ also defines a bijection between cells  $E_1 \times \cdots \times E_n \To C \comma \ell$ and cells
\[ 
\xymatrix{ B \ar@{=}[d]  \ar[r]|\mid^{1 \comma !} & 1 \ar[r]|\mid^{E_1} \ar@{}[dr]|{\Downarrow} & \cdots \ar[r]|\mid^{E_n} & C \ar@{=}[d] \\ B \ar[rrr]|\mid_{C \comma f}    && & C}
\] But this says exactly that the cell $1 \comma ! \times C \comma \ell \To C \comma f$ that corresponds to $\nu$ under this series of bijections displays $C \comma \ell \colon 1 \prof C$ as a right extension of $C \comma f \colon B \prof C$ along $1 \comma ! \colon B \prof 1$. By Proposition~\ref{prop:pointwise-kan} and Proposition~\ref{prop:limits-as-pointwise-kan} we conclude that $\ell$ also defines the limit of $f \colon B \to C$, as claimed.
\end{proof}

\begin{lem} If $f \colon B \to A$ admits a right adjoint $u \colon A \to B$, then $k$ is initial.
\end{lem}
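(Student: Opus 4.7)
The plan is to verify exactness of the square
\[ \xymatrix{ B \ar[d]_f \ar[r]^{!_B} & 1 \ar@{=}[d]  \\ A \ar[r]_{!_A} & 1}\]
(reading the statement's ``$k$'' as the left adjoint $f$), which by Definition~\ref{defn:exact-square} amounts to showing that the induced binary cell $\hat\lambda\colon f\comma A \pbtimes{B} 1\comma !_B \To 1\comma !_A$ displays $1\comma !_A$ as the composite in the virtual equipment $\MMod{K}$. Since $1\comma !_A$ and $1\comma !_B$ are the identity spans $(\id_A,!_A)$ and $(\id_B,!_B)$, the pullback $f\comma A\pbtimes{B} 1\comma !_B$ reduces to $f\comma A$ as a span from $A$ to $1$, and $\hat\lambda$ is represented over $A$ by the projection $p_1\colon f\comma A\to A$. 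By Lemma~\ref{lem:fibred-adjoint-reflection} it then suffices to produce a fibred adjoint to $p_1$ over $A$.

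For this, I first invoke Example~\ref{ex:adjoint-equivalence-mods} to exchange $f\comma A$ for the equivalent covariant representable $B\comma u$, where the equivalence is as objects over $A\times B$; under it, $p_1\colon f\comma A\to A$ corresponds to the first projection $p_1\colon B\comma u\to A$. The identity 2-cell $\id_u\colon u=u$ induces by 1-cell induction a canonical section $i\colon A\to B\comma u$ of $p_1$, and by Lemma~\refI{lem:technicalsliceadjunction} this section is a right-adjoint right-inverse of $p_1$, an adjunction which is automatically fibred over $A$ because its counit is the identity on $\id_A$. Transporting this fibred adjunction back along the equivalence $B\comma u\simeq f\comma A$ using the stability of fibred adjunctions under equivalence (Corollary~\ref{cor:slice-2-cat-comparison}\ref{itm:slice-2-cat-comparison-adj}) yields a fibred right adjoint $\tilde u\colon A\to f\comma A$ to $p_1$. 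Applying Lemma~\ref{lem:fibred-adjoint-reflection} now certifies $\hat\lambda$ as a composite cell, so the square is exact and $f$ is initial.

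The main bookkeeping challenge is simply confirming that the canonical identity-section adjunction on the covariant representable is genuinely a fibred adjunction over the domain of $u$, and that this fibredness survives transport along the equivalence of modules. A more hands-on alternative, which avoids the transport step entirely, is to construct $\tilde u\colon A\to f\comma A$ directly by 1-cell induction from the counit $\epsilon\colon fu\To \id_A$ of $f\dashv u$; this yields $p_1\tilde u=\id_A$, $p_0\tilde u=u$, and $\phi\tilde u=\epsilon$. One then exhibits the unit of $p_1\dashv\tilde u$ by 2-cell induction from the pair of whiskerings $(\id_{p_1},\,u\phi\cdot\eta p_0)$ (where $\eta$ is the unit of $f\dashv u$), with the requisite comma compatibility and the triangle identities both reducing directly to the triangle identities for $f\dashv u$.
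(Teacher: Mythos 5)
Your proof is correct and follows essentially the same route as the paper's: translate initiality of $f$ into the assertion that $p_1\colon f\comma A\to A$ exhibits $1\comma !_A\cong A$ as a composite, replace $f\comma A$ by the equivalent $B\comma u$ using $f\dashv u$, take the right adjoint right inverse to $p_1$ from Lemma~\refI{lem:technicalsliceadjunction}, and conclude via Lemma~\ref{lem:fibred-adjoint-reflection}. The only differences are cosmetic (your explicit transport of the fibred adjunction along the equivalence, and the optional direct construction of $\tilde u$ by 1- and 2-cell induction), so no changes are needed.
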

\begin{proof}
The functor $f$ is initial if and only if the map $p_1 \colon f \comma A \To A$ of modules from $A$ to $1$ exhibits $A$ as the reflection into modules of the isofibration $(p_1,!) \colon f \comma A \tfib A \times 1$. If $f \dashv u$, we have $f \comma A \simeq B \comma u$ over $A$. Lemma~\refI{lem:technicalsliceadjunction} constructs a right adjoint right inverse to $p_1$ and the conclusion follows from  Lemma~\ref{lem:fibred-adjoint-reflection}.
\end{proof}

\begin{defn} In a cartesian closed $\infty$-cosmos, an $\infty$-category $E$ \emph{admits functorial pointwise right Kan extension} along a functor $k \colon A \to B$ if there is a pointwise right Kan extension 
\[  \xymatrix{A \times E^A \ar[dr]_{\ev} \ar[rr]^{k \times E^A} & \ar@{}[d]|(.3){\nu }|{\Leftarrow} & B \times E^A \ar[dl]^{\ran_k(-)} \\ & E}\]
of the evaluation functor along $k \times {E^A}$.
\end{defn}

\begin{prop}[Beck-Chevalley condition]\label{prop:beck-chevalley} For any comma square 
\[ \xymatrix{ D \ar[r]^h \ar[d]_k  \ar@{}[dr]|{\Leftarrow\lambda} & B \ar[d]^f   \\ C \ar[r]_g & A }\] in a cartesian closed $\infty$-cosmos and any object $E$, the Beck-Chevalley condition is satisfied for the induced 2-cell
\[ \xymatrix{ E^A \ar[r]^{f^*} \ar[d]_{g^*} \ar@{}[dr]|{\Downarrow\lambda^*} & E^B \ar[d]^{h^*} \\ E^C \ar[r]_{k^*} & E^D}\] whenever functorial pointwise left or right Kan extensions of these functors exist: that is, the mates of $\lambda^*$ are isomorphisms.
\[ \xymatrix{ E^A \ar[r]^{f^*} \ar@{}[dr]|{\Downarrow \lambda^!} & E^B & & E^A \ar[d]_{g^*} \ar@{}[dr]|{\Uparrow \lambda_!} & E^B \ar[d]^{h^*} \ar[l]_{\lan_f}   \\ E^C \ar[r]_{k^*} \ar[u]^{\ran_g} & E^D \ar[u]_{\ran_h} & & E^C  & E^D \ar[l]^{\lan_k}}\]
\end{prop}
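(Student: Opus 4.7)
The plan is to establish the Beck--Chevalley condition for the right mate $\lambda^!$; the argument for the left mate $\lambda_!$ is formally dual, using the analogue of Proposition~\ref{prop:ran-abs-lifting} for functorial pointwise left Kan extensions and absolute left lifting diagrams.

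The key tool is Proposition~\ref{prop:ran-abs-lifting}. Applied with $K = E^C$ and $\ev \colon C \times E^C \to E$, it translates the existence of the functorial pointwise right Kan extension $\ran_g$ into an absolute right lifting diagram
\[ \xymatrix{ & E^A \ar[d]^{g^*} \\ E^C \ar@{=}[r] \ar[ur]^{\ran_g} \ar@{}[ur]|(.4){\Downarrow \epsilon_g} & E^C}\]
whose defining 2-cell is the counit of the adjunction $\ran_g \dashv g^*$, and which is moreover stable under pasting with $E^\mu$ for any comma square $\mu$ in $\lcat{K}_2$. The analogous statement for $\ran_h \dashv h^*$ gives in particular that $(\ran_h k^*, \epsilon_h k^*)$ is an absolute right lifting of $k^* \colon E^C \to E^D$ through $h^* \colon E^B \to E^D$.

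Next, I would specialise this stability statement to the given comma square $\lambda$. Pasting $E^\lambda$, which coincides with $\lambda^*$ since $E^{(-)}$ is contravariant on 1-cells but covariant on 2-cells, onto the absolute right lifting for $\ran_g$ produces a second absolute right lifting of $k^*$ through $h^*$:
\[ \xymatrix@C=2em{ & & E^A \ar[r]^{f^*} \ar[d]^{g^*} & E^B \ar[d]^{h^*} \ar@{}[dl]|{\Downarrow \lambda^*} \\ E^C \ar@{=}[rr] \ar[urr]^{\ran_g} \ar@{}[urr]|(.6){\Downarrow \epsilon_g} & & E^C \ar[r]_{k^*} & E^D,}\]
whose tip is $f^* \ran_g$. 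Absolute right lifting diagrams are unique up to canonical isomorphism, so there is a unique 2-cell $\xi \colon f^* \ran_g \To \ran_h k^*$ whose whiskering with $h^*$, followed by $\epsilon_h k^*$, reproduces this pasted composite; and since the pasted diagram is itself absolute right lifting, $\xi$ is automatically invertible.

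It remains to identify $\xi$ with the mate $\lambda^!$. This is a standard calculation of mates: the explicit formula
\[\lambda^! \;=\; (\ran_h k^* \epsilon_g) \cdot (\ran_h \lambda^* \ran_g) \cdot (\eta_h f^* \ran_g)\]
satisfies the universal equation defining $\xi$ as a direct consequence of the triangle identities for $\ran_h \dashv h^*$ and naturality, so the two must agree. The main obstacle in the proof is purely the diagrammatic bookkeeping of this last identification; no conceptually new ingredient is required once the absolute lifting framework of Proposition~\ref{prop:ran-abs-lifting} is in place.
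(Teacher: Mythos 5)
Your proposal is correct and takes essentially the same route as the paper: Proposition~\ref{prop:ran-abs-lifting} supplies the two absolute right lifting diagrams of $k^*$ through $h^*$ (the pasted one via stability under exponentiated comma squares), and the universal property of absolute liftings makes the comparison cell, identified with the mate $\lambda^!$, invertible --- your explicit mate calculation merely fills in the identification that the paper asserts without proof. The only quibble is notational: the adjunctions in question are $g^*\dashv\ran_g$ and $h^*\dashv\ran_h$ rather than the reverse, though the counit 2-cells you actually use are the correct ones.
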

\begin{proof}
By Proposition~\ref{prop:ran-abs-lifting}, the pointwise right Kan extensions define absolute right lifting diagrams
\[ \xymatrix{  \ar@{}[dr]|(.7){\Downarrow\epsilon} & E^A \ar[r]^{f^*} \ar[d]^{g^*} \ar@{}[dr]|{\Downarrow\lambda^*} & E^B \ar[d]^{h^*} & &\ar@{}[dr]|(.7){\Downarrow\epsilon}  & E^B \ar[d]^{h^*} \\  E^C \ar@{=}[r] \ar[ur]^{\ran_g}  & E^C \ar[r]_{k^*} & E^D & E^C \ar[r]_{k^*} & E^D \ar@{=}[r] \ar[ur]^{\ran_h} & E^D}\] and moreover the mate $\lambda^!$ of $\lambda_*$ defines a factorization of the left-hand diagram through the right-hand diagram
\[ \xymatrix{  \ar@{}[dr]|(.7){\Downarrow\epsilon} & E^A \ar[r]^{f^*} \ar[d]^{g^*} \ar@{}[dr]|{\Downarrow\lambda^*} & E^B \ar[d]^{h^*}  \ar@{}[dr]|{\displaystyle =} & & E^A \ar[r]^{f^*} \ar@{}[d]|{\lambda^!\Downarrow} \ar@{}[dr]|(.7){\Downarrow\epsilon}  & E^B \ar[d]^{h^*} \\  E^C \ar@{=}[r] \ar[ur]^{\ran_g}  & E^C \ar[r]_{k^*} & E^D & E^C \ar[r]_{k^*} \ar[ur]^{\ran_g}  & E^D \ar@{=}[r] \ar[ur]|(.6){\ran_h} & E^D}\] Immediately from the universal property of the absolute liftings of $k^*$ along $h^*$ we have that $\lambda^!$ is an isomorphism.
\end{proof}

\begin{rmk}[on derivators for (co)complete quasi-categories]\label{rmk:qcat-derivator}
Derivators were introduced independently by Heller \cite{Heller:1988ly} and by Grothendieck in \emph{Pursuing Stacks}. A \emph{derivator} is a 2-functor $D \colon \Cat_2\op \to \category{CAT}_2$ from the 2-category of small 2-categories, thought of as indexing shapes for diagrams, to the 2-category of large categories, satisfying the following axioms:
\begin{enumerate}[label = (Der\arabic*)]
\item $D$ carries coproducts to products.
\item For each $\scat{A} \in \Cat_2$, the functor $D(\scat{A}) \to \prod_{a \in \scat{A}} D(\catone)$ induced by the family of functors $a \colon \catone \to \scat{A}$ is conservative.
\item For every functor $k \colon \scat{A} \to \scat{B} \in \Cat_2$, its image $k^* \colon D(\scat{B}) \to D(\scat{A})$ admits a left adjoint $\lan_k \colon D(\scat{A}) \to D(\scat{B})$ and a right adjoint $\ran_k \colon D(\scat{A}) \to D(\scat{B})$.
\item For every comma square in $\Cat_2$, the Beck-Chevalley condition is satisfied: that is the mates of the induced 2-cell in the image of $D$ are isomorphisms.
\item For each $\scat{A} \in \Cat_2$, the induced functor  $D(\scat{A} \times \cattwo) \to D(\scat{A})^\cattwo$ is essentially surjective and full.
\end{enumerate}

Under the embedding $\Cat_2\inc\qCat_2$ categorical indexing shapes can be regarded as special cases of quasi-categorical indexing shapes.  Thus, for any large quasi-category $E$, we have a 2-functor 
\begin{equation}\label{eq:qcat-derivator}
\xymatrix{\Cat_2\op \ar[r]^-{E^{-}} & \category{qCAT}_2 \ar[r]^-{\ho} & \category{CAT}_2,}\end{equation} which sends a category $\scat{A}$ to the homotopy category of the large quasi-category of $\scat{A}$-indexed diagrams, valued in $E$.

Suppose $E$ admits functorial pointwise Kan extensions for all functors $k \colon \scat{A} \to \scat{B}$. By Proposition~\ref{prop:ran-abs-lifting}, these define adjoints $\lan_k \dashv k^* \dashv \ran_k$ to the induced functor $k^* \colon E^{\scat{B}} \to E^{\scat{A}}$,  which define adjunctions between homotopy categories. This proves (Der3). The embedding $\Cat_2\inc\qCat_2$ carries comma squares to comma squares. By Proposition~\ref{prop:beck-chevalley}, the Beck-Chevalley 2-cells are isomorphisms in $\category{qCAT}_2$ and hence also in $\category{CAT}_2$, proving (Der 4). Axiom (Der 1) follows from the fact that exponentiation converts coproducts in the domain to products,  $E^{\coprod_i \scat{A}_i} \cong \prod_i E^{\scat{A}_i}$, and the homotopy category functor $\ho \colon \category{qCAT}_2 \to \category{CAT}_2$ preserves small products. (Der 2) and (Der 5) were proven as Corollary~\refI{cor:pointwise-equiv} and Proposition~\refI{prop:weak-cotensors}. Indeed, in the arguments just given, $\category{qCAT}_2$ can be replaced by the homotopy 2-category of any cartesian closed $\infty$-cosmos admitting a comma-preserving 2-functor $\Cat_2 \to \lcat{K}_2$. In the general case, the 2-functor
\[ h\defeq \hom(1,-) \colon \lcat{K}_2 \to \category{CAT}_2\] maps a (large) $\infty$-category $E$ to its homotopy category $\hom(1,E)$.

In the special case of quasi-categories, we can argue further that any complete and cocomplete quasi-category $E$ admits functorial pointwise Kan extensions along all functors $k \colon \scat{A} \to \scat{B}$, thus defining a derivator \eqref{eq:qcat-derivator}. A complete and cocomplete quasi-category is a large quasi-category admitting limits and colimits of all diagrams indexed by small simplicial sets. We outline the argument here, defering full details to a future paper that will focus on the quasi-categorical case.

The first step is to show that in $\MMod{\category{qCAT}}$ right extensions always exist. Consider a module $K \colon A \prof B$ represented by an isofibration $(q,p) \colon K \tfib A \times B$. The operation of horizontal composition with this isofibration can be represented as a composite simplicial functor
\[ \xymatrix{ \category{qCAT}\slice{B \times C} \ar[r]^{p^*} & \category{qCAT}\slice{K \times C} \ar[r]^{q \circ -} & \category{qCAT}\slice{A \times C}}\] formed by first pulling back along $p \times \id_C$ and then composing with $q \times \id_C$. The latter functor has a right adjoint, pullback along $q \times \id_C$, which is a functor of $\infty$-cosmoi. Because $p$ is a cartesian fibration, it is \emph{homotopy exponentiable}, i.e., $p^* \colon  \category{qCAT}\slice{B \times C} \to \category{qCAT}\slice{K \times C}$ also admits a right adjoint $\Pi_p$, defining a functor of $\infty$-cosmoi; see \cite[\S B.3]{Lurie:2012uq}, where homotopy exponentiable maps are called \emph{flat fibrations}, for a discussion. Now, given a module $F \colon A \prof C$, the component at $F$ of the counit of the composite adjunction defines a right extension diagram
\[ \xymatrix{ A \ar[dr]|\mid_F \ar[rr]|\mid^K & \ar@{}[d]|(.3){\epsilon }|{\Leftarrow} & B \ar[dl]|\mid^{\Pi_p (q^*F)} \\ & C}\]
in  $\MMod{\category{qCAT}}$.

In particular, given the quasi-categories and functors displayed on the left, where $A$ and $B$ are small, there is some module $G$ that defines the right extension displayed on the right.
\begin{equation}\label{eq:mod-right-ext}  \xymatrix{A \times E^A \ar[dr]_{\ev} \ar[rr]^{k \times E^A} &  & B \times E^A & & A \times E^A \ar[dr]|\mid_{E \comma \ev} \ar[rr]|\mid^{(B \times E^A)\comma (k \times E^A)} & \ar@{}[d]|(.3){\nu }|{\Leftarrow} & B \times E^A \ar[dl]|\mid^{G} \\ & E & & &&  E}\end{equation} Now the quasi-category $E$ will admit functorial pointwise right Kan extensions along $k \colon A \to B$, just when the module $G$ in \eqref{eq:mod-right-ext} is covariantly represented. By Corollary~\ref{cor:fibrewise-rep}, the module $G \colon B \times E^A \prof E$ is covariantly represented if and only if its pullbacks along each vertex $(b,f) \in B \times E^A$ are covariantly represented.

Now the proof of Proposition~\ref{prop:pointwise-kan} \ref{itm:pointwise-in-mod}$\Rightarrow$\ref{itm:pointwise-exact-stable-in-mod}  tells us that the right extension diagram \eqref{eq:mod-right-ext} is stable under pasting with the images of exact squares in $\category{qCAT}_2$ under the covariant embedding $\category{qCAT}_2\inc\MMod{\category{qCAT}}$. Thus, by Lemma~\ref{lem:comma-exact}, Lemma~\ref{lem:product-exact-square}, and Lemma~\ref{lem:exact-squares-comp} we have a right extension diagram
\[ \xymatrix{ b \comma k \ar@{}[drr]|{\Leftarrow\phi} \ar[d]_{A \comma p_1} \ar[rr]^{\Del^0 \comma p_0} & & \Del^0 \ar[d]^{B \comma b} \\ 
A \ar[rr]^{B \comma k} \ar[d]_{(A \times E^A) \comma (A \times f)} & & B \ar[d]^{(B \times E^A) \comma (B \times f)}
\\ A \times E^A \ar[dr]|\mid_{E \comma \ev} \ar[rr]|\mid^{(B \times E^A)\comma (k \times E^A)} & \ar@{}[d]|(.3){\nu }|{\Leftarrow} & B \times E^A \ar[dl]|\mid^{G} \\ &  E}\]

Now if $A$ and $B$ are small then so is $b \comma k$, and hence if $E$ is complete, Proposition~\ref{prop:limits-as-pointwise-kan} tells us that there is a pointwise right Kan extension diagram
\[ \xymatrix{ b \comma k \ar[d]_{p_1} \ar[rr]^{!} \ar@{}[ddr]^{\Leftarrow\nu} & & \Del^0 \ar[ddl]^{\ell} \\ A \ar[dr]_f & & \\ & E & }\] given by forming the limit $\ell \in E$ of $p_1 f \colon b \comma k \to E$. Thus the fiber of the module $G$ over the point $(b,f) \colon \Del^0 \to B \times E^A$ is equivalent to $E \comma \ell$. As argued above, this implies that $G$ is represented, which implies that functorial pointwise right Kan extensions exist for any complete quasi-category $E$.
\end{rmk}

The result described in Remark \ref{rmk:qcat-derivator}, which will be proven in full in a forthcoming paper on quasi-categories~\cite{RiehlVerity:2016ys}, also provides convenient motivation for a second paper in progress.  Specific details of the quasi-categorical model of $(\infty,1)$-categories were used in two places in the argument just given: \begin{enumerate}[label=(\roman*)]
\item Particular features of the Joyal model structure on simplicial sets are used to prove that cartesian fibrations are homotopy exponentiable.
\item An inductive argument over dimensions of simplices is used to prove Lemma \ref{lem:qcat-pointwise-rep}, which is applied in  the proof of Corollary \ref{cor:fibrewise-rep} to characterize represented modules between quasi-categories.
\end{enumerate}
But there is no reason why the conclusion, that a complete and cocomplete quasi-category defines a derivator, should be restricted to this model of $(\infty,1)$-categories, and indeed a forthcoming paper on model independence of $\infty$-category theory~\cite{RiehlVerity:2016mi} will prove this.

The main idea is quite simple to describe. Certain functors between $\infty$-cosmoi define what we call \emph{weak equivalences of $\infty$-cosmoi}: functors that are surjective on objects up to equivalence and define equivalences of mapping quasi-categories. Each of the functors listed in Example \ref{ex:functors} between the $\infty$-cosmoi of quasi-categories, complete Segal spaces, Segal categories, and naturally marked simplicial sets is an weak equivalence of $\infty$-cosmoi. Each weak equivalence of $\infty$-cosmoi induces what we call a \emph{biequivalence of virtual equipments}. Informally, a biequivalence of virtual equipments preserves, reflects, and creates all equivalence-invariant features of the virtual equipment, e.g., whether a module is represented by a functor.

In particular, since right extensions always exist in $\MMod{\category{qCAT}}$, this is also true in any biequivalent virtual equipment. Furthermore, since a module between quasi-categories is covariantly represented if and only if its pullbacks to a module whose domain is the terminal object is covariantly represented, the same result holds for modules between $\infty$-categories in any biequivalent virtual equipment. In this way, we will conclude that any complete and cocomplete complete Segal space or Segal category also defines a derivator.

%%% Local Variables:
%%% mode: latex
%%% TeX-master: "all"
%%% End:

  \bibliographystyle{gtart}
  \bibliography{../../common/index}

\end{document}